\newcommand{\bC}{{\mathbb C}}
\newcommand{\bR}{{\mathbb R}}
\newcommand{\bZ}{{\mathbb Z}}
\newcommand{\bT}{{\mathbb T}}
\newcommand{\cF}{\mathcal F}
\newcommand{\cA}{\mathcal A}
\newcommand{\cB}{\mathcal B}
\newcommand{\cK}{\mathcal{K}}
\newcommand{\cN}{\mathcal N}
\newcommand{\cM}{\mathcal M}
\newcommand{\cO}{\mathcal O}
\newcommand{\cP}{\mathcal P}
\newcommand{\cU}{\mathcal U}
\newcommand{\cL}{\mathcal L}
\newcommand{\cR}{\mathcal R}
\newcommand{\cH}{\mathcal H}
\DeclareMathOperator{\val}{val}
\DeclareMathOperator{\Per}{Per}
\newtheorem{theorem}{Theorem}
\newtheorem{tm}{Theorem}[section]
\newtheorem{cy}[tm]{Corollary}
\newtheorem{lm}[tm]{Lemma}
\newtheorem{prp}[tm]{Proposition}
\newtheorem{pblm}[tm]{Problem}
\newtheorem{rem}[tm]{Remark}
\newtheorem{df}[tm]{Definition}
\newtheorem{ex}[tm]{Example}
\begin{document}

\title{The local  Floer cohomology of indicator functions}
\author{Yoel Groman}
\address{ Yoel Groman,
Hebrew University of Jerusalem,
Mathematics Department\\
Email: yoel.groman@mail.huji.ac.il}
\begin{abstract}
For a compact set $K$ with contact type boundary in a symplectic manifold $M$ we construct a spectral sequence from the local Floer homology of the Reeb orbits, as studied by \cite{Mclean2012}, to the relative symplectic cohomology of $K$ in $M$ over the Novikov ring.  The spectral sequence is functorial with respect to inclusions which are not required to be exact. This functoriality is key to the closed string reconstruction problem near the singularity of an SYZ fibration. We illustrate this in the case of dimension $2n=4$ for symplectic cluster manifolds. In higher dimension, an additional ingredient, the locality spectral sequence, is required, and is the subject of a forthcoming work in progress. 

\end{abstract}
\maketitle

\tableofcontents
\section{Introduction}

Let $M$ be a symplectic manifold which is either closed or geometrically bounded and let $K\subset M$ be an arbitrary compact subset. Hamiltonian Floer theory associates to $K\subset M$  the cohomology group $SH^*_K(M)$, the \emph{symplectic cohomology of $K$ relative to $M$}.  This invariant plays a central role in many recent works.  An important motivation for the present study is its role in SYZ mirror symmetry: Mirror symmetry heuristics suggest that when $K$ is an invariant neighborhood of a fiber of a Maslov $0$ Lagrangian torus fibration, which may be singular, $SH^*_M(K)$ is the ring of analytic functions on the corresponding subset of the mirror. This idea has been pursued in \cite{GromanVarolgunes2022} for the case of symplectic cluster manifolds in dimension $2n=4$. The present work is part of a program to give a general construction of the mirror via relative $SH$.

As is typical in Floer theory, it is impossible to compute relative $SH$ from its definition. However, one can hope to make progress by analyzing its underlying chain complex $SC^*_M(K)$ as the deformation of something simpler.  The idea that we pursue in the present paper is that $SC^*_M(K)$ is the deformation of \emph{local Floer cohomology} \footnote{There is an unfortunate conflict of terminology, as what is here referred to as relative SH has sometimes been called, including by the author, local SH, whereas the term local Floer cohomology is used in the literature to what we call below the unweighted Floer cohomology of an orbit.}, a version of Floer cohomology which takes into account only Floer trajectories of infinitesimally small energy.  It turns out, as explained in \S\ref{SubsecSYZ}, that in settings of SYZ mirror symmetry this goes a long way.  

Though we are motivated by an application to SYZ mirror symmetry, we formulate all our results for compact sets with arbitrary contact boundary.  This of course contains a lot more than singularities of Lagrangian torus fibrations.  On the other hand,  the contact hypothesis is a bit restrictive for the case of Lagrangian torus fibrations. For example, the symplectic cluster manifolds studied in \cite{GromanVarolgunes2021} are not generally exact. The removal of the contact type hypothesis will be taken up in forthcoming work \cite{GromanToAppear2}. 
\subsection{Statement of the main results}
Let $K\subset M$ have contact boundary. Suppose with respect to some contact primitive of $\omega|_{\partial K}$  the set of Reeb  orbits decomposes, as a subset of the loop space with the continuous topology, into a collection of isolated path connected components so that
\begin{itemize}
\item any path of Reeb orbits has vanishing $\omega$ flux, and,
\item the Chern class evaluates to zero on any closed path of Reeb orbits.
\end{itemize}
Note that we do not assume these components are Morse-Bott, or even that they are smooth manifolds. Fix any ground ring $R$. To each Reeb component $\gamma$ of $\partial K$ we can associate an $R$-module,  $SH^*_{uw}(\gamma)$, the \emph{local Floer cohomology} of the Reeb component as studied by \cite{Mclean2012}.  A precise definition in the present context is given at the end of \S\ref{SubsecSHReeb}.  Before proceeding we point out some basic facts about $SH^*_{uw}(\gamma)$.
\begin{itemize}
\item The $R$-module $SH^*_{uw}(\gamma)$ depends only on the multiplicity of $\gamma$ and the germ near $\gamma$ of the pair $(M,\partial K)$ up to symplectomorphism.   In fact, it depends on the germ only up to isolating isotopy.  See Definition \ref{dfIsolatingIsotopy}.
\item If $\gamma$ is Morse-Bott, the group $SH^*_{uw}(\gamma)$ over $\bZ$ is, up to a shift of grading,  the integral singular homology of $\gamma$ twisted by a local system $\cL$ coming from Floer theory.  See Theorem \ref{tmMorseBottCascades}. 
\item When $\gamma$ is a Maslov $0$ Lagrangian torus and $\partial K$ is integral affine convex at $\gamma$, the local system $\cL$ is trivial by the work of \cite{BourgeoisOancea2009}. %\marginpar{Perhaps the connection between the local system of  Theorem \ref{tmMorseBottCascades}, the local system alluded to here, and the work of  \cite{BourgeoisOancea2009} should be made more explicit inside the paper.} 
\end{itemize}

Denote by $\cR$ the set of all Reeb components. Then we can associate to $K$ a graded BV algebra over the ground ring $R$ whose underlying $R$-module is
\begin{equation}
RH^*(K):=H^*(K;R)\oplus\bigoplus_{\gamma\in\cR(\partial K)}SH^*_{uw}(\gamma).
\end{equation}
We can think of the component $H^*(K;R)$ as the local Floer cohomology of the component of constant orbits inside $K$. 

We first give a formulation of our result  under the assumption that \emph{the period spectrum is gapped}.  This means that for any homology class $\alpha\in H_1(\partial K;\bZ)$ there is a $\delta>0$ so that the absolute value of the difference of any two distinct periods of orbits representing $\alpha$ is at least $\delta$. We later relax this assumption. 

The underlying complex $SC^*_M(K)$ of relative symplectic cohomology is defined over the universal Novikov field defined in equation \eqref{eqNovikovField}. As such it comes equipped with a $T$-adic filtration.  In particular, for any real number $\hbar$ we have a $\bZ$ filtration of $SC^*_M(K)$ defined by $F^pSC^*_M(K):=T^{p\hbar}SC^*_M(K)$. This gives rise to a spectral sequence $E_*^{*,*}=E_*^{*,*}(M,K)\Rightarrow SH^*_M(K) $ associated with the filtration.

\begin{theorem}\label{mainThmA}
For $\hbar>0$  small enough, the page $E_1$ is naturally isomorphic to  
\begin{equation}
E_1^{p,q}=R^{p+q}(K)\otimes \Lambda_{[-(p+1)\hbar,-p\hbar)}.
\end{equation}
\end{theorem}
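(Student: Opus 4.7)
The plan is to identify the associated graded of the $\hbar$-scaled $T$-adic filtration with a direct sum of \emph{local} Floer complexes, one for each Reeb component and one for the constants in $K$, and then recognize $E_1 = H^*(E_0, d_0)$ via the definition of $SH^*_{uw}(\gamma)$.

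First, I would realize $SC^*_M(K)$ as a completed telescope of Floer complexes $CF^*(H_n)$ over a cofinal family of admissible Hamiltonians $H_n$ vanishing on $K$, following Varolgunes's construction. Arranging $H_n$ to be a small non-degenerate perturbation of a radial function $h_n(r)$ in a collar of $\partial K$, the 1-periodic orbits split into (a) constant orbits in the interior of $K$ descending from critical points of a Morse function, and (b) for each Reeb component $\gamma$ of $\partial K$, a cluster of non-degenerate orbits inside an isolating neighborhood of $\gamma$, with action close to that of $\gamma$.

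Second, using the gapped assumption I would choose $\hbar > 0$ so small that, uniformly in $n$:
(i) for each $\gamma$, the perturbed orbits in the cluster near $\gamma$ all lie in a single $\hbar$-action window, as do the constant orbits in $K$;
(ii) distinct clusters lie in disjoint $\hbar$-windows;
(iii) any Floer trajectory between orbits in the same $\hbar$-window has energy strictly less than the threshold needed to exit a fixed isolating neighborhood of the corresponding component.
Items (i) and (ii) are immediate from the period gap hypothesis, while (iii) is the standard no-escape lemma at a contact boundary combined with uniform energy estimates on the continuation maps of the telescope.

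Under (i)--(iii), the $E_0$ differential decomposes as a direct sum of local pieces: the summand at the window of constant orbits is the Morse complex of $K$, computing $H^*(K;R)$, and the summand at the window of $\gamma$ is exactly the chain complex whose cohomology is defined to be $SH^*_{uw}(\gamma)$ in \S\ref{SubsecSHReeb}. The Novikov factor $\Lambda_{[-(p+1)\hbar,-p\hbar)}$ arises as the $T$-adic slice $F^p/F^{p+1}$ of the $\Lambda$-coefficients attached to each generator; because this slice is the same for every generator, it factors out of the direct sum after passage to cohomology. Taking $E_1 = H^*(E_0, d_0)$ then yields the asserted formula.

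The main obstacle is step (iii): establishing uniform confinement of Floer trajectories across the entire telescope, not merely for a single $H_n$. The gapped hypothesis is essential here, since without it clusters from different Reeb components could mix within any $\hbar$-window and the $E_0$ differential would no longer factor through local pieces. A secondary technicality is the compatibility of the $T$-adic filtration on the completed telescope with the action filtrations on its individual layers, which reduces to checking that the continuation maps respect the $\hbar$-windows up to controlled error.
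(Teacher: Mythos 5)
There is a genuine gap: your proposal omits the outer generators entirely, and handling those is the main content of the paper's proof. Because $M$ may be closed (or $K$ not exactly embedded), the telescope must be built from \emph{S-shaped} Hamiltonians $H_{K,I}$, not cone-type ones. For such a Hamiltonian, besides the constants in $K$ and an inner incarnation $\gamma^i$ of each Reeb component, there are also an \emph{outer} incarnation $\gamma^o$ at the concave bend and possibly further critical points outside $K\cup V$; this is exactly what Remark \ref{remOutsideGenerators} warns about. Your description ``the 1-periodic orbits split into (a) constant orbits in the interior of $K$ \ldots\ and (b) for each Reeb component $\gamma$ a cluster near $\gamma$'' is simply not a complete list, and your (i)--(iii) applied to the true generator set would produce an $E_1$ page containing the extra outer summands, which is not the asserted $RH^*(K)\otimes\Lambda_{[-(p+1)\hbar,-p\hbar)}$. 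Separating inner from outer cannot be done by disjointness of $\hbar$-windows alone, because the actions of the outer cluster for $\gamma$ drift as the slope $a_I\to\infty$; what is needed, and what the paper supplies, is first the uniform short-exact-sequence splitting of Proposition \ref{prphbarfilt} (outer orbits form a subcomplex in windows of size $<\hbar$, with $\hbar$ uniform in $I$, via Proposition \ref{prpRelEn}), and then Proposition \ref{prpOutInfHKI} showing the outer subcomplex dies under continuation to larger $I'$, so that the long exact sequence \eqref{eqInOutLES} collapses and the continuation map to $H_K$ factors through the inner quotient.

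A secondary but also real issue is your step (iii). You cite ``the standard no-escape lemma at a contact boundary combined with uniform energy estimates on the continuation maps of the telescope,'' but the whole point of Proposition \ref{prpActionWindowComponent} (and the reparametrization Lemma \ref{lmReparam} plus Proposition \ref{prpSmallDiamEst}) is that Hein's estimate does \emph{not} apply off the shelf here: the Hamiltonians $H_{K,I,\eta}$ converge to the non-smooth $H_{K,I}$ and have no uniform $C^2$ bound near $\partial K$, so one cannot simply take $\cN$ a small $C^2$-ball about a fixed Hamiltonian. The paper circumvents this by rescaling time via the slope (Lemma \ref{lmReparam}) to reduce to the time-independent Reeb flow with uniform Lyapunov constants, and then applies a small-diameter estimate (Proposition \ref{prpSmallDiamEst}). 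Your proposal asserts this uniformity rather than proving it; it is one of the two technical pillars of the result. The remaining parts of your outline (relating the truncated complex on an isolating neighborhood to $SH^*_{uw}(\gamma)\otimes\Lambda_{[0,\delta)}$, as in Lemma \ref{lmIsotropicComp}, and extracting the Novikov factor from the $T$-adic slice) are in the right spirit, but they sit on top of the two missing steps above.
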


\begin{rem}\label{remOutsideGenerators}

Theorem \ref{mainThmA} gives rigorous meaning to the folklore interpretation of relative $SH$ that it is the homology of a complex generated by critical points inside $K$ and by the Reeb orbits of the boundary.  The contribution of $M$ is through the  connecting trajectories in $M$ which are not necessarily confined to $K$. A major difficulty in making this precise is that in general the relative $SH$ is computed by $S$-shaped Hamiltonians. These have  too many generators. Namely there are outside critical points and each Reeb component appears with at least two incarnations. In the  case where either $M$ is not exact, or  $K$ is not exactly embedded, it is far from clear how to filter out these extra generators. 
\end{rem}

To make the statement actually useful we need to discuss functoriality and naturality with respect to a class of inclusions. The assignment $K\mapsto E_*^{*,*}(M,K)$ is clearly contravariantly functorial with respect to all inclusions. For functoriality of $RH^*(K)$ we consider a more limited class. We say that an inclusion $K_1\subset K_2$ is \emph{admissible} if  
\begin{enumerate}
\item each Morse-Bott component of either $\partial K_1$ or $\partial K_2$ is either contained in $\partial K_1\cap\partial K_2$ or is disjoint of  $\partial K_1\cap\partial K_2$.  
\item
The restriction of $\omega$ to a neighborhood of $\partial K_1\cup \partial K_2$ is exact.
\end{enumerate}

\begin{rem}
Crucially, we \emph{do not} require the existence of a primitive on a neighborhood of $\partial K_1\cup\partial  K_2$ which is of contact type. See Figure \ref{FigAdmInc0}. This is essential for studying restriction maps for non-exact embeddings.  One of the main advantages of relative $SH$ over other approaches to symplectic cohomology is that it can handle non-exact embeddings. 
\end{rem}

\begin{figure}\label{FigAdmInc0}

\begin{tikzpicture}[scale=1.5]
  \def\edge{2}
  \draw[rounded corners, thick] (-\edge,-\edge) rectangle (\edge,\edge);
  \foreach \x in {-1.8,-1.4,...,1.9} {
    \foreach \y in {-1.8,-1.4,...,1.9} {
      \draw[->] (\x,\y) -- (\x + 0.2*\x, \y + 0.2*\y);
    }
  }
  \node at (0,0) {$K_1$};

  \draw[rounded corners, thick, blue] (0.5,-\edge) rectangle (2,1);
  \node[blue] at (1.2,-0.4) {$K_2$};

\end{tikzpicture}
\caption{An admissible inclusion}
\end{figure}
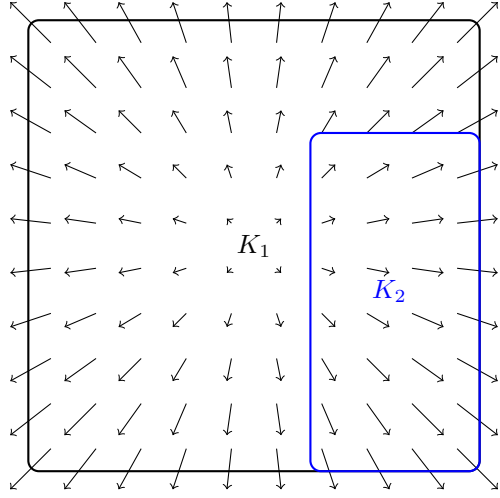

\begin{rem}
    The admissibility condition for inclusions is closely related to the barrier condition appearing in the work \cite{varolgunes} on the Mayer Vietoris property in relative $SH$. 
\end{rem}
We make the assignment $K\mapsto RH^*(K)$ contravariantly functorial with respect to admissible inclusions. Given an admissible inclusion $K_1\subset K_2$, the induced map $R^*(K_2)\to R^*(K_1)$ is described as follows. For the critical components it is the restriction map in ordinary homology. For a Reeb components it is the identity map on components contained in $\partial K_1\cap\partial K_2$ and the trivial map otherwise. 

\begin{theorem}\label{mainThmB}
For admissible inclusions there is a commutative diagram
\begin{equation}
\xymatrix{E_1^{p,q}(M,K_2)\ar[r]\ar[d]&RH^*(K_2)\otimes \Lambda_{[-(p+1)\hbar,-p\hbar)}\ar[d]\\
E_1^{p,q}(M,K_1)\ar[r]& RH^*(K_1)\otimes\Lambda_{[-(p+1),\hbar,-p\hbar)}}
\end{equation}
where the vertical arrow on the left is the restriction map in relative $SH$ and the map on the right is the functorial map in $RH$ on the first factor.
\end{theorem}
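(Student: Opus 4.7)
The restriction map $SC^*_M(K_2)\to SC^*_M(K_1)$ is built from continuation maps between cofinal acceleration data for $K_2$ and $K_1$. Such maps are defined over the universal Novikov ring and are in particular $T$-linear, hence strictly preserve each filtration $F^pSC^*_M(K_j)=T^{p\hbar}SC^*_M(K_j)$. Consequently they induce a morphism of the associated spectral sequences that agrees with the restriction in relative $SH$ on the $E_\infty$ page. The content of Theorem \ref{mainThmB} is therefore the identification of the induced map on $E_1$ under the isomorphism of Theorem \ref{mainThmA}.

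The plan is to choose the acceleration data $\{H_i^{(1)}\}$, $\{H_i^{(2)}\}$ for $K_1$ and $K_2$ in a compatible way and then analyze the induced map directly. Admissibility condition (1) partitions the Morse--Bott Reeb components of $\partial K_1$ and $\partial K_2$ into those entirely contained in the overlap $\partial K_1\cap \partial K_2$ and those entirely disjoint from it, which allows one to construct $\{H_i^{(1)}\}$ and $\{H_i^{(2)}\}$ so that near the shared components the two families coincide, while elsewhere $H_i^{(1)}\geq H_i^{(2)}$, in the same local model used in the proof of Theorem \ref{mainThmA}. Admissibility condition (2), the local exactness of $\omega$ near $\partial K_1\cup\partial K_2$, gives a primitive that lets one assign well-defined action differences to continuation trajectories between orbits concentrated near these boundaries, so that the energy of such trajectories can be compared to the filtration parameter $\hbar$.

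With these data in place, on $E_1$ every generator is supported either in $K_j$ (giving $H^*(K_j;R)$) or near a Reeb component $\gamma\subset \partial K_j$ (giving $SH^*_{uw}(\gamma)$). For a shared Reeb component $\gamma\subset \partial K_1\cap \partial K_2$, the two acceleration data agree in a neighborhood of $\gamma$, so the only continuation trajectories of energy $<\hbar$ starting at such a generator are constant, producing the identity map on $SH^*_{uw}(\gamma)$. For a Reeb component $\gamma\subset \partial K_2$ disjoint from $\partial K_1\cap \partial K_2$, the orbits in question lie in the interior of $M\setminus K_1$; in the acceleration data for $K_1$ they correspond to generators at much higher action, and an action computation using the local primitive provided by admissibility (2) shows that any continuation trajectory from such a generator lands in $T^{\hbar}SC^*_M(K_1)$, so is zero on $E_1$. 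For interior critical points, the standard identification of low-energy Floer continuation with Morse continuation (as used in the proof of Theorem \ref{mainThmA}) yields the topological restriction $H^*(K_2;R)\to H^*(K_1;R)$.

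The main obstacle is the second case: controlling the continuation trajectories out of Reeb components $\gamma\subset \partial K_2\setminus \partial K_1$ and verifying that their images all lie in the strictly smaller filtration piece. This is where the admissibility hypothesis is essential; without a primitive for $\omega$ on a neighborhood of $\partial K_1\cup \partial K_2$, the topological action differences needed for this estimate are ill-defined, and one cannot separate the contribution of such a component from genuine low-energy contributions to $E_1(M,K_1)$. Once this estimate is in place, naturality of the constructions under monotone homotopy makes the diagram commute.
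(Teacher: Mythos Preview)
Your overall strategy is sound, and the paper's proof follows the same broad outline: identify the restriction map on the $E_1$ page, component by component, as identity on common Reeb summands, zero on non-common ones, and the topological restriction on $H^*(K_j)$. However, your treatment of the non-common Reeb components has a genuine gap.

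You assert that for $\gamma\subset\partial K_2\setminus\partial K_1$, an action computation shows any continuation trajectory lands in $T^{\hbar}SC^*_M(K_1)$. This is not what happens. The targets of such trajectories are Reeb orbits $\gamma'$ of $\partial K_1$, and once one knows the trajectory is local (energy $<\hbar$, via Proposition \ref{prpRelEn}), its energy is approximately the period difference $T(\gamma,\alpha)-T(\gamma',\alpha)$ for the extended primitive $\alpha$. There is no reason this exceeds $\hbar$; it can be arbitrarily small, and it can even vanish. The paper's argument (Proposition \ref{tmInfintsmlSHFunct}) is a dichotomy: by Morse--Bott discreteness of the period spectrum in each $H_1$ class of $\partial K_1\cup\partial K_2$, either $|\Delta T|$ exceeds a fixed gap $\delta$ (so the map vanishes in the $[0,\delta)$ window), or $\Delta T=0$ exactly. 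In the latter case, since $\gamma\neq\gamma'$ as components, one further adjusts the perturbation data so that the actual action difference is precisely zero, ruling out any nonconstant continuation trajectory. Your outline does not address this $\Delta T=0$ case, and a naive action argument cannot.

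A second point: the paper does not build coincident acceleration data near shared components. It works instead with the strictly $S$-shaped Hamiltonians $H_{K_j,I}$ and uses the In/Out decomposition (Proposition \ref{prphbarfilt}) together with the vanishing of outer orbits under continuation (Proposition \ref{prpOutInfHKI}) to reduce to the quotient complexes $In^*_{[0,\delta)}(H_{K_j,I})$. This is how the identification of $E_1$ with $RH^*(K_j)$ is achieved in the first place (Theorem \ref{tmFilteredMainTm}); without the In/Out step your acceleration data carry many spurious outer generators, and it is not clear how your ``coincident near shared components'' construction alone yields the isomorphism of Theorem \ref{mainThmA} functorially.
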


Our main application of these results is to the computation of relative $SH$ in the neighborhood of a singularity of an  SYZ fibration \emph{as a sheaf over the base. } For these applications, the gappedness requirement is too strict. It appears reasonable that at least in these settings, the gappedness requirement could be removed, but we do not know how to prove this. Rather we formulate a version Theorems  \ref{mainThmA} and \ref{mainThmB} which holds in the non-gapped setting. It appears the correct framework would be to consider spectral sequences whose pages are continuously indexed, but we have not found this notion explored in the literature. 

We define the infinitesimal symplectic cohomology of $K$ as 
\begin{equation}
SH^*_{M,t^+}(K):=\varprojlim_{\epsilon\to 0}SH^*_{M,[t,t+\epsilon)}(K). 
\end{equation}
We then have
\begin{theorem}\label{mainThmC}
There is a natural isomorphism $SH^*_{M,t^+}(K)\simeq RH^*(K)$. This isomorphism commutes with  restriction maps for admissible inclusions. 
\end{theorem}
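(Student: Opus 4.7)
The plan is to realize $SH^*_{M,t^+}(K)$ as the cohomology of an ``infinitesimal'' chain complex localized at the action value $t$, and then identify that complex with a sum of local Floer complexes. Concretely, for each $\epsilon>0$ I set $SC^*_{M,[t,t+\epsilon)}(K) := F^t SC^*_M(K)/F^{t+\epsilon} SC^*_M(K)$, where $F^a SC := \{x : \val(x)\ge a\}$, so that $SH^*_{M,[t,t+\epsilon)}(K)$ is its cohomology. The surjections for $\epsilon'\ge\epsilon$ define a strict inverse system of chain complexes; I would first check the Mittag-Leffler condition at the level of cohomology so that passage to cohomology commutes with the inverse limit, reducing the claim to a chain-level computation on $\varprojlim_\epsilon SC^*_{M,[t,t+\epsilon)}(K)$.

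On this inverse limit the induced differential counts only area-zero Floer cylinders. Using the energy estimates underlying Theorem \ref{mainThmA} in the gapped setting, such cylinders are confined to an isolating neighborhood of a single Reeb component of $\partial K$, or to an isolating neighborhood inside $K$ in the case of the constant-orbit component. Thus the infinitesimal complex splits as $C^*(K;R)\oplus\bigoplus_{\gamma\in\cR} SC^*_{uw}(\gamma)$, equipped with the Morse differential on the first summand and the local Floer differential on each of the others. Taking cohomology yields the asserted isomorphism with $RH^*(K)$. For naturality under an admissible inclusion $K_1\subset K_2$, the restriction map in relative $SH$ is strict for the Novikov filtration and so descends to every truncation and to the inverse limit; by locality of the zero-area differential it is the identity on the local Floer subcomplex of each component in $\partial K_1\cap\partial K_2$, the zero map on Reeb components not meeting $\partial K_1$ (which are simply not among the generators of $SC^*_M(K_1)$), and the ordinary restriction on Morse subcomplexes. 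This is precisely the prescription for the functorial map on $RH^*$ given before Theorem \ref{mainThmB}.

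The main obstacle is the non-gapped part of the second step: when Reeb components accumulate at action $t$, the zero-area differential could a priori mix nearby components via the Novikov completion implicit in $SC^*_M(K)$. However, the hypothesis that each component is isolated in the continuous loop-space topology affords each one its own isolating neighborhood and its own local Floer theory. The hard work is a Gromov-compactness argument showing that the inverse limit actually respects the full (possibly infinite) direct sum decomposition rather than only its completion, and that passing to the limit does not introduce additional differentials between distinct components. Granted this locality, the decomposition of Step~2 is forced and the theorem follows.
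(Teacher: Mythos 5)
Your proposal contains a genuine gap at the decisive step, and the gap is precisely the difficulty the paper flags in Remark~\ref{remOutsideGenerators}. You pass to the chain-level inverse limit $\varprojlim_\epsilon SC^*_{M,[t,t+\epsilon)}(K)$ and assert that ``the infinitesimal complex splits as $C^*(K;R)\oplus\bigoplus_{\gamma\in\cR} SC^*_{uw}(\gamma)$'' because area-zero cylinders are confined to isolating neighborhoods. But the telescope underlying $SC^*_M(K)$ is built from a cofinal sequence of $S$-shaped Hamiltonians $H_{K,I,\eta}$, and its generating set is much bigger than what you list: for each $I$ every Reeb component appears in a second, \emph{outer} incarnation, and there are additional critical points of $H_{K,I,\eta}$ in $M\setminus (K\cup V)$. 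These are genuine $1$-periodic orbits of the approximating Hamiltonians and they all survive into the associated graded; nothing in a confinement estimate makes them disappear from the generating set. Reducing to the stated summands is exactly the content the paper must establish, and it requires (a) the uniform lower bound $\hbar$ of Proposition~\ref{prpRelEn} on the energy of trajectories crossing the symplectization region, (b) Proposition~\ref{prphbarfilt} showing the concave orbits form a subcomplex in windows of size $\hbar$, and (c) Proposition~\ref{prpOutInfHKI} showing the concave subcomplex is annihilated under continuation to larger slope and so vanishes in the colimit. You have identified the ``hard work'' as showing the inverse limit respects an infinite direct sum; in the paper that is a comparatively soft point, while the elimination of the extra generators is where the technical weight lies.

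There is also an unverified step in the commutation of cohomology with the inverse limit. You propose to check Mittag--Leffler for the cohomology inverse system $\{SH^*_{M,[t,t+\epsilon)}(K)\}_\epsilon$, but you do not establish it, and it is not clear a priori because of the action accumulation you yourself flag. The paper sidesteps this entirely: it never takes a chain-level inverse limit. Instead it first computes the action-bounded pieces $F^aSH^*_{[0,\epsilon)}(K)$ for $\epsilon<\delta(a)$ by Theorem~\ref{tmFilteredMainTm}, which already have the form $\bigl(H^*(K)\oplus_{T(\gamma)\le a} SH^*_{uw}(\gamma)\bigr)\otimes\Lambda_{[0,\epsilon)}$, so the inverse system over $\epsilon$ is essentially constant after the rescaling identification and $\varprojlim_\epsilon$ is trivial. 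The remaining issue is commuting $\varprojlim_\epsilon$ with $\varinjlim_a$, and this is handled not by Mittag--Leffler but by producing for each $a$ a splitting $SH^*_{[0,\epsilon)}=F^aSH^*_{[0,\epsilon)}\oplus R(a,\epsilon)$ (a consequence of the action filtration existing uniformly below $\hbar$), which gives injectivity and exhaustiveness directly. So even granting your Step~3 locality, the route through Mittag--Leffler would need an additional argument that the paper avoids by design. For naturality your sketch is essentially correct in spirit, but the vanishing of $r_{\gamma\gamma'}$ for non-common Reeb components again requires the primitive-extension hypothesis in the definition of admissible inclusion and the argument of Proposition~\ref{tmInfintsmlSHFunct}, not just ``locality of the zero-area differential.''
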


An important notion which comes out of this discussion is the \emph{spectral symbol} $\sigma:SH^*_M(K)\to RH^*(K)$ defined when the symplectic cohomology $SH^*_{M,0}(K)$ over the Novikov ring is torsion free, and which associates to an element its leading term. For precise definitions see \S\ref{SecSpectral}

We also highlight the commutative diagram
\begin{equation}
\xymatrix{H^*(K_2)\ar[d]\ar[r]&H^*(K_1)\ar[d]\\
SH^*_{M,[0,\delta)}(K_2)\ar[r]& SH_{M,[0,\delta)}^*(K_1)}
\end{equation}
for admissible inclusions $K_1\subset K_2$ and $\delta>0$ small enough. This can be seen as a weak version of Viterbo functoriality. Note that for general ambient manifold $M$, we generally don't have a natural map $H^*(K)\to SH^*_M(K)$. 

\subsection{SYZ mirror symmetry near a singularity}\label{SubsecSYZ}
We apply the results above to the study of mirror symmetry in a neighborhood of a singularity of an SYZ fibration. To be precise, let $B$ be a topological manifold and let $\pi:M\to B$ be a Maslov $0$ Lagrangian torus fibration with singularities. In the broadest sense this means that 
\begin{enumerate}
\item for any pair of functions $f_1,f_2:B\to\bR$ so that the functions  $g_1:=f_1\circ \pi,g_2:=f_2\circ\pi $ are smooth we have that $\{g_1,g_2\}=0$.
\item  there is a subset $\Delta$ which deformation retracts to a codimension $2$ simplicial complex so that the fibers over $B\setminus \Delta$ are smooth Maslov $0$ Lagrangian tori.  
\end{enumerate}

\begin{pblm}\label{pblmNonAdHoc}
Construct in a non-ad hoc way a rigid analytic variety $M^{\vee}$ over the universal Novikov field together with a Non-Archimedean torus fibration $\pi^{\vee}:M^{\vee}\to B$, as defined in \cite{KoSo}, which is mirror to  $\pi:M\to B$ in the sense of, say, homological mirror symmetry. 
\end{pblm}
The framework of family Floer theory is sufficient for carrying out mirror symmetry over the complement of $\Delta$ \cite{Abouzaid2014b, Tu2014, Yuan2020}.  It runs into difficulties, however, when studying neighborhoods of singular fibres. 

To date, to the best of the author's knowledge, there is no satisfactory answer to Problem \ref{pblmNonAdHoc}. The work of  \cite{pomerleano} indicates that a good starting point for a general answer is the \emph{closed string mirror} pieced together from relative $SH$ of invariant sets \cite{GromanVarolgunes2022}. Note that relative $SH$ is an invariant of domains in $M$, so singular fibers pose no particular foundational difficulties. An additional appealing feature is the expected relation of higher $SH$ to polyvector fields \cite{Ganatra2013}. This produces a path to classical closed string mirror symmetry which does not go through homological mirror symmetry. These ideas will be expanded upon elsewhere.

We thus pose  the \emph{closed string reconstruction problem} which can be stated as follows. 
\begin{pblm}[\textbf{The Closed string reconstruction problem}]\label{pblmReconstruction}
Construct a rigid analytic variety $M^{\vee}$ over the universal Novikov field together with a Non-Archimedean torus fibration $\pi^{\vee}:M^{\vee}_{cs}\to B$ which is determined canonically up to isomorphism by the requirement that for any polytope $P\subset B$ we have $SH^0_M(\pi^{-1}(P))$ is the ring of analytic functions on $(\pi^{\vee})^{-1}(P)$ and satisfies that  $SH^*_M(\pi^{-1}(P))$ is the analytic of polyvector fields on $(\pi^{\vee})^{-1}(P)$.
\end{pblm}
A concise reference for basic notions in rigid analytic geometry that we refer to here and below is \cite{Conrad2008}.

This leads to
\begin{pblm}\label{pblmAffinoid}
Fix $P\subset B$. When is $SH^0(\pi^{-1}(P))$ an affinoid algebra of dimension the same as $B$? When is $SH^*(\pi^{-1}(P))$ the ring of polyderivations? Fix $P_1\subset P_2$. When is the restriction map from $P_2\to P_1$ dual to an open inclusion?   
\end{pblm}

In the present paper we address this in dimension $n=2$.  We lay foundations necessary for addressing this in dimension $n>2$.  An additional necessary ingredient, the locality spectral sequence,  is the subject of a forthcoming work \cite{GromanToAppear2}.   We briefly discuss this below for the positive singularity.

\subsubsection{The case $n=2$}

For $P\subset B$ denote $\cF^*(P):=SH^*_M(\pi^{-1}(P))$.  We abbreviate $\cF(P)=\cF^0(P)$. 

The Arnold-Liouville integral affine structure on $B_{reg}:=B\setminus \Delta$ allows us to define the notion of convex polygons. These are polygons whose boundary is in $B_{reg}$ and so that this boundary is convex.  We shall assume henceforth that $P$ is rational, convex and \emph{Delzant smooth}.  See \S\ref{Sec4DimSYZ} for precise definitions. For the remainder of the discussion we add the assumption that that \emph{the periods associated with the set of edges are rationally independent \footnote{We expect this restriction to be easily liftable. See Remark \ref{remRestrictLift}.}.}   We call such $P$ \emph{admissible}. 

To a rational convex polygon we associate a partial monoid $P_{trop}(\bZ)$.  As a set it is the set of integral points in the dual fan of $P$\footnote{The dual fan in this case lives not in $\bR^2$ but in some integral affine manifold with singularities which is dual  in an appropriate sense to $P$.}. The partial monoid structure is given by standard addition within cones of the fan.

Let $P_1\subset P_2$ be admissible. We say that the inclusion is \emph{admissible} if $\partial P_1\cap\partial P_2$ is a   codimension $1$ subset whose boundary points are interior points of the $1$ dimensional strata of $\partial P_2$. See Figure \ref{FigAdm}. It is easy to see this property is  transitive. Note that $\partial P_1\cap\partial P_2$  is a union of  edges of $\partial P_1$. 

Given an admissible inclusion $Q\subset P$ there is an induced (partial) morphism of partial monoids $P_{trop}(\bZ)\to Q_{trop}(\bZ)$.  It is induced in the obvious way by mapping dual generators of common edges to themselves. 

\begin{rem}
It is useful to think the partial monoid $P_{trop}(\bZ)$ as the monomial basis of the underlying abelian group of the Stanley-Reisner ring associated with $\partial P$. That is, the quotient of the polymonial algebra generated by the edges by the ideal generated by products of non-adjacent edges. Note that a product of basis elements is either a basis element or $0$ in which case we consider it undefined. 
\end{rem}

To each element $x\in P_{trop}(\bZ)$ associate an $R$-module $M_x^*$ as follows. For the $0$ element take $M_0:=H^*(P;R)$. For $x=me_i+ne_{i+1}$ let $T_x$ be the $(m,n)$-cover of the torus formed by taking the quotient $T^*_vB$ by the dual to the lattice generated by the primitive tangents to $e_1,e_2$. Let $M^*_x:=H^*(T_x;\bZ)$.

\begin{theorem}\label{tmInfSHn2}
Let $P\subset B$ be admissible and let $K=\pi^{-1}(P)$. Then 
\begin{enumerate}
\item
 The  infinitesimal Floer cohomology of $K$ is the direct sum 
 \begin{equation}
 SH^*_{M,t^{+}}(K)=M(P_{trop}(\bZ)):= \bigoplus_{x\in P_{trop}(\bZ)}M_x^*
 \end{equation}
 in each $\bR$-degree. 
\item
If  $Q\subset P$ is an admissible inclusion then the restriction map in infinitesimal Floer cohomology is induced by the map of partial monoids $P_{trop}(\bZ)\to Q_{trop}(\bZ)$.
\end{enumerate}
\end{theorem}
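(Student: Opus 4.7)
The plan is to apply Theorem \ref{mainThmC} to reduce the computation of $SH^*_{M,t^+}(K)$ to that of $RH^*(K) = H^*(K;R) \oplus \bigoplus_{\gamma \in \cR(\partial K)} SH^*_{uw}(\gamma)$, and then to enumerate the Reeb components of $\partial K$ together with their local Floer cohomology. The admissibility hypotheses on $P$ ensure that $\partial K$ admits a contact type structure after a small smoothing of its Delzant corners: the integral affine structure near $\partial P \subset B_{reg}$ together with an interior base point yields a radial Liouville vector field on a neighborhood of $\partial K$. Rational independence of the edge periods guarantees that the actions of all Reeb orbits are $\bZ$-linearly independent, so the period spectrum is gapped and one may also apply the gapped versions of Theorems \ref{mainThmA} and \ref{mainThmB}.

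For part (1), I first identify the constant-orbit summand $H^*(K;R)$ with $M_0 = H^*(P;R)$: away from the focus-focus fibers of $\pi$ the preimage $K$ deformation retracts onto a Lagrangian section over $P$, and a local Mayer-Vietoris argument near each focus-focus fiber (using the collapse of its vanishing cycle) reduces the local cohomology contribution to that of a disk in the base. Next I enumerate Reeb orbits on the smoothed $\partial K$. Over the interior of an edge $e_i$, the Reeb vector field is proportional to the $T^2$-action direction dual to the primitive tangent of $e_i$; closed orbits of multiplicity $m \geq 1$ form a single Morse-Bott family homotopy equivalent to $S^1$, matching the $(m,0)$-cover $T_{me_i} = T_v^*B / \bZ \cdot m e_i^* \cong \bR \times S^1$. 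Over a smoothed vertex $V = e_i \cap e_{i+1}$, Delzant smoothness places us in the standard model on $\bC^2$ with $\mu = (|z_1|^2, |z_2|^2)/2$; for each pair $(m,n) \in \bZ_{>0}^2$ the Reeb orbits of bi-multiplicity $(m,n)$ form a Morse-Bott torus which is canonically identified with the $(m,n)$-cover $T_{me_i + ne_{i+1}}$ of $T_v^*B / \Lambda^*$ via the correspondence between parameterized orbits and lattice cosets. Each fiber torus has vanishing Maslov class and $\partial P$ is integral affine convex at every $\gamma_x$, so by \cite{BourgeoisOancea2009} the Floer local system is trivial. The Morse-Bott description (Theorem \ref{tmMorseBottCascades}) then yields $SH^*_{uw}(\gamma_x) \cong H^*(T_x; R) = M_x^*$, which assembles into the stated direct sum in each $\bR$-degree.

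For part (2), I apply Theorem \ref{mainThmB}. Given an admissible inclusion $Q \subset P$ in the base, the induced inclusion $\pi^{-1}(Q) \subset \pi^{-1}(P)$ is admissible in the required sense: exactness of $\omega$ on a neighborhood of $\partial \pi^{-1}(Q) \cup \partial \pi^{-1}(P)$ follows from the integral affine structure on $B_{reg}$, and shared versus non-shared Morse-Bott boundary components satisfy the required separation since they lie over $\partial Q \cap \partial P$ or its complement. Theorem \ref{mainThmB} then gives the restriction map as the identity on Reeb components supported over the common boundary and zero elsewhere, and as the classical restriction $H^*(P;R) \to H^*(Q;R)$ on the constant-orbit summand; under the identifications from part (1), this is exactly the map induced by the partial monoid morphism $P_{trop}(\bZ) \to Q_{trop}(\bZ)$. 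The main technical obstacle I anticipate is the natural identification at vertices: showing that the Morse-Bott torus carrying the bi-multiplicity $(m,n)$ orbits is canonically the $(m,n)$-cover $T_x$ --- rather than merely an abstractly isomorphic torus --- requires careful bookkeeping of the interaction between the Delzant smoothing and the integral affine lattice, together with tracking how the parameterization of Reeb orbits in the loop space recovers the covering structure.
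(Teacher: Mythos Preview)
Your overall strategy—reduce to $RH^*(K)$ via Theorem~\ref{mainThmC} and then enumerate Reeb components—is the right shape, but it skips the main technical step and contains several errors in the enumeration.

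The central gap: $K=\pi^{-1}(P)$ has corners, so Theorem~\ref{mainThmC} does not apply directly. You propose a single smoothing of the Delzant corners, but Theorem~\ref{mainThmC} then only computes $SH^*_{M,t^+}$ of the \emph{smoothed} domain, not of $K$ itself. The paper handles this by developing the admissible smoothing framework of \S\ref{SecNSCase}: one takes a one-parameter family of \emph{strictly convex} smoothings $P_s\to P$, shows that $\tau\mapsto\pi^{-1}(P_\tau)$ is an admissible smoothing in the sense of Definition~\ref{dfAdmSm}, and applies the non-smooth version Theorem~\ref{tmInfintsmlSHns}. The irrationality hypothesis on the edge periods is needed precisely here, to ensure the limiting periods of the families $\gamma^\tau$ are all distinct; it does \emph{not} make the spectrum gapped (the periods $m\cA_i+n\cA_{i+1}$ are dense in $\bR_+$), so your remark about Theorems~\ref{mainThmA} and~\ref{mainThmB} applying is incorrect. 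For part~(2) the paper constructs \emph{compatible} convex smoothings of $Q\subset P$ (Definition~\ref{dfCompSmooth}) and invokes Theorem~\ref{tmInfintsmlSHnsFunct}; your direct appeal to Theorem~\ref{mainThmB} again presupposes smooth boundary and does not address compatibility.

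The Reeb enumeration also has problems. In the paper's strictly convex smoothing, for each nonzero $x\in P_{trop}(\bZ)$ there is a unique point on $\partial P_s$ whose outward conormal is proportional to $x$ (this is the ``characteristic line'' picture), and the Reeb component there is the torus of parametrized loops in the fibre $T^2$—always a 2-torus, with $SH^*_{uw}(\gamma_x)=H^*(T^2)$. In your flat-edge picture the component over $e_i$ is $e_i\times T^2\simeq T^2$, not $S^1$; the claim $T_{me_i}\cong\bR\times S^1$ is wrong. Separately, your identification $H^*(K;R)\simeq H^*(P;R)$ already fails when $P\subset B_{reg}$: then $K\simeq P\times T^2$ and $H^*(K)=H^*(T^2)\neq H^*(P)$. (The paper's \S\ref{Sec4DimSYZ} restates the theorem with $M_0=H^*(\pi^{-1}(P);R)$, which is what one actually obtains from $RH^*(K)$; no such identification is needed.)
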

\begin{rem}
The domains considered in Theorem \ref{tmInfSHn2} have corners rather than boundaries. To deduce it from  Theorem \ref{mainThmC} we use  convex smoothing. See Section \S\ref{SecNSCase}. This approach  forces us to impose the irrationality assumption on the periods. This might pose a problem if we wish to apply the reasoning to analyze restriction maps in connection with isotopies of domains. We expect a more careful approach developing a version of Theorem \ref{mainThmC} for domains with corners would alleviate this restriction.
\end{rem}
\begin{rem}
The degree $0$ part $SH^0_{M,t^{+}}(K)$ can be alternatively described as the underlying module of the Stanley Reisner ring associated with  the intersection complex of $P$. Setting $t=0$, it is easy to see that is true also as a statement about algebra structure.  We do not pursue this in the present work. 
\end{rem}
Armed with the computation of infinitesimal $SH$, we proceed to address Problem \ref{pblmAffinoid}. We first add an assumption on $\pi$ which should be viewed as the closed string version of FOOO unobstructedness.  To state it, consider $P\subset B$ which contains no singular point. We can associate with $P$ a $2$-dimensional vector space $C_P$ be taking the tangent space to any point in the interior of $P$.  Then $P_{trop}(\bZ)$ is the integral lattice in $C^*_P$. 
\begin{df}
We say that the regular fibers of a Lagrangian torus fibration $\pi$ are \emph{undeformed} if for any convex $P\subset  B_{reg}$,  writing $K=\pi^{-1}(P)$, there is  an isomorphism of BV algebras $SH^*_M(K)\simeq SH_{\overline{K}}(K)$ where $\overline{K}\simeq T^*T^n$ is the completion of $K$ by attaching a cone.  Moreover, we assume this isomorphism respects spectral symbols. 
\end{df}
To justify this definition we spell out the settings where it is known or expected to be satisfied.
\begin{enumerate}
 \item One class of examples comes from complete embeddings considered in \cite{GromanVarolgunes2021}.  Namely,  if $M$ is a symplectic cluster manifold then the regular fibers of $\pi$ are undeformed. This follows immediately from the locality theorem of \cite{GromanVarolgunes2021}.    
\item In forthcoming work we prove that the regular fibers are undeformed if and only if $SH^*_M(K)$ considered over the Novikov ring is torsion free.  The latter assumption is verified in the same way as FOOO unobstructedness of the regular fibers is verified. Namely, either by flux considerations as in \cite{ShelukhinEtAl} or by an anti-symplectic involution as in \cite{Solomon}.  
\end{enumerate}

We now restrict attention again to the case $n=2$. 
\begin{theorem}\label{tmLocSHn2}
Suppose the regular fibers of $\pi$ are undeformed.   Suppose the inclusion of a regular fiber into $\pi^{-1}(P)$ induces an injection of $H^*(\pi^{-1}(P);\bZ)$ into $H^*(\bT^2;\bZ)$.  Then
\begin{enumerate}
\item \label{tmLocSHn2:pt1} There exists an isomorphism $\cF^*\simeq M^*(P_{trop}(\bZ))\otimes\Lambda$ which preserves norms and leading symbols. A corresponding isomorphism holds when considering truncation windows $[a,b)$. See \S\ref{SecConventions} for definitions. 
\item \label{tmLocSHn2:pt2} For $i$ indexing the edges of $\partial P$,  let $z_i\in \cF_P$ be an element corresponding to the generator associated with the $i$th edge under an isomorphism as in the previous part.   Then $\cF_P$ is generated as a Banach algebra by the collection $z_i$. In particular, $\cF_P$ is affinoid. 
\item For an admissible inclusion $Q\subset P$  the restriction map $\cF_P\to\cF_Q$ is the inclusion of a Laurent domain whenever $Q$ is contained in a small enough neighborhood of the boundary. 
\item There is a natural injection of $\cF^*(P)$ into the ring of polyvector fields on $\cF^0(P)$. 
\end{enumerate}
\end{theorem}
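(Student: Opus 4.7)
The plan is to combine Theorem \ref{tmInfSHn2}, which identifies the infinitesimal Floer cohomology with $M^*(P_{trop}(\bZ))$, with the functorial spectral symbol of Theorem \ref{mainThmC} and the undeformedness hypothesis, in order to produce explicit Banach generators of $\cF^*(P)$ and then promote the associated-graded identification to a genuine isomorphism of Banach modules.

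\emph{Step 1: edge generators.} For each edge $e_i$ of $\partial P$ I would choose a convex open $U_i \subset B_{reg}$ containing the relative interior of $e_i$ such that $\pi^{-1}(U_i) \subset \pi^{-1}(P)$ is an admissible inclusion. By undeformedness, $\cF^*(U_i)$ is the symplectic cohomology of a convex tube in $T^*\bT^2$, which is a Novikov completion of the Laurent polynomial ring $H^*(\bT^2) \otimes \Lambda[\bZ^2]$. Pick $\tilde z_i \in \cF^0(U_i)$ whose spectral symbol is the primitive dual generator of $e_i$, and let $z_i \in \cF^0(P)$ be its image under the restriction map of Theorem \ref{mainThmB}. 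Compatibility of that theorem with Theorem \ref{tmInfSHn2} ensures that $\sigma(z_i)$ is precisely the distinguished generator of $P_{trop}(\bZ)$ associated with $e_i$.

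\emph{Step 2: normal form and part (1).} Any $\xi \in \cF^*(P)$ has spectral symbol of the form $h + \sum_x m_x \cdot x$ with $h \in H^*(P;R)$ and $m_x \in M_x^*$. I lift $h$ to $\tilde h \in \cF^*(P)$ using the hypothesized injection $H^*(\pi^{-1}(P)) \hookrightarrow H^*(\bT^2)$ together with undeformedness, which kills the potential quantum obstruction to such a lift. Each $m_x \cdot x$ I lift to $m_x \otimes z^{x}$, where $z^x$ is the unique monomial in the $z_i$ whose spectral symbol lies in the cone of $x$. Subtracting these lifts from $\xi$ strictly increases the $T$-valuation; iterating and invoking $T$-adic completeness of $\Lambda$ produces a convergent series proving part (1). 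Running the argument window-by-window handles the $\Lambda_{[a,b)}$ refinement.

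\emph{Step 3: parts (2)--(4).} Part (2) is immediate from the normal form: every element is a convergent $H^*(P;R) \otimes \Lambda$-linear combination of monomials in the $z_i$, and the Stanley--Reisner-type relations (products $z_i z_j$ for non-adjacent edges) vanish because the corresponding element of the partial monoid $P_{trop}(\bZ)$ is undefined. For part (3), the restriction $\cF_P \to \cF_Q$ sends the $z_i$ associated with a shared edge to the corresponding generator of $\cF_Q$; the remaining edges of $Q$ give rise, via the cotangent model on a collar of $\partial P$, to Novikov units which invert the generators of $\cF_P$ attached to the edges of $P$ that have been cut off, exhibiting $\cF_Q$ as the Novikov completion of a Laurent localization of $\cF_P$. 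For part (4), the identification $M_x^i = H^i(T_x;\bZ) \cong \bigwedge^i H^1(T_x;\bZ)$ displays the $i$-th graded piece as the module of $i$-polyvector fields on the mirror, with multiplication by $z_i$ realizing the natural action of coordinates.

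The main obstacle is Step 2: the filtered lift must be shown to assemble into a norm-preserving isomorphism of Banach modules, not merely an isomorphism of associated gradeds. Undeformedness is the key input here, because it guarantees that the $z_i$ satisfy the expected relations up to higher filtration and, equivalently, that $\cF^*(P)$ is Novikov-torsion free, so the iterative subtraction neither stalls nor creates a nontrivial kernel.
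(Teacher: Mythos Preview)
There are genuine gaps.  In Step 1 the restriction map runs in the wrong direction: for $U_i\subset P$ the functorial map of Theorems \ref{mainThmB} and \ref{mainThmC} is $\cF(P)\to\cF(U_i)$, so one cannot manufacture $z_i\in\cF(P)$ as the image of an element of $\cF(U_i)$.  In the paper the $z_i$ are chosen directly in $\cF(P)$ as normalized lifts of the generators $\xi_i$ under the spectral symbol $\sigma$.  For such lifts to exist one must already know that $SH^*_{M,0}(\pi^{-1}(P))$ is torsion free, and this is \emph{not} a direct consequence of undeformedness of the regular fibers, since $P$ may contain singular values.  The paper establishes torsion freedom separately via Lemma \ref{lmTorsCrit}: cover a neighborhood of $\partial P$ by admissible polygons $Q_i\subset B_{reg}$ and use that the infinitesimal restriction map $RH^*(\pi^{-1}(P))\to\bigoplus_i RH^*(\pi^{-1}(Q_i))$ is injective (Theorem \ref{tmInfSHn2}) together with Lemma \ref{lmTorsionFreeInjInf}.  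This bootstrapping step is the heart of the argument and is missing from your outline.

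The iterative subtraction in Step 2 has a second gap: it requires that the monomial $z_{i_1}^{j_1}z_{i_2}^{j_2}$ (for adjacent edges) has leading symbol exactly $\xi_{i_1}^{j_1}\xi_{i_2}^{j_2}$, not merely something in the same cone.  A priori this product could have strictly smaller norm, or symbol $\xi^x+a$ for some $a$ of lower action.  The paper handles this in Theorem \ref{tmAffinoidPolygon} by restricting to a polygon sharing the relevant corner, where undeformedness identifies the product, and then invokes the irrationality of the periods to rule out the extra term $a$.  Finally, your claim in Step 3 that $z_iz_j=0$ for non-adjacent edges is incorrect: such products need not vanish in $\cF_P$; they are merely not among the distinguished generators, and nothing in the argument requires them to be zero.
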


\begin{rem}
    Part  \ref{tmLocSHn2:pt1}  of Theorem \ref{tmLocSHn2} is equivalent to the assertion that relative $SH$ for convex Delzant polygons is torsion free over the Novikov ring.  As the proof of the remaining parts of Theorem \ref{tmLocSHn2} indicates, \emph{establishing torsion freedom is the key to the closed string reconstruction in general.} The main contribution of the present paper is a technique for deducing torsion freedom for neighborhoods of the singular fiber from the torsion freedom of the regular fibers. Below we discuss how the matter of applying this in dimension $2n>4$. 
\end{rem}

\begin{rem}
Theorem \ref{tmLocSHn2} implies in particular that $\cF^0(P)$ is an integral domain and that $
\cF^*(P)$ satisfies uniqueness of analytic continuation with respect to restriction to admissible polygons. Further properties, such as normality and Gorenstein will be discussed in forthcoming work. 
\end{rem}
\begin{rem}
Parts \ref{tmLocSHn2:pt1} and \ref{tmLocSHn2:pt2} of Theorem \ref{tmLocSHn2} can be extracted in the case of exact symplectic cluster manifolds from the work of \cite{Pascaleff}.
\end{rem}

\begin{theorem}\label{tmLocSHn3}
Suppose $\cF^0(P)$ is a smooth algebra and that $\pi$ admits a section over $P$. Then
\begin{enumerate}
\item  the dual cohomology class of the section maps to a generator $\sigma$ of $\cF^2$ over $\cF_0$ corresponding to a nowhere vanishing bi vector field. 
\item
The injection of  $\cF^*(P)$  into the polyvector fields over $\cF^0(P)$ is an isomorphism
\item 
The BV operator on $\cF^*(P)$  is the divergence operator associated with the isomorphism $\cF^0\simeq\cF^2$ induced by the bivector field $\sigma$. 
\end{enumerate}
\end{theorem}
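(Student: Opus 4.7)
The plan is to combine the injection \(\iota\colon \cF^*(P) \hookrightarrow \mathrm{PV}^*(\cF^0(P))\) into polyvector fields provided by Theorem~\ref{tmLocSHn2}(4) with the explicit \(\cF^0\)-module description of Theorem~\ref{tmLocSHn2}(1). I will produce \(\sigma\) geometrically from the section, promote \(\iota\) to a graded isomorphism by a reflexivity/rank argument, and then identify \(\Delta\) with \(\mathrm{div}_\sigma\) via a torsor argument on BV generators.

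For part (1), the section \(s\colon P\to K\) is a \(2\)-dimensional submanifold of the \(4\)-manifold \(K=\pi^{-1}(P)\) meeting each regular torus fibre transversally in a single point, so its Poincar\'e dual \([s]^{\vee}\in H^2(K;R)\) restricts to the fundamental class of each regular fibre. Using the natural embedding of ordinary cohomology into \(\cF^*(P)\) afforded by the undeformed (hence torsion-free) hypothesis, let \(\sigma\in \cF^2(P)\) be the image of \([s]^{\vee}\). Under the leading-symbol identification \(SH^*_{M,t^{+}}(K)\cong\bigoplus_x M^*_x\) of Theorem~\ref{tmInfSHn2}, the symbol of \(\sigma\) in each summand \(M^*_x=H^*(T_x;R)\) is the top generator, by a geometric intersection count of the section with the corresponding Reeb component. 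Since \(\cF^0(P)\) is a smooth affinoid algebra of dimension \(2\), \(\Lambda^2\mathrm{Der}(\cF^0(P))\) is a line bundle; the non-vanishing of \(\iota(\sigma)\) at every maximal ideal of \(\cF^0(P)\) (a direct translation of the leading-symbol statement) shows that \(\iota(\sigma)\) is a nowhere vanishing bivector and that \(\cF^2 = \cF^0\cdot \sigma\).

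For part (2), \(\iota\) is onto in degree \(0\) tautologically and in degree \(2\) by part (1). In degree \(1\), both \(\cF^1\) and \(\mathrm{Der}(\cF^0)\) are reflexive \(\cF^0\)-modules of rank \(2\), the rank of \(\cF^1\) coming from Theorem~\ref{tmLocSHn2}(1) and reflexivity from the Poincar\'e pairing \(\cF^1\otimes_{\cF^0}\cF^1\to\cF^2\simeq\cF^0\) afforded by \(\sigma\); over the smooth \(2\)-dimensional \(\cF^0\) these are therefore locally free of rank~\(2\). The map \(\iota^1\) is compatible with wedge products, so its determinant agrees up to a unit with \(\iota^2\), which is an isomorphism of line bundles by part (1). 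A morphism of rank-\(2\) vector bundles inducing an isomorphism on determinants is itself an isomorphism, so \(\iota^1\) is onto and \(\iota\) is an isomorphism of graded algebras.

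For part (3), both \(\Delta\) and \(\mathrm{div}_\sigma\) are second-order operators of degree \(-1\) squaring to zero, annihilating \(\cF^0\), and BV-generating the Gerstenhaber bracket, which agrees on the two sides by part (2). Their difference is a first-order derivation of degree \(-1\) of the algebra, i.e.\ contraction with a \(1\)-form \(\alpha\in\Omega^1(\mathrm{Spec}\,\cF^0)\). Evaluating on \(\sigma\): one has \(\mathrm{div}_\sigma(\sigma)=0\) tautologically, and \(\Delta(\sigma)=0\) because \(\sigma\) is lifted from \(H^*(K;R)\) via constant-loop cycles which are \(S^1\)-invariant on the Floer chain level. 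As contraction against a nowhere vanishing top polyvector identifies \(\Omega^1\) with \(\mathrm{Der}(\cF^0)\), the vanishing \(\iota_\alpha(\sigma)=0\) forces \(\alpha=0\), giving \(\Delta=\mathrm{div}_\sigma\). I anticipate the main obstacle to be part (1): verifying that \([s]^{\vee}\) has the top generator of \(H^2(T_x;R)\) as leading symbol in every summand \(M^*_x\) requires tracking the geometric intersection of the section with each Morse-Bott Reeb component through the cascade model of \(SH^*_{uw}(\gamma)\) in Theorem~\ref{tmMorseBottCascades}, and the identifications of leading symbols used to prove Theorem~\ref{tmLocSHn2}. A secondary subtlety is justifying \(\Delta(\sigma)=0\) without circularity, which I would handle by the standard PSS/constant-loop \(S^1\)-invariance argument.
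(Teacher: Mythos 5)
Your proposal correctly identifies the source of the element $\sigma$ and the overall structure (nonvanishing of $\sigma$, upgrading injectivity to isomorphism, identification of $\Delta$ with $\mathrm{div}_\sigma$), and parts (2) and (3) take genuinely different routes from the paper's Lemma \ref{lmPolyvectPolygon}. However, there is a real gap in part (1), and the proposal as written would not go through.

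The key problem is the claim that nonvanishing of $\iota(\sigma)$ at every maximal ideal of $\cF^0(P)$ is ``a direct translation of the leading-symbol statement.'' This is not true. The leading symbol lives in $RH^*(K)\cong\bigoplus_{x\in P_{trop}(\bZ)} M_x^*$, a graded module encoding the tropicalization of the affinoid. Knowing that the symbol of $\sigma$ is the top class in each $M_x^*$ constrains $\sigma$ along a discrete ``tropical skeleton,'' but says nothing directly about vanishing of the bivector $\iota(\sigma)$ at an arbitrary point of $mSpec(\cF^0(P))$ -- lower-order Novikov corrections to $\sigma$ could in principle kill it at interior points whose tropicalization is not one of the distinguished $x$. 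The paper avoids this by first establishing nonvanishing on the Laurent domains $\cF^0(Q_i)$ near the edges (by matching $\sigma$ against the explicitly known $\lambda_{loc}$ over regular $Q$, characterized by leading symbol \emph{and} $\Delta$-closedness), and then invoking smoothness together with the rigid-analytic fact (\cite[Prop.~1.6]{Lutkebohmert}) that a codimension-one closed analytic subset of an affinoid must meet every Laurent domain in a given admissible covering. That last step is essential and cannot be replaced by a leading-symbol argument.

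Two secondary points. First, in part (2) you declare $\cF^1$ reflexive because of the pairing $\cF^1\otimes\cF^1\to\cF^2\simeq\cF^0$, but you never show this pairing is perfect, which is what you actually need; the paper instead argues surjectivity of $v\mapsto\partial_v$ by showing derivations are generated by divergences of polyvector fields (equivalently, $1$-forms by exact forms), and then combines with the injectivity from Lemma \ref{lmInjPoly}. Second, you take $\sigma$ to be the image of a class in $H^*(K;R)$; the paper is careful that for general ambient $M$ there is no natural map $H^*(K)\to SH^*_M(K)$, and instead uses the Poincar\'e dual of the section in $H^2(M;\bZ)$ mapped through $H^*(M)\to SH^*_M(K)$, comparing against $\lambda_{loc}\in\cF^2_{loc}(Q)$ defined via the completion where the cohomology map does exist. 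Your parts (2) and (3) arguments are plausible in outline once nonvanishing of $\sigma$ is secured, but the nonvanishing is where the real content (and the rigid-analytic input) lies.
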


The question of smoothness for symplectic cluster manifolds is discussed in forthcoming work. We point out that that this holds for neighborhoods of the nodal singularity in a symplectic cluster manifold. A simple example where $\cF^0(P)$ is not smooth occurs when $P$ contains a node with multiplicity greater than $1$. Both assertions are established in  \cite{GromanVarolgunes2022}. In forthcoming work it will be shown that this the only way in which non-smoothness occurs for symplectic cluster manifolds.

\subsubsection{The case $n>2$}
The analogues of Theorem \ref{tmInfSHn2}  also holds in higher dimensions for an appropriately defined notion of convex polytope. A detailed discussion is taken up in forthcoming work. The main new phenomenon is that singularities meet the boundary. See Figure \ref{figPrism}.  From the technical standpoint, the results of the present paper are largely sufficient for establishing the analogue  of Theorem \ref{tmInfSHn2} in all dimensions.  In the body of the paper we only discuss the case $n=2$ in detail.  We sketch the higher dimensional version of Theorem  \ref{tmInfSHn2} for one representative three dimensional example in section \ref{SecPosSing}.

Analogues of Theorem \ref{tmLocSHn2} and Theorem \ref{tmLocSHn3} also hold mutatis mutandis in all dimensions.  However,  to prove the torsion freedom we need to show that undeformedness of the regular fibers implies torsion freedom of the faces.  This done in \cite{GromanToAppear} by an inductive argument using the locality spectral sequence.

\subsection{Local and infinitesimal Floer homology of indicator functions}
We now turn to explain the technical ingredients going into the proof of the main Theorems. 

The invariant $SH^*_M(K)$ is the Floer cohomology of the indicator function 
\begin{equation}\label{eqHK}
H_K(x):=\begin{cases}
0&\quad x\in K,\\
\infty&\quad x\not\in K.\notag
\end{cases}
\end{equation}
The underlying Floer complex is defined up to homotopy equivalence as a colimit 
$$
CF^*(H_K)=hoColim_{i}CF^*(H_i,J_i).
$$
over any monotone sequence of non-degenerate Hamiltonians converging to $H_K$ on compact sets. The main Theorems should be seen as extending to the case of generalized Hamiltonians the truism that the Floer complex of a smooth non-degenerate Hamiltonian is generated by its 1-periodic orbits. They do this by relating Floer cohomology of $H_K$ in small action windows with local Floer cohomology groups $SH^*_{uw}(\gamma)$ associated with the Reeb orbits.

To explain what goes in to the proof of this relation we first discuss the related case of smooth Hamiltonians but possibly degenerate Hamiltonians. The Floer complex $CF^*(H)$ is again defined up to homotopy equivalence as a colimit over a monotone sequence of non-degenerate Hamiltonians $H_i$ approximating $H$ from below
$$
CF^*(H)=hoColim_{i}CF^*(H_i,J_i).
$$
If the 1-periodic orbits of $H$ are isolated, a slight strengthening of a result by \cite{Hein}, referred to henceforth as Hein's Lemma, shows that if all the $J_i$ are close to some given almost complex structure then for $\epsilon$ small enough the truncated complex $CF^*_{[a,a+\epsilon)}(H)$\footnote{We caution the reader that \cite{Hein} considers the Floer complex generated by capped orbits and truncates with respect to the action filtration, whereas we consider Floer complexes generated over the Novikov ring by (uncapped, non-contractible even,) periodic orbits and consider truncations with respect to the adic filtration.} decomposes into direct summands $\oplus CF^*_{[a,a+\epsilon)}(H;U_{\gamma})$ consisting  of periodic orbits and Floer solutions that are contained in an isolating neighborhood $U_{\gamma}$ of the component $\gamma$. 

On the other hand, one considers local Floer cohomology \cite{Pozniak, Ginzburg} which associates a chain complex over the ground ring $R$  to each component by slightly perturbing $H$ and considering only local Floer trajectories. It can be defined using the same Floer data as the one defining $CF^*(H)$ provided all the $H_i$ are sufficiently $C^2$ close to $H$. We use the notation $CF^*_{uw}(\gamma)$ for the corresponding complex. The subscript indicates that this complex is unweighted by contrast to  $CF^*_{[a,a+\epsilon)}(H;U_{\gamma})$ which is defined over the $\epsilon$ truncated Novikov ring. The relation between these two versions is given in Lemma \ref{lmIsotropicComp} which produces a natural isomorphism
\begin{equation}\label{eqLocInf}
CF^*_{[a,a+\epsilon)}(H;U_{\gamma})=CF^*_{uw}(\gamma)\otimes \Lambda_{[a, a+\epsilon)}.
\end{equation}
This immediately leads to a spectral sequence similar to the one in Theorem \ref{mainThmA} from the sum of the unweighted local Floer cohomologies of the periodic components of $H$ to the $HF^*(H)$ over the Novikov ring. Moreover, given an isolated homotopy, this allows one to understand the leading term of the continuation map. Namely, an isotopy from $\gamma^0$ to $\gamma^1$ induces a homotopy equivalence from $CF^*_{uw}(\gamma^0)\to CF^*_{uw}(\gamma^1)$. With this identification, the map  $CF^*_{[a,a+\epsilon)}(H;U_{\gamma^0})\to CF^*_{[a,a+\epsilon)}(H;U_{\gamma^1})$, defined for monotone homotopies, is given by rescaling by appropriate action differences.

We would like to imitate this type of reasoning to the case of the function $H_K$. Before proceeding, we wish to clarify that in this case the discussion is non-trivial even of all the Reeb components are transversally non-degenerate. 

 It turns out that it is advantageous to first do this for non-smooth  but continuous Hamiltonians $H_{K,I}$. $I$ here denotes a choice of Liouville coordinate and slope, and $H_{K,I}$ is a strictly $S$ shaped Hamiltonian. See figure \ref{figSShaped} and Definition \ref{dfSShaped}.
 We show below that if $I$ is such that the slope is not in the period spectrum of $\partial K$ then an equation similar to \eqref{eqLocInf} holds for the 1-periodic components of $H_{K,I}$ for $\epsilon=\epsilon(I)$ small enough depending on $I$. The proof of this claim involves showing that Hein's lemma, in its strengthened version Lemma \ref{lmEpsilonSeparation}, applies for a monotone sequence of smooth Hamiltonians approximating  $H_{K,I}$ from below. We point out that due to the non-smoothness of  $H_{K,I}$ this claim is non-trivial. Indeed, we don't have uniform estimates for the derivatives of such a sequence in any neighborhood of the periodic components we care about.  

Note that the set of $1$-periodic orbits of $H_{K,I}$ consists of inner orbits associated with the  bottom of the graph of $H_{K,I}$ and outer orbits associated with the top. Thus for small action windows we can write 
\begin{equation}
CF^*_{[a,a+\epsilon(I))}(H_{K,I})=In^*_{[a,a+\epsilon(I))}(H_{K,I})\oplus Out_{[a,a+\epsilon(I))}(H_{K,I}),
\end{equation}
where we sum over inner and outer generators respectively. Unfortunately, this splitting is only valid in action windows of size $\epsilon(I)$ and we have no control over $\epsilon(I)$. Crucially, and this is where we invoke the fact that $\partial K$ is of contact type, we prove there is an $\hbar>0$, independent of $I$, so that we have an exact sequence 
\begin{equation}
0\to  Out_{[a,a+\hbar)}(H_{K,I})\to CF^*_{[a,a+\hbar)}(H_{K,I})\to In^*_{[a,a+\hbar)}(H_{K,I})\to 0. 
\end{equation} 
More precisely, we show that in small action windows, Floer trajectories connecting Reeb components localize to a symplectization neighborhood in which case this property is established in \cite{CieliebakOancea}. We emphasize again that obtaining a uniform $\hbar$ giving this is a non-trivial matter as  $H_{K,I}$ satisfies no uniform estimate in $I$ and is also highly degenerate. A crucial consequence of this locality is  that the sub-complex $Out_{[a,a+\delta)}(H_{K,I})$ maps to $0$ under the continuation map to  $CF^*_{[a,a+\delta)}(H_{K,I'})$ for $I'$ big enough.  

To reach a conclusion concerning $H_K$ we first observe that for any truncation window we have 
\begin{equation}
CF_{[a,a+\hbar)}^*(H_K)=hoColim_ICF^*_{[a,a+\hbar)}(H_{K,I})=hoColim_IIn^*_{[a,a+\hbar)}(H_{K,I}).
\end{equation}
To utilize this, let us first make the gappedness assumption, and WLOG let the gap $\delta$ be less than $\hbar$. The proof of Theorem \ref{mainThmA} follows by establishing a commutative diagram of isomorphisms. 
\begin{equation}
\xymatrix{In^*_{[0,\delta)}(\gamma;H_{K,I})\ar[d]\ar[r]& CF^*_{uw}(H_K,\gamma)\otimes \Lambda_{[0,\delta)}\ar[d]\\
In^*_{[0,\delta)}(\gamma;H_{K,I'})\ar[r]& CF^*_{uw}(H_K,\gamma)\otimes \Lambda_{[0,\delta)}}.
\end{equation}
We now remove the gappedness assumption. Even in this case, the localization property gives us  an action filtration of $CF^*_{[a,a+\hbar)}(H_{K,I})$ which is respected by the continuation maps $I\to I'$. Below each action level, we have a finite number of generators, and therefore a gap which is independent of $I$. From this and some general nonsense about commuting limits and colimits we obtain Theorem \ref{mainThmC}.

\subsubsection{Removing the exactness assumption?}
It is clear from the above outline that the contact type assumption plays a crucial role in the proof. To be more precise, the argument extending Hein's lemma to Hamiltonians such as $H_{K,I}$ is completely general and is applicable to stable Hamiltonian hyper-surfaces. However, the argument for localization to a symplectization neighborhood in a uniform window uses the action filtration in an essential way.  In forthcoming work we produce a homological perturbation type algorithm which constructs a chan level model for $CF^*(H_K)$ whose underlying module is a direct sum of one Morse-Bott complex for each Reeb component. For this algorithm the exactness assumption is not crucial, and we expect it can be used to establish a version of Theorem \ref{mainThmC} whenever $\partial K$ is stable Hamiltonian. This concern is not just about mindless generality. Even if we limit ourselves to the setting of SYZ mirror symmetry, imposing a contact type assumption on the boundary is too restrictive. For example, in a forthcoming work we show that for symplectic cluster manifolds a smoothing of  the singularities of $mSpec(SH)$ is achieved by deforming the symplectic form so that we no longer have exactness near the boundary.

We part with a fundamental question raised by the above discussion. Let $M$ be a geometrically bounded symplectic manifold and let $H:M\to\bR$ be a proper Hamiltonian which is bounded from below.  For concreteness, consider $M$ the twisted cotangent bundle of a smooth compact manifold and $H$ a mechanical Hamiltonian. We don't necessarily know that $H$ satisfies $C^0$ estimates for the direct definition of Hamiltonian Floer cohomology. However, it is shown in \cite{Groman} that one can define it using a sequence of Lipschitz Hamiltonians which converge to $H$ on compact sets.  Moreover, the definition is independent of the choice of approximating sequence.
\begin{pblm}
 Does the analogue of Theorem \ref{mainThmA} hold? Namely,  suppose $H$ has isolated periodic components,  is $HF^*_{0^+}(H)$ the direct sum of components associated with the periodic components of $H$. 
 \end{pblm}
 Note in the case of the magnetic cotangent bundle, the approximating scheme involves drastically slowing down the Hamiltonian which introduces many periodic orbit not coming from $H$. The problem is thus akin to the problem mentioned in Remark \ref{remOutsideGenerators}.
%\subsection{Outline}
%In Section \S\ref{SecConventions} we set our conventions for Hamiltonian Floer theory.  In Section \S\ref{SecInfFloer} we develop the relation \eqref{eqLocInf} for smooth degenerate Hamiltonians. In Section \S\ref{secInfFH} we study unweighted local Floer cohomology and Hein's lemma for non-smooth Hamiltonians of the form $H_{K,I}$. In Section \S\ref{SecTruncSH} we develop a relation between truncated and unweighted Floer cohomology for the generalized function $H_K$.In Section \S\ref{SecProofs} we prove Theorems \ref{mainThmA}, \ref{mainThmB}, \ref{mainThmC}, and a version for domains with corners. In Section \S\ref{SecSYZ} we apply the tools developed so far to study 

\subsection{Acknowledgements}
The author would like to thank Mohammed Abouzaid for helpful comments on Morse Bott Floer theory and Umut Varolgunes for useful comments on an early draft and for long term collaboration on the topic of relative $SH$ and SYZ mirror symmetry.  

The work was supported by the ISF (grant no. 2445/20).
 
\section{Conventions for Floer cohomology}\label{SecConventions}
Fix a ground field $R$ for the remainder of the discussion. 

We denote by $\Lambda$ the Novikov field 
\begin{equation}\label{eqNovikovField}
\Lambda:=\left\{\sum a_iT^{\lambda_i}|\lambda_i\in\mathbb{R},a_i\in R,\lim_{i\to\infty}\lambda_i=\infty\right\}.
\end{equation}

We consider $\Lambda$ as a normed vector space (over itself) with norm given by $|x|=e^{-\val}(x)$ where for $x=\sum a_iT^{\lambda_i}$ we take $\val(x)=\inf\{\lambda_i|a_i\neq 0\}$. 
Denote by $\Lambda_a$ the lattice consisting of elements of norm $<e^{a}$. Equivalently,
\[
\Lambda_{a}:=\left\{\sum a_iT^{\lambda_i}|\lambda_i>-b,a_i\in R,\lim_{i\to\infty}\lambda_i=\infty\right\}.
\]
For an arbitrary interval $[a,b)$ we consider the module $\Lambda_{[a,b)}:=\Lambda_b/\Lambda_a$.

We assume the reader is familiar with the basic definitions of Floer cohomology. The purpose of the remainder of the sections is to set conventions. 

Let $H:\bR/\bZ\times M\to\bR$ be a smooth function and let $J$ be an $\omega$ compatible periodically time dependent almost complex structure. We assume that $(H,J)$ is regular for the definition of Floer cohomology. We write $X_H$ for the unique vector field satisfying $\omega(X_H,\cdot)=dH$. Denote by $\Per(H)$ the set of periodic orbits of $H$.  Given a pair of elements $\gamma_1,\gamma_2\in\Per(H)$ a \emph{Floer trajectory} from $\gamma_1$ to $\gamma_2$ is a solution $u$ to Floer's equation
\begin{equation}
\partial_su+J(\partial_t-X_H)=0,
\end{equation}
such that $\lim_{s\to-\infty}u(s,t)=\gamma_1(t)$ and $\lim_{s\to\infty}u(s,t)=\gamma_2(t)$. We define the \emph{topological energy} of $u$ by the formula
\begin{equation}
E_{top}(u):=\int u^*\omega+\int_{\bR/\bZ}\left(H_t(\gamma_2(t))-H_t(\gamma_1(t))\right)dt.
\end{equation}
It is a fact that
\begin{equation}\label{eqGeoTopEn1}
E_{top}(u)=\int\|\partial_su\|^2ds\geq 0.
\end{equation}
The quantity on the right hand side is referred to as the \emph{geometric energy} $E_{geo}(u)$. 

Denote by $$\overline{\cM}_1(\gamma_1,\gamma_2,E)\subset \overline{\cM}(\gamma_1,\gamma_2,E)$$ the subset consisting of solutions of index difference $1$.

We  proceed to define the Floer complex  $CF^*(H,J)$. We take the set of periodic orbits $\Per(H)$ to be  graded by a group $R$ with a map to $\bZ/2\bZ$. The reader who wish to do so may just take $R=\bZ/2\bZ$. For simplicity assume first that $\Per(H)$ is a finite set. Then as an $R$-graded $\Lambda$-module 
\begin{equation}\label{eqFloerComplex}
CF^*(H,J)=\bigoplus_{k\in R} CF^k(H,J),
\end{equation}
where 
\begin{equation}
CF^k(H,J):=\oplus_{\gamma\in Per^k(H)}\Lambda \langle\gamma\rangle
\end{equation}
and $Per^k(H)$ is the set of $1$-periodic orbits of $H$ with degree $k$. We consider $CF^*(H,J)$ as a finite dimension non-archimedean normed space.

For each periodic orbit $\gamma$ we denote by
$o_{\gamma}$ is the orientation line associated with $\gamma$. Each $u\in \cM(\gamma_1,\gamma_2)$ for $\gamma_1,\gamma_2$ with index difference $1$ induces an isomorphism
\[
d_u:o_{\gamma_1}\to o_{\gamma_2}.
\]
The differential is defined by
\begin{equation}
d|_{\Lambda_{\geq 0}\langle\gamma_1\rangle}=\sum_{\gamma_2:i_{CZ}(\gamma_2)-i_{CZ}(\gamma_1)=1}\sum_{u\in \cM(\gamma_1,\gamma_2)}T^{E_{top}(u)}d_u.
\end{equation}

For a pair $F_1=(H_1,J_1), F_2=(H_2,J_2)$, a \emph{monotone homotopy from $F_1$ to $F_2$} is a family $(H^s,J^s)$ which coincides with $(H_1,J_1)$ for $s\ll0$, with $(H_2,J_2)$ for $s\gg0$, and satisfies
\begin{equation}
\partial_sH^s\geq 0.
\end{equation}
Evidently, a monotone homotopy exists if only if $H_{1,t}(x)\leq H_{2,t}(x)$ for all $t\in S^1$ and $x\in M$. Solutions to the Floer equation corresponding to a monotone datum satisfy the variant of estimate \eqref{eqGeoTopEn1}
\begin{equation}\label{eqGeoTopEn2}
E_{top}(u)\geq\int\|\partial_su\|^2ds\geq 0.
\end{equation}
There is an induced chain map
\begin{equation}\label{eqContinuationMap}
f_{H^s,J^s}:CF^*(H_1,J_1)\to CF^*(H_2,J_2).
\end{equation}
These are again defined by counting appropriate Floer solutions weighted by their topological energy. Moreover, these maps are defined over the Novikov ring.

\subsection{The Floer complex for general Hamiltonians}
The naive definition of the Floer complex requires considering smooth non-degenerate Hamiltonians, and a choice of $J$ (and, in the fully general case, some scheme for virtual counts). The definition extends to more general Hamiltonians $H$ by considering a monotone sequence $(H_i,J_i)$ of generic Floer data such that $H_i$ converges pointwise to $H$ and defining 
$$
CF^*(H)=\widehat{hoColim}_iCF^*(H_i,J_i).
$$
Here  $hoColim$ denotes some unspecified model for the homotopy colimit, a popular choice being the telescope construction. In Appendix \ref{SecAcceleration} we recall the construction for indicator functions. The widehat denotes completion with respect to the adic norm. The complex $CF^*(H)$ is well defined up to contractible choice. 

The Hamiltonian Floer cohomology $HF^*(H)$ is defined as the homology of $CF^*(H)$. This is a vector space over the Novikov field. Note that $CF^*(H)$, being freely generated over $\Lambda$, is endowed with a natural norm by assigning to the generators the norm $1$. The differential respects this norm and we can thus define subcomplexes $CF^*_a(H)\subset CF^*(H)$ consisting of elements of norm $<e^a$. For a half interval $[a,b)$ the truncated Floer cohomology $HF^*_{[a,b)}(H)$ to be the homology of the sub-quotient $CF^*_b(H)/CF^*_a(H)$. It is shown in \cite{Groman}
that for any proper Hamiltonian $H$ we have  
\begin{equation}\label{eqTruncHom}
HF^*_{[a,b)}(H):=\varinjlim_{H'}HF^*_{[a,b)}(H'), 
\end{equation}
where the colimit is taken over all dissipative non-degenerate Hamiltonians. 

\begin{rem}
There is a slight discrepancy in the notation $HF^*_{[a,b)}(H)$ in comparison to \cite{Groman}. Here the filtration is with respect to the norm whereas there the filtration is with respect to the action filtration which is a monotone function of the norm. 
\end{rem}

\begin{df}
For a compact set $K\subset M$ the \emph{relative symplectic cohomology} of $K\subset M$ is the Hamiltionian Floer cohomology of the indicator function
\begin{equation}
    SH^*_M(K)=HF^*(H_K),
\end{equation}
where 
\begin{equation}
H_K(x):=\begin{cases}
0&\quad x\in K,\\
\infty&\quad x\not\in K.
\end{cases}
\end{equation}
Similarly, $H^*_{M,[a,b)}(K)$ denotes the corresponding truncated homology. 
\end{df}

%Within the text below we shall have occasion to refer to the Floer complex $CF^*(H)$ for $H$ which is not smooth, or even, as in the case of indicator functions, discontinuous.  

\section{Floer cohomology of $1$-periodic orbits of autonomous Hamiltonians.}\label{SecInfFloer}
As a warm up for the main results which concern Floer cohomology for generalized functions we discuss similar results for smooth Hamiltonians. Our result are a slight strengthening and a repackaging of results that have been established in the literature. Nevertheless we urge the reader to pay attention to Lemmas \ref{lmEpsilonSeparation} and \ref{lmIsotropicComp}, and, at least in the latter case, the proof thereof which will be re-used in the later sections.  
\subsection{Floer cohomology in small truncation windows}
We have seen that given an arbitrary proper Hamiltonian $H$, with no assumptions regarding non-degeneracy or dissipativity, we can define the truncated Hamiltonian Floer cohomology $HF^*_{[a,b)}(H)$ via equation \eqref{eqTruncHom} or the chain level version thereof.

The following lemma expresses the sense in which $HF^*_{[a,b)}(H)$, for small enough intervals, is about the dynamics of $H$, not a sequence approximating it. For simplicity of presentation, assume in the following $M$ is closed. The adjustment for $M$ geometrically bounded is completely straightforward.

\begin{lm}\label{lmEpsilonSeparation}
Let $H$ be an autonomous Hamiltonian on $M$. Let $\gamma$ be an isolated family of $1$-periodic points of $H$. That is, there is an open neighborhood of $\gamma$ with smooth boundary containing no other $1$-periodic points besides those in $\gamma$. Fix a bounded ball $\cB$ in the space of $\omega$-compatible almost complex structures considered with $C^k$ topology for some large fixed $k$.   For any small enough isolating neighborhood $V$ of $\gamma$ there is an epsilon $>0$ and a neighborhood $\cN$ of $H$ in $C^1$ considered as a function on $S^1\times M$ such that the following hold.
\begin{enumerate}
\item For any $H'\in\cN$ and any $J\in \cB$ in the Floer complex $CF^*_{[a,a+\epsilon)}(H',J)$ splits as
\begin{equation}
CF^*_{[a,a+\epsilon)}(H',J)=CF^*_{[a,a+\epsilon)}(V;H',J)\oplus CF^*_{[a,a+\epsilon)}(M\setminus V;H',J) 
\end{equation}
corresponding to $1$-periodic points in $V$ and in $M\setminus V$ respectively. 
\item 
For any pair $H_1\leq H_2\in \cN$ and any pair $J_1,J_2\in\cB$ the continuation map $CF^*(H_1,J_1)\to CF^*(H_2,J_2)$ for any monotone path respects the splitting of the previous item. 
\end{enumerate}
Accordingly we denote by $HF^*_{[a,a+\epsilon)}(V;H,\cB)$ the corresponding component of $HF^*_{[a,a+\epsilon)}(H)$ defined as the colimit over a monotone sequence of non-degenerate Hamiltonians converging to $H_0$. 
\end{lm}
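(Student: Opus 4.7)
The plan is to reduce both parts of the lemma to a uniform lower bound on the topological energy of any Floer trajectory whose two asymptotes lie on different ``sides'' of the neighborhood $V$, and then exploit the Novikov weighting $T^{E_{top}(u)}$ appearing in the differential and continuation maps.

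First I would set up the geometry. Given $V$ with smooth boundary, choose a strictly smaller open $V' \Subset V$ with smooth boundary still containing $\gamma$, so that $H$ has no $1$-periodic orbits in the closed annular region $A := \overline{V \setminus V'}$. Since $A$ is compact and the zero set of the section $\gamma \mapsto \dot\gamma - X_H(\gamma)$ of the loop space depends upper semicontinuously on $H$ in the $C^1$ topology on $S^1 \times M$, shrinking a $C^1$-neighborhood $\cN$ of $H$ ensures no $H' \in \cN$ has a $1$-periodic orbit in $A$. Consequently every $1$-periodic orbit of any $H' \in \cN$ lies either in $V'$ or in $M \setminus V$, and the direct-sum decomposition of $CF^*(H',J)$ as a $\Lambda$-module is meaningful.

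The core step is to establish $c > 0$, depending on $V, V', \cN, \cB$ but not on individual choices, such that every Floer trajectory $u$ for $(H', J)$ with $H' \in \cN$, $J \in \cB$, whose image meets both $V'$ and $M \setminus V$, satisfies $E_{geo}(u) \geq c$. This is a monotonicity statement: the restriction of $u$ to the preimage of $A$ solves a perturbed Cauchy--Riemann equation whose inhomogeneity is bounded in $C^0$ by $\|X_{H'}\|_{C^0}$, uniformly controlled on $\cN$, and the standard monotonicity inequality for perturbed pseudoholomorphic maps then gives a uniform $\omega$-area lower bound on $u^{-1}(A)$. Perhaps more cleanly, one argues by contradiction via Gromov--Floer compactness for a sequence $(H'_n, J_n) \to (H, J_\infty)$ and Floer trajectories $u_n$ with $E_{top}(u_n) \to 0$: since the data are autonomous, $E_{top} = E_{geo} = \int \|\partial_s u\|^2$, so the broken limit would consist of constant cylinders, producing a chain of $1$-periodic orbits of $H$ connecting $V'$ to $M \setminus V$, contradicting the absence of such orbits in $A$.

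With this uniform bound I fix $\epsilon < c$. For part (1), since $H'$ is autonomous, $E_{top}(u) = E_{geo}(u) \geq c > \epsilon$ for any off-diagonal Floer trajectory, so the corresponding matrix entries of the differential carry $T$-valuation above $\epsilon$ and vanish in $CF^*_{[a, a+\epsilon)}$, giving the splitting. For part (2), for a monotone homotopy with $(H^s, J^s)$ uniformly $C^1$- and $C^k$-bounded, \eqref{eqGeoTopEn2} yields $E_{top} \geq E_{geo}$ and the same monotonicity argument adapts to the inhomogeneous Floer equation, giving $E_{top} \geq c$ for continuation solutions between the two sides; since different choices of monotone homotopy produce chain-homotopic continuation maps, it suffices to verify this for a bounded class such as linear interpolations. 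The main delicate point is uniformity of the monotonicity constant across $\cN \times \cB$, which reduces to the classical observation that the constants depend only on $C^1$-bounds on the Hamiltonian and $C^k$-bounds on the almost complex structure, both uniform by hypothesis.
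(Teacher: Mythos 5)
Your proposal is correct but takes a genuinely different route from the paper. The paper's argument is a quantitative one modeled on Hein's lemma: it nests neighborhoods $U_0 \subset U_1 \subset U_2 \subset U_3 = V$, derives a loop-wise lower bound on $\int_{S^1}\|\partial_s u - X_{H'}\|^2\,dt$ from the energy--distance inequality (Proposition \ref{PrpDistEn}) by distinguishing the case where $u_s$ stays in $U_3\setminus U_0$ from the case where it crosses a shell, and then combines this with the monotonicity inequality (Lemma \ref{lmMonEst++}) applied to the graph in $\Sigma\times M$ to produce an explicit energy gap whose constants depend only on bounds over $\cN\times\cB$. Your primary argument is a soft Gromov--Floer compactness argument by contradiction: take $\epsilon_n\to 0$, $H'_n\to H$ in $C^1$, $J_n$ in the precompact ball $\cB$, translate trajectories so $u_n(0,0)$ lands in the annulus $A$, pass to a limit with zero geometric energy, and conclude that there would be a $1$-periodic orbit of $H$ in $A$. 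This is correct, and it is shorter. Your secondary ``monotonicity for perturbed pseudoholomorphic maps'' sketch is less convincing as stated, since monotonicity for the graph controls the $\tilde\omega$-area, which includes the domain area and not only $E^{geo}(u)$; one needs an additional mechanism (in the paper, the energy--distance inequality, which exploits that $X_H$ has a Lipschitz flow) to rule out a trajectory that crosses $A$ mostly by drifting along the Hamiltonian flow in $t$ and therefore contributes negligible $s$-derivative.

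The substantive difference in hindsight: the paper deliberately opts for a quantitative argument because the exact same scheme is reused verbatim in Proposition \ref{prpActionWindowComponent} for the strictly S-shaped Hamiltonians $H_{K,I,\eta}$. There the approximating sequence does \emph{not} converge to a smooth Hamiltonian in $C^1$ (the limit $H_{K,I}$ is only Lipschitz), so the compactness-by-contradiction argument is unavailable, and the paper instead needs constants that depend only on a $C^0$ bound for $X_H$. The remark following the lemma --- ``The proof requires a slight adjustment of Hein's proof to estimate the energy of a path'' --- signals exactly this design choice. So your argument proves the lemma as stated, but the paper's more laborious quantitative route is the one that generalizes; if you only intend to use the lemma for smooth autonomous $H$ and do not need the robust constants elsewhere, the compactness argument is a legitimate simplification.

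Two smaller points. Part (2) is stated ``for any monotone path,'' but neither your argument nor the paper's handles genuinely arbitrary monotone homotopies, since the estimates require a bound on $\partial_s H_s$; you correctly note that chain-homotopy invariance of the continuation map lets you reduce to a tame class such as linear interpolations, and the paper implements the same reduction by building the bound $|\partial_s H_s|\le 1$ into the definition of its Floer diagram $\cH(\cN,\cB)$. And your observation that the set of $1$-periodic orbits depends upper semicontinuously on the Hamiltonian in $C^1$ is the right justification for the first step, but note you should fix the smaller neighborhood $V'$ so that its closure is contained in $V$ and then contract $\cN$ so no orbits land in $\overline{V}\setminus V'$; the lemma's conclusion decomposes according to orbits in $V$ versus $M\setminus V$, so you want every orbit of $H'\in\cN$ to lie in $V'$ or outside $\overline{V}$, which is exactly what you arrange.
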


\begin{rem}
If $V$ is a sub-level set of $H$ this is an immediate consequence of a Lemma by \cite{Hein}.  However we wish to separate a small neighborhood of a family of $1$-periodic orbits, not a sublevel set.  The proof requires a slight adjustment of Hein's proof to estimate the energy of a path. 
\end{rem}

\begin{proof}
Fix a compatible almost complex structure $J\in \cB$. Consider the continuous function $f:M\to\bR_+$ given by $f(x)=d_J(x,\gamma)$ for $d_J$ the distance with respect to the metric induced by $J$ and $\gamma$ considered as a subset.  Fix real numbers $0<r_0<r_1<r_2<r_3$ with the following properties. Denote by $U_i:=f^{-1}([0,r_i))$.
\begin{enumerate}
\item $U_3\setminus U_0$ contains no $1$-periodic points of $H$.
\item The minimal flow-time of $X_H$ from $U_2$ to the boundary of $U_3$ and from $U_2\setminus U_1$ to the boundary of $U_0$ is at least $2$. 
\end{enumerate}
Such numbers exist by continuity of the flow and the fact that all points of $\gamma$ are periodic since $\gamma$ is compact and isolated.  Fix  a neighborhood $\cN$ of $H$ which is sufficiently small for the above to hold for all $H\in\cN$. 

Fix an $R>0$. Write $V=U_3$. Let $\delta_0$ be a bound from below on distance $d(x,\psi_1(x))$ of any $x\in U_3\setminus U_0$ to its image under the time $1$ flow of $H$. Let $\delta_1$ be a bound from below on the distance $d(\psi_t(x),\partial (U_3\setminus U_0))$ for $x\in U_2\setminus U_1$ and for $t\in[0,1]$. All bounds are assumed to hold for metrics determined by any almost complex structure in $\cB$. Fix $(H',J')\in \cN\times \cB$. Let $u$ be an $(H',J')$ Floer trajectory meeting both $V$ and $M\setminus V$ with input or output a periodic orbit contained in $U_0\subset V$. Then the pre-image $u^{-1}(U_2\setminus U_1)$ contains a connected component $S\subset\bR\times S^1$. Let $[a,b]$ be the image of $S$ under projection to the first factor. Then there is a constant $\delta_2$ so that for any $s\in [a,b]$ there are a $t_0,t_1\in[0,1]$ such that 
$$d(u_s(t_1),\psi_{t_1-t_0}(u_s(t_0)))>\delta_2.$$ Indeed, either $u_s:=u|_{s\times S^1}$ remains inside $U_3\setminus U_0$  or $u_s$ connects $U_2\setminus U_1$ with $\partial (U_3\setminus U_0)$. From this we deduce using Proposition \ref{PrpDistEn} that for a constant $C=C(J,H)$ we have
\begin{equation}\label{eqLoopwiseEst}
\int\|X_{H'_t}\circ u(s,t)-\partial_su(s,t)\|^2dt>C\delta_2. 
\end{equation}

We conclude that $E^{geo}(u)\geq (b-a)\delta_1$. On the other hand, monotonicity (Lemma \ref{lmMonEst++}) gives the estimate $E^{geo}+(b-a)\geq C\delta_0$. Combining the two estimates we get an a priori estimate on the energy of any trajectory connecting a periodic orbit in $V$ with one in $M\setminus V$.  Note that the constants all depend on $\cN$ and $\cB$ only. This proves the first part of the claim. 

For the second part, given $J_1,J_2\in \cB$ and $H_1\leq H_2\in\cN$ note that for any continuation trajectory between them we have a similar estimate depending only on the path of Floer data connecting them. To see this observe first that the constant the estimate in \eqref{eqLoopwiseEst}  can be made uniform for all almost complex structures in $\cB$ and Hamiltonians in $\cN$ at least if the metric on the space of almost complex structures takes into account sufficiently many derivatives. Moreover, this estimate depends on $s$ only through the value of $J_s$ on the loop $u_s$. For the monotonicity estimate note that the metric on $\bR\times S^1\times M$ induced by Gromov trick for the family of almost complex structures and Hamiltonians is  equivalent to the one associated with a fixed almost complex structure and Hamiltonians with equivalence constant depending on $|\partial_s(H_a,J_s)|$ which in turn can be made to depend only on $\cN,\cB$. By the last clause in Lemma \ref{lmMonEst++} and Remark \ref{rmMonotonicity} this produces a monotonicity estimate with constants depending only on  $\cN,\cB$. \end{proof}

\begin{lm}\label{lmGermHF}
Let $\gamma$ be an isolated family of $1$-periodic points of $H$. Then for any small enough neighborhood $V$ of $\gamma$ and for $\epsilon>0$ small enough, the group $HF^*_{[0,\epsilon)}(V;H,\cB)$  depends only on the restriction of $H,J$ to $V$. That is, given two pairs of data $(H_1,J_1)$ and $(H_2,J_2)$ whose restrictions to $V$ are the same, there is a canonical isomorphism between the corresponding groups. 
\end{lm}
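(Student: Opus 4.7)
The strategy is to show that both groups are canonically isomorphic to a single intrinsic invariant of the germ, built from any non-degenerate perturbation supported near $\gamma$. The key input is the localization provided by Lemma~\ref{lmEpsilonSeparation}: for small enough action windows, Floer trajectories with endpoints close to $\gamma$ cannot wander far.

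First I would shrink twice: pick nested isolating neighborhoods $V'\Subset V''\Subset V$ of $\gamma$. Applying Lemma~\ref{lmEpsilonSeparation} with $V'$ playing the role of the inner set $U_0$ and $V$ the outer set $U_3$, for each of the pairs $(H_i,J_i)$ we obtain $\epsilon_i>0$ and $C^1$-neighborhoods $\cN_i$ with the property that any Floer trajectory (for data in $\cN_i\times\cB$) having a periodic endpoint in $V'$ and topological energy at most $\epsilon_i$ is confined to $V$. Take $\epsilon:=\min(\epsilon_1,\epsilon_2)$.

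Next, I would build a common perturbation. Choose a non-degenerate perturbation $(\tilde H,\tilde J)$ of $(H,J)$ whose support is contained in $V$; since $H_1|_V=H_2|_V=H|_V$ and $J_1|_V=J_2|_V=J|_V$, this uniquely determines pairs $(\tilde H_i,\tilde J_i)$ that agree with $(\tilde H,\tilde J)$ on $V$ and with $(H_i,J_i)$ outside $V$. By choosing $\tilde H-H$ sufficiently $C^1$-small and $\tilde J-J$ sufficiently $C^k$-small, we keep $(\tilde H_i,\tilde J_i)\in\cN_i\times\cB$ for both $i$. Iterating, we choose a monotone sequence of such common perturbations $(\tilde H^n,\tilde J^n)\nearrow (H,J)$ on $V$ and extend each to $(\tilde H_i^n,\tilde J_i^n)$ approximating $(H_i,J_i)$ from below.

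Now the identification is immediate. The generators of $CF^*_{[0,\epsilon)}(V';\tilde H_i^n,\tilde J_i^n)$ are the $1$-periodic orbits of $\tilde H^n$ lying in $V'$, which depend only on the data on $V$, hence coincide for $i=1,2$. By the first step, the trajectories contributing to the differential stay in $V$, where the Floer equations for $i=1$ and $i=2$ are literally the same. Thus $CF^*_{[0,\epsilon)}(V';\tilde H_1^n,\tilde J_1^n)$ and $CF^*_{[0,\epsilon)}(V';\tilde H_2^n,\tilde J_2^n)$ are equal as chain complexes. The same reasoning applied to a monotone continuation homotopy whose restriction to $V$ is $s$-independent shows that the connecting maps in the telescope defining $hoColim_n$ also agree. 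Passing to the homotopy colimit and then to homology yields the canonical isomorphism between $HF^*_{[0,\epsilon)}(V;H_1,\cB)$ and $HF^*_{[0,\epsilon)}(V;H_2,\cB)$, with independence of the choice of common perturbation following from the usual continuation argument applied within $V$.

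\textbf{Main obstacle.} The essential technical point is making the common perturbation $(\tilde H_i,\tilde J_i)$ simultaneously (i) non-degenerate and generic enough to define a Floer complex, (ii) $C^1$-close enough to both $(H_1,J_1)$ and $(H_2,J_2)$ to lie in $\cN_1\cap\cN_2$, and (iii) part of a monotone cofinal system used to define $HF^*_{[0,\epsilon)}(V;H_i,\cB)$ on each side. This is primarily a bookkeeping issue, but it is what forces the invariant to depend only on the germ rather than on any global data; once the compatible perturbations are in hand, the rest is a direct application of Lemma~\ref{lmEpsilonSeparation}.
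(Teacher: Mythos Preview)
Your approach is correct and follows essentially the same route as the paper: invoke the confinement from Lemma~\ref{lmEpsilonSeparation} to localize small-energy trajectories to $V$, and then observe that since the data on $V$ coincide, the resulting local chain complexes are literally equal. The paper's proof compresses your construction into a single sentence by noting that the constants in Lemma~\ref{lmEpsilonSeparation} are \emph{robust}---they depend only on the behavior of $(H,J)$ inside $V$---so the same $\epsilon$ and the same confinement apply simultaneously to both $(H_1,J_1)$ and $(H_2,J_2)$, and your explicit common-perturbation scheme is then implicit.
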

\begin{proof}
For this observe that the estimate derived in Lemma \ref{lmEpsilonSeparation} says that if a Floer trajectory connects periodic orbits in $V$ and has energy $\leq\epsilon$ it stays inside $V$.  This observation was made by \cite{Hein}. Moreover, the estimates are \emph{robust} in the sense that they  depend only on the behavior of $(H,J)$ in $V$. Hence the claim.
\end{proof}

\subsection{The unweighted local Floer complex}

\begin{df}
Let $H$ be a Hamiltonian. A family $\gamma$ of $1$- periodic orbits of $H$ is said to be a \emph{well isolated $1$-periodic component} if 
\begin{enumerate}
\item $\gamma$ is path connected. 
\item $\gamma$ is isotropic in the sense that for any path in $\gamma$ the corresponding cylinder in $M$ has vanishing symplectic area \footnote{The latter is defined for any arbitrary continuous path of smooth loops by homotoping to a smooth path with fixed endpoints.}.  
\item $\gamma$ has an isolating neighbourhood $V$ such that $\omega|_V$ is exact.
\item Choosing an almost complex structure $J$ on $M$, the restriction to $V$ of the complexified canonical bundle is trivial. 
\end{enumerate}
We call such $V$ a \emph{well-isolating neighborhood.}
\end{df}
We will associate with any well isolated component $\gamma$ a Floer type complex $CF^*_{uw}(\gamma;H)$   \emph{over the ground ring $R$} constructed from Floer solutions in a small neighbourhood of $\gamma$.   We will then relate it to the truncated Floer cohomology in small action windows. 

To proceed we introduce an enhanced version of a notion introduced by Pozniak \cite{Pozniak}. 

We use the letter $\cH$ to denote a \emph{Floer theoretic diagram} by which we mean 
\begin{itemize}
\item a collection, also denoted by $\cH$, of smooth regular Floer data $F=(H,J)$, 
\item for each pair $F_1,F_2$ of such Floer data a set $\cH(F_1,F_2)$ of continuation data $(H_s,J_s)$ from $F_1$ to $F_2$, and,
\item  for each pair $k_1,k_2\in \cH(F_1,F_2)$ a contractible set of homotopies  from $k_1$ to $k_2$. 
\end{itemize}
We say that $\cH$ is \emph{contractible} if for any pair $F_1, F_2$ of Floer data in $\cH$ there is a Floer datum $F_3$ so that $\cH(F_1,F_3)\neq\emptyset$ and $\cH(F_2,F_3)\neq\emptyset$.

The typical example to keep in mind is as follows.  Consider  an open set $\cN$ in the space of time dependent Hamiltonians in the $C^2$ topology and a bounded ball $\cB$ in the space of $\omega$-compatible almost complex structures with $C^k$ topology for sufficiently large $k$.  Denote by $\cH(\cN,\cB)$ the following Floer diagram. 
\begin{itemize}
\item The Floer data are all the pairs $(H,J)\in\cN\times\cB$  so that $(H,J)$ is regular,
\item the continuation data between $(H_1,J)$ and $(H_2,J)$ is the set of all monotone continuation data whose first derivative is supported in $[-1,1]\times S^1$ and is bounded above by $1$ in the region $[-1,1]\times S^1$.
\item The homotopies  between two continuation data are all the regular  paths in the space obtained from the previous part by dropping the regularity requirement. 
\end{itemize}
Observe that if there is a lower semi continuous function $H$ which dominates all the functions in $\cN$ and  $\cN$ contains a monotone sequence converging pointwise to $H$ then $\cH(\cN,\cB)$ is contractible. In this case we say that $\cH(\cN,\cB)$ \emph{converges to $H$}.

Let $\cU\subset\cL(M)$ be a subset of loop-space. Denote by $F_{\cH}(\cU)$ the set of all paths in $\cU$ which arise as a Floer solution associated with $\cH$. By this we mean any map $u:\bR\times S^1\to M$ that solves the Floer  equation associated with a Floer datum $F\in\cH$, a continuation datum in $\cH(F_1,F_2)$ or as part of the homotopy between two such continuation data.

We denote by $S_{\cH}(\cU)\subset\cU$ the image of $\bR\times F_{\cU}$ under the evaluation map
$(s,u)\mapsto u(s,\cdot)$.
\begin{df}
We say that $S_{\cH}(\cU)$ is \emph{isolated} if the closure of $S_{\cH}(\cU)$ is contained in $\cU$. 
\end{df}

Consider in particular  the case where $\cU$ is the set of loops contained in some open set $V$. We denote this by $\cU_V$.  Suppose $V$ is an exact isolating neighbourhood of a well isolated periodic component  $\gamma$  of some Hamiltonian $H$. 
A basic consequence of proof of Lemma \ref{lmEpsilonSeparation} is
\begin{lm}
For any real number $R$ and almost complex structure  $J$ if $\cN$ is a sufficiently small open neighbourhood of $H$ in the $C^2$ topology then letting $\cH:=\cH(\cN,J,R)$ we have that $S_{\cH}(\cU_V)$ is isolated.
\end{lm}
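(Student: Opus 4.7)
The plan is to show that every Floer solution $u\in F_{\cH}(\cU_V)$ has all of its $s$-slices contained in a strictly smaller open set $V_0$ with $\overline{V_0}\subset V$. Once this confinement is established, we have $S_{\cH}(\cU_V)\subset \cU_{V_0}$, and any continuous limit of loops in $V_0$ lies in $\overline{V_0}\subset V$, whence $\overline{S_{\cH}(\cU_V)}\subset \cU_V$. So the entire statement reduces to a uniform confinement property for solutions in the diagram $\cH(\cN,J,R)$.

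First I would set up the nested neighborhoods $U_0\Subset U_1\Subset U_2\Subset V_0\Subset V$ of $\gamma$ exactly as in the proof of Lemma~\ref{lmEpsilonSeparation}, and use upper semicontinuity of the $1$-periodic orbit set together with the $C^2$-topology to shrink $\cN$ so that: (i) every $H'\in\cN$ has all of its $1$-periodic orbits inside $V$ concentrated in $U_0$; (ii) the flow estimates $\delta_0,\delta_1,\delta_2>0$ in the proof of Lemma~\ref{lmEpsilonSeparation} apply uniformly for every $H'\in\cN$. The key observation, not used in Lemma~\ref{lmEpsilonSeparation} but crucial here, is that the asymptotic orbits of any solution in $F_{\cH}(\cU_V)$ must themselves be limits of loops in $V$, hence are periodic orbits of the endpoint Hamiltonian in $\bar V$, and therefore by (i) actually lie in $U_0$.

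Next I would establish a uniform upper bound $E_{\max}(\cN,R)$ on the topological energy of every solution in $F_{\cH}(\cU_V)$, going to zero as $\cN$ shrinks. For a Floer cylinder of $(H',J)$, the topological energy is the action difference between asymptotic orbits in $U_0$; using the exact primitive $\theta$ of $\omega$ on $V$ together with the isotropy condition in the definition of well isolated component, the $H$-action is constant on $\gamma$, so by $C^2$-closeness the $H'$-action difference on orbits in $U_0$ is uniformly small. For continuation trajectories, the formula for topological energy picks up an additional contribution
\begin{equation}
\int_{\bR}\int_{S^1}(\partial_s H^s)(u(s,t))\,dt\,ds
\end{equation}
which is bounded by $R$ times the $C^0$-diameter of $\cN$, again shrinking to $0$. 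Combining these two estimates yields the desired $E_{\max}(\cN,R)\to 0$. Now the Gromov monotonicity estimate via the Gromov trick metric on $\bR\times S^1\times M$ (Proposition~\ref{PrpDistEn} and Lemma~\ref{lmMonEst++}) yields a constant $c>0$, depending only on $J$ and the uniform $C^0$-bound on the data in $\cN$, such that any Floer solution reaching a loop of distance $\geq r$ from $U_0$ has geometric energy at least $cr^2$. Shrinking $\cN$ to force $E_{\max}(\cN,R)<c\cdot\mathrm{dist}(U_0,M\setminus V_0)^2$ forbids any solution in $F_{\cH}(\cU_V)$ from having a slice outside $V_0$, and exactness of $\omega|_V$ together with the small energy bound precludes sphere bubbling during the underlying compactness arguments.

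The main obstacle is the uniform topological energy bound for \emph{continuation} trajectories in the diagram. For pure Floer cylinders the isotropy of $\gamma$ gives the bound immediately via the primitive $\theta$, but for the homotopy piece we need to carefully track the interplay between the $C^2$-closeness of $\cN$, the $R$-bound on $\partial_s H^s$, and the fact that the asymptotic orbits still lie in $U_0$ uniformly. A related technical point is verifying that the monotonicity constant $c$, which in general depends on derivatives of the Floer data via the Gromov trick metric, stays bounded away from $0$ as $\cN$ shrinks around $H$; this follows because the $C^2$ norm of data in $\cN$ stays within a fixed band around $H$, so the ambient geometry governing monotonicity remains uniformly controlled.
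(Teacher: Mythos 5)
Your overall strategy matches the paper's: fix a primitive on $V$, use the isotropy assumption and $C^2$-smallness of $\cN$ to make the actions of all relevant $1$-periodic orbits (and hence the topological energy of the relevant Floer solutions) arbitrarily small, and then feed this into the confinement machinery from the proof of Lemma~\ref{lmEpsilonSeparation}. The observation that the asymptotic orbits of any $u\in F_{\cH}(\cU_V)$ lie in $\overline{V}$, hence (after shrinking $\cN$) in $U_0$, is a correct and useful point.

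Where you go wrong is the treatment of continuation trajectories. You claim that for a continuation trajectory the topological energy ``picks up an additional contribution'' $\int_{\bR}\int_{S^1}(\partial_s H^s)(u(s,t))\,dt\,ds$, which you then try to bound by $R$ times the $C^0$-diameter of $\cN$. Both parts of this are off. First, the formula is backwards: for a monotone continuation trajectory from $(H_1,J_1)$ to $(H_2,J_2)$ with asymptotics $\gamma_1,\gamma_2$, the topological energy is still exactly the action difference $\int u^*\omega + \int_{S^1}\bigl(H_{2,t}(\gamma_2(t))-H_{1,t}(\gamma_1(t))\bigr)dt = \cA_{H_2}(\gamma_2)-\cA_{H_1}(\gamma_1)$; the quantity $\int\!\int \partial_s H^s$ is the \emph{deficit} $E_{top}(u)-E_{geo}(u)$, which is nonnegative by monotonicity and which you never need to estimate. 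Second, the claimed bound by $R$ times the $C^0$-diameter is not justified, since the integrand $(\partial_s H^s)(u(s,t))$ is evaluated along a moving curve, so the $s$-integral does not telescope to a difference of Hamiltonian values. The paper sidesteps all of this: once a primitive on $V$ is chosen and $\cN$ is small, \emph{every} Floer solution in $F_{\cH}(\cU_V)$—cylinder or monotone continuation—has topological energy equal to an action difference of orbits near $\gamma$, which is at most $\epsilon$. That single observation, combined with the a priori estimates of Lemma~\ref{lmEpsilonSeparation} (whose proof already handles the continuation case uniformly in $\cN,\cB$), is all that is needed. Your ``main obstacle'' is thus not a real obstacle, and the extra bound you propose is both unnecessary and not established.
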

\begin{proof}
Fix a primitive on $V$ and any $\epsilon>0$.  Since $\gamma$ is isotropic and path connected,  the action of any periodic orbit in the family $\gamma$ equals a fixed constant which we may take to be $0$.  By taking $\cN$ an arbitrarily small neighbourhood of $H$  we guarantee that the $1$-periodic orbits contained in $V$ of any $H'\in\cN$ are arbitrarily close to $\gamma$ and so have actions which are arbitrarily close to $0$.  We can thus guarantee for any $\epsilon$ that for any pair of $1$-periodic orbits $\gamma_1,\gamma_2$ associated with Hamiltonians $H_1,H_2\in\cN$ and contained in $V$ we have that $|\Delta\cA|:=|\cA_{H_1}(\gamma_1)-\cA_H(\gamma_2)|<\epsilon$.  

The proof of Lemma \ref{lmEpsilonSeparation} now shows that all the relevant Floer solutions map into $V$.   
\end{proof}

An observation made by Pozniak \cite{Pozniak} is that for any $F\in \cH$ one can build a complex Floer complex  $CF^*_{uw}(\cU;F)$ over $R$ whose generators are periodic orbits in $\cU$ and whose connecting trajectories are paths in $\cU$. Indeed, by the assumption and Gromov-Floer compactness we the connecting trajectories square to $0$. In a similar way, given Floer data $F_1,F_2\in H$ so that $\cH(F_1,F_2)\neq\emptyset$ we get an induced map $HF^*_{uw}(\cU;F_1)\to HF^*_{uw}(\cU;F_2)$.

In particular if $\cH$ is  contractible, we obtain a Hamiltonian Floer group $$HF^*_{uw}(\cU;\cH)=\varinjlim_{(H,J)\in\cH}HF^*_{uw}(\cU;H,J).$$  
\begin{df}
In case of $\gamma$, $V$ and $\cH$ as above we write
\begin{equation}
HF^*_{uw}(\gamma;H):= HF^*_{uw}(\cH(\cU_V;\cN,\cB)).
\end{equation}
We refer to this as the \emph{unweighted local Floer cohomology of $\gamma$. }
\end{df}

Dropping $V$ and $\cB$ from the notation is justified by the following Lemma. 
\begin{lm}\label{lmInfSHGermDep}
$HF^*_{uw}(\gamma;H)$ depends only on the germ of $M$ and $H$ near $\gamma$ in the sense that a symplectomorphism of the germs induces a canonical isomorphism of the corresponding unweighted Floer groups.
\end{lm}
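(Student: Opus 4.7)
The plan is to reduce the germ-dependence to an underlying locality principle. The key observation is that for any well-isolating neighborhood $V$ of $\gamma$ and any Floer datum $F=(H,J)\in\cH(\cU_V;\cN,\cB)$, the chain complex $CF^*_{uw}(\cU_V;F)$ depends only on the restriction $F|_V$. Indeed, the generators are those periodic orbits of $H$ lying in $V$, and hence are determined by $X_H|_V$; the differential counts Floer trajectories whose loops $s\mapsto u(s,\cdot)$ all lie in $\cU_V$, so the image of $u$ is contained in $V$ and the Floer equation involves only $(H,J)|_V$. The same remark applies verbatim to continuation data and to the homotopies between them, so the Floer-theoretic diagram $\cH(\cU_V;\cN,\cB)$, its continuation maps, and therefore the colimit $HF^*_{uw}(\gamma;H)$, depend only on germs of Floer data along $V$.

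Now suppose we are given a symplectomorphism of germs $\phi:(M_1,\omega_1,H_1,\gamma_1)\to(M_2,\omega_2,H_2,\gamma_2)$ defined on a neighborhood $W$ of $\gamma_1$. Choose a well-isolating neighborhood $V_1\subset W$ of $\gamma_1$ with $\overline{V_1}\subset W$; since $\phi$ is symplectic and intertwines the Hamiltonian flows, $V_2:=\phi(V_1)$ is a well-isolating neighborhood of $\gamma_2$. Fix $\cN_1,\cB_1$ small enough for the isolation conclusion of Lemma \ref{lmEpsilonSeparation} to hold. For each $F_1=(H_1',J_1')$ in the corresponding Floer diagram $\cH_1$, pull the restriction $F_1|_{V_1}$ across $\phi$ and extend arbitrarily to a global Floer datum $F_2$ on $M_2$. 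Shrinking $\cN_1,\cB_1$ if necessary, these extensions can be arranged to lie in a pre-chosen $\cN_2\times\cB_2$ on $M_2$ for which the same isolation conclusion holds. By the locality observation, $\phi$ induces an isomorphism of chain complexes $\phi_*:CF^*_{uw}(\cU_{V_1};F_1)\isomto CF^*_{uw}(\cU_{V_2};F_2)$ which is independent of the choice of extension outside $V_2$.

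The same push-forward construction applies to continuation data and to homotopies, defining a morphism of Floer-theoretic diagrams $\cH_1\to\cH_2$, well-defined modulo extension choices which (by the previous paragraph) do not affect the resulting chain maps. Passing to colimits yields a canonical map $\Phi:HF^*_{uw}(\gamma_1;H_1)\to HF^*_{uw}(\gamma_2;H_2)$. Applying the same construction to $\phi^{-1}$ produces a two-sided inverse, so $\Phi$ is a canonical isomorphism.

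The only real technical content is bookkeeping: one must arrange the pairs $(\cN_i,\cB_i)$ so that both diagrams are contractible and so that the pushforward from one to the other (followed by arbitrary extensions outside $V_2$) actually lands in the target diagram. The main thing to verify is that the $C^2$-smallness requirements and the $\epsilon$'s produced in the proof of Lemma \ref{lmEpsilonSeparation} depend only on the germ data near $\gamma$, and hence transport across $\phi$. The substantive mathematical content is the locality principle in the first paragraph.
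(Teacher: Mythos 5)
Your proposal is correct and follows essentially the same route as the paper: establish that the unweighted complex and its continuation maps depend only on the restriction of the Floer data to a small isolating neighborhood (via the robustness of the $C^0$ estimates from Lemma \ref{lmEpsilonSeparation} and cofinality under shrinking $\cN$ and enlarging $\cB$), then transport across the symplectomorphism of germs. The paper's proof is terser — it proves independence of the auxiliary choices $(V,\cB)$ and leaves the push-forward along $\phi$ implicit — whereas you spell out that step; the mathematical content is the same.
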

\begin{proof}
First observe that if $\cN'\subset \cN$ is still an open neighborhood of $H$ then  $HF^*_{uw}(\cU_V;\cH(\cN',\cB))=HF^*_{uw}(\cU_V;\cH(\cN,\cB))$.  If $\cN'$ is chosen small enough so that the estimates guarantee that all Floer trajectories arising in the Floer diagram $\cH'=\cH(\cN',\cB)$ are contained in some small neighborhood $V'\subset V$,  then the sets $S_{\cH'}(\cU_V)$ and $S_{\cH'}(\cU_{V'})$ are the same.  Here we rely on the robustness of the $C^0$ estimates mentioned in the proof of Lemma \ref{lmGermHF}.  In this way we obtain independence of the isolating set $V$. Independence of $\cB$ is obtained in the same way. Namely shrinking from $\cN$ to $\cN'$ allows to enlarge $\cB$ to $\cB'$. But the diagram $\cH(\cN',\cB)$ is cofinal in the diagram $\cH(\cN',\cB')$ for $\cB\subset\cB'$.  
\end{proof}

We denote the underlying chain complex of $HF^*_{uw}(\gamma;H)$ by $CF^*_{uw}(\gamma;H)$. It is well defined up to contractible choice. Moreover, it can be modeled by the periodic orbits and Floer trajectories of a single Floer datum $(H',J)$ for $H'$ close enough to $H$. That is,  for an isolating neighborhood $V$ of $\gamma$ and for $H'$ close enough to $H$ and so that $(H,J)$ is regular we have a well defined chain complex $CF^*_{uw}(V;H',J)$ generated by the $1$ periodic orbits of $H'$ that are contained in $V$ and the differential is defined by the unweighted count of Floer solutions contained in $V$.  This complex $CF^*_{uw}(V;H',J)$ is a chain level model for $HF^*_{uw}(\gamma;H)$. 
 
\subsubsection{The Morse-Bott case}

We briefly treat the Morse-Bott case.  Let $H$ be a Hamiltonian. Let $\gamma$ be a Morse-Bott family of $1$-periodic orbits which is well isolated. 
Let $U$ be an open neighborhood of $\gamma$ containing no other $1$-periodic orbits, so that $\omega|_U$ is exact and so that that the canonical bundle over $U$ is still trivial. 

Fix a trivialization of the canonical bundle over $\gamma$ and denote by $\iota_{CZ}(\gamma)$ the Conley-Zehnder index of $\gamma$ according to this trivialization. Each point $x$ of $\gamma$ corresponds to a $1$-periodic orbit.  We thus can associate a Fredholm operator $D_{x}$ on the space of maps from the disc into $R^{2n}$ up to homotopy \cite{SeidelBook}. The orientation line  associated with $D_x$ is well defined up to canonical isomorphism.
\begin{df}
The local system $\cL=\cL(\gamma)$ is the local system of $\bZ$-coefficients whose stalk over each point $x\in\gamma$, where $\gamma$ is considered as a manifold whose points are $1$-periodic orbits of $H$, is the orientation line of the Fredholm operator $D_x$. 
\end{df}
We can now formulate 
\begin{tm}\label{tmMorseBottCascades}
There is a natural isomorphism $HF^*_{uw}(\gamma;H)\simeq HM^{*+\iota_{CZ}(\gamma)}(\gamma;\cL)$ where the right hand side is the Morse homology of $\gamma$ with respect to an arbitrary Morse function with coefficients twisted by $\cL$.
\end{tm}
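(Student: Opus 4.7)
The plan is to realize $HF^*_{uw}(\gamma;H)$ via an explicit Morse--Bott perturbation and identify the resulting chain model with the Morse complex of an auxiliary function on $\gamma$ twisted by $\cL$. Choose a Morse function $f \colon \gamma \to \bR$ and extend it to a function $\tilde f$ on a well-isolating neighborhood $V$ so that $\tilde f$ is constant in directions symplectically transverse to $\gamma$ in an adapted chart. For $\epsilon>0$ small the perturbed Hamiltonian $H_\epsilon := H + \epsilon \tilde f$ lies in any preassigned $C^2$-neighborhood $\cN$ of $H$, so by Lemma~\ref{lmGermHF} we have $HF^*_{uw}(\gamma;H) \cong HF^*_{uw}(\gamma;H_\epsilon)$ canonically, and a single pair $(H_\epsilon,J)$ furnishes a chain-level model $CF^*_{uw}(V;H_\epsilon,J)$.

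Next I would establish the standard Morse--Bott correspondence at the level of generators: for $\epsilon$ small enough the $1$-periodic orbits of $H_\epsilon$ contained in $V$ are non-degenerate and are in natural bijection with $\mathrm{Crit}(f)$, with the orbit $\gamma_p$ corresponding to $p \in \mathrm{Crit}(f)$. A Robbin--Salamon computation, using that the transverse Hessian is inherited from the Morse--Bott datum while the tangent Hessian is $\epsilon\,\mathrm{Hess}_p(f)$, yields
\begin{equation}
\iota_{CZ}(\gamma_p) \;=\; \iota_{CZ}(\gamma) - \tfrac{1}{2}\dim\gamma + \mathrm{ind}_p(f),
\end{equation}
which, upon regrading Morse homology cohomologically, matches the asserted shift by $\iota_{CZ}(\gamma)$.

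The main step is the identification of the differential. For $\epsilon\to 0$ Floer trajectories in $V$ with bounded topological energy degenerate, by the $C^0$ estimates of Lemma~\ref{lmEpsilonSeparation} together with isotropy and exactness on $V$, to broken configurations in which the limiting ``vertical'' components lie entirely in $\gamma$ and carry zero energy, while the ``horizontal'' components are gradient flow lines of $f$ on $\gamma$. Running a cascades-style argument (along the lines of Bourgeois--Oancea, Frauenfelder, and Fukaya--Oh) and combining it with an implicit function theorem perturbation around each broken trajectory, I would show that for $\epsilon$ small the index-one component of the Floer moduli space is canonically identified with the index-one Morse moduli space of $f$. The orientation of each such configuration is obtained by transporting the orientation line $o_{\gamma_p}$ of the Fredholm operator $D_p$ along the corresponding path in $\gamma$; this transport is exactly the monodromy of the local system $\cL$, so the signed count of Floer trajectories between $\gamma_p$ and $\gamma_q$ agrees with the $\cL$-twisted Morse differential.

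The main obstacle I expect is the cascades/gluing analysis together with the orientation bookkeeping: one needs uniform compactness as $\epsilon\to 0$ within the isolating neighborhood (which the robustness in Lemma~\ref{lmGermHF} provides, but must be combined with a parametric Gromov--Floer argument to rule out bubbling and sliding out of $V$), a gluing theorem producing a bijection between broken Morse--Floer configurations and nearby honest Floer solutions, and a careful verification that the induced isomorphism on orientation lines is precisely parallel transport in $\cL$ rather than some twist thereof. Once these ingredients are in place, naturality in $H$ and $J$ follows from running the same argument on the continuation data defining the directed system underlying $HF^*_{uw}(\gamma;H)$, producing the claimed canonical isomorphism with $HM^{*+\iota_{CZ}(\gamma)}(\gamma;\cL)$.
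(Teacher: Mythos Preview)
Your approach is correct but differs from the paper's. The paper does not perturb $H$ by a Morse function on $\gamma$ and then analyze the $\epsilon\to 0$ degeneration of the Floer differential via cascades. Instead it fixes an \emph{arbitrary} non-degenerate $H'$ close to $H$ modeling $CF^*_{uw}(\gamma;H)$, chooses a Morse function $f$ on $\gamma$, and constructs an explicit PSS-type chain map $CF^*_{uw}(V;H',J)\to CM^*(\gamma,f;\cL)$ by counting hybrid configurations: a Floer trajectory for $H'$ with input a periodic orbit $x$ of $H'$ and output a point $y'$ on the unperturbed family $\gamma$, followed by an upward gradient line of $f$ from $y'$ to a critical point $y$. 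The orientation line of the Floer cylinder at $y'$ is by definition $\cL_{y'}$, and parallel transport along the gradient line gives the twisted Morse sign; the paper then declares the chain-map and homotopy-invertibility verifications standard and omits them. Your route trades this hybrid-moduli construction for a specific choice $H'=H+\epsilon\tilde f$ and a gluing/compactness analysis as $\epsilon\to 0$; it buys a direct identification of generators and differential at the cost of the adiabatic analysis you flag as the main obstacle, whereas the paper's route avoids that limit entirely but needs a separate (if routine) argument for the homotopy inverse.
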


\begin{proof}
We model $FH^*_{uw}(\gamma;H)$ using the Floer complex of a sufficiently close non-degenerate Hamiltonian $H'$.  We pick a Morse function $f$ on $\gamma$.  Denote by $n$ the dimension of the family $\gamma$.

The map is constructed by considering configurations consisting of a Floer trajectory $u$ with input a periodic orbit $x$ of $H'$ and output a periodic point $y'$ in the family $\gamma$ followed by an upward gradient trajectory $\alpha$ flowing into a critical point $y$ of $f$.  The virtual dimension is given by the formula $i_{CZ}(\gamma)+i_{morse}(y)-i_{CZ}(x)$. 

The map is given as follows. Denote by $D_u$ the Fredholm operator associated with Floer solutions whose input is $x$ and whose output is anywhere on $\gamma$. Then $\delta_x=|D_u|\otimes \cL_{y'}$. Given an orientation of $\gamma$ the latter is isomorphic to $T_{y'}W^u(y)\otimes\cL_{y'}$. The gradient trajectory $\alpha$ then gives an isomorphism 
$$
T_{y'}W^u(y)\otimes\cL_{y'}=W^u(y)\otimes\cL_{y}.
$$
The claim that this is a chain map and homotopy invertible is standard and we omit it. 
\end{proof}
\subsection{From truncated to unweighted Floer cohomology}
In the following abbreviate $\Lambda_{[0,\epsilon)}:=T^{-\epsilon}\Lambda_{>0}/\Lambda_{>0}$. This is the module over $\Lambda_{>0}$ consisting of elements whose norm is in the interval $\left[1,e^{\epsilon}\right)$. 
\begin{lm}\label{lmIsotropicComp}
Let $H$ be a smooth Hamiltonian and let $\gamma$ be a well isolated $1$-periodic component of $H$. 
 Then for any  choice of $\cB$ and an isolating neighborhood $V$ there is canonical isomorphism for $\epsilon>0$ small enough 
\begin{equation}\label{eqIsoInfFH}
HF^*_{[0,\epsilon)}(V;H,J)=HF^*_{uw}(\gamma;H)\otimes \Lambda_{[0,\epsilon)}
\end{equation}
for any $J\in\cB$. 
\end{lm}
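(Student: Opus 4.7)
The plan is to establish the identification at the chain level using a single non-degenerate regular pair $(H',J)$ with $H'$ in a sufficiently small $C^2$-neighborhood $\cN$ of $H$, and then let it pass to the defining colimit. By Lemma~\ref{lmEpsilonSeparation}, for a small enough well-isolating neighborhood $V$ and $\epsilon>0$ small, the truncated complex $CF^*_{[0,\epsilon)}(V;H',J)$ splits off as a direct summand whose generators are the $1$-periodic orbits of $H'$ lying in $V$ and whose differential counts only Floer trajectories that stay in $V$. These same generators and trajectories model $CF^*_{uw}(V;H',J)$; the only discrepancy between the two complexes sits in the Novikov weights $T^{E_{top}(u)}$ appearing in the weighted differential.

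Fix a primitive $\lambda$ of $\omega|_V$, and for a loop $\eta\colon S^1\to V$ and Hamiltonian $K$ set $\cA_K(\eta):=\int_{S^1}\eta^*\lambda+\int_0^1 K_t(\eta(t))\,dt$. For a Floer trajectory $u$ contained in $V$ from $\gamma'_1$ to $\gamma'_2$, Stokes applied to $u^*\omega=u^*d\lambda$ together with the definition of $E_{top}$ gives the action--energy identity
\[
E_{top}(u)=\cA_{H'}(\gamma'_2)-\cA_{H'}(\gamma'_1).
\]
Since $\gamma$ is isotropic and path-connected, $\cA_H$ is constant on the family $\gamma$, equal to some $c_0$. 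Replacing $\lambda$ by $\lambda+\mu$ for a closed one-form $\mu$ shifts $\cA_K$ by the single constant $\int_{[\gamma]}\mu$ on every loop in the homology class $[\gamma]\in H_1(V)$, so I may normalize $c_0=-\epsilon/2$. Continuity of the action in the $C^1$-topology on Hamiltonians then guarantees, after shrinking $\cN$, that $\cA_{H'}(\gamma')\in(-\epsilon,0]$ for every $1$-periodic orbit $\gamma'$ of $H'$ contained in $V$.

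I now define
\[
\Phi\colon CF^*_{uw}(V;H',J)\otimes_R\Lambda_{[0,\epsilon)}\longrightarrow CF^*_{[0,\epsilon)}(V;H',J),\qquad \gamma'\otimes 1\longmapsto T^{\cA_{H'}(\gamma')}\gamma',
\]
extended $\Lambda_{[0,\epsilon)}$-linearly. The action bound places each image in the valuation window $(-\epsilon,0]$ that $\Lambda_{[0,\epsilon)}$ represents. The action--energy identity telescopes $T^{\cA_{H'}(\gamma'_1)}\cdot T^{E_{top}(u)}=T^{\cA_{H'}(\gamma'_2)}$, so in the rescaled basis the weighted differential on the right of $\Phi$ becomes the unweighted differential $d_{uw}\otimes\mathrm{id}$ on the left. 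The same calculation applied to monotone continuation strips---whose topological energy is still the difference of actions evaluated against the corresponding Hamiltonians---shows that $\Phi$ is compatible with continuation maps between pairs in $\cN\times\cB$; passing to cohomology yields \eqref{eqIsoInfFH}. Canonicity is then routine: a different primitive shifts every $\cA_{H'}(\gamma')$ by a single constant $c$, rescaling $\Phi$ by the unit $T^c$ and leaving the induced map on cohomology unchanged, while independence of $\cB$ and $V$ follows from the robustness arguments as in Lemmas~\ref{lmGermHF} and~\ref{lmInfSHGermDep}. The main obstacle is really bookkeeping rather than analytic: the only real work is coordinating $\epsilon$, $\cN$, and the normalization of $\lambda$ so that all perturbed orbits fit in the window $(-\epsilon,0]$, and once this is done the identification is forced by the action--energy identity and by the uniform separation already supplied by Lemma~\ref{lmEpsilonSeparation}.
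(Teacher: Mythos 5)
Your proposal has the right basic idea -- the conversion between the weighted truncated complex and the unweighted complex is effected by rescaling generators by powers of $T$ whose exponents are actions -- but it contains a genuine gap: the rescaling map is \emph{not} an isomorphism (nor even a well-defined map of sub-quotients) for a single non-degenerate $H'$ close to $H$, and the ``pass to the defining colimit'' step that you treat as routine is in fact the core of the argument.

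First, the well-definedness issue. After your normalization $\cA_{H'}(\gamma')\in(-\epsilon,0]$, the map $\Phi(\gamma'\otimes\lambda)=T^{\cA_{H'}(\gamma')}\lambda\,\gamma'$ multiplies the $\gamma'$-coefficient by $T^{a}$ with $a\leq 0$. Such a multiplication does \emph{not} descend to an endomorphism of $\Lambda_{[0,\epsilon)}=T^{-\epsilon}\Lambda_{>0}/\Lambda_{>0}$: it increases norms, and hence does not carry the submodule $\Lambda_{>0}$ into itself. So the formula does not define a $\Lambda_{>0}$-module map on the stated domain. This is precisely why the paper's map $\alpha_i$ runs in the opposite direction, from the truncated complex with a \emph{strictly smaller} truncation window $[0,\epsilon-\delta_i)$ (where $\delta_i$ bounds the action spectrum of $H_i$) into $CF^*_{uw}\otimes\Lambda_{[0,\epsilon)}$; the valuation shift is then at most $\delta_i$, and this extra $\delta_i$ of slack is exactly what makes the map land where it should.

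Second, and more seriously, even granting a well-defined $\Phi_i$ for each $H_i$, the induced map on cohomology is not an isomorphism for any fixed $i$, so compatibility with continuation maps by itself does not ``yield \eqref{eqIsoInfFH}''. A one-line counterexample: suppose $H'$ near $H$ has two non-degenerate orbits $x,y\subset V$ connected by a single Floer trajectory of topological energy $\delta>0$. In $CF^*_{uw}\otimes\Lambda_{[0,\epsilon)}$ the differential $x\mapsto y$ is an isomorphism of the two factors and the cohomology vanishes in those degrees. In $CF^*_{[0,\epsilon)}$ the differential $x\mapsto T^{\delta}y$ has nonzero kernel (all $\lambda x$ with $|\lambda|<e^{\delta}$) and nonzero cokernel (classes of norm in $[e^{\epsilon-\delta},e^{\epsilon})$). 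These ``slivers'' of size $\delta$ are exactly the deficiency of $\Phi_i$; they vanish only in the colimit because $\delta_i\to 0$. Establishing that the colimit map is an isomorphism despite each $\alpha_i$ failing to be one is where the paper spends its effort -- the explicit injectivity argument (lift $x$, pick $k$ with $\delta_k$ smaller than the ``gap'' $\delta$ of $x$, and show the killing element lifts) and the explicit surjectivity argument. These cannot be replaced by the assertion that the maps are compatible with continuation. You need to add an analogue of those two paragraphs; without them the proof does not establish the statement.

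The action--energy computation, the chain-map verification, and the canonicity/robustness discussion in your last paragraph are all fine and parallel the paper.
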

\begin{rem}
More precisely, it can be shown there is a  map of the underlying chain complexes in equation \eqref{eqIsoInfFH} which is canonical up to contractible choice and induces an isomorphism on homology. Since $H$ is degenerate the underlying complex of the left hand side involves choosing a model for the homotopy colimit over functions converging to $H$. The proof is rather tedious which is why we formulate and prove only the cohomology level claim. 
\end{rem}

\begin{proof}
Fix $\cN$ a small enough neighborhood of $H$ so $S_{\cH(\cN,\cB)}(\cU_V)$ is isolated.  Consider a monotone sequence $H_i\in  \cN$ of non-degenerate Hamiltonians converging uniformly to $H$ and assume $(H_i,J)$ is regular. Assume $\epsilon>0$ is small enough so that all Floer trajectories connecting orbits in $V$ stay in $V$.  Then $$HF^*_{[0,\epsilon)}(V;H,J)=\varinjlim_i HF^*_{[0,\epsilon)}(V;H_i,J). $$On the other hand, for any $i$ we can model $CF^*_{uw}(\gamma;H)$ by $CF^*_{uw}(V;H_i,J)$. 

Exactness of $\omega|_V$ gives rise to an additional filtration on $CF^*_{[0,\epsilon)}(H_i;J_i;V)$ by action $\cA_{H_i}$. The definition of the action involves the choice of a primitive of $\omega$. For convenience we may assume by adding constants that $\cA_H(\gamma)\equiv 0$.  For any $i$ let $\delta_i<\epsilon$ such that the action spectrum of $H_i$ is contained in the open interval $(-\delta_i,\delta_i)$. We may take $\delta_i\to 0$ as $i\to\infty$.  
In particular,
$$HF^*_{[0,\epsilon)}(V;H,J)=\varinjlim_i HF^*_{[0,\epsilon-\delta_i)}(V;H_i,J).$$

Thus we can define a map 
$$\alpha_{i}:CF^*_{[0,\epsilon-\delta_i)}(V;H_i,J) \to CF^*_{uw}(V;H_i,J)\otimes  \Lambda_{[0,\epsilon)}$$
where we weight generators whose input is $x$  by $T^{-\cA_{H_i}(x)}$. Unwinding definitions this is well defined as a map of $\Lambda_{>0}$-modules. To check it is a chain map note that $\langle\alpha_i d x,y\rangle=nT^{-\cA_i(y)}T^{\cA_i(y)-\cA_i(x)}=nT^{-\cA_i(x)}$ for $n$ the number of Floer differentials from $x$ to $y$. On the other hand $\langle d\alpha_i  x,y\rangle=nT^{-\cA_i(x)}$.  Similarly, we have a homotopy commutative diagram
\begin{equation}\label{LclFHDiag}
\xymatrix{CF^*_{[0,\epsilon-\delta_i)}(V;H_i,J)\ar[d]\ar[r]&CF^*_{uw}(V;H_i,J)\otimes \Lambda_{[0,\epsilon)}\ar[d]\\
		CF^*_{[0,\epsilon-\delta_i)}(V;H_{i+1},J)\ar[r]&CF^*_{uw}(V;H_{i+1},J)\otimes  \Lambda_{[0,\epsilon)}}
\end{equation}
since the same is true for the unweighted version and the weights are again seen to depend only on the input. It follows that we have a well define map $\alpha$ between the corresponding colimits.

It remains to verify that $\alpha$ is an isomorphism. Abbreviate $A_i=CF^*_{[0,\epsilon-\delta_i)}(V;H_i,J)$ and $B_i=CF^*_{uw}(V;H_i,J)\otimes \Lambda_{[0,\epsilon)}$.  Denote by $\kappa_{ij}:A_i\to A_j,\eta_{ij}:B_i\to B_j$ the structural maps in the colimit.  Note the $\eta_{ij}$ are all homotopy equivalences. Suppose $\alpha(x)=0$. Then $x$ lifts to an element of $x\in A_i$ so that $\alpha_i(x)$ is a boundary. We have that $|x|<e^{\epsilon}$. That is, there is $\delta>0$ such that $T^{-\delta}\kappa_{ij}(x)\in A_j$ for all $j$. Pick $k>i$ such that  that the action spectrum of $H_k$ is contained in the interval $(-\delta/2,\delta/2)$. Then $\eta_{ik}\circ \alpha_i(x)$ is a boundary and it must be killed by an element $y$ such that $|y|<e^{\delta}|x|$. Thus $y$ can be lifted to an element of $B_k$ which kills $\kappa_{ik}(x)$. It follows that $x$ is null-homologous in the colimit. Injectivity follows.

We now prove surjectivity. Given an element $z=T^{-\lambda}y$ for $y\in HF^*_{uw}(\gamma;H)$ and $\lambda\in[0,\epsilon)$ we lift $z$ to some $z_i\in B_i$ whose action spectrum is contained in $(-\delta/2,\delta/2)$ for $\delta<\epsilon-\lambda$.  Let $y_i\in B_i$ be the element corresponding to $z_i$ under the naive identification of $A_i$ with $B_i$. Then $T^{-\delta'}(y_i)$ is well defined as an element of $A_i$ for all $\delta'$ in the action spectrum of $H_i$ so  $z_i$ can be lifted to $A_i$ and surjectivity follows.

\end{proof}

Let $\tau\mapsto H^{\tau}$ be a continuous monotone family of smooth Hamiltonians. 
\begin{df}\label{dfIsolatingIsotopy0}
An \emph{isotopy of well  isolated $1$-periodic components} is a family $\tau\mapsto\gamma^{\tau},\tau\in[0,1],$ of well isolated $1$-periodic components of $H^{\tau}$ such that there exists a homotopy of embeddings $f:[0,1]\times U\to M$ where 
\begin{itemize}
\item $U$ is a compact manifold with boundary,
\item $U^{\tau}:=f(\{\tau\}\times U)$ is a well isolating neighborhood of $\gamma^{\tau}$, and,
\item we can choose the primitives of $\omega$ and the trivializations of the complexified canonical bundle on $U^{\tau}$ so that they are locally constant in $\tau$.
\end{itemize}
\end{df}
Given such an isotopy define a function $\cA_t:[0,1]\to\bR$ by $t\mapsto \cA_{H^{t}}(\gamma^{t})$ where the action is defined using primitive which is locally constant in $t$.  

\begin{lm}\label{lmIstopyInfMap}
An isotopy of well isolated  components induces an isomorphism 
\begin{equation}\label{eqIsoInfFH2}
HF^*_{uw}(\gamma^0;H^0)\to HF^*_{uw}(\gamma^{\tau};H^{\tau}). 
\end{equation}
Moreover, for fixed $J,r$, there is an $\epsilon>0$ such that for any $\tau_1<\tau_2$ so that $\Delta:=\cA_{\tau_2}-\cA_{\tau_1}<\epsilon$, we have that $HF^*_{[0,\epsilon)}(U^{\tau_1};H^{\tau_1},J)$ maps into $HF^*_{[0,\epsilon)}(U^{\tau_2};H^{\tau_2},J)$ and with respect to the identifications \eqref{eqIsoInfFH} and \eqref{eqIsoInfFH2} the map is given by rescaling by $T^{\Delta}$. 
\end{lm}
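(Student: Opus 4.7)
The plan is two-fold: first build the isotopy isomorphism \eqref{eqIsoInfFH2} by subdividing $[0,1]$ and running standard continuation arguments inside unweighted Floer diagrams; then derive the $T^\Delta$ rescaling by tracking actions as in the proof of Lemma~\ref{lmIsotropicComp}.

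By compactness of $[0,1]$ and continuity of $\tau \mapsto (H^\tau, U^\tau)$, I subdivide $[0,1]$ into finitely many sub-intervals on each of which I can exhibit a common open set $V \subset M$ --- a slight thickening of $\bigcup_\tau U^\tau$ over the sub-interval --- that is well-isolating for every $\gamma^\tau$ in the sub-interval and over which the primitive of $\omega$ and the canonical-bundle trivialization extend compatibly with the data of Definition~\ref{dfIsolatingIsotopy0}. Taking a small $C^2$-neighborhood $\cN$ of the path $\tau \mapsto H^\tau$ together with a bounded $C^k$-ball $\cB$ around $J$, the estimates in the proof of Lemma~\ref{lmEpsilonSeparation} become uniform in $\tau$: their constants depend only on $C^2$ and $C^k$ bounds on $(H^\tau, J)$, which are uniform by compactness, so the Floer diagram $\cH(\cN,\cB)$ converges to each $H^\tau$ in the sub-interval and has $S_\cH(\cU_V)$ isolated. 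Monotone continuation from $H^{\tau}$ to $H^{\tau'}$ with $\tau \le \tau'$ then produces chain maps $CF^*_{uw}(V;F_\tau) \to CF^*_{uw}(V;F_{\tau'})$, while the reversed family --- which is legitimate in the unweighted setting, where continuation data need not be monotone --- provides a homotopy inverse. Contractibility of $\cH(\cN,\cB)$ makes the induced map on $HF^*_{uw}$ independent of all choices; composing along the subdivision yields the isomorphism \eqref{eqIsoInfFH2}.

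For the quantitative statement I assume first that $\tau_1<\tau_2$ lie in a common sub-interval; the general case follows by composing through the subdivision, as both actions and weights add. The monotone structure of the family gives a monotone continuation datum from $H^{\tau_1}$ to $H^{\tau_2}$, and by the isolation established above all Floer trajectories contributing to the induced map on truncated complexes remain in $V$. Using the locally-constant-in-$\tau$ primitive of $\omega$ on $V$ and Stokes' theorem on the cylinder swept out by $\tau \mapsto \gamma^\tau$, the topological-energy formula of \S\ref{SecConventions} computes
\[
E_{top}(u) \;=\; \cA_{H^{\tau_2}}(\gamma^{\tau_2}) - \cA_{H^{\tau_1}}(\gamma^{\tau_1}) \;=\; \Delta
\]
for any continuation trajectory between $\gamma^{\tau_1}$ and $\gamma^{\tau_2}$, independence of the choice of $u$ being forced by exactness of $\omega|_V$. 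Repeating the action-weighted identification $\alpha_i(x) = x_{uw} \otimes T^{-\cA_{H^{\tau_i}}(x)}$ from the proof of Lemma~\ref{lmIsotropicComp} at both endpoints, the continuation chain map translates under \eqref{eqIsoInfFH} into the unweighted isotopy isomorphism tensored with multiplication by $T^\Delta$ on $\Lambda_{[0,\epsilon)}$, as claimed.

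The main obstacle is the parametric form of Lemma~\ref{lmEpsilonSeparation}: one needs the Floer diagram $\cH(\cN,\cB)$ and the isolation of $S_\cH(\cU_V)$ to hold uniformly across a whole sub-interval of the isotopy rather than merely at a single $\tau_0$. Once this is in hand --- and it reduces to the observation that the energy, loopwise-distance and monotonicity bounds in that proof are continuous in the Hamiltonian --- the remaining steps are routine action-weighted continuation bookkeeping.
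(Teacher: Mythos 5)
Your argument follows essentially the same route as the paper: subdivide $[0,1]$ so that the isolation estimates of Lemma~\ref{lmEpsilonSeparation} hold uniformly over each sub-interval, build the isomorphism by composing two-sided continuation maps on the unweighted complexes, and recover the $T^{\Delta}$ rescaling by re-running the action-weighting $\alpha_i$ from the proof of Lemma~\ref{lmIsotropicComp}. One small imprecision: the continuation trajectories actually connect $1$-periodic orbits of the nondegenerate approximants $H_i$ rather than points of the Morse--Bott families $\gamma^{\tau_j}$, so $E_{top}(u)$ equals $\Delta$ only up to the $\delta_i$-errors; these discrepancies are exactly what the weights $T^{-\cA_{H_i}(x)}$ in $\alpha_i$ absorb, which your final step implicitly uses.
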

\begin{proof}
After fixing a $J$ we can subdivide the interval $[0,1]$ into intervals of size $\delta$ which are so small that for appropriate homotopy $s\mapsto H_s$ from $H^{k\delta}$ to $H^{(k+1)\delta}$, the small energy continuation solutions $u$ satisfy that $u(s,\cdot)\subset U^{\tau}$ where $\tau$ is some fixed value in the interval. We can assume that the difference  $H^{(k+1)\delta}-H^{k\delta}$ is so small that the continuation maps for the unweighted complexes are defined for both sides. The isomorphism \eqref{eqIsoInfFH2} is constructed as  a composition of isomorphisms $HF^*_{uw}(\gamma^{k\delta};H^{k\delta})\to HF^*_{uw}(\gamma^{(k+1)\delta};H^{(k+1)\delta})$. This map is independent of the choice of small enough $\delta$. 

For the second part of the theorem pick $\epsilon$ so that Floer trajectories of energy $\leq\epsilon$ are local as in the previous paragraph. The claim then follows by picking arbitrarily close non-degenerate Hamiltonians and following the map as defined in the proof of Lemma \ref{lmIsotropicComp}.

\end{proof}

\section{Infinitesimal symplectic cohomology}\label{secInfFH}

Let $K\subset M$ be a compact set with smooth boundary of positive contact type.  The aim of this section and the next two is develop analogous results to those of the previous section in the case of a Hamiltonian which is the indicator function 
\begin{equation}
H_K(x):=\begin{cases}
0&\quad x\in K,\\
\infty&\quad x\not\in K.\notag
\end{cases}
\end{equation}
In other words, our goal is to relate the Floer cohomology of $H_K$ in small action windows to certain unweighted Floer cohomology groups associated with the Reeb components.

The unweighted invariant is essentially the one defined in \cite{Mclean2012}.  For an isolated Reeb orbit $\gamma$ in the boundary $\partial K$, \cite{Mclean2012} associates a group $SH^*(\gamma)$. However, in order to relate the unweighted invariant to the truncated Floer cohomology we need a somewhat more refined construction.
\subsection{Symplectic cohomology of Reeb orbits}\label{SubsecSHReeb}

Let $\alpha$ be a contact form on $\partial K$ arising as a primitive of $\omega|_{\partial K}$. Assume for any $T>0$, the simple Reeb orbits of period $\leq T$ occur in a  discrete sequence $\gamma_i$ of isolated isotropic path connected components. We will denote by $\gamma_0$ the component consisting of the points of $K$. These are the periodic points which are fixed. We denote by $\cR(K)$ the set of all Reeb components including $\gamma_0$.

Before proceeding we point out that the set of periodic points under the Reeb flow depends only on $\omega$ and $\partial K$, not on the choice of primitive. Indeed the Reeb vector field is determined up to scaling as lying on the characteristic line field determined by $\omega$ and $\partial K$. The choice of $\alpha$ affects the period of each periodic point. Our isotropy assumption is that the latter is constant on each component. It will be important for the discussion below to allow $\alpha$ to change. We shall assume that $\alpha$ is of \emph{positive contact type} meaning that the Liouville field $Z$ determined by $\iota_Z\omega=\alpha$ points outwards of $K$. 

For a choice of $\alpha$ we denote by  $T(\gamma_i,\alpha):=-\int\beta_i^*\alpha$ be for $\beta_i$ the flow-line through any point of $\gamma_i$. By the isotropy condition, this is independent of the chosen point and is referred to as the period of $\gamma_i$ with respect to $\alpha$. We assume $T_i\leq T_{i+1}$ for all $i$. We refer to the set $\{T_i\}$ as the \emph{period spectrum of $\partial K$ with respect to the contact form $\alpha$}. If $\gamma_i$ is a critical component we take $T_i=0$.

A choice of $1$-form $\alpha$ gives rise to a radial coordinate $r$ in a neighborhood $V$ of $\partial K$ in which the symplectic form is given by $\omega|_V=d(e^{r}\alpha)$ so that $\partial K$ is given by $\{r=0\}$. We call such a neighborhood $V$ a \emph{symplectization neighborhood}. 

Our study of the Floer cohomology of the non-continuous function $H_K$ will involve three stages of approximation. Namely, smooth non-degenerate $\to$ smooth $\to$ continuous $\to H_K$. First we introduce strictly S-shaped Hamiltonians, which are continuous but not smooth. 

\begin{df}\label{dfSShaped}
Fix a primitive $\alpha$ and a real number $\epsilon>0$ such that there is symplectization neighborhood $V$ of $\partial K$ with proper Liouville coordinate $t:V\to(-\epsilon,\epsilon)$. Define the Hamiltonian
\begin{equation}
H_{K,a,\epsilon,\alpha}(x):=\begin{cases}0,&\quad x\in K,\\  at,&\quad x\in t^{-1}([0,\epsilon)),\\ a\epsilon, &\quad x\in M\setminus K\cup V.
\end{cases}
\end{equation}
\end{df}

Note that for fixed $\epsilon$ and $\alpha$ we have that $H_{K,a,\epsilon,\alpha}\to H_K$ as $a\to\infty$. From now on we abbreviate the data of the parameters as $I=(\alpha,\epsilon,a)$.  To refer to the individual parameters we will write $I$ as a subscript: $a_I,\epsilon_I,$ and $\alpha_I$.  Note there is  a partial order on the space of parameters $I$ by $I_1\leq I_2 \iff H_{K,I_1}\leq H_{K,I_2}$. 

\begin{figure}\label{figSShaped}
\begin{tikzpicture}
\begin{axis}[
    axis lines=middle,
    xlabel={$x$},
    ylabel={$H_{K,I}(x)$},
    xtick={0.5,0.9},
    xticklabels={$K$, $K+\epsilon_I$},
    ytick={0,1},
    yticklabels={},
    domain=0:2,
    samples=200,
    clip=false
]

% Values for K, I, and epsilon
\def\K{0.5}
\def\I{1}
\def\eps{0.4}
\def\Ii{0.9}
% Define the function H_{K,I}
\addplot[blue, thick] {(\x<=\K)*0+(\x>\K)*(\x<(\K+\eps))*(\Ii/(\eps)*(\x-\K))+(\x>=\K+\eps)*\Ii};

% Add labels
\node at (axis cs: \K/2,0) [below] {$0$};
\node at (axis cs: \K+\eps/2,\Ii) [above] {$a_I\epsilon_I$};
\end{axis}
\end{tikzpicture}
\caption{Graph of $H_{K,I}$ }
\end{figure}
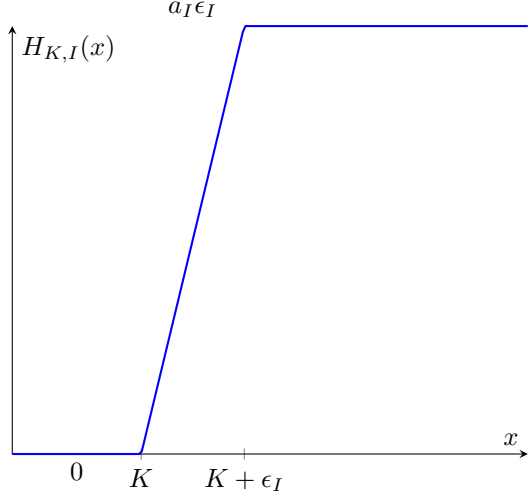

For $a>0$ let $\delta(a)$ be the minimal gap in $Spec(\partial K,\alpha)\cap [0,a]$.

Given a loop $\gamma$ and an isolating open neighborhood $V$ of $\gamma$ in $\partial K$. For any real number $r<\epsilon/10$ let $U_{r}\subset M$ be given in symplectization coordinates by $V\times (-r,r)$. Let $\cU_I(\gamma,r)$ be the set of loops in $U_{r}$  whose action is in the $\delta$ interval centered at $T(\gamma)$.

For a parameter $I$ we say that a subset $\cN\subset C^2(S^1\times M))$ is \emph{$I$-thick} if it is open and contains a cofinal sequence converging to $H_{K,I}$. 
\begin{prp}\label{prpFloerDiagGamma}
Suppose $a_I$ is not in the period spectrum. 
Then
\begin{enumerate}
\item \label{prpFloerDiagGamma:it1} For any real number $r$ and any ball $\cB$ of compatible almost complex structures there is an $I$-thick set $\cN=\cN(I,\cB,r)\subset C^2(S^1\times M)$ such that writing $\cH=\cH(\cN,\cB)$  we have that $S_{\cH}(\cU_I(\gamma,r))$ is isolated.  While the construction involves a choice,  every two choices are equivalent in the sense that their intersection is still $I$-thick. 
\item \label{prpFloerDiagGamma:it2}
For $r_2\leq r_1$ and $\cB_1\subset \cB_2$ write $\cN_i=\cN(I,r_i,\cB_i)$. Then $\cN_1\cap\cN_2$ is $I$-thick. %\cH_i=\cH_{K,I,r_i,R}$.  
Writing $\cH'_2:=\cH(\cN_1\cap\cN_2,\cB_2)$ we have
%Then $\cH'_2$ is still open and converges to $H_{K,I}$ and satisfies 
$S_{\cH'_2}(\cU_I(\gamma,r_1))=S_{\cH'_2}(\cU_I(\gamma,r_2))$.

\end{enumerate}
\end{prp}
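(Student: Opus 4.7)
The plan is to extend the Hein-type isolation argument of Lemma \ref{lmEpsilonSeparation} from smooth Hamiltonians to $H_{K,I}$, using the assumption that $a_I$ is off the period spectrum to extract a uniform action gap. I would first fix a monotone cofinal sequence of smooth functions $H_n \nearrow H_{K,I}$ constructed so that each $H_n$ equals $H_{K,I}$ outside narrow slabs around the two corners $\{t=0\}$ and $\{t=\epsilon_I\}$ of the symplectization coordinate. In the linear region the $1$-periodic orbits of $H_n$ organize into families indexed by Reeb components of $\partial K$ of period strictly less than $a_I$, each located at a definite level in the collar, and each family inheriting from its Reeb component an approximate action. The positivity of the period gap $\delta(a_I)$, which exists because $a_I$ is not itself a period, gives a uniform separation of these families in both $t$-level and action, and the family associated with $\gamma$ is the unique one meeting the $\delta$-window centered at $T(\gamma)$ inside $U_r$. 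In particular, for $n$ large this family is contained in an arbitrarily small neighborhood $U_{r'}\subset U_r$ of $\gamma$.

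Next I would set $\cN=\cN(I,\cB,r)$ to be a small $C^2$-open neighborhood of the sequence $\{H_n\}$ so that for every $H'\in\cN$ and $J\in\cB$ the following variant of Lemma \ref{lmEpsilonSeparation} holds: a Floer solution $u$ associated with a datum in $\cH(\cN,\cB)$ with one slice in $\cU_I(\gamma,r)$ and another leaving this set must spend a definite $s$-interval traversing the annulus $U_r\setminus U_{r'}$. In this annulus there are no $1$-periodic orbits of $H'$ in the relevant action range, so the loop $u_s$ is at definite distance from any time-$1$ trajectory of $H'$, and the loopwise estimate \eqref{eqLoopwiseEst} combined with Proposition \ref{PrpDistEn} and the monotonicity estimate Lemma \ref{lmMonEst++} produces a lower bound on the geometric energy of $u$ depending only on $\cN$ and $\cB$. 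Choosing $\delta$ small enough relative to this bound contradicts the assumption that the input and output actions lie in the $\delta$-window, so every such $u$ is in fact confined to $\cU_I(\gamma,r')$, which gives isolation.

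The main obstacle is the absence of uniform $C^2$ bounds on $\{H_n\}$: derivatives blow up at the corners as $n\to\infty$. The resolution is that the estimates above are needed only in the fixed annulus $U_r\setminus U_{r'}$, where for $r'>0$ chosen in advance the sequence $\{H_n\}$ is eventually constant equal to the affine Hamiltonian $H_{K,I}$ and hence uniformly $C^2$; the constants in Proposition \ref{PrpDistEn} and Lemma \ref{lmMonEst++} therefore depend only on $\cN$ and $\cB$. The well-definedness claim in part \eqref{prpFloerDiagGamma:it1} is then formal: given two $I$-thick sets $\cN$ and $\cN'$ of the above form, interleaving their defining cofinal monotone sequences yields a cofinal monotone sequence inside $\cN\cap\cN'$, so the intersection is $I$-thick as well. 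For part \eqref{prpFloerDiagGamma:it2}, the inclusion $\cU_I(\gamma,r_2)\subset\cU_I(\gamma,r_1)$ makes one direction trivial. For the other, any Floer solution in $\cH'_2$ with all slices in $\cU_I(\gamma,r_1)$ has its input and output orbits automatically in the smaller isolating neighborhood $U_{r'_2}\subset U_{r_2}$ by the orbit localization established above, and applying the isolation of part \eqref{prpFloerDiagGamma:it1} for the pair $(r_2,\cB_2)$ confines all intermediate slices to $\cU_I(\gamma,r_2)$ as well.
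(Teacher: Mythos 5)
There is a genuine gap, and it is precisely the difficulty the paper flags immediately after Proposition~\ref{prpActionWindowComponent}: the argument of Lemma~\ref{lmEpsilonSeparation} does not transfer because the approximating family $\cK(K,I,\eta)$ is not $C^2$-close to a fixed Hamiltonian near the periodic orbits of interest. Your resolution --- that the estimates are only needed in an annulus $U_r\setminus U_{r'}$ where the approximating sequence is eventually constant equal to $H_{K,I}$ --- rests on the premise that the $1$-periodic orbits of $H_n$ sit ``at a definite level in the collar,'' so a fixed $U_{r'}$ can contain them while avoiding the corner slabs. This premise is false. For $H_n\in\cK(K,I,\eta_n)$ the orbits with period $T(\gamma)\in(0,a_I)$ occur where $h_n'(e^t)=T(\gamma)$, and since $h_n'$ passes from $0$ to $a_I$ over the shrinking interval $(0,\eta_n/8)$, these levels migrate into the corner as $n\to\infty$. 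So the annular region you need to traverse necessarily includes radial levels at which $h_n''$ is blowing up and at which $h_n'(e^\rho)$ can be arbitrarily small. At those levels the time-$1$ flow of $H_n$ is arbitrarily close to the identity, so the loop $u_s$ is \emph{not} ``at definite distance from any time-$1$ trajectory,'' and the Lipschitz constant entering Proposition~\ref{PrpDistEn} (which depends on $D X_{H_n}$, hence on $h_n''$) is not uniform. Both ingredients of the loopwise estimate~\eqref{eqLoopwiseEst} degenerate exactly where you need them, and separating the radial and tangential escape (which your argument does not do) does not by itself fix this, because escape in the $\partial K$-direction can occur entirely at small radial level.

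The paper's proof handles this with two devices you do not mention. First, Lemma~\ref{lmReparam}: the $L^1$ energy of a loop with respect to $H_n$ is reparametrized into the $L^1$ energy of a time-stretched loop with respect to the fixed Hamiltonian $H_1(x,\rho)=e^{\rho}$, whose flow has $C^2$-uniform geometry independent of $n$. This trades the degenerate constants for a stretched flow time $\tau(1)=\int h_n'$. Second, a case split on $\tau(1)$: if $\tau(1)>t_0$ the time-$\tau(1)$ Reeb-type flow moves the loop a definite amount; if $\tau(1)<t_0$ the flow moves little, but then the action constraint $\cA(u_s)\approx T(\gamma)\neq 0$ plus the isoperimetric inequality forces the loop length to be bounded below, which again yields a uniform lower bound on $E^1_H(u(s,\cdot))$. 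Your appeal to ``the relevant action range'' gestures at this but does not supply the mechanism; the action constraint enters not to rule out periodic orbits but to force a length lower bound when the flow is slow. Without the reparametrization and the isoperimetric argument the energy lower bound you want simply does not follow, and the constants cannot be made to ``depend only on $\cN$ and $\cB$'' as claimed. The formal parts of your write-up (reduction to Proposition~\ref{prpActionWindowComponent}, the intersection being $I$-thick, and the two-way inclusion in part~\ref{prpFloerDiagGamma:it2}) do follow the paper's structure and are fine once the core isolation estimate is established.
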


The best way to understand the content of this notation heavy statement is to discuss its consequence which we turn to do.  The proof will be given in the next subsection.

\begin{cy}
Let
\begin{equation}
HF^*_{uw}(\gamma;I,\cB,r):=\varinjlim_{(H,J)\in\cN(I,\cB,r)}HF^*_{uw}(\cU_I(\gamma,r);H,J). 
\end{equation}
Then the Floer groups $HF^*_{uw}(\gamma;I,\cB,r)$ depend only on $\gamma$ and $I$. That is, for any two choices for the other parameters there is a canonical isomorphism between the corresponding groups.  Thus we may abbreviate the push-out of all these isomorphisms by $HF^*_{uw}(\gamma;I).$

\end{cy}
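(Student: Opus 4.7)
The plan is to show that any two admissible parameter choices yield canonically isomorphic colimit groups by passing to a common refinement. Given two choices $(\cB, r)$ and $(\cB', r')$, I first pick an enlarged ball $\cB''$ containing both $\cB$ and $\cB'$, along with a radius $r'' \leq \min(r, r')$. Applying part \ref{prpFloerDiagGamma:it2} of the Proposition to the pairs $(\cB \subset \cB'', r'' \leq r)$ and $(\cB' \subset \cB'', r'' \leq r')$ shows that the intersections $\cN_a := \cN(I, \cB, r) \cap \cN(I, \cB'', r'')$ and $\cN_b := \cN(I, \cB', r') \cap \cN(I, \cB'', r'')$ are both $I$-thick.

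Next, I observe that replacing an $I$-thick index set by a smaller $I$-thick subset does not alter the colimit. Indeed, given $\cN'' \subset \cN$ both $I$-thick, the cofinal monotone sequence in $\cN''$ converging to $H_{K,I}$ dominates any fixed non-degenerate $(H, J) \in \cH(\cN, \cB)$, supplying a continuation map into a term in the subdiagram $\cH(\cN'', \cB)$. This establishes cofinality and hence an isomorphism of colimits. Combined with the equality $S_{\cH(\cN_a, \cB'')}(\cU_I(\gamma, r)) = S_{\cH(\cN_a, \cB'')}(\cU_I(\gamma, r''))$ supplied by part \ref{prpFloerDiagGamma:it2}---which forces the generating periodic orbits and connecting Floer trajectories to agree term-by-term on this sub-diagram---we obtain canonical isomorphisms
\[
HF^*_{uw}(\gamma; I, \cB, r) \;\cong\; \varinjlim_{(H,J) \in \cH(\cN_a, \cB'')} HF^*_{uw}(\cU_I(\gamma, r''); H, J) \;\cong\; HF^*_{uw}(\gamma; I, \cB'', r''),
\]
and an identical chain of isomorphisms using $\cN_b$, yielding the desired comparison $HF^*_{uw}(\gamma; I, \cB, r) \cong HF^*_{uw}(\gamma; I, \cB', r')$.

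The main subtlety---and the point I expect to be the chief obstacle---is the canonicity of the resulting isomorphism, namely its independence of the choice of common refinement $(\cB'', r'')$. This reduces to a cocycle condition: any two candidate refinements admit a further common refinement, and a third application of part \ref{prpFloerDiagGamma:it2} on the triple intersection identifies the two resulting comparison isomorphisms. This is formal bookkeeping and uses no additional Floer-theoretic input; the only care required is to verify that the triple intersection of $I$-thick sets remains $I$-thick, which follows by iterating the intersection clause of part \ref{prpFloerDiagGamma:it2}. Once canonicity is in hand, the push-out over all choices of $(\cB, r)$ is unambiguously defined and we may write $HF^*_{uw}(\gamma; I)$ for the common group.
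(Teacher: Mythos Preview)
Your proposal is correct and follows essentially the same approach as the paper: both invoke part~\ref{prpFloerDiagGamma:it2} of Proposition~\ref{prpFloerDiagGamma} to identify the groups for comparable parameter choices via the $I$-thickness of $\cN_1\cap\cN_2$ and the coincidence of the Floer solution sets $S_{\cH}$. The paper's proof is terser---it writes only the single chain of equalities for an ordered pair of parameters and leaves the common-refinement and cocycle bookkeeping implicit---but the substance is the same.
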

\begin{proof}

First observe that independence of $J$ is subsumed under independence of $R$ by definition.   Given $r_1\leq r_2$ and $\cB_1\geq \cB_2$,    \ref{prpFloerDiagGamma}\ref{prpFloerDiagGamma:it2} gives

$$
HF^*_{uw}(\cU_1;\cH(\cN_1,\cB_1))=HF^*_{uw}(\cU_2;\cH(\cN_1,\cB_1))=HF^*_{uw}(\cU_2;\cH(\cN_2,\cB_2))
$$

\end{proof} 
In fact, we will show
\begin{prp}\label{prpInfSHGermDep}
The Floer group $HF^*_{uw}(\gamma;H_{K,I})$ depends only on the germ of $(M,\partial K)$ and $I$ in a neighbourhood of $\gamma$ in the sense that a symplectomorphism of the germs induces an isomorphism of the corresponding groups. 
\end{prp}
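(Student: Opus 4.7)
The plan is to mimic the proof of Lemma \ref{lmInfSHGermDep}, using the robustness built into Proposition \ref{prpFloerDiagGamma} to confine all relevant Floer data and Floer solutions to an arbitrarily small neighbourhood of $\gamma$ where the germ symplectomorphism is defined. Concretely, suppose we are given another triple $(M', K', I')$ with a Reeb component $\gamma'$ together with a symplectomorphism $\phi \colon (U,\gamma) \to (U',\gamma')$ of open neighbourhoods such that $\phi(\partial K \cap U) = \partial K' \cap U'$ and $\phi^*\alpha_{I'} = \alpha_I$ on a neighbourhood of $\gamma$ in $\partial K$. After possibly shrinking $U$ and $U'$, the Liouville coordinate on one side is pulled back from the other, so $\phi^* H_{K',I'} = H_{K,I}$ on $U$, up to a choice of $\epsilon$ which only contracts $U$ further.

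First I would invoke Proposition \ref{prpFloerDiagGamma}\ref{prpFloerDiagGamma:it2} with a choice of $r>0$ so small that the closure of $\cU_I(\gamma, r)$ consists of loops contained in $U$. By the isolation statement in Proposition \ref{prpFloerDiagGamma}\ref{prpFloerDiagGamma:it1} we may additionally arrange, by shrinking the $I$-thick set $\cN$ and the ball $\cB$ if necessary, that every periodic orbit and every Floer solution for $\cH := \cH(\cN,\cB)$ that touches $\cU_I(\gamma,r)$ is entirely contained in $U$. The critical point is that this isolation estimate is \emph{robust}: it depends only on the behavior of the Floer data on $U$, not on their global structure. Hence any cofinal collection of data on $U$ converging to $H_{K,I}|_U$ can be extended to global Floer data in $\cN$ without affecting the chain complex $CF^*_{uw}(\cU_I(\gamma,r); H, J)$.

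Next I would transport data via $\phi$. Given a Floer datum $(H,J) \in \cN(I,\cB,r)$ with $H$ agreeing with $\phi^* H_{K',I'}$ on $U$ and $J$ with $\phi^* J'$ on $U$, the symplectomorphism $\phi$ identifies the loop space $\cU_I(\gamma,r)$ with $\cU_{I'}(\gamma', r)$, identifies periodic orbits, and identifies Floer solutions (which by construction remain inside $U$, respectively $U'$). This yields a chain isomorphism at the level of models, which by cofinality of data satisfying the compatibility is independent of the choices made in $\cN$, $\cB$, $r$, and passes to the colimit defining $HF^*_{uw}(\gamma; I)$ as in the corollary preceding the proposition. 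Naturality in $\phi$, and the fact that this construction gives the identity when $\phi$ is the identity, are immediate from the construction.

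The main obstacle is the coherent production of compatible pairs of Floer data on both sides: one needs that the set of $(H,J)$ in $\cN(I,\cB,r)$ whose restriction to $U$ transports under $\phi$ to a datum in $\cN(I',\cB',r)$ is still $I$-thick in the sense of Proposition \ref{prpFloerDiagGamma}. This follows by combining the intersection clause in \ref{prpFloerDiagGamma}\ref{prpFloerDiagGamma:it2} with an explicit extension procedure: start with any approximating sequence $H_i \to H_{K,I}|_U$ on $U$, transport it via $\phi$ to an approximating sequence for $H_{K',I'}|_{U'}$, and extend both to global approximating sequences using cutoff functions that are constant on $\cU_I(\gamma,r)$ and $\cU_{I'}(\gamma',r)$ respectively. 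Since the $C^2$ topology only cares about derivatives on compact sets meeting the relevant orbits and since all estimates are local by the preceding step, this extension lands in the required $I$-thick neighbourhoods.
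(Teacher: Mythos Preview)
Your proposal is correct and follows essentially the same approach as the paper: the paper's proof simply notes that the confinement estimates (monotonicity from Lemma \ref{lmMonEst++} and the energy--distance estimate from Proposition \ref{PrpDistEn}) are robust in the sense that the constants depend only on the local behaviour of the data, and then defers to the argument of Lemma \ref{lmInfSHGermDep}. Your write-up unpacks this in more detail, in particular the transport of Floer data through $\phi$ and the production of compatible $I$-thick families on both sides, but the underlying idea is the same.
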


Finally,  we will prove the independence on the parameter $I$.   Given an $I$ thick set $\cN\subset C^2(S^1\times M)$  a subset $\cN'\subset\cN$ is called \emph{almost full} if there is an $H<H_{K,I}$ such that $\cN'$ contains all elements of $\cN$ which are greater than $H$.

\begin{prp}\label{prpFloerDiagMor}
Given a monotone sequence $I_1\leq\cdots \leq I_n$, numbers $r_i$ and ball $\cB_i$ for $i=1,\dots n$ then there is an open $\cN$ such that letting $r=min\{r_i\}$, $\cB$ a ball containing the $\cB_i$,  and $\cH=\cH(\cN,\cB)$, the following hold
\begin{enumerate}
\item For each $i$ the intersection $\cN\cap\cN_i$ is an almost full subset of $\cN_i$. 
\item $\cN_n$ is cofinal in $\cN$ 
\item $S_{\cH}(\cU(\gamma,r))$ is isolated.  
\end{enumerate}
We thus get induced maps  $HF^*_{uw}(\gamma;I_1)\to HF^*_{uw}(\gamma;I_2)$ for $I_1\leq I_2$. These map are functorial and are isomorphisms.  
\end{prp}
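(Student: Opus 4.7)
The plan is to construct $\cN$ by amalgamating the individual sets $\cN_i$ from Proposition \ref{prpFloerDiagGamma}, define the continuation maps via the Floer diagram $\cH=\cH(\cN,\cB)$, verify functoriality, and finally prove the isomorphism claim using an isotopy argument paralleling Lemma \ref{lmIstopyInfMap}.

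First I would apply Proposition \ref{prpFloerDiagGamma} to each triple $(I_i,\cB_i,r_i)$ to obtain the $I_i$-thick sets $\cN_i$. Extend the discrete sequence $I_1\leq\cdots\leq I_n$ to a continuous monotone path $\tau\mapsto I(\tau)$, $\tau\in[1,n]$, with $I(i)=I_i$, interpolating the slopes $a_{I_i}$ and Liouville radii $\epsilon_{I_i}$ (keeping $\alpha$ constant). Along this compact path, the monotonicity and loop-wise energy estimates in the proof of Proposition \ref{prpFloerDiagGamma} can be made uniform in $\tau$ and in $J\in\cB$, because $\gamma$ and its symplectization neighborhood are fixed and only the slope of $H_{K,I(\tau)}$ varies. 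Define $\cN$ as a $C^2$-neighborhood of $\{H_{K,I(\tau)}\}_{\tau\in[1,n]}$ shrunk enough that every Floer, continuation, or homotopy-of-continuation trajectory produced by $\cH$ whose loops meet $\cU(\gamma,r)$ has all loops contained in $\cU(\gamma,r)$; this gives property (3). Property (1) follows because $\cN$ contains a fixed-size $C^2$-neighborhood of each $H_{K,I_i}$, so every $H\in\cN_i$ larger than some $H'<H_{K,I_i}$ lies in $\cN$. Property (2) follows because $H_{K,I_n}$ pointwise dominates every $H_{K,I(\tau)}$, so any $H\in\cN$ can be bounded above by a smooth approximation of $H_{K,I_n}$ in $\cN_n$.

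To construct the induced map for $I_1\leq I_2$, pick cofinal monotone sequences $H^{(1)}_k\to H_{K,I_1}$ in $\cN_1\cap\cN$ and $H^{(2)}_k\to H_{K,I_2}$ in $\cN_2\cap\cN$ arranged so that $H^{(1)}_k\leq H^{(2)}_k$; such sequences exist because $H_{K,I_1}\leq H_{K,I_2}$. Monotone continuation data inside $\cH$ gives chain maps between the unweighted Floer complexes on $\cU(\gamma,r)$, and passing to the (homotopy) colimit in $k$ yields $HF^*_{uw}(\gamma;I_1)\to HF^*_{uw}(\gamma;I_2)$. Independence of the choices (the sequences and the continuation data) follows from the contractibility of homotopies in $\cH$, exactly as in Lemma \ref{lmInfSHGermDep}. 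Functoriality for $I_1\leq I_2\leq I_3$ is obtained by applying the amalgamation to all three parameters simultaneously and composing continuations inside a single diagram $\cH$.

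The main step, and the main obstacle, is showing the maps are isomorphisms. My plan is to reduce to a piecewise isotopy argument in the spirit of Lemma \ref{lmIstopyInfMap}. Subdivide $[1,n]$ into subintervals so small that for sufficiently close smooth approximations $H^{(\tau)}\in\cN\cap\cN_{i(\tau)}$ of $H_{K,I(\tau)}$, the $1$-periodic orbits in $\cU(\gamma,r)$ vary by a well-isolated isotopy (an analogue of Definition \ref{dfIsolatingIsotopy0} adapted to the non-smooth family $H_{K,I(\tau)}$; the key point is that $\gamma$ itself is an honest Reeb orbit independent of $I$, and its incarnation as a $1$-periodic orbit of a smooth approximation is governed only by the geometry of the transition region). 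On each subinterval the uniform isolation built into $\cN$ forces small-energy continuation trajectories between successive approximations to remain in $\cU(\gamma,r)$, so the continuation map on unweighted cohomology is the continuation map of the perturbed smooth data. The unweighted analogue of Lemma \ref{lmIstopyInfMap} then gives an isomorphism on each piece, and composition gives the global isomorphism. The remaining bookkeeping is to identify this composite with the map defined in the previous paragraph; compatible choice of approximating sequences along the path of parameters, together with contractibility of homotopies in $\cH$, supplies this identification.
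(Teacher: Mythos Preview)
Your construction of $\cN$ and the induced maps is close in spirit to the paper's, though the paper works more discretely: it takes $\cN$ to be a small $C^2$-neighborhood of the finite union $\bigcup_{i=1}^n \cK(K,I_i,\eta)$ for a single small $\eta$, rather than a neighborhood of a continuous interpolating family $\{H_{K,I(\tau)}\}$. Your continuous interpolation is not obviously harmless, since the slope $a_{I(\tau)}$ will typically cross the period spectrum, which is exactly the hypothesis excluded in Proposition~\ref{prpFloerDiagGamma}; you would need to argue separately that this does not interfere with isolation near $\gamma$. The paper sidesteps this by never leaving the finitely many parameters $I_1,\dots,I_n$.

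The substantive divergence is in the isomorphism step. You invoke an isotopy argument modeled on Lemma~\ref{lmIstopyInfMap}, but that lemma concerns smooth Hamiltonians and a genuine isotopy of the periodic component; here $\gamma$ is fixed and only the $S$-shape is changing, so the adaptation is not automatic and you leave it at the level of a sketch. The paper instead gives a direct and more explicit argument: for $I_1=(a_1,\alpha,\epsilon)$ and $I_2=(a_2,\alpha,\epsilon)$ it exhibits cofinal pairs $H^1_\eta\in\cK(K,I_1,\eta)$ and $H^2_{\eta,\zeta}\in\cK(K,I_2,\zeta)$ with $\eta\ll\zeta$ chosen so that the two Hamiltonians \emph{coincide} on the symplectization interval $(-\zeta/2,\zeta/2)$ containing the inner incarnation of $\gamma$. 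Since all relevant trajectories are trapped in this interval, the unweighted continuation maps are defined in both directions and are mutually inverse. This buys a clean isomorphism without any subdivision or appeal to isotopy invariance.

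Finally, there is a genuine gap in your treatment of the contact primitive: you explicitly assume $\alpha$ is constant along the path, but the proposition allows the $I_i$ to carry different primitives $\alpha_{I_i}$. The paper handles this separately, after the fixed-$\alpha$ case, by a $2$-out-of-$3$ argument: given $I_1\leq I_2$ with possibly different primitives, one interposes $I'_1>I_1,I_2$ and $I'_2>I'_1$ with $\alpha_{I'_i}=\alpha_{I_i}$, uses the already-established isomorphisms $I_i\to I'_i$, and deduces that $I_1\to I_2$ is an isomorphism by factoring. Your argument as written does not cover this case.
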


As a result, for each periodic component we denote by $SH^*_{uw}(\gamma)$ the push-out of all these isomorphisms. In more down to earth terms, $SH^*_{uw}(\gamma)$ can be computed by taking a Floer datum $(H,J)$ for $H$ a Hamiltonian which is so close to being S-shaped that Floer trajectories formed by connecting periodic orbits which are near $\gamma$ are confined to an isolating neighborhood of $\gamma$. We then take $SH^*_{uw}(\gamma)$ to be the unweighted complex formed by these orbits and trajectories. The Floer diagram is needed to keep track of how the unweighted groups obtained by different choices of $(H,J)$ are identified.

\subsection{Proofs}

\subsubsection{Construction of the Floer diagrams}\label{SubsecConstr}

We would like to approximate the continuous functions $H_{K,I}$ by smooth functions. 

 Fix any $\eta\ll \min\{\epsilon,\delta(a)\}$ and denote by $\cK(K,I,\eta)$ the set of all Hamiltonians $H$ such that

\begin{enumerate}
\item $H|_{M\setminus t^{-1}(0,\epsilon)}$ is $C^2$ small enough to have no non-trivial periodic orbits. 
\item In the region $t^{-1}(-\epsilon,\epsilon)$ we have that $H$ factors as $H=h\circ e^t$ and we require the following behavior of $h$ and $H$.
\begin{enumerate}
\item $0>H|_K>-\eta/8$.
\item $h'\equiv a$ on the interval $(\eta/8,\epsilon-\eta/8)$.
\item $h$ is convex on $[0,\eta/8)$ and concave on $[\epsilon-\eta/8,\epsilon]$. 
\end{enumerate}

\end{enumerate}

For each $\eta\ll\epsilon$ fix once and for all an element $H_{K,I,\eta}\in\cK(K,I,\eta)$. We refer to $\eta$ as the \emph{smoothing parameter}. If $\eta_i\to 0$ then  $H_{K,I,\eta_i}$  converges uniformly to $H_{K,I}$. Fix once and for all a  sequence $\eta_i\to 0$ such that $K_{K,I,\eta_i}$ is monotone.

For $H=H_{K,I,\eta}$ with $a_I$ not in the period spectrum, the set of $1$-periodic components $Per(H)$ can be described as follows. There are the fixed points inside, which we may take to be just one component $\gamma_0$ which coincides with $K$. Then each Reeb component $\gamma$ with $T(\gamma,\alpha)<a$ occurs in two incarnations, an \emph{inner incarnation} $\gamma^{i}$ and  and \emph{outer incarnation} $\gamma^{o}$. Finally, we have orbits which occur away from the union of $K$ with the symplectization neighborhood of $V$ of  $\partial K$.

We refer to the components in the inner incarnation together with the fixed points inside $K$ as \emph{convex orbits} and to components in the second one together with critical points outside $V$ as \emph{concave orbits}. 

Even though we are in a non-exact setting, since the functions $H_{K,I,\eta}$ have no non-trivial periodic orbits outside of the symplectization neighborhood we can unambiguously assign an action to each $1$-periodic component. These behave as follows:
\begin{equation}
\cA(\gamma)=T(\gamma,\alpha)+O(\eta)
\end{equation} 
for convex orbits, and,
\begin{equation}
\cA(\gamma)\leq(1+\epsilon)T(\gamma,\alpha)-(\epsilon-2\eta)a,
\end{equation}
for concave orbits,
where for $\gamma$ a $1$-periodic orbit of $H$ we denote by $T(\gamma,\alpha)$ the alpha period of the underlying Reeb component. If $\gamma$ happens to be a fixed point we just take $T(\gamma,\alpha)=0$. Note we typically take $\eta\ll\epsilon$. So, for $T(\gamma,\alpha)$ fixed, the action goes to $-\infty$ as $a$ increases to $\infty$.

\subsubsection{Proof of Proposition \ref{prpFloerDiagGamma}}

We will take $\cN(I,\cB,r)$ to be a sufficiently small neighborhood of $\cK(K,I,\eta)$ for $\eta$ small enough.  Clearly this is an $I$-thick set. We need to prove that by taking the neighborhood and $\eta$ small enough we have isolatedness.  Let us formulate this as a proposition. 
\begin{prp}\label{prpActionWindowComponent}
If $\eta$ is small enough and $\cN$ is a small enough  neighborhood of $\cK(K,I,\eta)$ in $C^2$ then  for $\cH=\cH(\cN,\cB)$ we have that $S_{\cH}(\cU_I(\gamma,r))$ is isolated. 
\end{prp}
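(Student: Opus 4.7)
The plan is to combine two ingredients: first, the gap condition on the period spectrum together with the smoothing parameter $\eta \ll \delta(a_I)$ forces $\gamma^i$ to be the only $1$-periodic cluster of $H_{K,I,\eta}$ in $U_r$ whose action meets the $\delta$-window; second, Lemma \ref{lmEpsilonSeparation} applied to $\gamma^i$ as an isolated $1$-periodic component of the smooth Hamiltonian $H_{K,I,\eta}$ provides the Hein-type confinement estimate for low-energy Floer solutions. Monotonicity of the action in $s$ then takes care of the action-theoretic side of isolatedness, while Lemma \ref{lmEpsilonSeparation} takes care of the spatial side.

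\emph{Local structure.} Fix $H = H_{K,I,\eta}$. By the analysis in \S\ref{SubsecConstr}, the only $1$-periodic components of $H$ in $U_r$ (for $r < \epsilon_I$) are the inner incarnation $\gamma^i$ with action $T(\gamma,\alpha_I) + O(\eta)$ and the outer incarnation $\gamma^o$ with action $(1+\epsilon_I) T(\gamma,\alpha_I) - (\epsilon_I - 2\eta) a_I + O(\eta)$. Shrinking $\eta$ relative to $\delta(a_I)$, $\epsilon_I$, and $a_I - T(\gamma,\alpha_I) > 0$, these two actions are separated by more than $\delta(a_I)$, so the open $\delta$-window at $T(\gamma,\alpha_I)$ selects only $\gamma^i$ among the $1$-periodic orbits of $H$ in $U_r$. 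By the implicit function theorem the same property persists for every $H'$ in a small $C^2$-neighborhood of $\cK(K,I,\eta)$, and the actions of the corresponding perturbed $1$-periodic orbits vary continuously in $H'$.

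\emph{Confinement plus energy bound.} Apply Lemma \ref{lmEpsilonSeparation} to $H$ with isolated family $\gamma^i$ and any $V' \Subset U_r$ containing $\gamma^i$ but disjoint from $\gamma^o$. This yields an energy scale $\epsilon_0 > 0$ and a $C^1$-neighborhood $\cN_1$ of $\cK(K,I,\eta)$ (depending only on $\cN_1$ and $\cB$, by the robustness clause) such that every Floer/continuation/homotopy solution in $\cH(\cN_1,\cB)$ with asymptotes in $V'$ and geometric energy at most $\epsilon_0$ is confined to $V'$. Now take $\cN \subset \cN_1$ so small that every $1$-periodic orbit of every $H' \in \cN$ lying in $U_r$ and in the $\delta$-window sits in $V'$ and has action within $\epsilon_0/4$ of $T(\gamma,\alpha_I)$. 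For any $u$ arising in $\cH(\cN,\cB)$ with loops in $\cU_I(\gamma,r)$, the asymptotes $\gamma_\pm$ satisfy these bounds, so the monotonicity identity $E_{geo}(u) \leq E_{top}(u) = \cA(\gamma_-) - \cA(\gamma_+) < \epsilon_0/2$ (which extends from Floer trajectories to monotone continuation and to homotopies between continuation data) places $u$ inside the regime of the confinement estimate. Hence every loop $u(s,\cdot)$ lies in $V'$, bounded away from $\partial U_r$, and by monotonicity of $s \mapsto \cA(u(s,\cdot))$ its action stays within $\epsilon_0/4 < \delta/2$ of $T(\gamma,\alpha_I)$, strictly inside the $\delta$-window. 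Both bounds are uniform on $\cH(\cN,\cB)$, so any $C^0$-limit of such loops still satisfies them and therefore lies in $\cU_I(\gamma,r)$, which is the desired isolatedness.

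The main technical subtlety is the order of quantifiers. The scale $\epsilon_0$ produced by Lemma \ref{lmEpsilonSeparation} depends on the neighborhood $\cN_1$ (and on $\cB$) chosen at the outset, while the action-difference bound is achieved by shrinking to a strictly smaller $\cN \subset \cN_1$. The argument therefore rests essentially on the uniformity built into Lemma \ref{lmEpsilonSeparation}, namely that the confinement constants depend only on $\cN_1 \times \cB$ and not on the specific Floer datum; without this, one would need a separate energy estimate for each of the continuation and homotopy data that make up the Floer diagram $\cH(\cN,\cB)$, nullifying the point of the diagrammatic formulation.
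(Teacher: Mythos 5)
Your overall outline (isolate the 1-periodic component, confine low-energy trajectories, close by monotonicity of action) is the right shape, and your first step (identifying $\gamma^i$ as the unique periodic cluster in $U_r$ inside the $\delta$-window for $\eta$ small) is sound. However, the pivotal second step has a genuine gap: you cannot invoke Lemma~\ref{lmEpsilonSeparation} to produce confinement constants that are uniform over the Floer diagram $\cH(\cN,\cB)$.

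Lemma~\ref{lmEpsilonSeparation} delivers an energy scale $\epsilon_0$ and a $C^1$-small neighborhood of a \emph{single} smooth Hamiltonian $H$, with constants that depend on the flow of $H$ (in particular on the Lyapunov constant of $\psi_t^H$ over the isolating region). But any $I$-thick $\cN$ must contain a sequence of smooth Hamiltonians that converge pointwise to the non-smooth $H_{K,I}$; these are the elements of $\cK(K,I,\eta)$ with increasingly sharp corners. On the corner regions of the S-shape the first derivative of $h$ rises from near $0$ to $a_I$ over an $\eta/8$-interval, so the members of $\cK(K,I,\eta)$ are not mutually $C^1$-close (nor $C^2$-close) near $\partial K$, and their time-1 flows have Lyapunov constants that blow up. Consequently the energy-distance inequality that underlies Lemma~\ref{lmEpsilonSeparation} does not hold with a constant uniform over $\cH(\cN,\cB)$, and your "robustness clause" cannot carry the argument across this family. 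The paper explicitly flags this obstruction right after the statement of the proposition. The fix in the paper is the reparametrization lemma (Lemma~\ref{lmReparam}): one rewrites the $L^1$ energy of each loop $u_s$ with respect to $H=h\circ H_1$ as the $L^1$ energy with respect to the fixed autonomous Hamiltonian $H_1$ (Reeb flow), traversed for a reparametrized time $\tau(1)\le a_I$. Since $H_1$ is fixed and $a_I$ is a uniform bound, the energy-distance constants become uniform over the whole diagram; a separate case analysis ($\tau(1)$ large versus small, supplemented by an isoperimetric bound on loop length) finishes the loopwise estimate. Your proposal omits this trick entirely, and without it the $\pi_1$-confinement fails.
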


Note this does not follow from the argument of Lemma \ref{lmEpsilonSeparation} since $\cK(K,I,\eta)$ does not consist of Hamiltonians that are $C^k$ close to each other on the neighborhood of some $1$ periodic orbit of a fixed Hamiltonian. The proof of Proposition \ref{prpActionWindowComponent} will require the following observation. Fix as before a symplectization neighborhood $V\simeq\partial K\times (-\epsilon,\epsilon)$ and let $\rho$ be the coordinate in the $\bR$ direction. Denote by $H_1(x,\rho)=e^\rho$ the Hamiltonian whose flow is the Reeb flow in a symplectization neighborhood. Let $H_2=h\circ H_1(x,\rho)$ for some monotone function $h$. Let $\gamma=(\alpha,a):[0,T]\to V$ be a smooth path.  Consider the function $\tau(t):=\int_0^t h'\circ a$ on the interval $[0,T]$. It is monotone, and so, invertible. For any $H$ and $t$  and for any $p\in\bR_+$ denote by $E^p_H(\gamma;t)$ the energy $\int_0^t\|\gamma'(x)-X_H\circ\gamma(x)\|^pdx.$ When there is no superscript, assume $p=2$.
\begin{lm}\label{lmReparam}
We have
\begin{equation}
E^1_{H_2}(\gamma;t)=E^1_{H_1}(\gamma\circ \tau^{-1},\tau(t)).
\end{equation}
\end{lm}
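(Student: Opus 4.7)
The plan is to exploit the fact that $X_{H_2}$ is a pointwise, non-negative rescaling of $X_{H_1}$ along $\gamma$, and then to perform the obvious change of variables $s \mapsto \tau(s)$ in the defining integral.

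First, since $H_2 = h \circ H_1$, the Hamiltonian chain rule gives $X_{H_2} = (h' \circ H_1)\, X_{H_1}$. Evaluating along $\gamma$, this reads $X_{H_2}(\gamma(s)) = \tau'(s)\, X_{H_1}(\gamma(s))$, where $\tau'(s)$ is precisely the integrand defining $\tau$.

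Next, I set $\beta := \gamma \circ \tau^{-1}$, so that $\gamma(s) = \beta(\tau(s))$. The chain rule gives $\gamma'(s) = \tau'(s)\, \beta'(\tau(s))$, and combining with the identity above I obtain
\begin{equation*}
\gamma'(s) - X_{H_2}(\gamma(s)) = \tau'(s)\bigl[\beta'(\tau(s)) - X_{H_1}(\beta(\tau(s)))\bigr].
\end{equation*}
Since $h$ is monotone, $\tau'(s) \geq 0$, so taking norms pulls this factor out as a non-negative scalar:
\begin{equation*}
\|\gamma'(s) - X_{H_2}(\gamma(s))\| = \tau'(s)\, \|\beta'(\tau(s)) - X_{H_1}(\beta(\tau(s)))\|.
\end{equation*}

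Finally, I would integrate this identity over $[0,t]$ and substitute $u = \tau(s)$, $du = \tau'(s)\,ds$, which converts the left-hand side directly into $\int_0^{\tau(t)} \|\beta'(u) - X_{H_1}(\beta(u))\|\, du = E^1_{H_1}(\gamma\circ\tau^{-1};\,\tau(t))$, as required.

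There is essentially no obstacle: the whole argument is one application of the chain rule plus one substitution. The only point worth flagging is that the identity genuinely requires the exponent $p=1$. For $p \neq 1$ one would pick up an extra factor of $(\tau'(s))^{p-1}$ after the change of variables, which would spoil the clean equality; this is presumably why the lemma is stated only for the $L^1$ energy. The monotonicity of $h$ is used exclusively to drop an absolute value on $\tau'$ when taking norms.
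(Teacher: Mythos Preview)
Your proof is correct and follows essentially the same route as the paper's: both use $X_{H_2}=(h'\circ H_1)X_{H_1}$ together with the change of variables $s\mapsto\tau(s)$ (the paper writes it via $g=\tau^{-1}$), and both rely on $p=1$ so that the Jacobian factor cancels. Your additional remarks on why $p=1$ is essential and where monotonicity of $h$ enters are accurate and make the argument slightly more transparent than the paper's version.
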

\begin{proof}
Let $g=\tau^{-1}$. Then $g'(s)={h'(a(g(s))}^{-1}$. 
Substituting in the integral $dt=g'(s)ds$ and using $X_{H_2}=(h'\circ a )X_{H_1}$, we obtain
\begin{align*}
    \int_0^t\|\gamma'(t)-X_{H_2}\circ\gamma(t)\|dt&=\int_0^{\tau(t)}\|\gamma'\circ g(s)-h'(a(g(s))) X_{H_1}\circ\gamma(g(s))\|g'(s)ds\\
    &=\int_0^{\tau(t)}\|(\gamma\circ g)'(s)-X_{H_1}\circ(\gamma\circ g)\|ds\\
    &=E^1_{H_1}(\gamma\circ \tau^{-1};\tau(t)).
\end{align*}
%Rearranging, the right hand side becomes $E^1_{H_1}(\gamma\circ \tau^{-1};\tau(t))$. 
\end{proof}

\begin{proof}[Proof of Proposition \ref{prpActionWindowComponent}]

The proof is similar to that of Lemma \ref{lmEpsilonSeparation} with some adjustments which we spell out. It is assumed to be read in conjunction with the proof of Lemma \ref{lmEpsilonSeparation}. First observe can find open sets $V_0\subset V_1\subset V_2\subset V_3\subset \partial K$ so that

\begin{enumerate}
\item
The only closed Reeb orbits whose action is in the $r$-interval around $T(\gamma)$ and which meet $V_3$ are the orbits of $\gamma$.

\item The minimal flow-time from $V_2$ to the boundary of $V_3$ and from $V_0$ to the boundary of $V_1$ is at least $2a_I$. 
\end{enumerate}

In symplectization coordinates we let $U_i=V_i\times[-\delta,\delta]$. Denote by $\pi_i$ for $i=1,2$ the projections to $\partial K$ and to $\bR$ respectively in symplectization coordinates. By making the neighborhood $\cN$ and $\eta$ small enough, we guarantee that all the Floer trajectories in  $S_{\cH}(\cU_I(\gamma,r))$  have arbitrarily small energy, and have ends contained in arbitrarily small neighborhoods of $\gamma$. 

The reasoning of Hein's Lemma immediately allows to confine $\pi_2\circ u$ in $(-\delta,\delta)$ for $u\in S_{\cH}(\cU_I(\gamma,r))$. Indeed, for any fixed $\delta'>0$ monotonicity applies with uniform constants in the region $\pi_2^{-1}([-\delta,-\delta+\delta']\cup [\delta',\delta])$ for all Floer data in $\cH$. The energy distance inequality applies since $a_I$ is not in the period spectrum. 

It thus remains to confine the projection $\pi_1\circ u$. Denote by $\psi_t$ the time $t$ flow under $H_1$. The pre-image $u^{-1}\circ\pi^{-1}(V_2\setminus V_1)$ contains a connected component $S\subset\bR\times S^1$. Let $[a,b]$ be the image of $S\subset\bR\times S^1$ under projection to the first factor. We now find a constant $\delta'$ such that 
$$
E^2_H(u(s,\cdot))>\delta'
$$
for all $s\in[a,b]$.
By \ref{lmReparam} $E^1_H(u(s,\cdot);1)=E^1_{H_1}(u(s,\tau(\cdot));\tau(1))$. Moreover, we have $\tau(1)\leq a_I$. If $u_s$ does not remain inside $V_3\setminus V_0$ then using our assumptions on the flow of $H_1$,  Lemma \ref{lmReparam} and Proposition \ref{PrpDistEn0}, we obtain an estimate  $E^1_H(u(s,\cdot))>\delta''$ for some $\delta''>0$ depending only on bounds on the geometry of $J$ and on $I$. If $u_s$ remains inside,  fix a real number $t_0$ to be determined momentarily. If $\tau(1)>t_0$, there is a bound from below on the distance $d(\psi_{\tau(1)}(x),x)$ for all $x\in V_2\setminus V_1$, which leads again to an estimate $E^1_H(u(s,\cdot))>\delta''$ for appropriate $\delta''$ depending on $t_0$.

Suppose on the other hand that $\tau(1)<t_0$. We have an estimate
$$
E^1_H(\gamma)=E^1_{H_1}(\gamma;\tau(1))\geq\ell(\gamma)-t_0\sup\|X_{H_1}\|. 
$$
Now note that  the action of $u_s$ is contained in a small interval around $T(\gamma)$ and so is bounded away from $0$. From this we deduce an a priori estimate from below on the length of $\gamma$. Indeed, for sufficiently small length the action is given roughly by the integral of $\omega$ on a filling of $u_s$, which by the isoperimetric inequality (see, e.g., \cite[Ch. 4]{MS2}) is estimated from below by $\ell^2(u_s)$. By making $t_0$ sufficiently small we thus obtain an estimate as required. 

Having estimated $E^1_H(u(s,\cdot))>\delta''$ we obtain an estimate from below $E^2_H(u(s,\cdot))>\delta'$ by applying Cauchy-Schwarz. 

%We then find $\delta'$ as required reasoning as in \ref{PrpDistEn} keeping in mind that $\tau(t)<a_I$. It remains to deal with the case $u_s$ does remain inside.

% If $\tau(1)>t_0$, we derive a uniform estimate from below on  $E_H(u(s,\cdot))$ , depending on $t_0$, using  Lemma \ref{lmReparam} and Proposition \ref{PrpDistEn} again. Suppose on the other hand that $\tau(1)<t_0$. We have an estimate
%$$
%E_H(\gamma)=E_{H_1}(\gamma;\tau(1))\geq\ell^2(\gamma)-t_0\sup\|X_{H_1}\|. 
%$$
%Now note that  the action of $u_s$ is contained in a small interval around $T(\gamma)$ and so is bounded away from $0$. From this we deduce an a priori estimate from below on the length of $\gamma$. Indeed, for sufficiently small length the action is given roughly by the integral of $\omega$ on a filling of $u_s$, which by the isoperimetric inequality (see, e.g., \cite[Ch. 4]{MS2}) is estimated from below by $\ell(u_s)^2$. By making $t_0$ sufficiently small we thus obtain an estimate as required. 

We conclude that $E^{geo}(u)\geq (b-a)\delta'$. To continue we observe that the monotonicity constant of Lemma \ref{lmMonEst++} is sensitive only to the $C^0$ norm of the Hamiltonian vector field. And thus, as in  the proo of Lemma \ref{lmEpsilonSeparation} we get an estimate $E^{geo}+(b-a)\geq C\delta''$. Combining the two estimates we get an a priori estimate on the energy of any trajectory associated with $\gamma$ and leaving $V_3$.

%For any $t\in[0,a_I]$ let $d_t$ be a bound from below on distance $d(x,\psi_t(x))$ of all $x\in V_3\setminus V_0$. By our assumptions, for any $t_0>0$, $d_t$ is bounded away from $0$ on the interval $[t_0,a_I]$. Let $\delta$ be a bound from below on the distance $d(\psi_t(x),\partial (V_3\setminus V_0))$ for $x\in V_2\setminus V_1$ and for $t\in[0,a_I]$. All bounds are assumed to hold for metrics determined by any almost complex structure in $\cB$. 
\end{proof}

We now proceed to
\begin{proof}[Proof of Proposition \ref{prpFloerDiagGamma}]
\begin{enumerate}
\item We fix a small enough $\eta$ and  $\cN$ as in Proposition \ref{prpActionWindowComponent}.  For any two such choice $\cN,\cN'$ the intersection $\cN\cap\cN'$ is open and contains the set $\cK(K,I,\eta)$ and is thus $I$-thick.
\item
Given $r_1\leq r_2$ and $\cB_2\subset\cB_1$ let $\cN_1,\cN_2$ be choices of neighborhoods as in the previous part.  Then $\cN_1\cap\cN_2$ is $I$-thick since it contains  the set $\cK(K,I,\eta)$.   For the equality $S_{\cH'_2}(\cU_I(\gamma,r_1))=S_{\cH'_2}(\cU_I(\gamma,r_2))$ note that by choice of $\cN_2$ all Floer solutions arising from the diagram  $\cH'_2$ and connecting loops in $\cU_I(\gamma,r_2)$ are paths in $\cU_I(\gamma,r_2)$ regardless of the fact that we do not impose this as a requirement. 

\end{enumerate}

\end{proof}

\begin{proof}[Proof of Proposition \ref{prpInfSHGermDep}]
Our confinement estimates build on the monotonicity estimate of Lemma \ref{lmMonEst++} and on the energy distance estimate of Proposition \ref{PrpDistEn} both of which are robust in the sense that the constants are unaffected by the behaviour outside if the region under consideration. The claim follows as in the proof of Lemma \ref{lmInfSHGermDep}. 

\end{proof}

\subsubsection{Proof of Proposition \ref{prpFloerDiagMor}}

We will pick an $\eta$ and a neighborhood $\cN$ of $\cup_{i=1}^n\cK(K,H_{I_i},\eta)$ in $C^2(S^1\times M)$. Clearly,  we have that $\cN\cap\cN_i$ is almost full in $\cN_i$, and $\cN_n$ is cofinal in $\cN$. A slight adjustment of argument in the proof of Proposition \ref{prpActionWindowComponent} shows that by taking $\eta,\cN$ small enough we get the isolatedness property. 

 Then $\cN$ contains information for the construction of the Floer groups $HF_{uw}(\gamma;I_i)$, for maps $HF_{uw}(\gamma;I_i)\to HF_{uw}(\gamma;I_j)$,  for $i\leq j$ and for the homotopies witnessing functoriality.  Note that almost  fullenss guarantees independence on the choice of $\cN$. We thus get functorial maps as required.  
 
It remains to show that these maps are isomorphisms. We show first that if $I_1=(a_1,\alpha,\epsilon)$ and $I_2=(a_2,\alpha,\epsilon)$ the map is an isomorphism.  For this it suffices to find a pair of sequences cofinal in $\cN\cap\cN_i$ for $i=1,2$ respectively for which the continuation maps are invertible.  The functions in the pair are constructed as follows.  Let $H^1_{\eta}\in\cK(K,I_1,\eta)$ and let $H^2_{\eta,\zeta}\in\cK(K,I_2,\zeta)$ assuming $\eta\ll\zeta$ and $H^1_{\eta}$ coincides with $H^2_{\eta,\zeta}$ on the symplectization interval $(-\zeta/2,\zeta/2)$.  By making $\eta$  arbitrarily small while keeping $\zeta$ fixed, we guarantee that any continuation trajectory connecting orbits associated with $\gamma$ has so little energy that it cannot reach out of the the symplectization interval $(-\zeta/2,\zeta/2)$.  Then we can define inverse continuation maps on the unweighted complexes for such a pair.  Call a pair $(\eta,\zeta)$ satisfying this \emph{admissible}. It is clear that we can construct a sequence $(\eta_i,\zeta_i)$ of such pairs with $\zeta\to 0.$ The corresponding sequence of pairs is cofinal. We thus conclude that the natural map $HF^*_{uw}(\gamma;I_1)\to HF^*_{uw}(\gamma;I_2)$ is an isomorphism whenever $\alpha_{I_1}=\alpha_{I_2}$ and $\epsilon_{I_1}=\epsilon_{I_2}$.  We  can easily relax the requirement that $\epsilon_{I_1}=\epsilon_{I_2}$ by the same argument as before.  To remove the requirement on $\alpha$ not that given $I_1,I_2$ we can find $I'_1>I_1,I_2$  and $I'_2>I'_1$ such that $\alpha_{I'_i}=\alpha_{I_i}$.  Since the map $I_1\to I'_1$ is an isomorphism factoring through the map $I_2\to I'_1$, the latter is a surjection. At the same time the isomorphism $I_2\to I'_2$ factors through the same map, implying its also an injection. Thus that map $I_2\to I'_1$ is an isomorphism. By the 2 out of 3 rule, the map $I_1\to I_2$ is an isomorphism. 

\subsubsection{Behavior under isotopy}\label{subsecIsotopy}
So far we have considered changes of parameters in which $\gamma$ stays fixed, and moreover, all the parameters are in contractible spaces. We now discuss an isomorphism of a somewhat different nature which is associated with a path. 
 
\begin{df}
    
Given a Reeb component $\gamma$, a family $T\mapsto V^T, T\in[0,\infty),$ of compact domains is said to \emph{isolate $\gamma$ well} if for each $T$
\begin{itemize}
\item  $\gamma$ is contained in the interior of $V^T$, 
\item any periodic points of period $\leq T$ that are in the neighborhood $V^{T}$ lie on $\gamma$, and,
\item $\omega|_{V^T}$ is exact and the complexified canonical bundle restricted to $V^T$ is trivial. 
\end{itemize}
\end{df}

\begin{df}\label{dfIsolatingIsotopy}
Given a path $\tau\mapsto K^{\tau}$ of smooth domains with contact boundary, consider an isotopy $\tau\mapsto \gamma^{\tau}$ of Reeb components of $K^{\tau}$. We say that this isotopy is \emph{isolated} if there is a family $V^{T,\tau}$ such that 
\begin{itemize}
    \item for fixed $\tau$, the family $T\mapsto V^T$ isolates $\gamma^{\tau}$ well,
    \item for each fixed $T$ the family $\tau\mapsto V^{T,\tau}$ is a continuous isotopy. That is, there is a homotopy $f^T:[0,1]\times V^T\to M$ such that $V^{T,\tau}=f^{T}(\{\tau\}\times V^T),$ and,
    \item we can choose the primitives of $\omega$ and the trivializations of the complexified canonical bundle on $V^{T,\tau}$ so that they are locally constant in $\tau$.
\end{itemize}

\end{df}
\begin{rem}
Note the component itself might be changing topology along the isotopy, as long as for each $\tau$ it stays connected. 
\end{rem}
%for each $T>0$ there is a continuous isotopy of  compact domains $V^{T,\tau}$ isolating $\gamma^{\tau}$ in the sense that for each $\tau$ the  only periodic points of period $\leq T$ that are in the neighborhood $V^{T,\tau}$ lie on $\gamma^{\tau}$ and $\gamma^{\tau}$ is contained in the interior of $V^{T,\tau}$. 

\begin{lm}\label{lmIsotopUW}
In the setting of Definition \ref{dfIsolatingIsotopy} there is an induced isomorphism $SH^*_{uw}(\gamma^0)\to SH^*_{uw}(\gamma^1)$.
\end{lm}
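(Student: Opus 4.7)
The plan is to adapt the strategy of Lemma \ref{lmIstopyInfMap} to the indicator setting, combined with the uniform isolation technology of Propositions \ref{prpFloerDiagGamma} and \ref{prpFloerDiagMor}. By compactness of $[0,1]$ and continuity of the isotopy $f^T$, I will subdivide $0=\tau_0<\tau_1<\cdots<\tau_N=1$ so finely that for each $k$ there exist a slope $a_k$ avoiding the period spectra of $\partial K^\tau$ uniformly for $\tau\in[\tau_k,\tau_{k+1}]$, a parameter $I_k=(\alpha_k,\epsilon_k,a_k)$ with contact primitive $\alpha_k$ chosen to depend smoothly on $\tau$, and a fixed open set $W_k\subset M$ contained in $V^{T,\tau}$ for every $\tau$ in the subinterval and every $T\geq a_k$, such that $W_k$ still isolates $\gamma^\tau$ for each $\tau$. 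Continuity of the isotopy $f^T$ guarantees the existence of such $W_k$, while upper semicontinuity of the period spectrum below a fixed bound provides the slope $a_k$.

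For each $k$ the proof of Proposition \ref{prpActionWindowComponent} applies uniformly to the family of Hamiltonians $\bigcup_{\tau\in[\tau_k,\tau_{k+1}]}\cK(K^\tau,I_k,\eta)$: the constants in the monotonicity estimate (Lemma \ref{lmMonEst++}) and the energy-distance estimate (Proposition \ref{PrpDistEn}) depend only on bounds for the local geometry of $(M,\partial K^\tau)$ near $W_k$, which are uniform in $\tau$ by continuity, together with the uniform gap around $a_k$. Hence for $\eta$ small and for a sufficiently small $C^2$-neighborhood $\cN_k$ of this family, the Floer diagram $\cH_k=\cH(\cN_k,\cB)$ has $S_{\cH_k}(\cU_{I_k}(\gamma^\tau,r))$ isolated inside $W_k$ for every $\tau$. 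In particular, $\cN_k$ is simultaneously $I$-thick in the sense of Proposition \ref{prpFloerDiagGamma} for the parameters defining $SH^*_{uw}(\gamma^{\tau_k})$ and $SH^*_{uw}(\gamma^{\tau_{k+1}})$.

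Within $\cH_k$ a monotone continuation from a Floer datum approximating $H_{K^{\tau_k},I_k}$ to one approximating $H_{K^{\tau_{k+1}},I_k}$ exists, and its unweighted count defines a chain map inducing a cohomology-level morphism $\Phi_k\colon SH^*_{uw}(\gamma^{\tau_k})\to SH^*_{uw}(\gamma^{\tau_{k+1}})$ that is independent of the interior choices up to contractible choice. The local constancy of the primitives of $\omega|_{V^{T,\tau}}$ and of the trivializations of the complexified canonical bundle, built into Definition \ref{dfIsolatingIsotopy}, guarantees that actions, gradings, and orientation lines transport canonically across the subinterval. Setting $\Phi:=\Phi_{N-1}\circ\cdots\circ\Phi_0$ and running the same construction for the reversed isotopy produces a two-sided inverse by a cofinality and concatenation argument analogous to the one in the proof of Proposition \ref{prpFloerDiagMor}, so $\Phi$ is the desired isomorphism.

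The hard part is the uniform isolation in the second paragraph: establishing that a single Floer diagram $\cH_k$ can simultaneously witness the definition of $SH^*_{uw}(\gamma^\tau)$ for all $\tau$ in a small subinterval. This rests on the robustness of the confinement estimates, which depend only on data varying continuously with $\tau$. Once it is in place, the remaining functoriality and invertibility checks reduce to the same bookkeeping already performed in Section \ref{SecInfFloer}.
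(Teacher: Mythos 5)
Your proposal is correct and takes essentially the same approach as the paper: subdivide $[0,1]$ by compactness so that on each subinterval one can fix a primitive, a slope off the period spectrum, and a single isolating open set, then use the uniformity of the confinement estimates (monotonicity plus the energy--distance inequality) to get invertible continuation maps of unweighted complexes, and compose. The only superficial difference is that you phrase invertibility via the reversed isotopy and a cofinality argument à la Proposition~\ref{prpFloerDiagMor}, while the paper obtains it by further subdividing until the action and $C^0$ differences are small enough that the forward and backward local continuations are both defined and homotopy inverse; these are the same mechanism.
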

\begin{rem}
This isomorphism \emph{may depend} on the path $\tau\mapsto K^{\tau}$. 
\end{rem}
\begin{proof}
By subdividing the interval into a finite number of pieces we may assume that there is a fixed primitive $\alpha$ which is of Liouville type for $\partial K_{\tau}$ for all $\tau$ in the interval. We fix a slope $a$ such that $T(\gamma^{\tau})\leq a$ for all $\tau$ in the interval.  By further dividing the interval, we may assume that $a$ is not in the period spectrum for any $\tau$. Letting $I=(\alpha,a,\epsilon)$ the family of functions $\tau\mapsto H_{K^{\tau},I}$ is monotone.  For each $\tau$ we have symplectization coordinates determined by $\alpha$ and $\partial K^{\tau}$.  We can take the monotonicity constants to be independent of $\tau$.  By further dividing the interval, we may assume without loss of generality that $a$ is not in the period spectrum for any $\tau$.  We may thus take the bound on the left hand side  of equation \eqref{eqBoundL2L1} to be bounded away from $0$ for all $\tau$ in the interval.  By compactness the Lyapunov constant may be bounded independently of $\tau$ and so the estimate \eqref{eqBoundL2L1} is independent of $\tau$.  Finally,  by compactness and further subdivision we may assume there is a fixed open set $V$ which isolates all the $\gamma^{\tau}$ from any other periodic orbit of period $\leq a$ for any $\tau$. There is then an $\epsilon_0>0$ such that any continuation trajectory leaving the open set $V$ has energy at least $\epsilon_0$. By subdividing the interval further into a finite number of pieces, we may assume that the action differences going from $\tau$ to $\tau'$ is $\leq\epsilon_0/2$. We can further assume that $|H_{K^{\tau},I}-H_{K^{\tau'},I}|$ is $\leq\epsilon/2$. 

To sum up, we can subdivide the interval into a finite number of sub intervals $\tau)<\tau_1\dots<\tau_N$ so there is a continuation map of unweighted Floer complexes from going from $\tau_i$ to $\tau_{i+1}$ and this map is invertible.  It is clear by construction that this independent of any choices.  The desired isomorphism is the composition of all these isomorphisms. 

\end{proof}

\section{From truncated to unweighted $SH$ of Reeb orbits}\label{SecTruncSH}

\subsection{The action filtration in small action windows}
We would like to prove an analogue of Lemma \ref{lmIsotropicComp} relating truncated Floer cohomology of $H_K$ to the Reeb orbit cohomologies $SH^*_{uw}(\gamma)$. However, we run into a difficulty that as we increase the slope, the size of the window for which a priori $C^0$ estimates produce  a summand associated with $\gamma$ goes to $0$.  It turns out that in order to get windows of fixed size (depending on $\gamma$) we first need to deal with the concave orbits.

\begin{prp}\label{prphbarfilt}
There is an $\hbar>0$ so that in action windows of size $\hbar$  we have
\begin{enumerate}
\item If we construct $CF^*(H_{K,I})$ by taking as in \S\ref{SubsecConstr} a homotpoy colimit over functions $H=H_{K,I,\eta_i}$ for $\eta_i$ small enough then $CF^*(H_{K,I})$ is filtered by the  action $\cA_{H_{K,I}}$ and continuation maps from $H_{K,I}$ to $H_{K,I'}$ respect this filtration. 
\item 
The concave orbits form a subcomplex in $CF^*(H_{K,I}).$ 
\end{enumerate}
\end{prp}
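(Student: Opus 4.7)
The strategy is to reduce both claims to the exact symplectization setting near $\partial K$, where the analysis of \cite{CieliebakOancea} applies.

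The key technical step is a uniform \emph{localization lemma}: for $\hbar>0$ small enough, depending on $M$, $K$, $\alpha$, $\epsilon$, and a fixed ball $\cB$ of $\omega$-compatible almost complex structures, but not on the slope $a_I$ or the smoothing parameter $\eta$, every Floer or monotone continuation trajectory $u$ for data in $\cK(K,I,\eta)\times\cB$ whose ends lie on non-trivial $1$-periodic orbits of $H_{K,I,\eta}$, and whose topological energy satisfies $E_{top}(u)<\hbar$, is contained in the symplectization neighborhood $V\simeq\partial K\times(-\epsilon,\epsilon)$. This is a refinement of Proposition \ref{prpActionWindowComponent}: outside $V$ the Hamiltonians $H_{K,I,\eta}$ are $C^2$-small, so have no non-trivial orbits and satisfy a uniform monotonicity estimate; inside $V$ they are radial in the Liouville coordinate, and the reparametrization of Lemma \ref{lmReparam} reduces the energy-distance estimate to the universal Hamiltonian $H_1=e^r$, yielding constants independent of $a_I$ and $\eta$.

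Given this localization, claim (1) follows. On $V$, the Liouville primitive $\lambda=e^r\alpha$ gives exactness of $\omega$, so $\cA_{H_{K,I}}(\gamma)$ is well-defined on each non-trivial $1$-periodic orbit, and Stokes applied to a confined trajectory gives $E_{top}(u)$ equal to the action difference. The trivial orbits in the interior of $K$ are assigned action $-H\approx 0$, and in small windows their Morse-type differentials stay local and respect the filtration trivially. Hence the chain differential and monotone continuation maps preserve the action filtration within each window of size $\hbar$.

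For claim (2), the localization reduces the analysis to Floer trajectories in $V$ from an outer orbit $\gamma_1$ at height $r_{out}$ to an inner orbit $\gamma_2$ at height $r_{in}<r_{out}$. Stokes gives $\int u^*\omega=e^{r_{in}}T(\gamma_2)-e^{r_{out}}T(\gamma_1)$, while $H(\gamma_1)-H(\gamma_2)\approx a_I\epsilon$. Combined with the action expressions recalled in \S\ref{SubsecConstr}, requiring both endpoints to lie in a common window of size $\hbar$ with $\hbar$ sufficiently small forces $E_{top}(u)$ to be strictly negative, contradicting $E_{top}\geq 0$. Hence no such trajectory exists, and the concave generators are closed under the differential within each small window.

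The main obstacle is the $I$-uniform $\hbar$ in the localization lemma: as $a_I\to\infty$ or $\eta\to 0$ the Hamiltonians $H_{K,I,\eta}$ have unbounded $C^2$-norm, so standard no-escape arguments do not apply directly. The resolution exploits the radial structure of $H_{K,I,\eta}$ in $V$ via Lemma \ref{lmReparam}, combined with the robustness of the monotonicity constant of Lemma \ref{lmMonEst++}, which depends only on $C^0$-data of the Hamiltonian vector field and hence survives the degeneration as $a_I\to\infty$ and $\eta\to 0$.
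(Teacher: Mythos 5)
Your overall strategy matches the paper's: establish a uniform localization estimate confining small-energy trajectories to the symplectization neighborhood $V$, and then apply \cite{CieliebakOancea} for the concave/convex analysis inside the symplectization. That is exactly what the paper does via Proposition~\ref{prpRelEn} and Lemma~\ref{lmBorgOanc}. However, your sketches of both steps have genuine gaps.

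\textbf{The localization lemma.} You present the confinement of small-energy trajectories to $V$ as a ``refinement of Proposition~\ref{prpActionWindowComponent}'' and invoke Lemma~\ref{lmReparam} and the radial structure of $H$ inside $V$. This misidentifies what is actually needed. The difficulty with a trajectory escaping $V$ is not about the behavior of the Hamiltonian inside $V$ at all: the escape portion of the trajectory lives in $M\setminus V$, where the Hamiltonian is $C^1$-small and has no $1$-periodic orbits. What is needed there is not a Reeb-orbit isolation estimate but a quantitative energy lower bound for Floer solutions with small Hamiltonian vector field that sweep out a nontrivial relative homology class. In the paper this is Proposition~\ref{prpSmallDiamEst}, a diameter--energy estimate obtained by rescaling the area form on the cylindrical base (Gromov trick with $\omega_{\Sigma,\tau}=\tau^2ds\wedge dt$, Lemma~\ref{lmProdCEquivalence}), and it is a genuinely separate estimate from the fiberwise reparametrization of Lemma~\ref{lmReparam}. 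Your appeal to Lemma~\ref{lmReparam} and the monotonicity constant being ``robust to the degeneration $a_I\to\infty$'' does not address the actual problem: the constants in Lemma~\ref{lmMonEst++} and Proposition~\ref{PrpDistEn} applied inside $V$ \emph{do} degrade as $a_I\to\infty$; the point is rather that the relevant estimate happens entirely outside $V$, where $X_H$ is uniformly small independently of $I$ and $\eta$, and you need the specific ``escape-costs-energy'' estimate of Proposition~\ref{prpSmallDiamEst} for that regime.

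\textbf{Part (2).} Once trajectories are confined to the symplectization, you attempt a direct Stokes computation to conclude $E_{top}(u)<0$ for any concave-to-convex trajectory. As written, this does not close. With the action expressions of \S\ref{SubsecConstr}, a trajectory from a concave orbit with period $T_1$ to a convex orbit with period $T_2$ has $E_{top}(u)\approx g(\tau_2)-g(\tau_1)$ where $g(\tau)=h(\tau)-\tau h'(\tau)$; this can be \emph{positive} for suitable $T_1$ close to $a_I$ (roughly of order $a_I\epsilon^2$), so the naive bound does not yield a contradiction. The phrase ``requiring both endpoints to lie in a common window of size $\hbar$\ldots forces $E_{top}(u)$ to be strictly negative'' conflates two separate implications: $E_{top}(u)<\hbar$ is the hypothesis that yields confinement; the contradiction then comes from the stronger fact (established in \cite[Lemma 2.3]{CieliebakOancea}, and \emph{not} from a single Stokes identity over the whole cylinder) that \emph{no} Floer cylinder confined to the symplectization can go from a concave to a convex orbit. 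The Cieliebak--Oancea argument integrates over carefully chosen sub-cylinders (a no-escape/maximum-principle type argument on the radial coordinate) rather than over the whole cylinder. Since the paper simply cites that lemma, the cleaner route for your proposal is to do the same: after localization, quote \cite[Lemma 2.3]{CieliebakOancea} (with the sign adjustment noted in the paper) rather than attempting to reproduce it with an incomplete Stokes argument.
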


The proof of Proposition \ref{prphbarfilt} will be given at the end of the section. 

As a consequence of Proposition \ref{prphbarfilt} we get for any $\delta\in(0,\hbar)$ a long exact sequence 
\begin{equation}\label{eqInOutLES}
\dots\to Out^*_{[0,\delta)}(H_{K,I})\to HF^*_{[0,\delta)}(H_{K,I})\to In^*_{[0,\delta)}(H_{K,I})\to \dots
\end{equation}
where $Out^*_{[0,\delta)}(H_{K,I})$ is the  truncated homology of the sub-complex generated by outside orbits in accordance with Proposition Similarly, $In^*_{[0,\delta)}(H_{K,I})$ denotes the corresponding quotient complex. Moreover, for $I'$ such that $H_{K,I}\leq H_{K,I'}$ we have natural maps of  the corresponding long exact sequences
\begin{equation}
\xymatrix{\dots\ar[r]& Out^*_{[0,\delta)}(H_{K,I})\ar[d]\ar[r]&  HF^*_{[0,\delta)}(H_{K,I})\ar[d]\ar[r]& In^*_{[0,\delta)}(H_{K,I})\ar[d]\ar[r]& \dots
\\
\dots\ar[r]& Out^*_{[0,\delta)}(H_{K,I})\ar[r]& HF^*_{[0,\delta)}(H_{K,I})\ar[r]& In^*_{[0,\delta)}(H_{K,I})\ar[r]& \dots}
\end{equation}

\subsection{A $C_0$ estimate for $S$-shaped Hamiltonians. }
For the following Proposition fix a positive real number $K$.  Let $M$ be a symplectic manifold with boundary. Let $S\subset\bR\times S^1$ be a compact Riemann surface with boundary.  Let $J_z$ be an $S$-parametrized family of  almost complex structures on $M$ with absolute value of sectional curvature bounded from above by $K$ and with radius of injectivity bounded from below by $\frac1{\sqrt{K}}$. Let $H:S\times M\to\bR$ be a domain dependent Hamiltonian satisfying $\partial_sH\geq 0$. For a solution $u$ to Floer's equation denote by $E^{geo}(u):=\int_S\|\partial_su\|^2dtds.$

\begin{prp}\label{prpSmallDiamEst}
 There is a continuous function $h:(\bR_+)^2\to\bR_+$ which converges to $1$ at $(0,0)$ and has the following significance. Let $\gamma:[0,1]\to S$ be a geodesic. Let $u:(S,\partial S)\to (M,\partial M)$  be a solution to Floer's equation. Let $d=dist(u(\gamma(0)),u(\gamma(1))$. Let $d_0=dist(u(\gamma),\partial M)$.  Assume $|X_H|<1/2$.  Then for any $\tau\in(0,1]$ such that $\|X_H\|<\tau$ and writing  $\delta=\min\left\{d_0,\frac1{\sqrt{K}}\right\}$ we have 
\begin{equation}
d<\frac1{\delta\tau}h\left(\frac{\left|\partial_sH\right|_{\infty}}{\tau^2},\frac{\|X_H\|^2_{\infty}}{\tau^2}\right)\left(E^{geo}(u)+\tau^2\ell(\gamma)\right).
\end{equation}
\end{prp}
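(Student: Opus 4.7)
The plan is to apply Gromov's trick to convert the Floer solution $u$ into a pseudoholomorphic curve in a modified product manifold, then invoke a classical length-area inequality for pseudoholomorphic curves with controlled geometry. The parameter $\tau$ serves as a rescaling scale: the hypothesis $\|X_H\|_\infty<\tau$ ensures that after rescaling the $S$-coordinates so that the Hamiltonian vector field becomes bounded by $1$, the normalized quantities $|\partial_sH|_\infty/\tau^2$ and $\|X_H\|_\infty^2/\tau^2$ control the deviation of the enlarged geometry from the product geometry.

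First I would construct, on $S\times M$ with coordinates $(s,t,x)$, the $2$-form $\tilde\omega=\omega_M+ds\wedge dt-dH\wedge dt$ and an $\tilde\omega$-compatible almost complex structure $\tilde J$ designed so that the graph $\tilde u(z):=(z,u(z))$ is $\tilde J$-holomorphic precisely when $u$ solves Floer's equation. Then I would estimate the sectional curvature and injectivity radius of the associated metric $\tilde g=\tilde\omega(\cdot,\tilde J\cdot)$: after the rescaling above, $\tilde g$ is a small perturbation of the product metric, the size of the perturbation being continuously controlled by the two normalized parameters. This produces the function $h$, with $h\to 1$ as the parameters tend to zero, recovering the flat product case.

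The final step is to apply a length-area inequality for $\tilde J$-holomorphic curves (of Sikorav--Hummel type) to $\tilde u$ inside a $\tilde g$-geodesic tube of radius $\delta$ around the image of $\gamma$ in $S$. Concretely, the $\tilde g$-distance between the endpoints of $\tilde u\circ\gamma$ is bounded by the $\tilde g$-area of $\tilde u$ divided by the transverse injectivity scale $\delta$; since the projection $S\times M\to M$ is distance non-increasing, this controls $d$. The $\tilde g$-area of $\tilde u$ equals $E^{geo}(u)$ plus a Hamiltonian correction from $-dH\wedge dt$, which on the geometrically relevant tubular neighborhood near $\gamma$ is bounded by $C\tau^2\ell(\gamma)$ after the rescaling.

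The main obstacle will be the quantitative tracking of all constants in a way that is continuous in the normalized parameters $|\partial_sH|_\infty/\tau^2$ and $\|X_H\|_\infty^2/\tau^2$, so as to package them into a single function $h$ with the stated limiting behavior $h\to 1$ at the origin; this in particular requires the sharp constants in the classical distance-energy inequality for $J$-holomorphic curves at the product limit, and careful bookkeeping of how the boundary-distance constraint $d_0$ and the curvature scale $1/\sqrt{K}$ together produce the transverse injectivity scale $\delta$.
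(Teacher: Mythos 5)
Your proposal follows essentially the same route as the paper's proof: apply Gromov's trick to the cylinder $\Sigma \times M$ with the $\tau^2$-rescaled area form (your coordinate rescaling is equivalent), show the induced metric is $C_\tau$-equivalent to the product metric with $C_\tau$ controlled by $|\partial_s H|_\infty/\tau^2$ and $\|X_H\|^2_\infty/\tau^2$, then cover a chain of points along $\tilde u\circ\gamma$ with disjoint $\tilde g_\tau$-balls of radius bounded by $\delta$, apply the monotonicity lemma to each, and use that the projection $\pi_2:\Sigma\times M\to M$ is norm-nonincreasing to convert separation in $M$ into separation in the graph. The only minor discrepancies are a sign in your $dH\wedge dt$ term (the paper uses $+dH$) and the fact that your appeal to a black-box length-area inequality hides the ball-chain count $\lfloor N/\tau\rfloor$ that is the explicit source of the $1/\tau$ factor; otherwise the strategy and all key ingredients match.
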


\begin{proof}
We refer to Appendix \ref{SecEstimates} for the notions introduced during the proof. For each $\tau$ let $\tilde{\omega}_\tau:=\tau^2\pi_1^*\omega_{\Sigma}+\pi_2^*\omega_M+dH\wedge dt$. Applying the Gromov trick to the space $S\times M$ we obtain a family of (Cauchy-)complete metrics $\tilde{g}_\tau:=g_{\tilde{J}_\tau}$. For each $\tau$ let $C_\tau=C_{\tau}\left(\frac{\left|\partial_sH\right|_{\infty}}{\tau^2},\frac{\|X_H\|^2_{\infty}}{\tau^2}\right)$ be the constant introduced in Lemma \ref{lmProdCEquivalence}. Then by Lemma \ref{lmProdCEquivalence} the geometry of $\tilde{g}_\tau$ is $C_\tau$ equivalent to the product metric $\tau^2\pi_1^*g_{\Sigma}+\pi_2^*g_M$. The product metric in turn has geometry bounded by $K_\tau:=\max\left \{\sqrt{K},\frac{1}{\tau}\right\}$. Denote by $\tilde{u}\subset S\times M$ the graph of $u$. Consider the lift $\tilde{\gamma}$ of $\gamma$. 
For any integer $N$ let $r_N=\frac{d}{2N}$.  Then we claim that for each $\tau\geq \|X_H\|$  there are at least $\lfloor N/\tau\rfloor$ points $s_i\in[0,1]$ such that $d_\tau(\tilde{u}(\gamma(s_i)), \tilde{u}(\gamma(s_{j})))\geq2\tau r_N$ whenever $i\neq j$. For this, it suffices to show that the projection map $\pi_2: S\times M\to M$ does not increase norms. Referring to the formula \eqref{eqGromovMetricFormula}, the orthogonal complement to the fibers of $\pi$ can be decomposed as $\bR(X_H+\frac{\partial}{\partial t})\oplus (\bR X_H)^{\perp}$ where the second summand is the orthogonal complement of $X_H$ with respect to $g_J$ in $TM$. On this summand the projection preserves norms. On the first summand the vector $X_H+\frac{\partial}{\partial t}$ which has norm at least $\tau$ gets mapped to the vector $X_H$ which has norm at most $\tau$ by assumption.   
Let $B_i=B_{r_{N}}(\tilde{u}(\gamma(s_i)))$ where we consider balls with respect to $\tilde{g}_{
\tau}$. Suppose now that $N$ is so that $r_N\ll \delta$ and $N>1$. The monotonicity inequality of Lemma \ref{lmMonEst++} then gives 
\begin{equation}
\sum_i \tilde{\omega}_\tau(B_i) \geq \frac1{C^3_\tau} \lfloor N/\tau\rfloor \tau^2 r_N^2>\frac1{4C^3_\tau }\tau r_N{d},
\end{equation}
where $C$ is a constant that dominates $C_{\tau}$ on $(0,1]$.
On the other hand
\begin{equation}
\tilde{\omega}_\tau(B_i)= E^{geo}(u;u^{-1}(\pi_1(B_i)))+Area_\tau(\pi_1(B_i)).
\end{equation}
So,
\begin{equation}
\sum_i \tilde{\omega}_\tau(B_i) \leq E^{geo}(u)+2\tau\ell_\tau=E^{geo}(u)+2\tau^2\ell.
\end{equation}

Combining these, we obtain
\begin{equation}
d\leq  \frac{C_\tau^3}{r_N}\left(\frac1{\tau}E^{geo}(u)+2\tau\ell\right).
\end{equation}
Taking $r_N\sim\frac15\delta$ gives the claim. 
\end{proof}

For the following proposition fix an S-shaped Hamiltonian $H$ and a geometrically bounded almost complex structure $J$. 
\begin{prp}\label{prpRelEn}
If $H|_{\bR\times S^1\times M\setminus V}$ is sufficiently $C^1$ small there is an $\hbar>0$ depending only on the geometry of $J$ such that if $E^{rel}(u)\neq 0$ then $E^{geo}(u)>\hbar$. The same holds if the hypothesis is true once we replace $H$ by $H-f$ where $f:\bR\times S^1\to\bR$ is  any smooth function.
\end{prp}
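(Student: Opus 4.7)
The intuition is that $E^{rel}(u)\neq 0$ should mean $u$ has nontrivial activity in the region $M\setminus V$ where $H$ is $C^1$-small, and then the small-diameter estimate of Proposition~\ref{prpSmallDiamEst} will force $u$ to have a macroscopic geometric energy. The strategy is to combine that estimate with the freedom to shift $H$ by a function of $(s,t)$ only.

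First I would use the shift $H\mapsto H-f$ to reduce to the case where the Hamiltonian is genuinely $C^1$-small on a fixed open neighborhood of $M\setminus V$, not merely small up to an additive constant. Since $X_H=X_{H-f}$ and $\partial_s H-\partial_s f$ differs from $\partial_s H$ only by a function of $(s,t)$, the Floer equation, the solutions $u$, the geometric energy $E^{geo}(u)$, and the notion of relative energy are all unaffected; but the relevant sup norms $\|X_H\|_\infty$ and $|\partial_s H|_\infty$ can be made arbitrarily small on this neighborhood. This is exactly what the last sentence of the statement is meant to allow, and it is the step where the hypothesis gets used.

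Next I would unpack the condition $E^{rel}(u)\neq 0$ to produce two points $z_0,z_1$ in the domain together with a geodesic segment $\gamma$ joining them, having two properties: (i) the image $u(\gamma)$ lies entirely in the region where $H-f$ is $C^1$-small, and (ii) $\mathrm{dist}\bigl(u(z_0),u(z_1)\bigr)\geq d_0$ for a fixed $d_0>0$ depending only on the width of a slightly enlarged $V$. This is the step most sensitive to the precise definition of $E^{rel}$; the correct formulation will be the one under which any Floer solution with nonzero relative energy must make such an excursion of definite size into the $C^1$-small region, with $\ell(\gamma)$ also controlled from above by the geometry.

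Finally I would apply Proposition~\ref{prpSmallDiamEst} to $\gamma$ after choosing a scale $\tau$ with $\|X_H\|_\infty<\tau\leq 1$ and both $|\partial_s H|_\infty/\tau^2$ and $\|X_H\|^2_\infty/\tau^2$ small enough that $h(\cdot,\cdot)$ stays close to $1$. The estimate then reads
\[
d_0 \;\leq\; \frac{1}{\delta\tau}\, h\!\left(\tfrac{|\partial_s H|_\infty}{\tau^2},\tfrac{\|X_H\|^2_\infty}{\tau^2}\right)\bigl(E^{geo}(u)+\tau^2\ell(\gamma)\bigr),
\]
which rearranges to $E^{geo}(u)\geq \hbar$ for a definite $\hbar>0$ depending only on the geometry of $J$ and the choice of $V$. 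The main obstacle I expect is the localization issue: Proposition~\ref{prpSmallDiamEst} is stated with global sup norms, so one either has to truncate $H$ outside a neighborhood of $u(\gamma)$ and bound the resulting error, or (cleaner) formulate $E^{rel}$ so that the relevant geodesic can be found inside a region where all the global data are already under control after the shift by $f$.
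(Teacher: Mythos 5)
Your overall strategy matches the paper's: the shift by $f$ leaves the Floer equation and $E^{geo}(u)$ unchanged (since $X_H=X_{H-f}$, with only $|\partial_s H|_\infty$ affected, which is exactly what the continuation-map application needs), you extract a segment along which $u$ has definite diameter, and Proposition~\ref{prpSmallDiamEst} then forces a positive lower bound on $E^{geo}(u)$ after optimizing $\tau$. The gap is precisely the step you flag as ``most sensitive'': you do not supply the argument that $E^{rel}(u)\neq 0$ actually produces such a segment, and this is the substantive content of the proof.

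The paper's argument here is a short topological one. If $E^{rel}(u)\neq 0$ then some connected component $v$ of $u^{-1}(M\setminus V)$ has $E^{rel}(v)\neq 0$. Fix $\epsilon>0$ smaller than both the injectivity radius of $M$ and the distance from $\partial K$ to its cut locus. If, for every $s$, every connected component of $u(s,\cdot)\cap v$ had diameter $<\epsilon$, then $v$ would be contractible relative to $V$ (each small arc can be pushed to $V$ along short geodesics), which would force $E^{rel}(v)=0$, a contradiction. Hence some $s_0$ yields a component of $u(s_0,\cdot)\cap v$ of diameter $\geq\epsilon$; you then take $\gamma$ to be an arc of that circle realizing a gap $\geq\epsilon/2$ and staying in the region where $H-f$ is $C^1$-small, with $\ell(\gamma)$ bounded by the circumference of $S^1$. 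That gives you both properties (i) and (ii) with $d_0=\epsilon/2$, and the rest of your outline (apply the estimate, pick $\tau$ with $\|X_H\|<\tau$ so that $h$ is $\leq 2$, and tune $\tau\sim\epsilon^2\delta$) goes through. Without this contractibility observation the proposal is not a proof, since it is not clear from the definition alone that nonzero $E^{rel}$ forces any cross-section to have macroscopic diameter.
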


\begin{rem}
The final clause is meant to make the Proposition applicable to continuation maps. The typical situation is for a continuation map from $H_0$ to $H_1$ the difference $H_1-H_0$ has to be allowed to be arbitrarily large, but the the oscillation of each of the $H_i$ on each component of $M\setminus V$ can be kept arbitrarily small. 
\end{rem}
\begin{proof}
Fix a symplectization neighborhood $V$ of $\partial K$ witnessing $H$ being of contact type. The fact that $E^{rel}(u)\neq 0$ implies that there is a component $v$ of $u\setminus u^{-1}(V)$ so that $E^{rel}(v)\neq 0$.  Let $\epsilon>0$ be dominated by the injectivity radius of $M$ as well as by the distance from $\partial K$ to its cut locus. Consider the coordinate $s$ restricted to $v$. Then there is a value $s_0$ such that a connected component of $u(s_0,\cdot)\cap v$ has diameter greater than $\epsilon$. Indeed, otherwise $v$ is contractible relative to $V$.  

Applying Proposition \ref {prpSmallDiamEst}, taking $d_0=\epsilon/2$ we and assuming without loss of generality that $\epsilon/2\leq \frac1{\sqrt{K}}$ we obtain
\begin{equation}
\epsilon<\frac2{\epsilon\tau}h\left(\frac{\left|\partial_sH\right|_{\infty}}{\tau^2},\frac{\|X_H\|^2_{\infty}}{\tau^2}\right)\left(E^{geo}(u)+\tau^2\ell(\gamma)\right),
\end{equation}
for any $\tau$ such that $\|X_H\|<\tau.$
Taking  $H$ even smaller in $C^1$ so that $h\left(\frac{\left|\partial_sH\right|_{\infty}}{\tau^2},\frac{\|X_H\|^2_{\infty}}{\tau^2}\right)\leq 2$ we obtain the estimate
\begin{equation}
    E^{geo}(u)>\epsilon^2\delta\tau/4-2\tau^2\pi.
\end{equation}
Fixing $\tau=\frac{\epsilon^2\delta}{16\pi}$
we obtain
\begin{equation}
E^{geo}(u)>\frac{\epsilon^2\delta}{16\pi}. 
\end{equation} 
For the final clause, note that $H$ determines the same equation as $H-f$ since Floer's equation only involves derivatives of $H$ in directions tangent to $M$.
\end{proof}

\subsection{Proof of Proposition \ref{prphbarfilt}}
\begin{lm}\label{lmBorgOanc}
Any monotone Floer trajectory involving Hamiltonians as in the hypothesis of Lemma \ref{prpRelEn} whose input is concave and whose output is convex has energy $>\hbar$. 
\end{lm}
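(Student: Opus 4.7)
The plan is to reduce the claim to Proposition~\ref{prpRelEn}, which already supplies a universal $\hbar > 0$ (depending only on the geometry of $J$) such that $E^{geo}(u) > \hbar$ whenever $E^{rel}(u) \neq 0$ and the $C^1$-smallness hypothesis on $H$ outside $V$ holds. That hypothesis is satisfied for the functions $H_{K,I,\eta_i}$ and, by the final clause of Proposition~\ref{prpRelEn}, for monotone homotopies between such functions modulo an $(s,t)$-dependent shift. So the task reduces to showing $E^{rel}(u) \neq 0$ for every monotone Floer continuation trajectory $u$ with concave input $\gamma_-$ and convex output $\gamma_+$.

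I split on the location of the input. If $\gamma_-$ is one of the critical points of $H$ lying in $M \setminus V$ (a ``plateau'' generator), then the trajectory starts outside $V$ while $\gamma_+$ lies inside $V$, so $u$ automatically has a non-empty piece in $M \setminus V$ and $E^{rel}(u) \neq 0$ is immediate. The interesting case is when $\gamma_-$ is an outer Reeb-type orbit $\gamma^o$ sitting at the concave elbow $t \approx \epsilon$ and $\gamma_+$ is the convex incarnation $\gamma^i$ at the convex elbow $t \approx 0$, both lying in the symplectization $V$.

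In that case I argue by contradiction. Suppose $u \subset V$. Then $u$ is a monotone continuation trajectory in the symplectization for a family of contact-type Hamiltonians of the form $h_s \circ e^t$. A standard Cieliebak--Oancea-type integrated maximum principle applied to the Liouville coordinate $t \circ u$ uses $h''_s \geq 0$ near the convex elbow and $h''_s \leq 0$ near the concave elbow, together with $\partial_s H \geq 0$, to forbid any continuation trajectory with concave input and convex output that remains in the symplectization. This is the chain-level manifestation of the splitting in \cite{CieliebakOancea}, in which the concave orbits form a subcomplex and the convex orbits a quotient. Hence $u$ must exit $V$, so $E^{rel}(u) \neq 0$, and Proposition~\ref{prpRelEn} yields $E^{geo}(u) > \hbar$.

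The main subtlety I anticipate is making the maximum principle step robust under the $\eta$-smoothing of $H_{K,I}$ and uniform in the parameter $I$, since $\hbar$ must be independent of $I$. Both desiderata are mild: the qualitative sign conditions on $h''$ that drive the maximum principle are built into the explicit smoothing from Section~\ref{SubsecConstr}, and the only constant that depends on genuine geometric data is the $\hbar$ produced by Proposition~\ref{prpRelEn}, which depends only on the geometry of $J$. A small amount of care is needed in comparing the Liouville coordinates on the source and target of a monotone continuation between different $I$'s, but after subdividing the homotopy into pieces where the contact primitive can be kept fixed (as done in Lemma~\ref{lmIsotopUW}) this is routine.
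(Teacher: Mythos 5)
Your proof takes essentially the same approach as the paper: first invoke Proposition~\ref{prpRelEn} to confine trajectories of energy $\leq\hbar$ to the symplectization neighborhood $V$, then rule out concave-to-convex trajectories inside $V$ by the argument of Cieliebak--Oancea. The paper simply cites \cite[Lemma 2.3]{CieliebakOancea} (with a sign adjustment) for the second step, whereas you sketch the underlying integrated-maximum-principle/action-filtration mechanism and handle the plateau case explicitly; the underlying logic and the two key ingredients are the same.
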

\begin{proof}
By the conclusion of Lemma \ref{prpRelEn}, Floer trajectories of energy $\leq\hbar$ remain inside a symplectization neighborhood. Within such a neighborhood, the claim is \cite[Lemma 2.3]{CieliebakOancea} with suitable adjustment for sign conventions. Namely, our $X_H$ is the negative of theirs.
\end{proof}

\begin{proof}[Proof of Proposition \ref{prphbarfilt}]
Hamiltonians of the form $H_{K,I,\eta}$  satisfy the hypothesis of Lemma \ref{prpRelEn}. This is also true of we consider continuation maps fixing $K,I$ and varying $\eta$. When varying also $I$ we satisfy the hypothesis of the last clause of Proposition \ref{prphbarfilt} on the outside componet of $M\setminus V$.   The first part of the claim is now immediate from  Lemma \ref{prpRelEn}. The second part follows from this and Lemma \ref{lmBorgOanc}. 
\end{proof}
\subsection{Truncated symplectic cohomology and the unweighted Floer homology of Reeb orbits}

For a Reeb component $\gamma$ of $K$ denote by $\delta(\gamma)$ the minimum of $\hbar$ and gap in the period spectrum at $\gamma$. Observe that while the action spectrum is dependent on the choice of primitive $\alpha$, the gaps in the period spectrum for the orbits in a fixed homology class of $\partial K$ are independent of the choice of primitive. Moreover, Proposition \ref{prpRelEn} implies that at least in windows of size $\leq\hbar$ the Floer cohomology is split into components associated with distinct classes of $H_1(\partial K;\bZ)$. 

\begin{prp}\label{prpInInfHKI}
For any Reeb component $\gamma$ and any choice of parameters $I$, the generators associated with $\gamma$ in the quotient complex underlying $In^*_{[0,\delta(\gamma))}(H_{K,I})$ form a summand. Denoting this summand by $In^*_{[0,\delta(\gamma))}(\gamma; H_{K,I})$ we have that for any $I'\geq I$  the summand  $In^*_{[0,\delta(\gamma))}(\gamma; H_{K,I})$ maps into  $In^*_{[0,\delta(\gamma))}(\gamma; H_{K,I'})$ under the natural continuation maps.
\end{prp}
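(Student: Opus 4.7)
The plan is to combine the period-gap assumption that defines $\delta(\gamma)$ with the two localization mechanisms already established: the homology-class splitting in windows of size $\leq\hbar$ from Proposition \ref{prpRelEn}, and the confinement to a symplectization neighborhood from Lemma \ref{lmBorgOanc} and the proof of Proposition \ref{prpActionWindowComponent}. First I would identify the generators of $In^*_{[0,\delta(\gamma))}(H_{K,I})$ that are ``associated with $\gamma$''. Recall that the inner incarnation of any Reeb component $\gamma'$ in $H_{K,I,\eta}$ has action $\cA(\gamma'^i)=T(\gamma',\alpha)+O(\eta)$. After the standard renormalization placing $T(\gamma,\alpha)$ at the lower end of the window, the inner incarnations $\gamma^i$ land in $[0,\delta(\gamma))$; for $\eta$ chosen sufficiently small the $O(\eta)$ correction is absorbed by the openness of the interval.

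For the direct summand property within a fixed $H_{K,I,\eta}$, I would argue that no Floer trajectory in the window connects an inner incarnation $\gamma^i$ to an inner incarnation of a distinct Reeb component. Such a trajectory would have topological energy $<\delta(\gamma)\leq\hbar$, so by Proposition \ref{prpRelEn} it is confined to a symplectization neighborhood of $\partial K$. Within such a neighborhood, the difference of actions of two inner incarnations equals, up to $O(\eta)$, either the difference of $\alpha$-periods (when the two underlying Reeb orbits are freely homotopic in $\partial K$) or a full period contribution (when they are not). The first possibility is excluded by the gap $\delta(\gamma)$; the second is excluded by the homology-class splitting of Proposition \ref{prpRelEn}. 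Applying the argument to both the differential on $In^*$ and on the quotient yields the direct summand decomposition.

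For the functoriality under continuation maps $I\leq I'$, the same argument applies to solutions of the parameterized Floer equation, using the last clause of Proposition \ref{prpRelEn} (the ``$H-f$'' version): although $H_{K,I'}-H_{K,I}$ is large on the outer component of $M\setminus V$, after subtracting a suitable function depending only on the domain and the position relative to $\partial K$, the remaining Hamiltonian is $C^1$-small on a neighborhood of $\partial K$, so continuation trajectories of topological energy $<\delta(\gamma)$ are confined to the symplectization. Standard energy estimates for monotone homotopies then give $E_{top}\geq 0$, and repeating the action-plus-gap analysis produces the required $\gamma\to\gamma$ compatibility. Stability under the homotopy colimit over the sequence $H_{K,I,\eta_i}$ follows since both the Floer differential and the $\eta_i$-to-$\eta_{i+1}$ continuation maps respect the summand by the same reasoning, and the $O(\eta_i)$ action shifts disappear in the colimit.

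The main obstacle I anticipate is verifying that the $C^1$-smallness hypothesis of Proposition \ref{prpRelEn} holds \emph{uniformly} across the entire Floer diagram needed for the homotopy colimit, since the slopes $a_I$ are not a priori controlled and the smoothings $\eta\to 0$ alter the $C^1$ norm near the corners of the graph of $H_{K,I}$. This is handled by the observation that the Hamiltonian in the interior of the symplectization region, where the contact structure governs the dynamics, does not enter the $C^1$-smallness requirement, only the restrictions to fixed compact subsets of $M\setminus V$ do; these can be arranged uniformly small by shrinking $\cN$ as in the construction of \S\ref{SubsecConstr}.
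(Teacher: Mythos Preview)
Your overall strategy---use the $\hbar$-window localization of Proposition~\ref{prphbarfilt} to reduce to an action/period-difference argument---is exactly what the paper does. However, your case analysis has a genuine gap.

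You split into ``same free homotopy class in $\partial K$'' versus ``different class'' and claim the first is excluded by the period gap $\delta(\gamma)$ and the second by the homology-class splitting of Proposition~\ref{prpRelEn}. This misses the case of two \emph{distinct} Reeb components $\gamma\neq\gamma'$ lying in the same class of $H_1(\partial K;\bZ)$ with the \emph{same} period $T(\gamma,\alpha)=T(\gamma',\alpha)$. Nothing in the hypotheses forbids this: the period spectrum is a set of real numbers, and coincidences between periods of distinct isolated components are allowed. For such a pair the action difference of the inner incarnations is $O(\eta)$, so neither the gap $\delta(\gamma)$ nor the relative-energy argument rules out a connecting trajectory in the window. (Your phrase ``a full period contribution'' for the non-homotopic case is also not right: the action of an inner incarnation is $T(\gamma',\alpha)+O(\eta)$ regardless of homology class, so the action difference is always the period difference.)

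The paper closes this gap by invoking Proposition~\ref{prpActionWindowComponent} directly: for $\eta$ small enough the Hein-type confinement shows that \emph{all} Floer trajectories in $S_{\cH}(\cU_I(\gamma,r))$ stay in an isolating neighborhood of $\gamma$ that excludes $\gamma'$, so there are simply no such trajectories---the energy/action accounting is irrelevant here. You cite this proposition in your opening sentence but never deploy it for this purpose; you only use it (implicitly) for confinement to the symplectization strip. Once you add this step, your argument for continuation maps goes through unchanged, since Proposition~\ref{prpActionWindowComponent} is formulated for the whole Floer diagram $\cH(\cN,\cB)$, which includes the continuation data. The uniform-$C^1$ concern you raise at the end is handled correctly and is not the real obstacle.
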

\begin{proof}
By Proposition \ref{prphbarfilt}, for action intervals less than $\hbar$ the action filtration the weight of a Floer trajectory is given by the topological energy with respect to a fixed primitive (the choice is immaterial). So any Floer trajectory connecting $\gamma,\gamma'$ with $T(\gamma;\alpha)\neq T(\gamma';\alpha)$ must have energy $\geq\hbar$. Since our Hamiltonians may have degenerate orbits we need to rule out the possibility of trajectories connecting generators associated with distinct Reeb components $\gamma,\gamma'$ satisfying $T(\gamma)=T(\gamma')$. It is an immediate consequence of Proposition \ref{prpActionWindowComponent} that there are no such trajectories once the smoothing parameter  $\eta$ is made small enough. 
\end{proof}
\begin{prp}
For any choice of parameters $I$ for $K$ there is an isomorphism $In^*_{[0,\delta(\gamma))}(\gamma;H_{K,I})\to SH^*_{uw}(\gamma)\otimes \Lambda_{[0,\delta(\gamma))}$. This isomorphism is natural with respect to monotone changes in the parameters. Namely,  if $I\leq I'$ there is a commutative diagram
\begin{equation}
\xymatrix{In^*_{[0,\delta(\gamma))}(\gamma;H_{K,I})\ar[d]\ar[r]& SH^*_{uw}(\gamma)\otimes \Lambda_{[0,\delta(\gamma))}\ar[d]\\
In^*_{[0,\delta(\gamma))}(\gamma;H_{K,I'})\ar[r]& SH^*_{uw}(\gamma)\otimes \Lambda_{[0,\delta(\gamma))}}
\end{equation}
where on the vertical left arrow is the continuation map and on the right it is the identity. 
\end{prp}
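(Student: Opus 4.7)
The plan is to imitate the strategy of Lemma \ref{lmIsotropicComp}, with the modifications forced by the degenerate and non-smooth character of $H_{K,I}$ and the need for naturality not just in smoothing data but in the shape parameter $I$ itself. The starting point is to fix, as in \S\ref{SubsecConstr}, a cofinal monotone sequence of smooth non-degenerate approximations $H_{K,I,\eta_j}$ with $\eta_j\to 0$, and to model $In^*_{[0,\delta(\gamma))}(\gamma;H_{K,I})$ as $\varinjlim_j In^*_{[0,\delta(\gamma))}(\gamma;H_{K,I,\eta_j},J_j)$. By Proposition \ref{prpActionWindowComponent}, for $j$ large the generators lying in $\cU_I(\gamma,r)$ form a direct summand and all relevant Floer and continuation trajectories are confined to a well-isolating neighborhood $V$ on which $\omega$ is exact, so the topological energy coincides with the $\cA_{H_{K,I,\eta_j}}$-difference computed using a fixed primitive on $V$.

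On each approximating model I would define the weighting map $\alpha_j$ that sends a generator $x\in In^*_{[0,\delta(\gamma))}(\gamma;H_{K,I,\eta_j},J_j)$ to $T^{-\cA_{H_{K,I,\eta_j}}(x)+T(\gamma,\alpha)}$ times the same $x$ viewed as an element of the unweighted local complex $CF^*_{uw}(V;H_{K,I,\eta_j},J_j)$. The shift is chosen so that the image lies in $\Lambda_{[0,\delta(\gamma))}$ once $\eta_j$ is small enough that the actions of all inner generators associated to $\gamma$ lie within distance $\eta_j$ of $T(\gamma,\alpha)$. Exactness of $\omega$ on $V$ makes $\alpha_j$ a chain map by the same input-action-only computation as in the proof of Lemma \ref{lmIsotropicComp}. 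The compatibility of these $\alpha_j$ with the smoothing continuation maps yields, after passing to the colimit, the desired map $\alpha:In^*_{[0,\delta(\gamma))}(\gamma;H_{K,I})\to SH^*_{uw}(\gamma)\otimes \Lambda_{[0,\delta(\gamma))}$.

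To check that $\alpha$ is an isomorphism I would repeat the injectivity/surjectivity argument in the proof of Lemma \ref{lmIsotropicComp}: any class that becomes trivial in the colimit can be lifted to some $j$ and killed at a later stage $k$ with control on the norm because the action spectrum of $H_{K,I,\eta_k}$ near $\gamma$ concentrates on $T(\gamma,\alpha)$ as $k\to\infty$; surjectivity is analogous. For naturality, given $I\leq I'$, choose compatible smoothing sequences $H_{K,I,\eta_j}\leq H_{K,I',\zeta_j}$ and a monotone homotopy between them. Proposition \ref{prphbarfilt} and Proposition \ref{prpInInfHKI} guarantee the continuation map preserves the $\gamma$-summand of $In^*_{[0,\delta(\gamma))}$, and Proposition \ref{prpFloerDiagMor} identifies the induced continuation map on the unweighted side with the canonical isomorphism used in the definition of $SH^*_{uw}(\gamma)$. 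Under the weighting $\alpha$ each continuation trajectory confined to $V$ is weighted by $T^{\cA_{I'}(y)-\cA_I(x)}$, which is exactly the shift needed to make the diagram commute after the identification.

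The main obstacle is the naturality square for general $I\leq I'$. Unlike the smooth case, here the two sides of the comparison involve different isolating neighborhoods, different smoothing regimes, and continuation solutions whose $C^0$ behavior is only controlled through the uniform $\hbar$ produced in Proposition \ref{prphbarfilt}. One must therefore choose the compatible pair $(\eta_j,\zeta_j)$ of smoothing parameters carefully, as in the proof of Proposition \ref{prpFloerDiagMor}, so that any continuation trajectory of energy $<\delta(\gamma)$ connecting inner generators near $\gamma$ is trapped in a common symplectization slab where the action identity used above is valid. Once this trapping is in place the weighting shift matches the continuation weight exactly, and the commutative square with identity on the right reduces to a computation analogous to diagram \eqref{LclFHDiag}.
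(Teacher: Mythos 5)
Your proposal is correct and follows essentially the same route as the paper's proof: model both complexes as colimits over smooth non-degenerate approximations close to $\cK(K,I,\eta)$, use the confinement from Proposition \ref{prpActionWindowComponent} to justify the local weighting map and the chain-map property, replicate the injectivity/surjectivity argument from Lemma \ref{lmIsotropicComp}, and for naturality exploit the fact that the isolating symplectization slab for $I$ continues to isolate the inner incarnation of $\gamma$ for the approximations of $H_{K,I'}$. The one place where some extra care is worth flagging explicitly: for the square to close with the \emph{identity} on the right, the normalization constant in your weighting (your $T(\gamma,\alpha)$) must be computed for a single fixed primitive on the common slab for both $I$ and $I'$ — otherwise the shifts would differ by $T(\gamma,\alpha_I)-T(\gamma,\alpha_{I'})$; this is precisely what the common-slab trapping secures, and the paper handles it by declaring $V_I$ to be the isolating neighborhood throughout.
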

\begin{proof}
The proof is nearly the same as that of Lemma \ref{lmGermHF}. We outline it briefly. We fix an isolating neighbourhood $U$ of $\gamma$, a small constant $\delta_0\ll\epsilon_I$ and consider the neighbourhood $V_I$ given in symplectization coordinates corresponding to the  primitive $\alpha_I$ as $V_I=U\times (0,\delta_0)$.  We consider a decreasing sequence $\eta_i\to 0$ and $H_i$ of smooth non-degenerate Hamiltonians close enough to some $H_{K,I,\eta_i}\in\cK(K,I,\eta_i)$ so that all Floer trajectories for the differential and continuation map contributing to $In^*_{[0,\delta)}(\gamma)$ have energy so small as to be guaranteed to be contained within $V_I$.   This can be done according to Proposition \ref{prpActionWindowComponent}. 

As in the proof of Lemma \ref{lmGermHF} we now construct a map from the underlying complex $CF^*_{In,[0,\epsilon)}(\gamma; H_{K,I})$ to  $CF^*_{uw}(\gamma;H_{K,I})\otimes \Lambda_{\epsilon}$.  Namely we model both the truncated and the weighted complexes of $H_{K,I}$ as colimits over $i$ of suitable chain complexes associated with $H_i$.  We then define the map between them as in the paragraph  preceding eq \eqref{LclFHDiag}. The proof that this map is well defined and induces an isomorphism in homology is word for word the same as in the proof of Lemma \ref{lmGermHF}. In the same way the lower horizontal map is an isomorphism. 

To conclude, we need to show that the diagram commutes. Here we need to briefly comment since the neighborhood $V_I$ and $V_{I'}$ aren't necessarily the same when they encode different primitives. However, if we   take $\delta$ small enough, $V_I$ is still an isolating neighborhood of the incarnation of $\gamma$  for the functions approximating $H_{K,I'}$. All the relevant Floer trajectories are contained in this neighborhood, and again we have naturality in the same way as in the proof of Lemma \ref{lmGermHF}.  

Since all the other arrows in the commutative diagram are isomorphisms, it follows that the left vertical arrow is also an isomorphism. 
\end{proof}

\begin{prp}\label{prpTruncIsotop}
More generally, consider an isotopy of domains $\tau\mapsto K^{\tau}$ and an isolated family of Reeb components $\gamma^{\tau}$. Let $\delta=\inf_\tau\delta(\gamma^{\tau})$. Let $I, I'$ be choices of parameters so that $H_{K^0,I}\leq H_{K^1,I}$. Then we have a commutative diagram
\begin{equation}
\xymatrix{In^*_{[0,\delta)}(H_{K^0,I})\ar[d]\ar[r]& SH^*_{uw}(\gamma^0)\otimes \Lambda_{[0,\delta(\gamma))}\ar[d]_{T^{\Delta\cA}\iota}\\
In^*_{[0,\delta)}(H_{K^1,I})\ar[r]& SH^*_{uw}(\gamma^1)\otimes \Lambda_{[0,\delta(\gamma))}}
\end{equation}
where $\iota$ is the map from Lemma \ref{lmIsotopUW}   and $\Delta\cA$ is the action difference.
\end{prp}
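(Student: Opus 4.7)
The plan is to combine the argument of the preceding proposition with the subdivision strategy of Lemma \ref{lmIsotopUW}. First subdivide $[0,1]$ into $0=\tau_0<\cdots<\tau_N=1$ so that on each $[\tau_j,\tau_{j+1}]$ one can arrange simultaneously: a single primitive $\alpha$ of Liouville type for all $\partial K^\tau$ in the subinterval, a common slope $a$ off the period spectrum throughout, a fixed isolating neighbourhood $V$ for the family $\{\gamma^\tau\}_{\tau\in[\tau_j,\tau_{j+1}]}$ on which $\omega$ is exact with primitive locally constant in $\tau$, and a monotone path $I^\tau$ of parameters interpolating between the endpoint data. Such a subdivision exists by compactness combined with the isolation hypothesis of Definition \ref{dfIsolatingIsotopy}; the uniform confinement of the relevant Floer trajectories in $V$ is obtained exactly as in Lemma \ref{lmIsotopUW}, using the energy estimates of Propositions \ref{prpActionWindowComponent} and \ref{prpRelEn} to guarantee a lower bound on the energy of trajectories escaping $V$ that is independent of $\tau$.

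On each subinterval the argument of the preceding proposition applies almost verbatim. Choose smoothing parameters $\eta\to 0$ and non-degenerate $C^2$-approximations of $H_{K^{\tau_j},I^{\tau_j}}$ and $H_{K^{\tau_{j+1}},I^{\tau_{j+1}}}$ close enough that all Floer and continuation trajectories contributing to $In^*_{[0,\delta)}$ are confined to $V$. Exactness of $\omega|_V$ lets us define the action-weighting map $x\mapsto T^{-\cA(x)}x$ on each generator, using the chosen primitive; the identity $E_{top}(u)=\cA(\gamma_1)-\cA(\gamma_2)$ for Floer and continuation trajectories in the exact region shows, exactly as in the proof of Lemma \ref{lmIsotropicComp}, that this weighting turns the truncated differential and truncated continuation map into their unweighted counterparts, up to the overall scalar $T^{\cA(\gamma^{\tau_{j+1}})-\cA(\gamma^{\tau_j})}$ on the right-hand factor. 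Hence each single-subinterval square commutes with the asserted form.

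Composing the $N$ squares yields the desired diagram: the action shifts telescope to $\Delta\cA=\cA(\gamma^1)-\cA(\gamma^0)$ because the primitives were chosen locally constant in $\tau$, and the composition of unweighted continuations is by construction the isomorphism $\iota$ of Lemma \ref{lmIsotopUW}. The principal technical obstacle is the simultaneous uniform confinement on each subinterval: one needs a single $V$ and a single primitive that work throughout, and a choice of approximating smooth Hamiltonians so that the truncated inner complexes have no trajectories leaving $V$ across the entire monotone path $I^\tau$ and the moving domain $K^\tau$. This requires balancing the smoothing parameter, the $C^2$-closeness, and the size of $V$ so that the estimates of Propositions \ref{prpActionWindowComponent} and \ref{prpRelEn} hold uniformly; this is precisely the balancing already performed in Lemma \ref{lmIsotopUW}, and presents no phenomena beyond those synthesized from the two results.
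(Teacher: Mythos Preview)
Your proposal is correct and follows essentially the same approach as the paper: subdivide via Lemma~\ref{lmIsotopUW}, establish the commutation on each small subinterval using the action-weighting argument, and compose. The paper's own proof is terser and cites Lemma~\ref{lmIstopyInfMap} for the per-subinterval commutation rather than unpacking it via Lemma~\ref{lmIsotropicComp} and the preceding proposition as you do, but since the proof of Lemma~\ref{lmIstopyInfMap} itself invokes Lemma~\ref{lmIsotropicComp}, the two arguments are the same in substance.
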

\begin{proof}
The map on the right hand side is constructed in the proof of Lemma \ref{lmIsotopUW} and  involves an adiabatic subdivision of the interval parametrizing the isotopy.  For each of the small intervals the commutation of the diagram is obtained by the same argument as in Lemma \ref{lmIstopyInfMap}. 
\end{proof}

An analogous argument for the outer orbits leads to 

 \begin{prp}\label{prpOutInfHKI}
For any $I$ there is an $I'>I$ such that $Out^*_{[0,\delta(I))}(H_{K,I})$ maps to $0$ under the continuation map to $Out^*_{[0,\delta(I))}(H_{K,I'})$.  
\end{prp}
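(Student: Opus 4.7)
The strategy parallels that of Proposition~\ref{prpInInfHKI}, but replaces the identification of the inner part with the \emph{vanishing} of the outer part. I would fix a sufficiently small smoothing parameter $\eta$, model $CF^*(H_{K,I})$ and $CF^*(H_{K,I'})$ via the smooth approximations $H_{K,I,\eta}$ and $H_{K,I',\eta}$ from \S\ref{SubsecConstr}, and pick a generic regular monotone homotopy $(H^s,J^s)$ between them. The resulting chain-level continuation $f$ will be shown to kill outer generators in the window, and the conclusion then passes through the homotopy colimit.

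By Lemma~\ref{lmBorgOanc} applied to the homotopy, any monotone continuation trajectory whose input is a concave orbit and whose output is a convex orbit has topological energy exceeding $\hbar>\delta(I)$. Hence its contribution lies in $CF^*_{\hbar}\subset CF^*_{\delta(I)}$ of the target and dies in the window, so in $[0,\delta(I))$ the continuation restricts to a map between the outer subcomplexes.

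Next, I would argue that outer-to-outer continuation trajectories have a uniform positive lower bound on topological energy. A zero-energy trajectory $u:\gamma^o\to \gamma'^o$ would satisfy $\partial_s u\equiv 0$, hence be the constant cylinder $u(s,t)=\gamma(t)$ at a loop $\gamma$ that is simultaneously a $1$-periodic orbit of every $H^s$. For an outer orbit at Liouville level $t_0>0$ in the concave region, this forces $h^{s\prime}(e^{t_0})=T$ (the underlying Reeb period) for all $s$, which fails for a generic monotone interpolation when $a_{I'}>a_I$. Gromov--Floer compactness applied inside the isolating neighborhood of Proposition~\ref{prpActionWindowComponent}, combined with the finiteness of outer $1$-periodic components of $H_{K,I,\eta}$ and $H_{K,I',\eta}$ (by gappedness and the bound $T\leq a$), yields an infimum $w_0>0$ over all outer-to-outer continuation trajectories, uniformly in $\eta$.

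Consequently, for any outer generator $\gamma^o$ the outer part of $f(\gamma^o)=\sum T^{E_{top}(u)}\,y_u$ consists of terms of adic valuation $\geq w_0>0$, so it lies in $CF^*_0(H_{K,I',\eta})$ and vanishes in the quotient $CF^*_{\delta(I)}/CF^*_0=CF^*_{[0,\delta(I))}$. Passing through the homotopy colimit defining $CF^*(H_{K,I'})$ gives the desired vanishing in $Out^*_{[0,\delta(I))}(H_{K,I'})$, and in fact any $I'>I$ with $a_{I'}$ outside the period spectrum suffices. The main technical obstacle is the $\eta$-uniformity of the lower bound $w_0$; this is ensured by the robustness of the energy and isolatedness estimates in Proposition~\ref{prpActionWindowComponent}, which depend only on the open neighborhood $\cN$ and the ball $\cB$ of almost complex structures and not on the precise smoothing.
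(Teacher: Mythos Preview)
Your argument has a genuine gap at the last step. You correctly observe that if every outer--to--outer continuation trajectory has topological energy at least some $w_0>0$, then for each outer generator $\gamma^o$ the element $f(\gamma^o)$ lies in $CF^*_0$ and hence $[f(\gamma^o)]=0$ in $CF^*_{[0,\delta(I))}$. But this does \emph{not} make the induced map on $Out^*_{[0,\delta(I))}$ vanish. The truncated module $\Lambda_{[0,\delta(I))}$ is not generated over $\Lambda_{\geq 0}$ by $[1]$: it contains the nonzero classes $[T^{-\lambda}]$ for all $\lambda\in[0,\delta(I))$, and these are not multiples of $[1]$. The continuation map sends $[T^{-\lambda}\gamma^o]\mapsto [T^{-\lambda}f(\gamma^o)]$, which has valuation $\geq w_0-\lambda$; this is zero in the window only when $\lambda<w_0$. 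So the map vanishes on all of $Out^*_{[0,\delta(I))}$ only if $w_0\geq\delta(I)$. Your compactness argument produces \emph{some} positive $w_0$ but gives no control on its size, and in particular your conclusion that ``any $I'>I$ with $a_{I'}$ outside the period spectrum suffices'' is false.

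The paper's proof supplies exactly the missing quantitative control. It fixes $\alpha,\epsilon$ and varies only the slope. Using the action formula for concave orbits, $\cA(\gamma^o)\approx (1+\epsilon)T(\gamma)-\epsilon a$, one sees that increasing the slope from $a'$ to $a''$ shifts the outer actions down by approximately $\epsilon(a''-a')$. For small steps the outer--to--outer continuation is, by the same localization argument as in Lemma~\ref{lmIstopyInfMap}, rescaling by $T^{\epsilon(a''-a')}$. Composing enough such steps gives total rescaling by $T^{\epsilon(b-a)}$, and choosing $b$ with $\epsilon(b-a)\geq\delta(I)$ makes the composite vanish in the window. In other words, the key is an \emph{explicit} lower bound on the energy that grows without bound in $I'$, not a soft compactness bound.
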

\begin{proof}
For $I=(\alpha,a,\epsilon)$ consider $I'=(\alpha,b,\epsilon)$. That is, vary only the slope. 
In action windows of size $<\delta(a)$ and for $(a''-a')\epsilon< \delta(a)$ the continuation map 
$Out^*_{[0,\delta(I))}(H_{K,I'})\to Out^*_{[0,\delta(I))}(H_{K,I''})$ 
is given by rescaling by the action difference by the same argument as \ref{lmIstopyInfMap}. This action difference is given  by rescaling  by roughly $T^{(a''-a')\epsilon}$. The map from $a$ to arbitrary $b$ is given by composition of $(b-a)/\epsilon$ such rescalings. The claim follows. 
\end{proof}

\section{Proof of the main Theorems}\label{SecProofs}
\subsection{Proof of Theorems \ref{mainThmA} and \ref{mainThmB}}
\begin{tm}\label{tmFilteredMainTm}
$$F^aSH^*_{M,[0,\epsilon)}(K)=\oplus_{\gamma:\cA(\gamma)<a}SH^*_{uw}(\gamma)\otimes\Lambda_{[0,\delta(a))}.$$ 
\end{tm}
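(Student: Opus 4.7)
The plan is to write $SH^*_{M,[0,\epsilon)}(K)$ as a colimit $\varinjlim_I HF^*_{[0,\epsilon)}(H_{K,I})$, use the inner/outer decomposition~\eqref{eqInOutLES} to isolate Reeb contributions from exterior ones, and then assemble the results of Section~\ref{SecTruncSH} on each inner summand. Throughout I take $\epsilon \le \delta(a) \le \hbar$, so that Proposition~\ref{prphbarfilt} makes the action filtration $F^\bullet$ well defined at the chain level and respected by all continuation maps in play.

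First I would eliminate the outer contribution. Starting from the long exact sequence~\eqref{eqInOutLES} and passing to the colimit along a cofinal monotone sequence $I_n$, Proposition~\ref{prpOutInfHKI} shows that for each $n$ there exists $I_{n+1} \ge I_n$ on which the structural map out of $Out^*_{[0,\epsilon)}(H_{K,I_n})$ vanishes. Consequently $\varinjlim_I Out^*_{[0,\epsilon)}(H_{K,I}) = 0$, and the long exact sequence yields a natural isomorphism
\begin{equation*}
SH^*_{M,[0,\epsilon)}(K) \;\cong\; \varinjlim_I In^*_{[0,\epsilon)}(H_{K,I}),
\end{equation*}
which preserves the action filtration $F^\bullet$ by Proposition~\ref{prphbarfilt}.

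Next I would analyze the filtered inner piece. By Proposition~\ref{prpInInfHKI}, once the smoothing parameter $\eta$ is small the generators associated with distinct Reeb components form direct summands of $In^*_{[0,\delta(a))}(H_{K,I})$, so
\begin{equation*}
F^a In^*_{[0,\delta(a))}(H_{K,I}) \;=\; \bigoplus_{\gamma:\cA(\gamma)<a} In^*_{[0,\delta(a))}(\gamma; H_{K,I}),
\end{equation*}
the sum being finite by isolation of the Reeb components. The proposition immediately following Proposition~\ref{prpInInfHKI} then identifies each summand canonically with $SH^*_{uw}(\gamma) \otimes \Lambda_{[0,\delta(a))}$, functorially in $I$ and reducing to the identity on the $SH^*_{uw}(\gamma)$ factor under any monotone change $I \le I'$. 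Taking the colimit, whose structure maps are identities on each factor, gives the direct sum as claimed.

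The main obstacle is establishing uniformity of the window $\delta(a)$ across the cofinal sequence $I_n$. This rests on Proposition~\ref{prpActionWindowComponent}, whose constants depend only on a fixed $C^2$-neighbourhood of $\bigcup_n \cK(K,I_n,\eta)$ and not on individual parameters; this uniformity is precisely what allows the splitting into Reeb-component summands to be preserved by every continuation map that appears in the colimit. I also expect a delicate bookkeeping point in matching the truncation parameter $\epsilon$ on the left with the spectral gap $\delta(a)$ on the right, which will require restricting attention to $\epsilon \le \delta(a)$ and, for intermediate values of $\epsilon$, invoking compatibility of the resulting identifications under the window-restriction maps.
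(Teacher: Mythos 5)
Your proposal follows essentially the same route as the paper's proof: both express $SH^*_{M,[0,\epsilon)}(K)$ as a colimit over $I$, invoke Proposition~\ref{prpOutInfHKI} to eliminate the outer contribution, and use Proposition~\ref{prpInInfHKI} together with the unnamed proposition after it to identify the filtered inner piece with $\bigoplus_{\gamma}SH^*_{uw}(\gamma)\otimes\Lambda_{[0,\delta(a))}$, natural in $I$. The only cosmetic difference is that the paper frames the elimination of the outer piece by showing the boundary map in~\eqref{eqInOutLES} vanishes and that the continuation to $H_K$ factors through the quotient $F^aIn^*_{[0,\delta(a))}(H_{K,I})$, whereas you pass to the colimit of the long exact sequence and observe $\varinjlim_I Out^*=0$; these are equivalent formulations of the same step.
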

\begin{proof}
First observe
\begin{equation}
SH^*_{[0,\epsilon),M}(K)=\varinjlim_I HF^*_{[0,\epsilon)}(H_{K,I}). 
\end{equation}
Now, from Proposition \ref{prpInInfHKI} we deduce that for any $I$ such that $slope(I)>a$ we have
\begin{equation}\label{eqFiltSymbMap}
F^aIn^*_{[0,\epsilon)}(H_{K,I})=\oplus_{\gamma:\cA(\gamma)<a}SH^*_{uw}(\gamma)\otimes\Lambda_{[0,\delta(a))}.
\end{equation}
And moreover, this is natural with respect to continuation maps from $H_{K,I}$ to $H_{K,I'}$. From this and proposition \ref{prpOutInfHKI} we deduce that the boundary homomorphism in the long exact sequence \eqref{eqInOutLES} vanishes. Moreover, by proposition \ref{prpOutInfHKI} the continuation map from $H_{K,I}\to H_K$ factors through the quotient map to $F^aIn^*_{[0,\delta(a))}(H_{K,I})$. Finally, the induced map from 
$ F^aIn^*_{[0,\delta(a))}(H_{K,I})$ is an isomorphism by Proposition \ref{prpInInfHKI}.

\end{proof}

We now discuss functoriality.

\begin{df}
 We say that an inclusion $K_1\subset K_2$ is \emph{admissible} if  
\begin{enumerate}
\item $\partial K_1\cap\partial K_2$ is codimension $0$ sub-manifold with boundary of $\partial K_2$.
\item each Morse-Bott component of either $\partial K_1$ or $\partial K_2$ is either contained in $\partial K_1\cap\partial K_2$ or is disjoint of  $\partial K_1\cap\partial K_2$, 
\item there is a contact type primitive of $\omega$ near $\partial K_2$ which extends to a primitive (not necessarily of contact type) of $\omega$ on a neighborhood of $\partial K_1$.
\end{enumerate}
\end{df}
\begin{ex}
A basic example is when $M$ is the affine variety $\bC^2\setminus \{xy-1\}$ equipped with the symplectic form $Im\left(\frac{dx\wedge dy}{xy-1}\right)$. Then $M$ carries a Lagrangian torus fibration $\pi:M\to\bR^2$ with a nodal singular fiber over the origin. Let $B\simeq\bR^2$ be the integral affine manifold with singularities coming from Arnold-Liouville. Let $P_2\subset B$ be a convex polygon containing the singular value, and let $P_1\subset P$ be a convex polygon not intersecting the monodromy invariant line. See Figure \ref{FigAdmInc}. Take $K_i=\pi^{-1} (P_i).$ Then each of $K_i$ is a Liouville domain, but it can be shown that $K_1$ is not a Liouville subdomain of $K_2$ with respect to any contact form form on $K_1$. 
\end{ex}
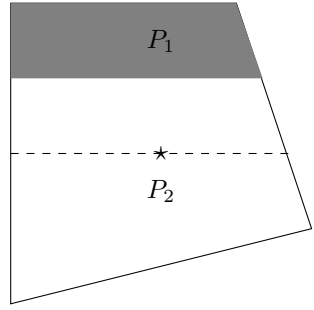
\begin{figure}\label{FigAdmInc}
 \begin{tikzpicture}
  % Draw the polygon P2
  \draw (0,-1) -- (0,2) -- (3,2) -- (4,-1) -- (0,-2) -- cycle;
  
  % Draw the center star or X
  \node at (2,0) {\large $\star$}; % Change \star to \times for X
  
  % Draw the dashed horizontal line through the center
  \draw[dashed] (0,0) -- (3.7,0);
  
  % Draw the polygon P1 above the dashed line
  %\draw (0,1)--(3.,1);
  \fill[gray] (0,1) -- (3.35,1) -- (3,2) -- (0,2) -- cycle;
  % Label the polygons
  \node at (2,-0.5) {$P_2$};
  \node at (2,1.5) {$P_1$};
\end{tikzpicture}
\caption{An admissible inclusion of compact domains.}
\end{figure}

Suppose we have an admissible inclusion $\iota: K_1\subset K_2$. Let $\gamma$ be a Reeb component of $K_2$. If $\gamma$ is in the interior of the intersection $\iota(\partial K_1)\cap K_2$ then there is a natural isomorphism $\iota_*:SH^*_{uw}(\iota^{-1}(\gamma))\simeq SH^*_{uw}(\gamma)$ induced by the symplectomorphism of the germs.  If $\gamma$ is a component in the boundary of $\partial K_1\cap\partial K_2$ we still have  an isomorphism $\iota_*$ by considering an isotopy  $\tau\mapsto K^{\tau}_1$ such that for $K^0_1$ the component $\gamma$ is still in the interior, and such that $\partial K_1\cap\partial K_2\subset \partial K^{\tau}_1\cap\partial K_2$ for all $\tau$. It is straightforward to construct such an isotopy. In the following we identify $\gamma$ with $\iota(\gamma)$ and call $\gamma$ a \emph{common Reeb component}.  All other Reeb components of $K_2$ are called \emph{non-common Reeb components.}

In the following, for an admissible inclusion $K_1\subset K_2$ denote by $\delta_{12}(a)$ the minimal gap in $\left(Spec(\partial K_1)\cup Spec(\partial K_2)\right)\cap[0,a]$ where the spectrum is taken with respect to a primitive which is defined on $\partial K_1\cup\partial K_2$. We also write $F^aSH^*_{M,[0,\delta_{12}(a))}(K_i)$ for the induced filtration with respect to this primitive. Note this makes sense in small action windows also for primitives which are not necessarily of contact type. 

\begin{prp}\label{tmInfintsmlSHFunct} 
Under the isomorphism \eqref{eqFiltSymbMap}, given an admissible embedding $K_1\subset K_2$, the induced map  $F^aSH^*_{M,[0,\delta_{12}(a))}(K_2)\to  F^aSH^*_{M,[0,\delta_{12}(a))}(K_1)$ is given by the direct sum of the projection to the summands corresponding to the common orbits with the restriction map $H^*(K_1)\to H^*(K_2)$.
\end{prp}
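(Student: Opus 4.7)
The plan is to realize the restriction map as a continuation map between $S$-shaped Hamiltonian models $H_{K_2,I_2}\le H_{K_1,I_1}$ chosen with a common slope $a$ exceeding all $\cA(\gamma)$ under consideration and compatible with a primitive $\lambda$ of $\omega$ on a neighborhood of $\partial K_1\cup\partial K_2$ (of contact type on $\partial K_2$ by admissibility). By Theorem \ref{tmFilteredMainTm}, Proposition \ref{prpInInfHKI} and Proposition \ref{prpOutInfHKI}, both filtered pieces $F^aSH^*_{M,[0,\delta_{12}(a))}(K_i)$ split as $\bigoplus_{\gamma}SH^*_{uw}(\gamma)\otimes \Lambda_{[0,\delta_{12}(a))}$ plus the constant summand, and the restriction is identified with the continuation $\mathrm{In}^*_{[0,\delta_{12}(a))}(H_{K_2,I_2})\to \mathrm{In}^*_{[0,\delta_{12}(a))}(H_{K_1,I_1})$ modulo the outer subcomplex (which is killed in the colimit).

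I then analyze this continuation summand by summand. For a \emph{common} Reeb component $\gamma\subset\partial K_1\cap\partial K_2$, admissibility allows the smoothings to be chosen so that $H_{K_2,I_2}$ and $H_{K_1,I_1}$ share the same germ near $\gamma$, together with the same primitive. Proposition \ref{prpActionWindowComponent} then confines the relevant Floer trajectories to an isolating neighborhood of $\gamma$, so the induced map on the $\gamma$-summand is identified with the identity on $CF^*_{uw}(\gamma)$ via the germ invariance of Proposition \ref{prpInfSHGermDep}, together with Lemma \ref{lmIsotopUW} to absorb any small isotopy of the boundary away from $\gamma$. For the constant-orbit summand, after a small autonomous Morse perturbation of each valley the inner complex is identified with the Morse complex of $K_i$, and the restricted continuation is the standard Morse continuation between nested domains, hence the ordinary restriction $H^*(K_2)\to H^*(K_1)$.

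The nontrivial case is a \emph{non-common} Reeb component $\gamma\subset\partial K_2$: by admissibility $\gamma\subset K_2\setminus K_1$ and is separated from $\partial K_1\cup K_1$ by a positive distance $d$ in any fixed compatible metric. The goal is to show that the continuation image of this summand lies in the outer subcomplex of $H_{K_1,I_1}$, so that it vanishes in $SH^*_M(K_1)$ by Proposition \ref{prpOutInfHKI}. For this I bound the geometric energy from below: any continuation trajectory out of $\gamma$ that reaches a convex orbit of $\partial K_1$ or a critical point in $K_1$ must traverse a geodesic of length at least $d$, so a monotonicity argument modeled on Proposition \ref{prpSmallDiamEst} (with Reeb-time reparametrization in the collars via Lemma \ref{lmReparam}) yields a uniform lower bound $c(d)>0$ on its geometric energy, and shrinking the action window below $c(d)$ rules out such trajectories. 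The main obstacle is obtaining this energy bound uniformly in the interpolating data, since the monotone continuation may carry a large gradient in the intermediate region between $\partial K_1$ and $\partial K_2$; this is precisely where the admissibility hypothesis on the extension of the primitive is used, to ensure a compatible two-collar geometry in which the estimates of Section \ref{SecTruncSH} can be run simultaneously for both boundaries.
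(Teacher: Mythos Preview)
Your treatment of the common Reeb components and of the constant-orbit summand is fine and matches the paper's argument. The gap is in the non-common case.

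You argue that a non-common component $\gamma\subset\partial K_2$ is at positive distance $d$ from $K_1$, and then invoke a monotonicity/diameter estimate to produce an energy lower bound $c(d)$ for any continuation trajectory landing on a convex orbit of $K_1$. But the proposition is about the \emph{fixed} window $[0,\delta_{12}(a))$, where $\delta_{12}(a)$ is the minimal period gap of $\partial K_1\cup\partial K_2$. There is no reason for your geometric constant $c(d)$ to dominate $\delta_{12}(a)$; indeed $d$ can be made arbitrarily small by choosing $K_1$ and $K_2$ close along the non-common part, while $\delta_{12}(a)$ depends only on the period spectrum. You acknowledge the difficulty (``the main obstacle is obtaining this energy bound uniformly\ldots'') but do not resolve it: the sentence about ``compatible two-collar geometry'' is not a mechanism.

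The paper's argument avoids the distance/monotonicity route entirely. It uses the extended primitive $\alpha$ on a neighborhood of $\partial K_1\cup\partial K_2$ (this is exactly the content of admissibility condition (c)) together with Proposition~\ref{prpRelEn}: for trajectories of energy $<\hbar$ one has $E^{rel}(u)=0$, hence the topological energy equals the $\alpha$-action difference, which in turn is arbitrarily close to the \emph{period} difference $T(\gamma)-T(\gamma')$. By discreteness of the combined spectrum this difference is either $\ge\delta_{12}(a)$ (so the trajectory is outside the window) or exactly $0$; in the latter case one perturbs the data to make the action difference precisely zero, ruling out any nonconstant trajectory. For $\gamma'$ a constant orbit the argument is the same with $T(\gamma')=0$, giving energy $\approx T(\gamma)\ge\delta_{12}(a)$. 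This period-based mechanism is what makes the bound match the window size; your geometric $c(d)$ does not.
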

\begin{proof}
By Theorem \ref{tmFilteredMainTm} the canonical map $ F^aSH^*_{M,[0,\delta_{12}(a))}(K_2)\to  F^aSH^*_{M,[0,\delta_{12}(a))}(K_1)$ induces a collection of canonical maps $r_{\gamma\gamma'}:SH^*_{uw}(\gamma)\to SH^*_{uw}(\gamma')$ where $\gamma$ runs over all Reeb components of $K_2$ and $\gamma'$ runs over all Reeb components of $K_1$.

By Proposition \ref{prpTruncIsotop} it suffices to study the corresponding claim for $In_{[0,\delta_{12}(a))}(H_{K_1,I_1})\to In_{[0,\delta_{12}(a))}(H_{K_2,I_2})$. We claim that for $\gamma$ a common component we have
\begin{equation}
r_{\gamma\gamma'}=\begin{cases}\iota_*,&\quad\gamma'=\iota(\gamma)\\
							0,&\quad \gamma'\neq\iota(\gamma).\end{cases}
\end{equation}
Indeed, if $\gamma'\neq\gamma$ this follows by the argument of Proposition \ref{prpInInfHKI}.  For $\gamma=\gamma'$ the Floer trajectories contributing to the continuation map are  the ones contributing to the unweighted complex which, i.e., the map $\iota_*$.

We now deal with the case that $\gamma$ is a non-common Reeb component. Let $\gamma'$ be Reeb component of $K_2$. If $\gamma'$ is trivial then any continuation Floer trajectory connecting $\gamma$ to $\gamma'$ can be considered as a disc with boundary in $K_1$. If this disc cohomologous to $0$ in homology rel $\partial K_1$, the topological energy of $u$ is arbitrarily close to $T(\gamma)>0$. Otherwise, $E(u)\geq \hbar$ by Lemma \ref{prpRelEn}. In any case we see the component $r_{\gamma\gamma'}$ vanishes.  Assume now $\gamma'$ is non-trivial. Let $\alpha$ be a contact type primitive of $\omega$ near $\partial K_1$ underlying the acceleration datum of $K_1$. By assumption it can be extended to a primitive along a neighborhood of $\partial K_2$. This primitive is not necessarily of contact type on $\partial K_2$. However for periodic orbits in a given homology class of $\partial K_1\cup\partial K_2$ the period spectrum with respect to this $\alpha$ is discrete by Morse-Bott non-degeneracy for each of the hyper-surfaces $\partial K_1,\partial K_2$. Now if $u$ is a continuation trajectory whose energy is less than $\hbar$ of Lemma  \ref{prpRelEn} the $u$ is cohomologous to $0$ rel $\partial K_1\cup\partial K_2$. In this case the topological energy is given by the action difference defined via $\alpha$. This can be taken arbitrarily close to the period difference $\Delta(T)=T(\gamma)-T(\gamma')$. By discreteness there is a  $\delta>0$  so that the $\delta$-interval around $\gamma$ contains no $\alpha$-periods of $\partial K_2$ other than possibly $T(\gamma)$ itself (in case that is a period). So, either $\Delta(T)>\delta$ are $\Delta(T)=0$. In the first case it is clear that the map $r_{\gamma\gamma'}$ vanishes. In the second case using the fact that $\gamma\neq \gamma'$ we can by adjusting the perturbation data and the functions $H_1,H_2$  make the action difference on these orbits precisely zero ruling out any continuation trajectory.  

Finally, we analyze the map on $H^*(K_2)$. Given a critical point of $K_2$ there cannot be any continuation trajectory $u$ with output on a non trivial Reeb trajectory $\gamma$ so that $u$ is cohomologous to $0$ rel $\partial K_2$. Indeed, the topological energy of such a trajectory can be taken to be arbitrarily close to $-T(\gamma)$. Thus every such trajectory has energy at least $\hbar$. So, in infinitesimal Floer cohomology, only the Morse trajectories contribute. It is standard that these compute the restriction map in singular cohomology.

\end{proof}
\begin{proof}[Proof of Theorems \ref{mainThmA} and \ref{mainThmB}]
Under the gappedness assumption, the $\delta$ in Theorem \ref{tmFilteredMainTm} can be taken independent of $a$. Thus 
$$
SH^*_{[t,t+\delta),M}(K)=\varinjlim_aF^aSH^*_{[t,t+\delta),M}(K)=RH^*(K)\otimes \Lambda_{[0,\delta)}.
$$
But, setting $t=-\delta$, the left hand side is precisely the first page in the spectral sequence in the formulation of Theorem  \ref{mainThmA}. Theorem \ref{mainThmB} now follows immediately by Theorem \ref{tmInfintsmlSHFunct}.

\end{proof}
\subsection{The non-smooth case}\label{SecNSCase}
The proof of Theorem \ref{tmInfSHn2} requires a version of Theorem \ref{mainThmC} for certain domains with corners. To formulate it when the boundary of $K$ is non-smooth  we need to first have a definition of $RH^*(K)$. 

\begin{df}\label{dfAdmSm}
Let $K\subset M$ be a domain such that in a neighborhood $N(\partial K)$ the symplectic form is exact. An \emph{admissible smoothing} of $K$ is a family $\tau\mapsto K^\tau$ such  and a choice $\alpha$ of a primitive of $\omega$ in a connected open neighborhood of $\partial K$ containing $\partial K^{\tau}$ for all $\tau\in[0,1]$  such that the following hold
\begin{enumerate}
\item $K=K^0\subset K^{\tau'}\subset K^{\tau}$ whenever $\tau'<\tau$.
\item For each $\tau>0$, the restriction $\alpha|_{\partial K^{\tau}}$ is a contact form. 
\item 
There is a countable set $\cR$ such that Reeb components decompose into families $\gamma^{\tau}$ for $\gamma\in\cR$ such that $\gamma^{\tau}$ is a well isolated Reeb component of $\partial K^{\tau}$ and the isotopy $\tau\mapsto \gamma^{\tau}_i$ is isolated in the sense of Definition \ref{dfIsolatingIsotopy}.

%The set of primitive periodic Reeb components  decomposes for each $\tau$ into a disjoint union of path connected isotropic components.
%\item For each $T$ the set of Reeb components of period $\leq T$ is \emph{isolated uniformly in $\tau$}. This means  there is a continuous  isotopy $\tau\mapsto U^{T,\tau}$ of embedded domains decomposing into a disjoint union of isotopies of connected components  $U^{T,\tau}(\gamma^{\tau})$ with the property that for each $\tau$ the interior of $U^{T,\tau}(\gamma^{\tau})$  is a well isolating neighborhood of a unique primitive connected Reeb  component $\gamma^{\tau}$ and moreover, each Reeb component of period $\leq T$ is contained in  $U^{T,\tau}$. 
\end{enumerate} 

Define for $\gamma\in\cR$ 
\begin{equation}
    SH^*(\gamma)=\varinjlim_{\tau\to 0}SH^*(\gamma^{\tau})
\end{equation}
where the direct limit is over the isomorphisms of Lemma \ref{lmIsotopUW} for $\tau>0$. 
\end{df}

%For $K$ with such an admissible smoothing denote by $\cR$ a set parametrizing the Reeb components, and for each $\gamma\in\cR$ denote by $\gamma^{\tau}$ the corresponding Reeb component of  $\partial K^{\tau}$.

In the setting of Definition \ref{dfAdmSm}, 
for each $a$ let $\delta(a,\tau)$ be the minimal non-zero gap in the period spectrum of $\partial K^{\tau}$ up to action value $a$. For fixed $a$ the function $\delta(a,\tau)$ is continuous as a function of $\tau$.

%Suppose now we have smoothing $\tau\mapsto K^{\tau}$ of $K=K^0$ where for each closed subinterval $I\subset[0,1)$ the restriction of the family to $\tau\in I$  is as in Definition \ref{dfAdmSm}.   We then denote for $\gamma\in\cR$ the direct limit over the isomorphisms of Lemma \ref{lmIsotopUW} for $\tau>0$ by $SH^*_{uw}(\gamma)$. 

% now conclude 
\begin{tm}\label{tmInfintsmlSHns}
Suppose the limiting periods are all distinct and let $\delta(a)=\inf_\tau\delta(a,\tau)>0$. Then
\begin{equation}
F^aSH^*_{[0,\delta(a))}(K)=\left(H^*(K)\oplus_{\gamma\in\cR}SH^*_{uw}(\gamma)\right)\otimes\Lambda_{[0,\delta)}.
\end{equation}
 
\end{tm}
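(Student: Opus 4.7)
The strategy is to reduce to the smooth case via the admissible smoothing and then pass to the limit $\tau\to 0^{+}$. First I would observe that for each $\tau>0$, the boundary $\partial K^{\tau}$ is of contact type with respect to the common primitive $\alpha$, so Theorem~\ref{tmFilteredMainTm} applies and gives
\begin{equation*}
F^{a}SH^{*}_{M,[0,\delta(a,\tau))}(K^{\tau})=\bigoplus_{\gamma^{\tau}:\cA(\gamma^{\tau})<a}SH^{*}_{uw}(\gamma^{\tau})\otimes\Lambda_{[0,\delta(a,\tau))}.
\end{equation*}
Since $\delta(a)=\inf_{\tau}\delta(a,\tau)>0$ by hypothesis, we may truncate to the common window $[0,\delta(a))$ and the decomposition persists for every $\tau$.

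Next I would set up the direct system of relative SH groups. For $\tau'<\tau$ the inclusion $K^{\tau'}\subset K^{\tau}$ is admissible (both boundaries are contact with respect to the same $\alpha$ on a joint neighborhood), so Proposition~\ref{tmInfintsmlSHFunct} describes the restriction map $F^{a}SH^{*}_{M,[0,\delta(a))}(K^{\tau})\to F^{a}SH^{*}_{M,[0,\delta(a))}(K^{\tau'})$ as the direct sum over common Reeb components of the unweighted restriction maps, together with the ordinary restriction in cohomology. By Lemma~\ref{lmIsotopUW} and the isolation hypothesis in Definition~\ref{dfAdmSm}, the maps $SH^{*}_{uw}(\gamma^{\tau})\to SH^{*}_{uw}(\gamma^{\tau'})$ are isomorphisms, and the direct limit over $\tau\to 0^{+}$ of the right hand side is, by the definition of $SH^{*}_{uw}(\gamma)$ and the fact that $K$ is a deformation retract of $K^{\tau}$ for $\tau$ small,
\begin{equation*}
\left(H^{*}(K)\oplus\bigoplus_{\gamma\in\cR:\cA(\gamma)<a}SH^{*}_{uw}(\gamma)\right)\otimes\Lambda_{[0,\delta(a))}.
\end{equation*}

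It remains to identify this direct limit with $F^{a}SH^{*}_{M,[0,\delta(a))}(K)$. Here I would argue by cofinality at the chain level: a homotopy colimit model for $CF^{*}(H_{K})$ is built from a monotone sequence of smooth Hamiltonians $(H_{i},J_{i})$ converging pointwise to $H_{K}$, and one can choose such a sequence by interleaving smoothings of the indicator functions $H_{K^{\tau_{i}}}$ for a sequence $\tau_{i}\to 0$ with increasing slope parameters $I_{i}\to\infty$. Since each $H_{K^{\tau_{i}},I_{i},\eta_{i}}$ lies in the acceleration data used to compute $SH^{*}_{M}(K^{\tau_{i}})$ as well as $SH^{*}_{M}(K)$, a standard cofinality argument together with the exchange of direct limits shows that the natural map
\begin{equation*}
\varinjlim_{\tau\to 0^{+}}F^{a}SH^{*}_{M,[0,\delta(a))}(K^{\tau})\;\longrightarrow\;F^{a}SH^{*}_{M,[0,\delta(a))}(K)
\end{equation*}
is an isomorphism, completing the proof.

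The main obstacle is the last step: one must verify that the action filtration by $\cA_{\alpha}$ is well defined and preserved in passage to the colimit despite the absence of a contact primitive for $\partial K$ itself, and that the concave/inner splitting from Proposition~\ref{prphbarfilt} survives uniformly in $\tau$. The uniform gap hypothesis $\delta(a)>0$ is precisely what is needed to guarantee that the $\hbar$-threshold of Proposition~\ref{prpRelEn} and the separation estimates of Proposition~\ref{prpActionWindowComponent} can be taken independent of $\tau$, so that the filtration and the resulting long exact sequence \eqref{eqInOutLES} pass cleanly through the limit.
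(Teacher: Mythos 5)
Your high-level strategy is the same as the paper's: apply the smooth result (Theorem~\ref{tmFilteredMainTm}) for each $\tau>0$, then identify $F^a SH^*_{M,[0,\delta(a))}(K)$ with the colimit over $\tau\to 0$. Your cofinality-of-acceleration-data argument for the last step is a legitimate alternative phrasing of the paper's shorter ``colimits commute with colimits'' observation.

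However, there is a genuine gap in the middle step, where you describe the maps in the direct system. You invoke Proposition~\ref{tmInfintsmlSHFunct}, which is about \emph{admissible inclusions} and asserts the restriction map is projection onto \emph{common} Reeb components (i.e., components lying in $\partial K_1\cap\partial K_2$). For a smoothing $K^{\tau'}\subsetneq K^\tau$, the boundaries are disjoint, so there are no common Reeb components and, read literally, Proposition~\ref{tmInfintsmlSHFunct} would give the zero map on all Reeb summands --- which would make the colimit lose exactly the generators the theorem needs. You then say that ``by Lemma~\ref{lmIsotopUW}'' the maps on $SH^*_{uw}$ are isomorphisms, but Lemma~\ref{lmIsotopUW} concerns isotopy-induced isomorphisms, not restriction for admissible inclusions, so the two invocations are inconsistent. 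The relevant tool is instead Proposition~\ref{prpTruncIsotop}, which treats the \emph{isotopy} $\tau\mapsto K^\tau$ and identifies the continuation map on $In^*_{[0,\delta)}$ as $T^{\Delta\cA}\iota$, with $\iota$ the isotopy isomorphism of Lemma~\ref{lmIsotopUW}. The action shift $\Delta\cA(\tau,\tau')$ is in general nonzero and must be shown to go to $0$ as $\tau,\tau'\to 0$ for the colimit to be what you claim; your proof does not address this rescaling at all. You should replace the appeal to Proposition~\ref{tmInfintsmlSHFunct} by one to Proposition~\ref{prpTruncIsotop}, and spell out why the $T^{\Delta\cA}$ factors become trivial in the colimit.
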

\begin{rem}
The assumption of distinct limiting periods is imposed on us because our current basic approach is to build on Theorems \ref{tmFilteredMainTm} and \ref{tmInfintsmlSHFunct}. A more complete approach, which would suffer from less limitations, is to prove versions of these theorems from the start for appropriate domains with corners. This will be pursued elsewhere. 

\end{rem}
\begin{proof}[Proof of Theorem \ref{tmInfintsmlSHns}]
We first observe that since colimits commute with colimits we have for any $\epsilon>0$ 
\begin{equation}
SH^*_{M,[0,\epsilon)}(K^0)=\varinjlim_{\tau\to0}SH^*_{M,[0,\epsilon)}(K^{\tau}).
\end{equation}
More crucially, for each $a$, we have 
\begin{equation}
F^aSH^*_{M,[0,\epsilon)}(K^0)=\varinjlim_{\tau\to0}F^aSH^*_{M,[0,\epsilon)}(K^{\tau}).
\end{equation}

The claim now follow from Theorem \ref{tmFilteredMainTm} and Proposition \ref{prpTruncIsotop}.

\end{proof}

To discuss functoriality we need the notion of compatible smoothings. 
\begin{df}\label{dfCompSmooth}
Given domains $K_1\subset K_2\subset M$ as in Definition \ref{dfAdmSm} we say that a pair $\{K_1^{\tau}\}\subset \{K_2^{\tau}\}$ of smoothings is \emph{compatible} if for each $\tau$  the inclusion $K_1^{\tau}\subset K_2^{\tau}$ is admissible, and for each action $a$ with respect to some fixed primitive, the set of common components of action $\leq a$ stabilizes as $\tau\to\infty$. We call the intersection over $\tau$ of the common components for $K_1^{\tau}\subset K_2^{\tau}$ \emph{the common components} for $K_1\subset K_2$. 
\end{df}
\begin{ex}\label{exComp}
Consider $K_1,K_2$ be the pre-images of respectively  a rectangle and a square under the moment map $T^*\bT^2=\bR^2\times\bT^2\to\bR^2$. Figure \ref{FigComp} depicts an example of a compatible and incompatible smoothing. In this case there are choices of smoothings which are compatible. In Figure \ref{FigComp2} we have a convex polygon inside an $L$-shaped polygon. It can be shown that no compatible smoothings exist in this case. 
\end{ex}

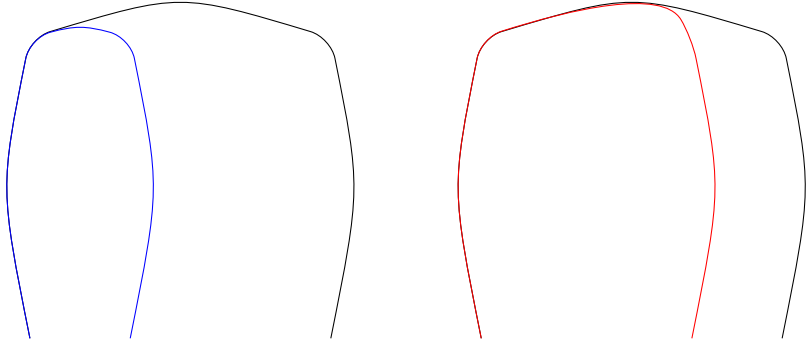
\begin{figure}
\begin{tikzpicture}[scale=2]

  % Define coordinates for left square and rectangle
  \coordinate (L1) at (0,0);
  \coordinate (L2) at (2,0);
  \coordinate (L3) at (2,2);
  \coordinate (L4) at (0,2);
  \coordinate (R1) at (2/3,0);
  \coordinate (R2) at (2/3,2);
  
  % Draw the left square and rectangle
  \draw[rounded corners=8pt] (L2)
             .. controls +(0.2,1) and +(0.2,-1) .. (L3)
             .. controls +(-1,0.3) and +(1,0.3) .. (L4)
             .. controls +(-0.2,-1) and +(-0.2,1) .. (L1);
             
  \draw[blue, rounded corners=8pt]  (R1)
             .. controls +(0.2,1) and +(0.2,-1) .. (R2)
             .. controls +(-0.3,0.08) and +(0.3,0.08) .. (L4)
             .. controls +(-0.2,-1) and +(-0.2,1) .. (L1);

  % Shift coordinates for the right square and rectangle
  \coordinate (L1) at (3,0);
  \coordinate (L2) at (5,0);
  \coordinate (L3) at (5,2);
  \coordinate (L4) at (3,2);
  \coordinate (R1) at (4.4,0);
  \coordinate (R2) at (4.4,2);

  % Draw the right square and rectangle
  \draw[rounded corners=8pt] (L2)
             .. controls +(0.2,1) and +(0.2,-1) .. (L3)
             .. controls +(-1,0.3) and +(1,0.3) .. (L4)
             .. controls +(-0.2,-1) and +(-0.2,1) .. (L1);
             
\draw[red, rounded corners=8pt](R1)
             .. controls +(0.2,1) and +(0.2,-1) .. (R2)
             .. controls +(-0.2,0.33) and +(0.6,0.19) .. (L4)
             .. controls +(-0.2,-1) and +(-0.2,1) .. (L1);

\end{tikzpicture}
\caption{The smoothings on the right are compatible, while those on the left are not}
\label{FigComp}
\end{figure}

\begin{figure}

\begin{tikzpicture}
  % Define the coordinates of the L-shaped polygon
  \coordinate (A) at (0, 0);
  \coordinate (B) at (4, 0);
  \coordinate (C) at (4, 2);
  \coordinate (D) at (1, 2);
  \coordinate (E) at (1, 1);
  \coordinate (F) at (0,1);
  % Draw the L-shaped polygon
  \draw (A) -- (B) -- (C) -- (D) -- (E) -- (F) --cycle;
  \draw[blue] (1,0)--(3,0)--(3,2)--(1,2)--cycle;
\end{tikzpicture}
\caption{Non-convex boundary }\label{FigComp2}
\end{figure}
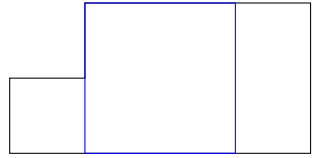

\begin{rem}\label{remRestrictLift}
The framework set up here is rather restrictive in two ways. Firstly we impose the assumption that the limiting periods are all distinct. Second, our definition of admissibility does not allow a Reeb component  of $K_2$ which is a proper subset of a component of $K_1$. Both restrictions can be removed. This will be done in forthcoming work. \end{rem}

\begin{tm}\label{tmInfintsmlSHnsFunct}
Given a pair of compatible inclusions the restriction map in infinitesimal Floer cohomology is given by projection to the common components. 
\end{tm}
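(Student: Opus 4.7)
The plan is to reduce to the smooth case already handled in Proposition \ref{tmInfintsmlSHFunct} by taking a colimit along the compatible smoothings. I would fix a primitive $\alpha$ of $\omega$ on a neighborhood of $\partial K_1\cup\partial K_2$ which restricts to a contact form on each $\partial K_i^{\tau}$ for $\tau>0$, and use this primitive throughout to compute periods, actions, and the filtrations $F^a$.

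First I would observe that the indicator-function construction is natural, so the commuting-of-colimits argument used in the proof of Theorem \ref{tmInfintsmlSHns} upgrades to a commutative diagram
\begin{equation}
\xymatrix{F^a SH^*_{M,[0,\epsilon)}(K_2)\ar[r]\ar@{=}[d]& F^a SH^*_{M,[0,\epsilon)}(K_1)\ar@{=}[d]\\
\varinjlim_{\tau\to 0} F^a SH^*_{M,[0,\epsilon)}(K_2^{\tau})\ar[r]& \varinjlim_{\tau\to 0} F^a SH^*_{M,[0,\epsilon)}(K_1^{\tau})}
\end{equation}
for any $\epsilon\leq\delta_{12}(a)$, where the bottom arrow is the colimit of the restriction maps at finite $\tau$. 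At each fixed $\tau>0$ the inclusion $K_1^{\tau}\subset K_2^{\tau}$ is admissible in the sense of Proposition \ref{tmInfintsmlSHFunct}, so this bottom restriction map decomposes as the projection onto the common-Reeb summands together with the restriction map $H^*(K_2^{\tau})\to H^*(K_1^{\tau})$ on the critical part.

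The main step is then to pass to the limit. Here compatibility enters in an essential way: by Definition \ref{dfCompSmooth}, for each action level $a$ the set of common components of action $\leq a$ stabilizes as $\tau\to 0$ to the common components of $K_1\subset K_2$. For a limiting common component $\gamma$, Proposition \ref{prpTruncIsotop} provides compatibility squares identifying $SH^*_{uw}(\gamma^{\tau})$ coherently along the isotopy defining $SH^*_{uw}(\gamma)$, and, crucially, the action-rescaling factors $T^{\Delta\cA}$ appearing on the two sides of the inclusion are equal since they are computed with the same primitive $\alpha$, so they cancel in the restriction map. For a component of $\partial K_2^{\tau}$ which is non-common at level $\tau$ (even if it is common in the limit, for intermediate $\tau$), Proposition \ref{tmInfintsmlSHFunct} shows its contribution to the restriction map at level $\tau$ vanishes, and this vanishing is preserved under the colimit. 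Finally, since the smoothings $K_i^{\tau}$ deformation retract to $K_i$, we have $H^*(K_i^{\tau})=H^*(K_i)$ and the ordinary cohomology part of the restriction map is independent of $\tau$.

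The main obstacle is the bookkeeping in the previous paragraph: namely, a component which is common for $K_1\subset K_2$ in the limit may correspond, at intermediate $\tau$, to a common component of $K_1^{\tau}\subset K_2^{\tau}$ whose precise incarnation depends on $\tau$, and one must ensure that the isotopy identifications chosen in Definition \ref{dfAdmSm} for $K_1$ and for $K_2$ can be chosen compatibly. Compatibility of the smoothings is designed exactly to make this possible: one selects, for each common component $\gamma$, a simultaneous isolating family $V^{T,\tau}$ that works for both $\partial K_1^{\tau}$ and $\partial K_2^{\tau}$, and applies Lemma \ref{lmIsotopUW} to both sides using this common data. Once this is set up, combining it with Theorem \ref{tmInfintsmlSHns} and the diagram above gives the desired description of the restriction map.
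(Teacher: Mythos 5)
Your proposal matches the paper's proof: both reduce to the smooth case of Proposition~\ref{tmInfintsmlSHFunct} by expressing $F^aSH^*_{[0,\epsilon)}(K_i)$ as a colimit over the compatible smoothings $K_i^\tau$, observing that the limiting restriction map $r^0_{\gamma\gamma'}$ is the colimit of the finite-$\tau$ maps $r^\tau_{\gamma\gamma'}$, and using stabilization of common components plus the vanishing/identity dichotomy from the smooth case. Your additional bookkeeping about the cancellation of the $T^{\Delta\cA}$ rescaling factors from Proposition~\ref{prpTruncIsotop} and the coherence of the isotopy identifications is not spelled out in the paper's terse proof but is a reasonable and harmless elaboration of the same argument.
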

\begin{proof}
As in the proof of Theorem \ref{tmInfintsmlSHns}, it suffices to prove that for any $a$ and for $\epsilon>0$ small enough the corresponding claim holds for $F^aSH^*_{[0,\epsilon)}(K_0)$. Consider the commutative diagram

$$
\xymatrix{ F^aSH^*_{[0,\epsilon)}(K_2^\tau)\ar[d]\ar[r]&F^aSH^*_{[0,\epsilon)}(K_1^\tau))\ar[d]\\
F^aSH^*_{[0,\epsilon)}(K_2^0)\ar[r]&F^aSH^*_{[0,\epsilon)}(K_1^0)}
$$

Given  Theorem \ref{tmInfintsmlSHFunct}  and the identifications of Theorems \ref{tmInfintsmlSHns}, the horizontal maps are direct sums of the maps $r^0_{\gamma\gamma'}$ and $r^{\tau}_{\gamma\gamma'}$ respectively. Moreover, $r^0_{\gamma\gamma'}$ is the colimit over $\tau$ of the $r^{\tau}_{\gamma\gamma'}$. For $\tau$ large enough we hav that $r^{\tau}_{\gamma\gamma'}=0$ if $\gamma'\neq\gamma$. So $r^0_{\gamma\gamma'}=0$ in this case. Similarly $r^0_{\gamma\gamma'}$ is the identity if $\gamma=\gamma'$.
\end{proof}
\subsection{Proof of Theorem \ref{mainThmC}}

\begin{df}
For $K\subset M$ a compact set define the \emph{infinitesimal Floer cohomology} by
\begin{equation}
SH^*_{M,t^+}(K):=\varprojlim_{a\to t^+} SH^*_{M,[t,a)}(K).
\end{equation}
\end{df}
\begin{rem}
A similar definition makes sense for any lower semi-continuous Hamiltonian.
\end{rem}

Note that $SH^*_{M,t^+}(K)$ is an $R$-module.  Moreover,  since the Floer complex is a free normed Novikov field module with an orthonormal basis  we have a canonical isomorphism $SH^*_{M,t^+}(K)=SH^*_{M,0^+}(K)$ for all real $t$.  The isomorphism is induced by scaling by $T^{-t}$.

\begin{proof}[Proof of Theorem \ref{mainThmC}]
Given Theorem \ref{tmFilteredMainTm} we conclude
\begin{equation}
\varprojlim_{\epsilon\to 0}F^aSH^*_{[0,\epsilon)}(K_0)=H^*(K)\oplus_{\gamma\in\cR(\partial K):T(\gamma)\leq a}SH^*_{uw}(\gamma).
\end{equation}

To conclude, it remains to prove 
\begin{equation}
SH^*_{0^+,M}(K_0)=\varinjlim_a \varprojlim_{\epsilon\to 0}F^aSH^*_{[0,\epsilon)}(K_0).
\end{equation}
This requires an argument for commutation of limit and colimit. That is, we wish to show that the natural map
\begin{equation}
\varinjlim_a\varprojlim_{\epsilon}F^aSH^*_{[0,\epsilon)}(K_0)\to\varprojlim_{\epsilon}\varinjlim_aF^aSH^*_{[0,\epsilon)}(K_0)=SH^*_{0^+,M}(K_0)
\end{equation}
is an isomorphism.

For this we observe that for any fixed $a$ and $\epsilon<\delta(a)$ we actually have a \emph{splitting} $SH^*_{[0,\epsilon)}=F^aSH^*_{[0,\epsilon)}\oplus R(a,\epsilon)$ where $R(a,\epsilon)$ is some "remainder" term. This is a consequence of the fact that below $\hbar$ all Floer theoretic interactions are weighted by action differences. In particular, for any $a$ we have a splitting
\begin{equation}
SH^*_{0^+,M}(K_0)=\varprojlim_{\epsilon\to 0}(F^aSH^*_{[0,\epsilon)})\oplus \varprojlim_{\epsilon\to 0}R(a,\epsilon).
\end{equation}
In particular, for any $a$ the map $\varprojlim_{\epsilon}F^aSH^*_{[0,\epsilon)}\to SH^*_{0^+,M}(K_0)$ is injective and therefore so is the map from $\varinjlim_a\varprojlim_{\epsilon}F^aSH^*_{[0,\epsilon)}(K_0)$. For surjectivity note that all the maps in the inverse limit respect the action filtration for $\epsilon<\hbar$. Thus the induced filtration by of  $SH^*_{0^+,M}(K_0)$ by the $\varprojlim_{\epsilon\to 0}(F^aSH^*_{[0,\epsilon)})$ is exhaustive. This proves the conclusion of Theorem first half of the present Theorem.

Naturality of the isomorphism is immediate by naturality in Theorem \ref{tmInfintsmlSHFunct}.

\end{proof}

We state the Theorem for the non-smooth case.
\begin{tm}
Theorem \ref{mainThmC} holds if one considers $K$ a domain with an admissible smoothing. Naturality holds for inclusions $K_1\subset K_2$ of domains with compatible admissible smoothings. 
\end{tm}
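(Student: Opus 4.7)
The plan is to repeat the proof of Theorem \ref{mainThmC} nearly verbatim, with Theorem \ref{tmFilteredMainTm} replaced by Theorem \ref{tmInfintsmlSHns} and Proposition \ref{tmInfintsmlSHFunct} replaced by Theorem \ref{tmInfintsmlSHnsFunct}. First I would invoke Theorem \ref{tmInfintsmlSHns} to obtain, for each slope value $a$ and each $\epsilon < \delta(a)$, the filtered identification
\[
F^a SH^*_{M,[0,\epsilon)}(K) = \left(H^*(K) \oplus \bigoplus_{\gamma \in \cR(\partial K):\, T(\gamma) \leq a} SH^*_{uw}(\gamma)\right) \otimes \Lambda_{[0,\epsilon)}.
\]
Taking $\varprojlim_{\epsilon \to 0}$ kills the Novikov factor and leaves $H^*(K) \oplus \bigoplus_{T(\gamma) \leq a} SH^*_{uw}(\gamma)$; taking $\varinjlim_a$ then produces $RH^*(K)$.

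Next I would show that the natural map
\[
\varinjlim_a \varprojlim_\epsilon F^a SH^*_{M,[0,\epsilon)}(K) \longrightarrow \varprojlim_\epsilon \varinjlim_a F^a SH^*_{M,[0,\epsilon)}(K) = SH^*_{M,0^+}(K)
\]
is an isomorphism. This is the commutation-of-limits step from the proof of Theorem \ref{mainThmC}, whose key input is a splitting $SH^*_{M,[0,\epsilon)}(K) = F^a SH^*_{M,[0,\epsilon)}(K) \oplus R(a,\epsilon)$ for $\epsilon$ sufficiently small, coming from the fact that below the universal threshold $\hbar$ of Proposition \ref{prpRelEn} Floer-theoretic interactions are weighted by action differences. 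Once such a splitting is in hand, both injectivity and surjectivity of the above map follow verbatim as in the smooth case.

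The main obstacle, and where I expect to spend most of the effort, is establishing this splitting for the non-smooth $K$. The strategy is to use the identification $SH^*_{M,[0,\epsilon)}(K) = \varinjlim_{\tau \to 0} SH^*_{M,[0,\epsilon)}(K^\tau)$ employed in the proof of Theorem \ref{tmInfintsmlSHns}, and to observe that the threshold $\hbar$ provided by Proposition \ref{prpRelEn} depends only on local geometric data of $\omega$ and of the primitive $\alpha$ near $\partial K$, so it can be chosen uniformly in small $\tau > 0$. The splittings on each $SH^*_{M,[0,\epsilon)}(K^\tau)$ are compatible with the continuation maps induced by $K^{\tau'} \subset K^\tau$, since these respect the action filtration in windows of size $< \hbar$, so the splittings pass to the colimit and define the desired splitting for $K$.

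Finally, for naturality under a compatible admissible inclusion $K_1 \subset K_2$ of domains with compatible smoothings, I would apply Theorem \ref{tmInfintsmlSHnsFunct} at each filtration level: the restriction map $F^a SH^*_{M,[0,\epsilon)}(K_2) \to F^a SH^*_{M,[0,\epsilon)}(K_1)$ is, under the identifications above, the direct sum of the identity on common Reeb summands, the zero map on the non-common ones, and the usual restriction in singular cohomology on the $H^*$ factor. Passing to the limits in $\epsilon$ and then $a$ yields the claimed commutative diagram relating the infinitesimal Floer cohomologies of $K_1$ and $K_2$.
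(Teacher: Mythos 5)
Your proposal is correct and follows exactly the paper's approach: the paper's proof is the one-liner "Given Theorems \ref{tmInfintsmlSHns} and \ref{tmInfintsmlSHnsFunct}, the proof is the same as that of Theorem \ref{mainThmC}," which is precisely the substitution you spell out. Your added discussion of why the $\hbar$-splitting persists for non-smooth $K$ (uniformity of $\hbar$ in $\tau$ and compatibility with the colimit over smoothings) is a reasonable fleshing-out of what "the same" implicitly requires, not a deviation.
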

\begin{proof}
Given Theorems \ref{tmInfintsmlSHns} and  \ref{tmInfintsmlSHnsFunct}, the proof is the same as that of Theorem \ref{mainThmC}.
\end{proof}
\section{Relative $SH$ near the singularity of an SYZ fibration}\label{SecSYZ}
\subsection{Infinitesimal $SH$ near the singularity of an SYZ fibration}
\begin{lm}
Let $\partial K$ carry a Lagrangian torus fibration, and let the Reeb orbit $\gamma$ be a torus fiber. Assume $TM|_{\gamma}$ is trivialized so that the Robin Salamon index of $\gamma$ is $0$. Then $SH^*_0(\gamma)=H^*(T^*\bT^n;R)$. In other words, the local system of Theorem \ref{tmMorseBottCascades} is trivial. 
\end{lm}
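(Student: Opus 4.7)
The plan is to invoke Theorem \ref{tmMorseBottCascades}, which identifies $HF^*_{uw}(\gamma;H)$ with $HM^{*+\iota_{CZ}(\gamma)}(\gamma;\cL)$ for a local system $\cL$ over $\gamma\simeq\bT^n$. Under the Robin--Salamon $0$ hypothesis the grading shift vanishes, so the whole lemma reduces to showing that $\cL$ is trivial: granting this, the computation is $HM^*(\bT^n;R)=H^*(\bT^n;R)=H^*(T^*\bT^n;R)$, the last equality being the deformation retract of $T^*\bT^n$ onto its zero section.

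To trivialize $\cL$ I would follow the argument of \cite{BourgeoisOancea2009}, exploiting the Lagrangian structure of $\gamma$. Since $\bT^n$ is parallelizable, $T\gamma$ admits a global frame $(e_1,\dots,e_n)$. Because $\gamma$ is Lagrangian, for any $\omega$-compatible $J$ we have the $J$-orthogonal splitting $TM|_{\gamma}=T\gamma\oplus JT\gamma$, and then $(e_1,\dots,e_n,Je_1,\dots,Je_n)$ is a symplectic frame of $TM|_{\gamma}$. Restricting along each Reeb orbit $x\subset\gamma$ produces a family of symplectic trivializations of $x^*TM$ varying continuously in $x$, in which the linearized Cauchy--Riemann operators $D_x$ have $x$-independent asymptotic symbol. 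Hence the determinant lines $\det(D_x)$ form a trivial real line bundle over $\gamma$, and $\cL$ is the constant local system with fibre $R$.

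The step I expect to require the most care is verifying that for each fixed $x$ the trivialization coming from the Lagrangian framing is homotopic, as a trivialization of $x^*TM$, to the trivialization used to define the Robin--Salamon index of $\gamma$. The difference between two such trivializations over a single $S^1$ is measured by a Maslov class, and the hypothesis $\iota_{RS}(\gamma)=0$ in the chosen trivialization of $TM|_{\gamma}$ is precisely what forces this Maslov discrepancy to vanish; concretely, one compares the two via a capping half-cylinder and reads off the index difference. Once this compatibility is in place, combining it with the previous paragraph yields $SH^*_0(\gamma)\simeq H^*(T^*\bT^n;R)$ as claimed.
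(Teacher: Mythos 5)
Your proof is correct, but it takes a genuinely different route from the paper's. The paper argues briefly: reduce to $n=1$ by the K\"unneth formula, then cite Bourgeois--Oancea's result that the $S^1$ Reeb family is a \emph{good} orbit, so $\cL$ has trivial monodromy around each circle factor of $\bT^n$. You instead trivialize the determinant line bundle directly: the parallelization of $T\gamma\simeq\bT^n\times\bR^n$ together with the Lagrangian splitting $TM|_\gamma=T\gamma\oplus JT\gamma$ gives a global symplectic frame in which the asymptotic data of the operators $D_x$ is $x$-independent, and contractibility of the space of Cauchy--Riemann operators with fixed asymptotics then trivializes the orientation bundle. This is more self-contained, avoiding both the K\"unneth reduction and the good/bad dichotomy, at the cost of being longer. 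One hypothesis you should make explicit: the $x$-independence of the asymptotic operator is not automatic for an arbitrary Lagrangian torus; it uses that $\gamma$ is a \emph{fiber} of a Lagrangian torus fibration, so that the linearized flow is constant-coefficient in action-angle coordinates. Your last paragraph correctly locates the role of the RS-index-$0$ hypothesis: it identifies your Lagrangian framing with the trivialization fixed in the statement, which is what kills the grading shift in Theorem~\ref{tmMorseBottCascades}.
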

\begin{proof}
In dimension $1$ this is \cite{BourgeoisOancea2009}. Namely, $\gamma$ is a good orbit and so the local system has trivial monodromy. In higher dimension this follows by the Kunneth formula. 
\end{proof}

\subsubsection{The 4-dimensional case}\label{Sec4DimSYZ}
Let $M$ be a symplectic $4$-manifold which is closed or geometrically bounded and satisfies $c_1(M)=0$. Let $B$ be a smooth surface and let $\pi:M\to B$ be proper. Let $B_{reg}\subset B$ be an open subset so that $\pi$ is a proper Lagrangian submersion over $B_{reg}$. In particular, the fibers over $B_{reg}$ are Lagrangian tori. We assume these tori are Maslov $0$.  

We say that a subset $P\subset B$ is \emph{admissible} if
\begin{enumerate}
\item $\partial P\subset B_{reg}$. 
\item $\partial P$ is an integral affine convex polygon which is Delzant smooth. This means, first, $\partial P$ is the union of a finite number of affine segments with respect to the Arnold Liouville structure. Second,  orienting  $\partial P$ counter clockwise, at each vertex the tangent vector to the outgoing segment is to the left of the  line through the incoming one. Finally, we require that the primitive covectors annihilating adjacent edges form a $\bZ$-basis for the covectors at the vertex. 
\item The integral affine structure is conical near $\partial P$. This means there is an atlas of  action angle coordinates $\{p,\theta\}$ such that the Euler vector field $\sum p_i\frac{\partial}{\partial P_i}$ is preserved by the transition maps and points outwards of $P$. 
\item To each edge $\partial_iP$ associate the action $\cA_i$ which is the value of the primitive $\alpha=\sum_ip_id\theta_i$ on the primitive outward covector defining $\partial P_i$ where $\alpha$ is evaluated at any point on $\partial P_i$. Then the $\cA_i$ are mutually irrational.

\end{enumerate}

\begin{ex}
$\cR\subset\bR^2$ be an eigenray diagram. Let $B_{\cR}$ be the corresponding integral affine manifold with singularities. Let $P\subset B_{reg}$ be convex and Delzant, and suppose that the eigenlines through singular values inside $P$ all meet at a point in $P$ then $P$ is admissible. For a reference see \cite[\S7]{GromanVarolgunes2021}. 
\end{ex}
Let $P_1\subset P_2$ be admissible. We say that the inclusion is \emph{admissible} if $\partial P_1\cap\partial P_2$ is a  codimension $0$ subset whose boundary points are interior points of the $1$ dimensional strata of $\partial P_2$. See Figure \ref{FigAdm}. It is easy to see this property is  transitive. Note that each connected component of $\partial P_1\cap\partial P_2$  is a union of successive edges of $\partial P_1$. 
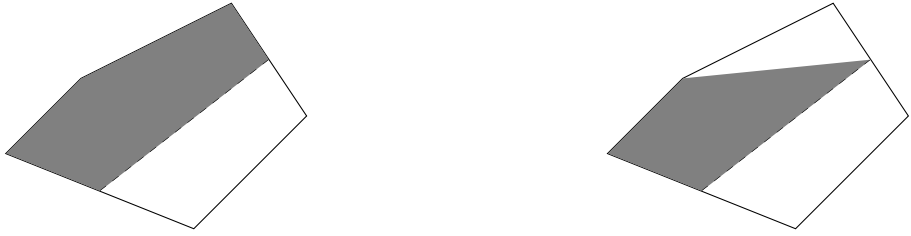
\begin{figure}\label{FigAdm}
 \centering
  \begin{tikzpicture}
    % Figure (a)
    \begin{scope}[shift={(-4cm,0)}]
      % Define the coordinates of the vertices for figure (a)
      \coordinate (Aa) at (0,0);
      \coordinate (Ba) at (2,1);
      \coordinate (Ca) at (3,-0.5);
      \coordinate (Da) at (1.5,-2);
      \coordinate (Ea) at (-1,-1);
    
      % Draw the polygon for figure (a)
      \draw (Aa) -- (Ba) -- (Ca) -- (Da) -- (Ea) -- cycle;
    
      % Calculate the midpoints of e_2 and e_4 manually for figure (a)
      \coordinate (M2a) at ($(Ba)!0.5!(Ca)$);
      \coordinate (M4a) at ($(Da)!0.5!(Ea)$);
    
      % Define the clipping path for figure (a)
      \clip (M2a) -- (M4a) -- (Ea) -- (Aa) -- (Ba) -- cycle;
    
      % Fill the clipped area with gray for figure (a)
      \fill[gray] (Aa) -- (Ba) -- (Ca) -- (Da) -- (Ea) -- cycle;
    
      % Draw the dashed line from M2 to M4 for figure (a)
      \draw[dashed] (M2a) -- (M4a);
    
      % Label the edges for figure (a)
      \node[above] at ($(Aa)!0.5!(Ba)$) {$e_1$};
      \node[right] at ($(Ba)!0.5!(Ca)$) {$e_2$};
      \node[below right] at ($(Ca)!0.5!(Da)$) {$e_3$};
      \node[below left] at ($(Da)!0.5!(Ea)$) {$e_4$};
      \node[left] at ($(Ea)!0.5!(Aa)$) {$e_5$};
    \end{scope}
    
    % Figure (b)
    \begin{scope}[shift={(4cm,0)}]
      % Define the coordinates of the vertices for figure (b)
      \coordinate (Ab) at (0,0);
      \coordinate (Bb) at (2,1);
      \coordinate (Cb) at (3,-0.5);
      \coordinate (Db) at (1.5,-2);
      \coordinate (Eb) at (-1,-1);
    
      % Draw the polygon for figure (b)
      \draw (Ab) -- (Bb) -- (Cb) -- (Db) -- (Eb) -- cycle;
    
      % Calculate the midpoints of e_2 and e_4 manually for figure (b)
      \coordinate (M2b) at ($(Bb)!0.5!(Cb)$);
      \coordinate (M4b) at ($(Db)!0.5!(Eb)$);
    
      % Define the clipping path for figure (b)
      \clip (M2b) -- (M4b) -- (Eb) -- (Ab) -- cycle;
    
      % Fill the clipped area with gray for figure (b)
      \fill[gray] (Ab) -- (Bb) -- (Cb) -- (Db) -- (Eb) -- cycle;
    
      % Draw the dashed line from M2 to M4 for figure (b)
      \draw[dashed] (M2b) -- (M4b);
    
      % Label the edges for figure (b)
      \node[above] at ($(Ab)!0.5!(Bb)$) {$e_1$};
      \node[right] at ($(Bb)!0.5!(Cb)$) {$e_2$};
      \node[below right] at ($(Cb)!0.5!(Db)$) {$e_3$};
      \node[below left] at ($(Db)!0.5!(Eb)$) {$e_4$};
      \node[left] at ($(Eb)!0.5!(Ab)$) {$e_5$};
    \end{scope}
  \end{tikzpicture}
  \caption{An admissible inclusion of integral affine polygons (left), and an inadmissible inclusion (right).}
\end{figure}

We define a category $\cP$ of compact sets $P\subset B$ with morphisms admissible inclusions. We also consider the category $PMon$ of \emph{partial monoids} over $\bZ$. We define a contravariant functor $Trop: \cP\to PMon$  as follows.

To each edge $e$ associate the monoid $C_e$ consisting of the integral covectors on $P$ which are constant on $e$ and are non-negative on the outside of $P$. To each vertex $e_i\cap e_{i+1}$ associate the cone $C_{i,i+1}$ generated by the primitive generators of $C_{e_i}$ and $C_{e_{i+1}}$. Define the partial monoid 
\begin{equation}
P_{trop}:=\cup_iC_{i,i+1},
\end{equation}
where we identify the generator of $C_{e_i}\subset  C_{i-1,i}$ with $C_{e_i}\subset C_{i,i+1}$. The partial monoid structure is defined for $x,y$ which are contained in a cone and is undefined otherwise. Given an admissible inclusion $Q\subset P$  we define a map $f:Edges(P)\to Edges (Q)\cup\{*\}$ by $f(i)=*$ if the $i$th edge of $P$ does not contain any edge of $Q$, and by $f(i)=j$ for $\partial_jQ$ the unique edge contained in $\partial_i P$ otherwise. We then get an induced partial map of partial monoids $P_{trop}\to Q_{trop}$ by mapping the generator associated with $\partial_iP$ to the generator associated to $\partial_{f(i)}Q$ if $f(i)\neq *$ and is undefined otherwise.

To each element $x\in P_{trop}(\bZ)$ associate an $R$-module $M_x^*$ as follows. For the $0$ element take $M_0:=H^*(\pi^{-1}(P);R)$. For $x=me_i+ne_{i+1}$ let $T_x$ be the $(m,n)$-cover of the torus formed by taking the quotient $T^*_vB$ by the dual to the lattice generated by the primitive tangents to $e_1,e_2$. Let $M^*_x:=H^*(T_x;\bZ)$.

\setcounter{theorem}{\getrefnumber{tmInfSHn2}-1}
\renewcommand{\thetheorem}{\arabic{theorem}} % Redefine theorem numbering
We now recall and prove Theorem \ref{tmInfSHn2} from the introduction.

\begin{theorem}\label{tmInfShPtrop}
Let $P\subset B$ be admissible and let $K=\pi^{-1}(P)$.   Then 
\begin{enumerate}
\item
 The  infinitesimal Floer cohomology of $K$ is the direct sum 
 \begin{equation}\label{eqPtroInfSH}
SH^*_{0^+,M}(P)=M(P_{trop}(\bZ)):= \bigoplus_{x\in P_{trop}(\bZ)}M_x^*
 \end{equation}
 in each $\bR$-degree. 
\item
If  $Q\subset P$ is an admissible inclusion then the restriction map in infinitesimal Floer cohomology is induced by the map of partial monoids $f:P_{trop}(\bZ)\to Q_{trop}(\bZ)$. Namely, the map is identity for $x\in P_{trop}(\bZ)$ for which is defined and $0$ otherwise.
\end{enumerate}
\end{theorem}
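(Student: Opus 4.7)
The plan is to deduce Theorem \ref{tmInfShPtrop} from the non-smooth versions of Theorem \ref{mainThmC}, namely Theorems \ref{tmInfintsmlSHns} and \ref{tmInfintsmlSHnsFunct}. Concretely, I would exhibit an admissible smoothing $\tau \mapsto K^{\tau}$ of $K = \pi^{-1}(P)$ in the sense of Definition \ref{dfAdmSm}, identify its set of Reeb families with $P_{trop}(\bZ) \setminus \{0\}$, compute each $SH^*_{uw}(\gamma)$ via Theorem \ref{tmMorseBottCascades}, and realize the restriction map for an admissible inclusion as the partial monoid map $f$.

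For the smoothing I would work in action--angle charts. Away from vertices of $\partial P$ the hypersurface $\pi^{-1}(\partial_i P)$ is already smooth and of contact type for $\alpha = \sum_j p_j\, d\theta_j$. Near a vertex $v = e_i \cap e_{i+1}$, the Delzant smoothness together with the conical structure on the affine base identifies a neighborhood of $\pi^{-1}(v)$ symplectically with a neighborhood of the zero section in $T^*\bT^2$, taking $P \cap U_v$ to the first quadrant in the $p$-plane. Smoothing the corner by a strictly convex one-parameter family of curves $\Gamma_v^{\tau}$ converging to the corner as $\tau \to 0$ and gluing into the straight edges produces $\partial K^{\tau}$ smooth, of contact type, and equal to $\partial K$ outside small neighborhoods of the $\pi^{-1}(v)$'s. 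The closed Reeb orbits on $\partial K^{\tau}$ form Morse--Bott tori in bijection with the integer covectors in the cones $C_{i,i+1}$ of $P_{trop}(\bZ)$: a lattice point $x = m e_i^{*} + n e_{i+1}^{*}$ corresponds to an orbit torus of cover type $T_x$ with $\alpha$-period the linear combination $m \cA_i + n \cA_{i+1}$. The irrationality assumption on the $\cA_i$ forces all the limiting periods to be distinct, so the hypothesis of Theorem \ref{tmInfintsmlSHns} is satisfied, and each family $\tau \mapsto \gamma_x^{\tau}$ is an isolated isotopy in the sense of Definition \ref{dfIsolatingIsotopy}. Theorem \ref{tmMorseBottCascades}, together with the cited triviality of the local system $\cL$ for Lagrangian torus Reeb orbits in a trivialized $TM$, then gives $SH^*_{uw}(\gamma_x) \simeq H^*(T_x; R) = M_x^{*}$. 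Adding the contribution $H^*(K; R) = M_0^{*}$ and invoking Theorem \ref{tmInfintsmlSHns} yields the desired direct sum decomposition.

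For the second assertion, given an admissible inclusion $Q \subset P$, I would construct a compatible pair of admissible smoothings $\{K_1^{\tau}\} \subset \{K_2^{\tau}\}$ in the sense of Definition \ref{dfCompSmooth}. The admissibility of the inclusion, namely the requirement that the boundary points of $\partial P \cap \partial Q$ are interior to $1$-strata of $\partial P$, is what makes this possible: one chooses the smoothings to agree on a neighborhood of $\partial P \cap \partial Q$ while independently rounding the corners of $Q$ not shared with $P$ and those of $P$ not shared with $Q$. Under this identification the common Reeb components of $K_1^{\tau} \subset K_2^{\tau}$ stabilize as $\tau \to 0$ and are in bijection with the edges of $\partial Q$ contained in $\partial P$, which is exactly the locus on which the partial monoid map $f\colon P_{trop}(\bZ) \to Q_{trop}(\bZ)$ is defined. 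The restriction formula then follows from Theorem \ref{tmInfintsmlSHnsFunct}.

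The main technical obstacle I anticipate is the construction of compatible smoothings near the endpoints of $\partial P \cap \partial Q$, together with showing that no transient Reeb family on $\partial K_1^{\tau}$ inside $\pi^{-1}(\partial P \setminus \partial Q)$ becomes temporarily identified with a component of $\partial K_2^{\tau}$ before disappearing. Controlling this uniformly will require a careful choice of rounding profile at each endpoint of $\partial P \cap \partial Q$ in local Delzant charts, together with a use of the irrationality of the $\cA_i$ to separate the relevant period spectra; this is also the origin of the irrationality restriction in the hypothesis of the theorem.
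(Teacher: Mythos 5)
Your proposal takes essentially the same route as the paper: both reduce to Theorems \ref{tmInfintsmlSHns} and \ref{tmInfintsmlSHnsFunct} by constructing compatible convex smoothings of $P$ and $Q$ (yours via corner-rounding in local Delzant charts, the paper's via a strictly convex defining function), identifying the limiting characteristic lines of the Reeb families with $P_{trop}(\bZ)\setminus\{0\}$, computing $SH^*_{uw}(\gamma_x)$ via Theorem \ref{tmMorseBottCascades} and the triviality of the local system, and invoking the rational independence of the edge periods to separate the limiting period spectra. The technical concern you flag about transient Reeb families near the endpoints of $\partial P\cap\partial Q$ is precisely what the paper's ``stabilization'' clause in Definition \ref{dfCompSmooth} and its explicit construction of compatible smoothings are designed to handle, so this too is consistent.
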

\begin{rem}
The $R$-module $M(P_{trop}(\bZ))$ has a functorial structure of a BV-algebra over R. It should not be hard to show that the isomorphism of \eqref{eqPtroInfSH} is in fact an isomorphism of BV algebras. This could slightly simplify the proof of some claims below. We do not pursue this point further here. 
\end{rem}
We break down the proof of Theorem \ref{tmInfShPtrop} into a number of steps. We first discuss the notion of \emph{convex smoothing} of a polygon $P$. Let $C\subset B$ be a non-contractible smooth simple closed curve.  $C$ is said to be convex if orienting the curve $C$ counter clockwise, it is locally convex at each point of $C$. That is, identifying a small open neighborhood $U\subset B$ of $p\in C$ with an open set in $T_pB$ via the affine structure we have that $C\cap U$ is to the left of the oriented tangent line at $p$. For $C$ convex the \emph{outside} of $C$ is the component of $B\setminus P$ which contains points to the right of $C$ under the local identifications of the last sentence. A similar notion applies to a convex polygon and more generally any convex piecewise smooth simple closed curve. 

A convex smoothing of $P$ is  a family $P_s\subset B$ such that
\begin{enumerate}
\item $P=P_0$
\item $\partial P_s\subset B_{reg}$ is smooth and strictly convex 
\item for $s<s'$ we have $P_s\subset P_s'$. 
\item Define a characteristic line to be a curve $\gamma:t\mapsto \partial(P_t)$ so that the tangent  line to $\partial(P_t)$ at $\gamma(t)$ is locally constant in integral affine coordinates. Then $\gamma$ converges as $t\to 0$. 
\end{enumerate}

\begin{lm}
Every polygon admits a convex smoothing.
\end{lm}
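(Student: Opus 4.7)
The plan is to smooth $\partial P$ by a local construction at each vertex combined with a small outward bulge along each edge, arranged so that the whole family is nested and so that each tangent direction can be tracked consistently to a limit point of $\partial P$.

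Fix a vertex $v$ of $\partial P$. By Delzant smoothness choose action coordinates $(p_1,p_2)$ centered at $v$ in which $P$ locally occupies the quadrant $\{p_1\le 0,\,p_2\le 0\}$ and the two incident edges lie on the coordinate axes. For each $s>0$ sufficiently small define the corner smoothing $A^s_v$ to be the quarter-circular arc of radius $s$ centered at $(s,s)$ lying in $\{p_1,p_2\ge 0\}$. It is tangent to the $p_1$-axis at $(s,0)$ and to the $p_2$-axis at $(0,s)$, it is strictly convex, and the family $\{A_v^s\}_{s>0}$ is monotonically nested and collapses to $v$ as $s\to 0$.

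Next I would patch these local arcs into a global curve. Remove from $\partial P$ a neighborhood of each vertex of length $s$ along each incident edge, glue in the arc $A_v^s$, and on the remaining portion of each edge add a small outward bulge of amplitude $c s^2$ by means of a $C^\infty$ bump function whose support is an interval centered at the edge midpoint and disjoint from the corner regions. With a careful choice of bump the resulting curve $\partial P_s$ is globally $C^\infty$ and has strictly positive curvature; setting $P_0:=P$ produces a monotone family $\{P_s\}_{s\ge 0}$ with $P_s\supset P$, and the curve stays in $B_{reg}$ for small $s$ because $\partial P\subset B_{reg}$.

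The only delicate point is the convergence of characteristic lines as $t\to 0$. On the strictly convex smooth curve $\partial P_t$, the Gauss-type map sending a point to its tangent direction (compared across $B$ via parallel transport in the affine structure) is a diffeomorphism with $\mathbb{S}^1$, so a characteristic line is determined by its prescribed tangent direction $w$. If $w$ is the tangent direction of some edge $e$ of $\partial P$, then for small $t$ the corresponding point of $\partial P_t$ lies on the bulged portion of $e$ and converges to the midpoint of $e$, since our bump function was symmetric. Otherwise $w$ is realized only on some corner arc $A_v^t$, and the corresponding point converges to $v$ because $\mathrm{diam}(A_v^t)=O(t)$. The main obstacle is bookkeeping: ensuring global $C^\infty$ smoothness, strict convexity, and strict nesting all simultaneously. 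The conical integral affine structure near $\partial P$ streamlines this by providing a developing map into $\mathbb{R}^2$ on a neighborhood of $\partial P$, so that the entire construction can be carried out in one chart and each $\partial P_s$ can alternatively be realized as a regular level set of a single $C^\infty$-in-$s$ family of strictly convex functions.
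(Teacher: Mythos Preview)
Your corner construction has the convexity backwards. In the local model $P=\{p_1\le 0,\,p_2\le 0\}$, every point of $P$ lies at distance at least $s\sqrt2>s$ from $(s,s)$, so $P$ sits entirely in the \emph{exterior} of the disk bounded by your arc; consequently any region $P_s\supset P$ bounded near $v$ by that arc is locally the complement of a disk, hence concave there, not convex. Said differently, one cannot round a convex corner from the outside by a short arc hugging the vertex; the correct move for a convex $P_s\supset P$ is to push each edge outward and join adjacent shifted edges by arcs centered at the original vertices, so that $P_s$ lies on the convex side of every arc. There is a second, independent gap: since your bump is supported near the edge midpoints and disjoint from the corner regions, $\partial P_s$ contains straight segments of the original edges of $P$; this violates the required strict convexity of $\partial P_s$ and also invalidates the Gauss-map bijection you invoke for the convergence of characteristic lines. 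Finally, note that after ``removing a neighborhood of length $s$'' the truncated edges end at $(-s,0)$ and $(0,-s)$, which do not meet your arc endpoints $(s,0)$ and $(0,s)$.

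The paper follows exactly the alternative you allude to in your last sentence but do not carry out: it realizes $P_s$ as the sublevel set $h_s^{-1}((-\infty,1])$ of a single smooth function
\[
h_s=\Bigl(\sum_i f_{i,s}^{\,s}\Bigr)^{1/s},\qquad f_{i,s}(x)=f(x_i-sg_i),
\]
where $x_i$ is the defining affine functional of the $i$th edge and $g_i$ is a strictly convex bump along that edge. Strict convexity, smoothness and nesting then come for free from properties of $h_s$, and no local patching is required.
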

\begin{proof}
The $i$th edge of $\partial P$ is defined by the equation $x_i=1$ for an appropriate integral affine function defined on a neighborhood of $\partial P$. Let $f:\bR\to \bR$ be a smooth monotone function which  is constant and equal to $1$ for $t\leq 1$ and is strictly monotone otherwise. Consider the function $h=\max_i\{f\circ x_i\}$ which extends as a constant to the interior of $P$ and is well defined on open neighborhood of $P$. Then $h$ is a continuous convex function defining $P$ as the inverse image of $1$.  For each $i$ pick a strictly convex function $g_i: e_i\to[0,1]$ which is $0$ on the boundary of $e_i$. For each $s$ let 
$$
f_{i,s}(x)=f(x_i-sg_i),
$$
and,
$$
h_s(x)=\left(\sum_if^s_{i,s}(x)\right)^{1/s}.
$$
Then $P_s:=h_s^{-1}((-\infty,1])$ is as required. 
\end{proof}
We say that an integral affine structure is \emph{conical} if there is an atlas so that the transition maps commute with local scaling. 
\begin{lm}
Suppose the integral affine structure on $B$ is conical and that $\pi$ admits a Lagrangian section over $B$. Then the family $K_s$ is an admissible smoothing in the sense of Definition \ref{dfAdmSm}.
\end{lm}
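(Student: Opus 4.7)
I would check the three clauses of Definition \ref{dfAdmSm} in order. The Lagrangian section together with the conical integral affine structure yield global action-angle coordinates in a neighborhood $N$ of $\partial K$, and hence a primitive $\alpha = \sum p_i\,d\theta_i$ of $\omega$ on $N$ with Liouville field $Z = \sum p_i\,\partial_{p_i}$ descending to the Euler vector field $E$ on $B$. Condition (1) is built into the construction of $P_s$. For condition (2), $\alpha|_{\partial K_s}$ is contact precisely when $Z$ is transverse to $\partial K_s$, which is equivalent to $E$ being transverse to $\partial P_s$. Since the flow of $E$ is dilation and $\partial P_s$ is strictly convex for $s > 0$, $E$ has strictly positive outward component along $\partial P_s$, giving (2).

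The substantive content is (3). I would first identify the Reeb dynamics: the Reeb vector field on $\partial K_s$ preserves the torus fibration $\pi$ and on the fiber over $x \in \partial P_s$ acts as a constant vector parallel to the outward conormal $v(x) \in T^*_x B$, normalized so that $\langle v(x), E_x\rangle = 1$. A closed Reeb orbit on the fiber exists iff $v(x)$ is proportional to a primitive integer covector $v_0$, in which case the whole fiber is foliated by closed orbits and, for each multiplicity $m \geq 1$, the parametrized loops form a Reeb component $\gamma^s_{v_0,m}$ homeomorphic to $\bT^2$. I would take $\cR$ to be the countable set of pairs $(v_0, m)$ realized as a conormal to $\partial P_s$ for some $s$. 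Strict convexity ensures that for each $v_0$ there is a unique point $x_s(v_0) \in \partial P_s$ with conormal direction $v_0$, and the characteristic-line clause in the definition of convex smoothing provides the continuous dependence of $x_s(v_0)$ on $s$, producing the family $\gamma^s_{v_0, m}$.

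I would verify well-isolation for each $\gamma^s_{v_0, m}$: isotropy holds because $\alpha$ evaluated on any loop in the component equals the constant $m\langle v_0, p(x_s(v_0))\rangle$; exactness of $\omega$ on an isolating neighborhood is the statement $\omega = d\alpha$; and triviality of the complexified canonical bundle follows from the natural action-angle framing combined with the Maslov-zero hypothesis. For any fixed period bound $T$, only finitely many primitive pairs $(v_0, m)$ yield orbits of period $\leq T$, each concentrated on a single fiber, so one may take mutually disjoint isolating neighborhoods $V^{T,s}_{v_0,m}$ as $\pi^{-1}(U_s(v_0))$ intersected with a collar of $\partial K_s$, for suitable small disks $U_s(v_0)$ around $x_s(v_0)$.

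Finally, the isotopy $s \mapsto \gamma^s_{v_0, m}$ is isolated in the sense of Definition \ref{dfIsolatingIsotopy} over any compact subinterval $[s_0, s_1] \subset (0, 1]$: the homotopy $f^{T,s}$ is built by tracing the characteristic line $s \mapsto x_s(v_0)$ and thickening by fiberwise identifications; the primitive $\alpha$ is globally defined on $N$ and may be taken locally constant in $s$; and the canonical bundle trivialization from action-angle coordinates is likewise $s$-independent. The principal technical point is ensuring that the disks $U_s(v_0)$ may be chosen uniformly in $s$ so that the neighborhoods remain mutually disjoint and exclude Reeb orbits outside $\gamma^s_{v_0, m}$ of period $\leq T$. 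This follows from compactness of $[s_0, s_1]$, continuity of the finitely many characteristic lines of period $\leq T$, and their mutual separation on each $\partial P_s$ by strict convexity.
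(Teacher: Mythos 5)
Your proposal is correct and takes essentially the same approach as the paper: construct the primitive $\alpha$ from the Lagrangian section and the conical integral affine structure, use strict convexity to show the Liouville/Euler field is transverse to each $\partial K_s$ (hence $\alpha|_{\partial K_s}$ is contact), and identify the Reeb components with torus fibers over points of $\partial P_s$ with rational conormal, with strict convexity ruling out birth or death of components. The paper's proof is substantially terser and does not spell out the verification of each clause of the well-isolation and isolated-isotopy definitions as you do; your additional detail (the action computation for isotropy, the $c_1=0$ trivialization, the uniform choice of isolating disks over compact subintervals of $(0,1]$) is a correct and appropriate elaboration of what the paper leaves implicit.
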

\begin{proof}
Fix a Lagrangian section $\sigma$ over $B$. Given a local integral affine chart we use the section $\sigma$ to define Arnold-Liouville coordinates. Namely, the angle coordinates are defined via the Hamiltonian flow of 
the coordinate functions of the integral affine chart with starting point $\sigma$. We then lift the Euler vector field in the action angle coordinates. The conicity assumption implies there is an atlas on $B$ so that the Euler vector field if invariant under all transition maps. 

By convexity of $f$, the Liouville field thus constructed points outward of each level set and  in particular defines a contact form. 

For each $s$ the primitive Reeb components are the pre-images under $\pi$ of points $x\in f^{-1}(s)$ at which the gradient in integral affine coordinates in a rational direction. By strict convexity there is no  birth or death of connected components \end{proof}

Let $Q\subset P$ be an admissible inclusion. We say that a convex smoothing $f_2$ of $Q$ is compatible with a convex smoothing $f_1$ of $P$ if there is an open set $V$ such that
\begin{enumerate}
\item $f_1|_V=f_2|_V$
\item The characteristic lines corresponding to elements of $Q_{trop}(\bZ)$ which are in the image of the partial map $P_{trop}(\bZ)\to Q_{trop}(\bZ)$ are all contained in $V$. 
\end{enumerate}

\begin{lm}
Compatible convex smoothings exist. Moreover, compatible smoothings of admissible polygons lift to compatible smoothings in  the sense of Definition \ref{dfCompSmooth} of the pre-images under $\pi$. 
\end{lm}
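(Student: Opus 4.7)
The plan is to build $f_1$ and $f_2$ together, fixing a single convex smoothing on a neighborhood of the common boundary $E := \partial P \cap \partial Q$ and extending it in two different ways. Admissibility of the inclusion $Q \subset P$ forces $E$ to be a union of full edges of $\partial Q$, each lying inside an edge of $\partial P$, with endpoints that are vertices of $Q$ in the relative interior of edges of $\partial P$. Thus near the relative interior of $E$ the germs of $\partial P$, $\partial Q$, and of their common outside region coincide. I pick a trimmed subset $E' \subset E$ staying away from the endpoint vertices of $E$, together with an open neighborhood $V$ of $E'$ in $B$ avoiding all vertices of $P$ and all endpoint vertices of $E$ in $Q$. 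Using the bump construction from the polygonal smoothing lemma above, I build a local smoothing $f|_V$ pushing each common edge outward within $V$, then extend $f|_V$ independently to convex smoothings $f_1$ of $P$ and $f_2$ of $Q$ by applying the same recipe to the non-common edges of $\partial P$ and $\partial Q$ respectively. By construction $f_1|_V = f_2|_V$.

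Next I verify the characteristic line condition. An element $x \in Q_{trop}(\bZ)$ in the image of the partial map $P_{trop}(\bZ) \to Q_{trop}(\bZ)$ is a multiple of a primitive outward covector of some common edge, or a cone-sum of two such covectors for adjacent common edges. Its characteristic line lies on $\partial Q_s$ at the point where the outward normal is annihilated by $x$, and it converges to a point in the relative interior of $E$ as $s \to 0$. Continuity of characteristic lines in $s$ combined with compactness of $s \in [0,1]$ and the fact that only finitely many such $x$ occur below any action level allow me to thicken $V$ in the direction transverse to $\partial P$ so that all these characteristic lines stay in $V$ for all $s \in [0,1]$.

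For the lifting statement, the preceding lemma lifts each $f_i$ to an admissible smoothing $K_i^\tau := \pi^{-1}(P_i^\tau)$ in the sense of Definition \ref{dfAdmSm}. Since $f_1|_V = f_2|_V$, the intersection $\partial K_1^\tau \cap \partial K_2^\tau$ contains $\pi^{-1}(\partial P_1^\tau \cap V)$ as a codimension-$0$ submanifold of $\partial K_2^\tau$; the Morse--Bott Reeb components are preimages of rational tangent directions on $\partial P_i^\tau$, and either lie fully in the common boundary or away from it; and the contact primitive $\alpha = \sum_i p_i\,d\theta_i$ extends across a neighborhood of $\partial K_1^\tau \cup \partial K_2^\tau$ using the conical integral affine structure and the Lagrangian section. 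Thus the inclusion $K_1^\tau \subset K_2^\tau$ is admissible for each $\tau > 0$. A common Reeb component of period $\leq a$ on $\partial K_2^\tau$ corresponds to an element $x \in P_{trop}(\bZ)$ of bounded action in the image of the partial map, of which there are only finitely many; these form precisely the common components for all $\tau$ small enough, giving the required stabilization.

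The main obstacle is the first step: $V$ must be thick enough transverse to $\partial P$ to contain finitely many characteristic lines throughout $s \in [0,1]$, while still being confined near the relative interior of $E$ so that $f_1$ and $f_2$ can diverge from one another elsewhere. This is settled by uniform continuity of the characteristic flow on the compact interval $[0,1]$ and the discreteness of bounded-action primitive covectors, reducing the choice of $V$ to a standard tubular-neighborhood construction around $E'$.
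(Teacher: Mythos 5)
Your construction proceeds in the opposite order from the paper's, and this is where a genuine gap opens up. You first fix a smoothing $f|_V$ on a tubular neighborhood $V$ of a trimmed sub-arc $E' \subset E$, then extend independently to $f_1$ on $P$ and $f_2$ on $Q$. The paper instead first builds the smoothing of $P$ subject to the constraint that, on each edge $e$ of $P$ containing a boundary point of $E$, the maximum $x_s$ of $f_s|_e$ (which is precisely where the characteristic line of the primitive conormal of $e$ sits) lies in the interior of $e \cap \partial Q$, and only then constructs the $Q$-smoothing to agree with the $P$-smoothing near each $(x_s, f(x_s))$.

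The gap in your version: you assert that the characteristic line of a common conormal ``converges to a point in the relative interior of $E$ as $s \to 0$'' and that uniform continuity lets you thicken $V$ transversally to contain it. But the location of that limit point is \emph{not} automatic --- it is the maximum of the smoothing over the common edge $e'$, and for a boundary common edge ($e'$ having an endpoint on $\partial E$) the maximum is determined by how you extend $f|_V$ to the rest of $e'$ and to the rest of the containing edge $e$ of $P$. If the extension bumps the edge out more away from $V$ than inside it, the maximum drifts outside $V$, and no amount of transversal thickening of $V$ will capture a characteristic line whose base point on $\partial P$ has escaped. Your concluding paragraph acknowledges a thickness/finiteness obstacle, but the real constraint is on the tangential location of the max, and neither your bump construction nor your extension recipe (which you describe as acting only on ``non-common edges,'' though the boundary common edges and their ambient edges of $P$ also need extending outside $V$) is pinned down enough to guarantee it.

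This is fixable within your framework --- require the bump on $V \cap e'$ to dominate the extension on $e' \setminus V$, so the max stays in $V$ --- but as written you claim the containment follows from continuity alone, which it does not. The paper dodges the issue entirely by placing the max first and choosing $V$ around it afterwards. Your verification of the lift to $\pi^{-1}$ (admissibility for each $\tau$, stabilization of common components) is in line with, and somewhat more explicit than, the paper's one-sentence appeal to the convergence of $df_s$ at points of $\partial P \cap \partial Q$.
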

\begin{proof}
Let $e$ be an edge of $P$ containing a boundary point of $\partial P\cap \partial Q$. Any smoothing of $\partial P$ can be locally considered as a family of functions $s\mapsto f_s$ on $e$ with a single maximum. Moreover, the characteristic line of the element associated with the primitive conormal of $e$  is given by $(x_s,f(x_s))$ for $x_s$ the point where $f_s$ achieves its local maximum. We construct a smoothing of $P$ so that the characteristic line  goes to the interior of $e\cap\partial Q$ if the latter is non-empty. We then construct a convex smoothing of $Q$ which coincides with that of $P$ on a neighborhood of $(x_s,f(x_s))$. For illustration, see the right hand figure in Figure \ref{FigComp}. It is clear that such a pair of smoothings satisfies the requirement.

For the last part of the statement, we need to verify the stabilization requirement. For this note that as $s\to 0$, the differential $df_s$ at a point  $p\in \partial Q\cap\partial P$ converges to the covector associated to the edge of $\partial P\cap\partial Q$ containing $p$.  
\end{proof}

We can now Prove Theorem \ref{tmInfShPtrop}.

\begin{proof}[Proof of Theorem \ref{tmInfShPtrop}]
We pick compatible convex smoothings for $Q$ and $P$. Given a convex smoothing $f$ denote by $\cR(f)$ the corresponding set of families of Reeb components. Note these are the same as characteristic lines associated with rational directions. We construct a bijection $\cR(f)\to P_{trop}(\bZ)\setminus\{0\}$ as follows. If the limit point of the characteristic line $\gamma$ is in the interior of some edge we map it to the corresponding cone in accordance with its multiplicity. Similarly if the limit point is a vertex, we map it to the corresponding element in the corresponding cone in the obvious way.  This bijection is functorial with respect to admissible inclusions.

The irrationality assumption implies all the limiting  periods are distinct. The compatibility of the smoothings of $P$ and $Q$ implies compatibility in the sense of Theorem \ref{tmInfintsmlSHnsFunct} of the inclusion $\pi^{-1}(Q)\subset\pi^{-1}(P)$. The claim now follows immediately by Theorems \ref{tmInfintsmlSHns} and \ref{tmInfintsmlSHnsFunct}.  
\end{proof}

\subsubsection{The positive singularity}\label{SecPosSing}

We briefly discuss the case $n>2$. A full discussion will be given in a forthcoming work, and so the discussion in this part is somewhat less formal. 

To fix ideas we shall consider a neighborhood of the positive singularity.  This can be described as a symplectic manifold with an integral affine structure whose base is a prism as in Figure \ref{figPrism}. The fibers over the generic points of the dashed lines are products of a nodal $2$-torus with $S^1$ while the fiber over the vertex is of type $(1,2)$ in the terminology of \cite{Gross2001}. For a concrete model see \cite{Gross00} The monodromy around the singular values is such that there are global integral affine coordinates $x_1,x_2$ in terms of which the sides of $P$ are given by the equations $x_1=const,x_2=const$ and $-x_1-x_2=const$. The front and back sides are not defined by global integral affine coordinates, however there are functions $y^{\pm}$ which are integral affine on a half space containing the front and back respectively.

\begin{figure}\label{figPrism}
%\tdplotsetmaincoords{70}{150} % Adjust this for a better view angle
\begin{tikzpicture}[ scale=2] % Scale the picture
  % Define the coordinates
  \coordinate (O) at (0,0,0);
  \coordinate (A) at (1,1,1);
  \coordinate (B) at (1,-2,1);
  \coordinate (C) at (-2,1,1);
  \coordinate (A') at (1,1,-1);
  \coordinate (B') at (1,-2,-1);
  \coordinate (C') at  (-2,1,-1);
  \coordinate (H1) at ($(A)!.5!(B) $);
  \coordinate (H2) at ($ (A)!.5!(C) $);
  \coordinate (H3) at ($ (B)!.5!(C) $);
  \coordinate (G1) at ($(A')!.5!(B') $);
  \coordinate (G2) at ($ (A')!.5!(C') $);
  \coordinate (G3) at ($ (B')!.5!(C') $);
  % Calculate the centers of the faces
 % \coordinate (F1) at ($(H1)!.5!(G1) $);
  %\coordinate (F2) at ($ (H2)!.5!(G2) $);
  %\coordinate (F3) at ($ (H3)!.5!(G3) $);
  \coordinate (F1) at (0,1,0);
  \coordinate (F2) at (1,0,0);
  \coordinate (F3) at (-0.7,-0.7,0);

  % Draw the edges of the prism
  \draw (A) -- (B) -- (C) -- cycle;
  \draw (A') -- (B') -- (C') -- cycle;
  \draw (A) -- (A');
  \draw (B) -- (B');
  \draw (C) -- (C');

  % Draw the section x=0
  \draw[dashed] (O) -- (F1);
  \draw[dashed] (O) -- (F2);
  \draw[dashed] (O) -- (F3);

  % Label the points
  \node at (O) [below] {O};
\end{tikzpicture}
\caption{The prism $P$}
\end{figure}
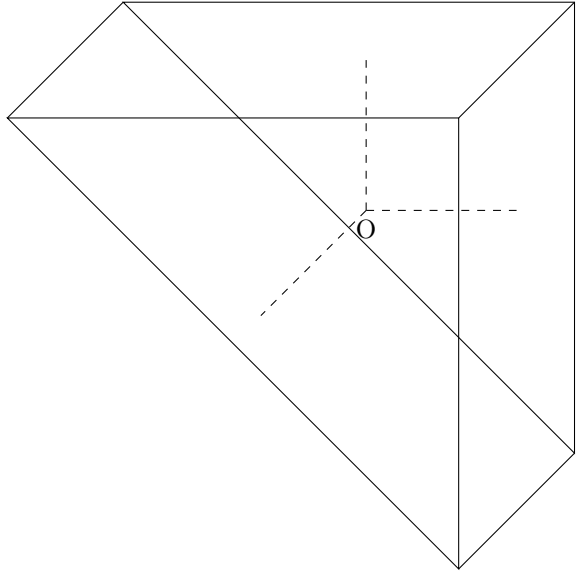

It is straightforward to associate a dual fan $P_{trop}(\bZ)$ to $P$. The one dimensional cones are non-negative integer multiples of the covectors $dy_{\pm},dx_1,dx_2,-d(x_1+x_2)$ with cones corresponding to double and triple intersections.  Note all these covectors define $S^1$ actions which are defined on a neighborhood of the respective face regardless of whether or not it contains a singular value. 

We can then associate with each $x\in P_{trop}(\bZ)$ a $\bZ$-module $M^*_x$.  For $x$ which is not in any of the $1$-dimensional cones associated with the sides of the prism this is a copy of the cohomology of the three torus.  However,  for elements corresponding to the periodic orbits of $dx_1,dx_2,-d(x_1+x_2)$ it can be shown that we get a copy of the cohomology of the singular fiber.  Note that in this case, the singular fiber produces periodic components which are not Morse-Bott. In this way it should be straightforward to prove the analogue the first part of  Theorem  \ref{tmLocSHn2} for the positive singularity. 

The second part of Theorem  \ref{tmLocSHn2}  is a little  more tricky as we did need to develop some theory for the restriction map in infinitesimal cohomology  from the component associated with normal to a face containing a singularity to a subface which does not contain the singularity.  A comprehensive discussion will be taken up in forthcoming work. 
\subsection{A criterion for torsion freedom}
\begin{df}
A Banach complex $B$ over $\Lambda$ is a chain complex over $\Lambda$ equipped with a complete norm such that the differential does not increase norms. We denote by $B_t$ the sub-complex consisting of elements whose norm is $<e^t$ and by $B_{[t_1,t_2)}$ the quotient complex $B_{t_2}/B_{t_1}$. We denote by $H_{t^+}(B)$ the infinitesimal cohomology
$$
\varprojlim_{\epsilon\to 0} H_{[t,t+\epsilon)}(B).
$$
\end{df}

\begin{rem}
    Some claims in this section and the next only use the norm, not the completeness property. Completeness is essential for 
\end{rem}
\begin{lm}\label{lmTorFreeInj}
Let $B$ be a Banach complex and suppose $H^*(B_0)$ is torsion free. Then for any $t<0$ the natural map 
\begin{equation}
\tau:H^*_{t^+}(B)\to H^*(B_0/B_t)
\end{equation}
is injective
\end{lm}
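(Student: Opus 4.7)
The plan is to identify all the relevant truncated cohomology groups explicitly using the torsion-freeness hypothesis, and then reduce injectivity of $\tau$ to injectivity of each map in the inverse system. Set $V := H^*(B_0)$. The key preliminary observation is that for any $s \in \bR$, multiplication by $T^{-s}$ is a canonical isomorphism of chain complexes $B_0 \xrightarrow{\sim} B_s$, identifying $H^*(B_s)$ with $V$. Under these identifications, for $s_1 \leq s_2$ the inclusion-induced map $H^*(B_{s_1}) \to H^*(B_{s_2})$ is multiplication by $T^{s_2 - s_1}$ on $V$, which is injective (in every degree) by the torsion-freeness hypothesis.

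I would then apply this to the two short exact sequences $0 \to B_t \to B_{t+\epsilon} \to B_{t+\epsilon}/B_t \to 0$ (for $\epsilon$ small enough that $t+\epsilon \leq 0$) and $0 \to B_t \to B_0 \to B_0/B_t \to 0$. In the associated long exact sequences, the connecting homomorphisms are sandwiched between two injective maps of the form above, so they vanish. This yields canonical isomorphisms
\[
H^*(B_{t+\epsilon}/B_t) \cong V/T^\epsilon V, \qquad H^*(B_0/B_t) \cong V/T^{-t} V,
\]
and a naturality check identifies the map $\tau_\epsilon : H^*(B_{t+\epsilon}/B_t) \to H^*(B_0/B_t)$ induced by the inclusion of subquotients with the map $V/T^\epsilon V \to V/T^{-t} V$ given by $[x] \mapsto [T^{-(t+\epsilon)} x]$.

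The crux, and the step where the hypothesis is used most directly, is to verify the injectivity of each $\tau_\epsilon$. Given a relation $T^{-(t+\epsilon)} x = T^{-t} y$ in $V$, I would rewrite it as $T^{-(t+\epsilon)}(x - T^\epsilon y) = 0$ and invoke torsion-freeness of $V$ (using $-(t+\epsilon) \geq 0$) to conclude $x \in T^\epsilon V$, i.e.\ $[x] = 0$ in $V/T^\epsilon V$. Finally, $\tau$ factors through each $\tau_\epsilon$ via the defining projections of the inverse limit, so injectivity of all the $\tau_\epsilon$ forces injectivity of $\tau$: if $\tau(x) = 0$ and $x_\epsilon$ is the component of $x$ at level $\epsilon$, then $\tau_\epsilon(x_\epsilon) = \tau(x) = 0$, hence $x_\epsilon = 0$ for all $\epsilon$, hence $x = 0$. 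The only real obstacle here is bookkeeping with the direction of the identifications and the signs of the exponents; there is no substantive technical input beyond torsion-freeness.
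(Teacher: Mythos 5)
Your argument is correct, but it takes a genuinely different route from the one in the paper, and it is worth comparing them. The paper works directly at the chain level: starting from a class $x\in H^*_{t^+}(B)$ killed by $\tau$, it picks a primitive $y$ of a representative in $B_0/B_t$, lifts $y$ to $\overline{y}\in B_0$, sets $\overline{x}=d\overline{y}$, and then shows by comparing the norm of $\overline{x}$ (close to $e^t$) with the norm of any primitive of $\overline{x}$ in $B_0$ (at least $e^{t+\epsilon}$) that a suitable rescaling $T^{\lambda}\overline{x}$ is a cycle in $B_0$ that is not a boundary there but becomes a boundary after multiplying by a positive power of $T$ --- i.e., a nonzero torsion class. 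Your proof instead works entirely at the level of cohomology modules over $\Lambda_{\geq0}$: the scaling isomorphism $T^{-s}\colon B_0\isomto B_s$ identifies all the $H^*(B_s)$ with $V=H^*(B_0)$, torsion-freeness makes the inclusion-induced maps $H^*(B_{s_1})\to H^*(B_{s_2})$ injective (they are multiplication by $T^{s_2-s_1}$), the connecting maps in the two long exact sequences therefore vanish, and the truncated groups become $V/T^{\epsilon}V$ and $V/T^{-t}V$ with $\tau_\epsilon$ realized as $[x]\mapsto[T^{-(t+\epsilon)}x]$; injectivity of each $\tau_\epsilon$ then reduces to a one-line computation with torsion-freeness, and injectivity of $\tau$ follows because the inverse limit embeds into the product. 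Both arguments ultimately turn on the interplay of the norm with $T$-scaling and use torsion-freeness at the same moment; your version has the advantage of exhibiting the truncated and infinitesimal cohomology groups completely explicitly (which also makes the compatibility of $\tau$ with the inverse system transparent), while the paper's version is a more direct contradiction argument that avoids invoking the long exact sequence. I'd only add one small cautionary note to your write-up: you should say explicitly that you are restricting to $\epsilon\leq -t$ (so that $B_{t+\epsilon}\subset B_0$ and the chain-level map $B_{t+\epsilon}/B_t\to B_0/B_t$ exists), and that this loses nothing because the inverse limit is unchanged upon restriction to small $\epsilon$.
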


\begin{proof}
Let $x$ be a non-zero element in the kernel of $\tau$. Then for $\epsilon>0$ small enough $x$ is non-zero in $H^*_{[t,t+\epsilon)}(B)$. Fix such an $\epsilon$ and  pick a representative of $x$ in $B_{t+\epsilon}/B_t$, still denoted by $x$. Then $x$ is a boundary in $B_0/B_t$ but not in $B_{t+\epsilon}/B_t$.  So, let $y\in B_0/B_t$ be a primitive of $x$. Lift $y$ to an element $\overline{y}\in B_0$ and let $\overline{x}=d\overline{y}$. Then any primitive of $\overline{x}$ maps to a primitive of $x$ in $B_0/B_t$ and so is has norm greater than $e^{\epsilon+t}$. Note we can assume the norm of $\overline{x}$ is arbitrarily close to $e^t$. It follows that $z:=T^{t+\epsilon/2}\overline{x}\in B_0$. Moreover it is not a boundary in $B_0$, but has a scalar multiple which is a boundary. That is, $z$ is non-zero torsion element in $B_0$. 
\end{proof}

\begin{lm}\label{lmTorsionFreeInjInf}
Let $\phi:B\to C$ be a map of Banach complexes. Suppose  $H^*(C_0)$ is torsion free and the map $\phi_{0^+}:H^*_{0^+}(B)\to H^*_{0^+}(C)$ is injective. Then $H^*(B_0)$ is also torsion free. If $\phi_{0^+}$ is injective in a degree $i$ then $H^i(B_0)$ is torsion free.
\end{lm}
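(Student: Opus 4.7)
The plan is to argue by contrapositive. Suppose $H^i(B_0)$ contains a non-zero torsion class, represented by a cycle $w\in B_0$ with $[w]\neq 0$ and $T^sw=dy$ for some $y\in B_0$ and $s>0$. I will construct a non-zero class in $H^*_{0^+}(B)$ that maps to zero under $\phi_{0^+}$, contradicting the injectivity hypothesis.

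The construction of the class mimics that in the proof of Lemma~\ref{lmTorFreeInj}. Set $z:=T^{-s}y$. Then $z\in B_s\setminus B_0$ (otherwise $w=dz$ would be a boundary in $B_0$, contradicting $[w]\neq 0$) and $dz=w\in B_0$, so $\bar z$ is a cycle in $B_s/B_0$ whose class is non-zero in $H^{i-1}(B_s/B_0)$. Running the same construction at every scale $s'\in(0,s]$ and taking the inverse limit yields a non-zero class $\zeta\in H^{*}_{0^+}(B)$.

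To see $\phi_{0^+}(\zeta)=0$, apply $\phi$ to $T^sw=dy$ to obtain $T^s\phi_\ast[w]=0$ in $H^i(C_0)$; the torsion-free hypothesis on $H^*(C_0)$ then forces $\phi_\ast[w]=0$, so $\phi(w)=du$ for some $u\in C_0$. Then $\phi(z)-u$ is a cycle in $C_s$ whose reduction modulo $C_0$ agrees with that of $\phi(z)$. In particular the image of $\zeta$ in $H^*(C_s/C_0)$ lifts to $H^*(C_s)$, that is, it is a \emph{macroscopic} rather than infinitesimal class. Combined with Lemma~\ref{lmTorFreeInj} applied to $C$, which identifies $H^*_{0^+}(C)$ with the part of the truncation cohomology coming genuinely from infinitesimal data, this forces $\phi_{0^+}(\zeta)=0$.

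The main obstacle is the last step: rigorously converting the existence of a lift $\phi(z)-u\in C_s$ into the vanishing of $\phi_{0^+}(\zeta)$ in the inverse limit $H^*_{0^+}(C)$. This requires tracking the class through the system $\{H^{*}(C_\epsilon/C_0)\}_\epsilon$ and invoking Lemma~\ref{lmTorFreeInj} applied to $C$ in the direction that distinguishes genuinely infinitesimal classes in $H^*_{0^+}(C)$ from lifts of cycles in $C_\epsilon$. The degree-wise refinement of the lemma follows because the whole argument uses $\phi_{0^+}$ in a single degree, with the appropriate shift dictated by the connecting homomorphism of the short exact sequences $0\to B_0\to B_\epsilon\to B_\epsilon/B_0\to 0$.
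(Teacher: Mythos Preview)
Your approach has two genuine gaps.

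First, the construction of $\zeta\in H^{*}_{0^+}(B)$ does not go through as written. The phrase ``running the same construction at every scale $s'\in(0,s]$'' presupposes that $T^{s'}[w]=0$ in $H^{i}(B_0)$ for every $s'>0$; but you only know this for $s'\geq s$. If $s_0:=\inf\{s'>0:T^{s'}[w]=0\}$ is positive, the cycle $\bar z$ lives only in $H^{i-1}(B_{s'}/B_0)$ for $s'>s_0$, and you cannot pass to the inverse limit as $\epsilon\to 0$. The paper sidesteps this entirely: it stays in degree $i$, identifies the threshold $\lambda\leq 0$ at which $[x]$ just barely survives in $H^{i}(B_0/B_\lambda)$, observes that by definition of $\lambda$ the class lifts to some $y\in H^{i}_{\lambda^+}(B)$, and then uses the scaling isomorphism $H^{*}_{\lambda^+}\cong H^{*}_{0^+}$ to transport the injectivity hypothesis. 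It never attempts to manufacture an element of $H^{*}_{0^+}(B)$ directly.

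Second, even granting $\zeta$, your argument that $\phi_{0^+}(\zeta)=0$ misuses Lemma~\ref{lmTorFreeInj}. You show that the image of $\zeta$ in $H^{i-1}(C_s/C_0)$ lifts to $H^{i-1}(C_s)$; but Lemma~\ref{lmTorFreeInj} asserts injectivity of $H^{*}_{t^+}(C)\to H^{*}(C_0/C_t)$ and says nothing about which classes in $H^{*}(C_s/C_0)$ come from zero in $H^{*}_{0^+}(C)$. By contrast the paper applies Lemma~\ref{lmTorFreeInj} in the direction it is stated: since $\phi_{\lambda^+}(y)\neq 0$, its image in $H^{*}(C_0/C_\lambda)$ is nonzero, forcing $\phi_*(x)\neq 0$ in $H^{*}(C_0)$, which is a nonzero torsion element there --- contradiction.

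Finally, note that your $\zeta$ lives in degree $i-1$, so your contradiction would require injectivity of $\phi_{0^+}$ in degree $i-1$, not $i$; this does not match the degree-wise refinement in the statement. The paper's argument uses injectivity in degree $i$ exactly.
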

\begin{proof}
Let $x\in H^*(B_0)$ be a non-zero torsion element. Let $\lambda\leq 0$ be the supremal number such that $x\neq 0\in H^*(B_0/B_\lambda)$. Denote by $[x]$ the class of $x\in H^*(B_0/B_\lambda)$. Then $[x]\neq 0$. Indeed if $[x]=0$ then up to a boundary in $B_0$ we have  $|x|<e^{\lambda}$. That is there some $\epsilon>0$ such that $|x|<e^{\lambda-\epsilon}$ contradicting the definition of $\lambda$.  Moreover, $[x]$ is in the image of the map $H^*_{\lambda^+}(B)\to  H^*(B_0/B_\lambda)$. Let $y\in H^*_{\lambda^+}(B)$ be a pre-image, then $\phi_{\lambda^+}(y)\neq 0$ by assumption (infinitesimal cohomology is invariant under scaling). By the previous lemma the image of $\phi_{\lambda^+}(y)$ in $H^*(C_0/C_{\lambda})$ is non-zero. In particular, $[x]$ maps to a non-zero element. Considering the  commutative diagram
$$
\xymatrix{ H^*(B_0)\ar[d]\ar[r]&H^*(C_0)\ar[d]\\
H^*(B_0/B_\lambda)\ar[r]&H^*(C_0/C_{\lambda})}
$$
we see that $x$ itself maps to a nonzero element. The contradicts the torsion freedom of $C$. 
\end{proof}

\begin{lm}\label{lmTorsCrit}
Let $K$ be a smooth domain with positive contact boundary or one admitting a smoothing as in Theorem \ref{tmInfintsmlSHns} and let $\{K_i\}\subset K$ be a finite collection of admissibly embedded domains which admit compatible smoothings.  Suppose the  union of the $K_i$ form an open neighborhood of $\partial K$ in $K$. 
For an integer $j$ suppose $H^j(K)\to H^j(K_i)$ is injective and that $SH^j_M(K_i)$ is torsion free. Then $SH^j_M(K)$ is torsion free. 
\end{lm}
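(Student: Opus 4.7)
The plan is to apply Lemma~\ref{lmTorsionFreeInjInf} to the chain-level restriction map
\[
\phi:CF^*(H_K)\to\bigoplus_i CF^*(H_{K_i}),
\]
assembled from continuation maps for the monotone inequalities $H_K\leq H_{K_i}$. Since the collection $\{K_i\}$ is finite, the right-hand side is a Banach complex whose subcomplex of non-negative valuation computes $\bigoplus_i SH^j_{M,0}(K_i)$, torsion free as a $\Lambda_{\geq 0}$-module by hypothesis (a finite direct sum of torsion free modules is torsion free). This dispatches the first assumption of Lemma~\ref{lmTorsionFreeInjInf}; the rest reduces to verifying injectivity of $\phi_{0^+}$ in degree $j$.

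To carry out this verification I would invoke Theorem~\ref{mainThmC} in the smooth case, and its non-smooth counterpart using Theorems~\ref{tmInfintsmlSHns} and~\ref{tmInfintsmlSHnsFunct} in the case where $K$ is presented via an admissible smoothing (this is precisely where the compatible smoothing hypothesis on $\{K_i\}\subset K$ is consumed, in the guise of Definition~\ref{dfCompSmooth}, to make functoriality work). This identifies
\[
SH^j_{M,0^+}(K)\simeq RH^j(K)=H^j(K;R)\oplus\bigoplus_{\gamma\in\cR(\partial K)}SH^j_{uw}(\gamma),
\]
the analogous decomposition for each $K_i$, and identifies $\phi_{0^+}$ with the direct sum of the maps $RH^j(K)\to RH^j(K_i)$ described in Theorem~\ref{mainThmB}: restriction on the $H^j$-summand, and on each Reeb summand $SH^j_{uw}(\gamma)$ the identity into those $RH^j(K_i)$ for which $\gamma$ is a common Reeb component, and zero otherwise.

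With this identification, injectivity of $\phi_{0^+}$ in degree $j$ reduces to an inspection summand by summand. The $H^j(K)$-factor maps injectively into $\bigoplus_i H^j(K_i)$ by the standing hypothesis on ordinary cohomology. For each $\gamma\in\cR(\partial K)$, the covering hypothesis that $\bigcup K_i$ is a neighborhood of $\partial K$ in $K$ forces $\gamma\subset\partial K\cap\partial K_i$ for some $i$; admissibility then makes $\gamma$ a common Reeb component of that inclusion, so the $SH^j_{uw}(\gamma)$-summand maps identically into the corresponding summand of at least one $RH^j(K_i)$. Since the images of distinct summands of $RH^j(K)$ land in distinct sub-summands of $\bigoplus_i RH^j(K_i)$, these component-wise injectivities assemble into global injectivity of $\phi_{0^+}$.

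The technical heart of the argument thus lies entirely in the identifications of Theorems~\ref{mainThmB}, \ref{mainThmC}, and their non-smooth versions; the main point requiring care is the bookkeeping in the non-smooth case, namely that every pairwise inclusion $K_i\subset K$ fits into the compatible-smoothing framework of Definition~\ref{dfCompSmooth} so that Theorem~\ref{tmInfintsmlSHnsFunct} is applicable. Granting this, Lemma~\ref{lmTorsionFreeInjInf} applied in degree $j$ yields the torsion freedom of $SH^j_{M,0}(K)$ as claimed.
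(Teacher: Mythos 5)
Your proposal is correct and follows essentially the same route as the paper: consider the restriction map to the direct sum $\bigoplus_i SH^*_{M,0^+}(K_i)$, show it is injective in degree $j$ via the identification with $RH^j$ from Theorem \ref{mainThmC} (and its non-smooth/functorial counterparts), and conclude with Lemma \ref{lmTorsionFreeInjInf}. The paper's proof is a three-line version of exactly this argument; your write-up simply unpacks the component-by-component injectivity check that the paper leaves implicit.
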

\begin{proof}
Consider the direct sum of maps $SH^*_{M,0^+}(K)\to SH^*_{M,0^+}(K_i)$.  It is an injection by Theorem \ref{mainThmC} . The claim now follows from Lemma \ref{lmTorsionFreeInjInf}.  
\end{proof}
\begin{rem}
The compatibility assumption in Lemma \ref{lmTorsCrit} is essential. For example, consider as in Example \ref{exComp} and Figure \ref{FigComp2} a partition of an $L$ shaped polygon into convex rectangles. Then the pre-images of the rectangles are torsion free, but that of the ambient L-shaped domain is not. See \cite[\S6.4]{GromanVarolgunes2021}.
\end{rem}

\subsection{Torsion freedom and the spectral symbol}\label{SecSpectral}
We use the notation $SH^*_{M,t}(K)$ \footnote{Note this notation, which indicates an action window of $[-\infty,t),$ collides somewhat with the notation $SH^*_{M,t^+}(K)$ which indicates an infinitesimally thin action window.} to denote the homology of the subcomplex $CF^*_{M,t}(K)\subset CF^*_M(K)$ consisting of the elements whose norm is $<e^t.$
The symplectic cohomology $SH_{M,t}(K)$ is a module over the Novikov ring $\Lambda_{\geq 0}$. As such it carries a valuation defined by $\val(a)=\sup\{r\in\bR_{\geq 0}:a\in T^rSH^*_M(K)$. We define a semi-norm by $|a|=e^{-\val(a)}$. Note the semi-norm makes sense even for torsion elements though it is only submultiplicative with respect to scalar multiplication for torsion elements. 

For $a\in SH^*_{M,0}(K)$ the statement $|a|\leq e^t$ implies that $a$ is in the image of $SH^*_{M,t+\epsilon}\to SH^*_{M,0}(K)$ for all $\epsilon>0$. 

Suppose now that  $SH^*_{M,0}(K)$ is torsion free. It then follows that there is a unique element in $SH^*_{M,t^+}(K)$ that maps to the image of $a$ in the quotient $SH^*_{M,[t,\infty)}$.  We thus get for each $t\in\bR$ a map
\begin{equation}
\tilde{\sigma}_t: SH^*_{M,t}(K)\to SH^*_{M,t^+}(K),
\end{equation} 
where
$$
 SH^*_{M,t}(K):=\varprojlim_{\epsilon\to 0} SH^*_{M,[-\infty,t+\epsilon)}(K).
$$
Combining this with the map from Theorem \ref{mainThmC} we obtain the 
\begin{equation}
\sigma_t: SH^*_{M,t}(K)\to RH^*(K).
\end{equation} 
We call $\sigma_t(x)$ the \emph{$t$-symbol} of $x$. 

From now on we assume $SH^*_{M,0}(K)$ is torsion free.  
The $t$-symbol is compatible with restriction maps for admissible inclusions of torsion free domains. Namely, the norm is not increased under the restriction map $r$. So, if $\sigma_t(x)$ is defined, so is $\sigma_{t}(r(x))$, and we have the equality $\sigma_t(r(x))=r_*\sigma_t(x)$ where $r_*$ is the functorial map $RH^*(K_2)\to RH^*(K_1)$.

We have seen that $SH^*_{M,[a,a+\hbar)}(K)$ is filtered by action. Let us  write $\sigma(x):=\sigma_{|x|}(x)$ and refer to this as the \emph{leading symbol} of $x$. For an $\epsilon\in(0,\hbar)$ we call an element of $x\in SH^*_K(M)$ \emph{$\epsilon$-good}  if $x$  is contained modulo $T^{\epsilon}$ in the submodule of action $\leq\cA(\sigma(x))$. Note that modulo $T^{\epsilon}$ the image of a good representative under $r$ is a combination of generators of action  $\leq\cA(\sigma(x))$

\begin{lm}\label{lmEpsGoodGenCrit}
Suppose $SH^*_M(K)$ is torsion free. Let $\epsilon>0$ and let $\cB\subset SH^*_M(K)$ be a collection of $\epsilon$ good elements. Suppose the set $\{\sigma(x):x\in \cB\}$ generates $RH^*(K)$. Then $\cB$ generates  $SH^*_M(K)$ as a topological vector space over $\Lambda$. A similar statement is true in a fixed degree $i$ assuming torsion freedom in degrees $i$ and $i+1$. 
\end{lm}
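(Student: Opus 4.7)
The plan is iterative approximation. Given $y \in SH^*_M(K)$, I build inductively a sequence $z_0, z_1, \ldots$ of finite $\Lambda$-linear combinations of elements of $\cB$ with $\val(z_n) \geq n\epsilon + \val(y)$, so that the partial sums form a Cauchy sequence converging to $y$ by completeness of the Floer complex. To produce $z_n$, given the residue $y_n := y - \sum_{k<n} z_k$ with $v_n := \val(y_n) \geq n\epsilon + \val(y)$, I seek $z_n$ with $\val(z_n) \geq v_n$ and $\val(y_n - z_n) \geq v_n + \epsilon$.

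The construction of $z_n$ takes place in the truncation $SH^*_{M,[v_n, v_n + \epsilon)}(K)$, which by Theorem \ref{tmFilteredMainTm} (applied in an $\epsilon$-wide window below the universal threshold $\hbar$ from Proposition \ref{prphbarfilt}) carries an action filtration whose associated graded is $\bigoplus_\gamma SH^*_{uw}(\gamma) \otimes \Lambda_{[v_n, v_n + \epsilon)}$. Let $\bar y_n$ denote the image of $y_n$ in this truncation. The claim is that $\bar y_n$ is a finite $\Lambda_{[v_n, v_n + \epsilon)}$-linear combination of images $\bar x$ for $x \in \cB$; lifting such a combination to $\Lambda \cdot \cB$ produces $z_n$.

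The claim is established by induction on the action stratum of $\bar y_n$, from lowest to highest. At the lowest stratum $a = 0$, $\bar y_n$ projects to $H^*(K; R) \otimes \Lambda_{[v_n, v_n + \epsilon)}$, which by the generating hypothesis on $\{\sigma(x)\}$ is spanned over $R$ by the symbols of those $x \in \cB$ with $\cA(\sigma(x)) = 0$. At a higher stratum $a > 0$, the $\epsilon$-goodness of any $x \in \cB$ with $\cA(\sigma(x)) = a$ guarantees that the image of the rescaling $T^{v_n - \val(x)} x$ in $SH^*_{M,[v_n, v_n + \epsilon)}(K)$ lies in the filtered piece $F^a$ and projects to $\sigma(x)$ on the stratum $a$; contributions at strictly lower strata (permitted by goodness) are already handled by induction. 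The generating hypothesis then supplies the required combination at stratum $a$.

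Completeness yields $y = \sum_n z_n$ in the closed $\Lambda$-span of $\cB$. The degree-$i$ refinement applies the same argument in degree $i$; torsion freedom in degree $i+1$ is invoked via Lemma \ref{lmTorsionFreeInjInf} (or directly via Lemma \ref{lmTorFreeInj}) to justify the symbol-level identification in degree $i$, where adjacent-degree boundary information enters. The main obstacle is the stratum-by-stratum induction: one must show that the lifting of $\bar y_n$ to $\cB$-combinations yields consistent $\Lambda$-coefficients, and that the sub-leading corrections at lower strata assemble into finite sums at each iteration; both points rely crucially on the $\epsilon$-goodness hypothesis together with the finite width of the action window.
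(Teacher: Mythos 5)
Your strategy is the same as the paper's: normalize, reduce via completeness to generation modulo $T^{\epsilon}$, then use the action filtration in small windows together with $\epsilon$-goodness. But there is a genuine gap in the middle step. You cite Theorem \ref{tmFilteredMainTm} as giving the action-graded structure in an ``$\epsilon$-wide window below $\hbar$'', but that theorem only produces the identification
$F^{a}SH^{*}_{M,[0,\cdot)}(K)\simeq\bigoplus_{\gamma:\cA(\gamma)<a}SH^{*}_{uw}(\gamma)\otimes\Lambda_{[0,\delta(a))}$
in windows of width $\delta(a)$, and $\delta(a)$ shrinks with $a$ (in the non-gapped case it tends to $0$). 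Since your iteration must handle residues of arbitrarily high action, at some point $\delta(a)<\epsilon$ and the cited theorem no longer covers the window you are working in. Closing this gap is exactly where the paper spends the hypothesis of torsion freedom: it shows that under torsion freedom the complex $In_{[0,\hbar)}(H_{K,I})$ can be modeled by one with vanishing differential, yielding the isomorphism $F^{a}SH^{*}_{M,[0,\hbar)}(K)\simeq\bigoplus_{\gamma:\cA(\gamma)<a}SH^{*}_{uw}(\gamma)\otimes\Lambda_{[0,\hbar)}$ for \emph{every} $a$ in the uniform window $[0,\hbar)$. Your argument has no substitute for this step.

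A second, smaller omission: your iteration passes freely between $SH^{*}_{M}(K)$ and the truncations $SH^{*}_{M,[v_n,v_n+\epsilon)}(K)$, but the identification of the norm-quotient $V_{b}/V_{a}$ of $SH^{*}_{M}(K)$ with the truncated cohomology $SH^{*}_{M,[a,b)}(K)$ is not automatic; the paper proves it as an intermediate claim and again uses torsion freedom (so that lifts and kernels behave as expected). Likewise the completeness of $SH^{*}_{M}(K)$ itself, which your Cauchy-sequence argument needs, is a consequence of torsion freedom (the paper points to \cite[\S 6.3]{GromanVarolgunes2021}). Finally, your appeal to Lemma \ref{lmTorsionFreeInjInf} is misplaced: that lemma \emph{deduces} torsion freedom from a separate injectivity hypothesis, which is not the implication needed here. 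The torsion-freedom hypothesis is being \emph{used}, not re-derived, and the places it enters are precisely the three points above.
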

\begin{proof}
Since we are working over the Novikov field, we may assume without loss of generality that the elements of $\cB$ are normalized. For any $a$ denote by $V_a=\{x\in SH^*_M(K):|x|<e^a\}$. Observe that because of torsion freedom, $SH^*_M(K)$ is complete. For a proof of this see \cite[\S6.3]{GromanVarolgunes2021}. So, it suffices to show for some $\epsilon >0$ the image of $\cB$ in the quotient $V_\epsilon/V_0$ generates. 

For this we claim that torsion freedom implies
\begin{equation}
V_b/V_a=SH^*_{M,[a,b)}(K). 
\end{equation}
We  first define a map. Given an element $x$ of $V_b$ we get an element on the right hand side by picking a representative $y$ of $x$ of norm $<b$. For any other representative $y'$ we must have that $y-y'$ can be killed by an element of norm $<b$ or otherwise we could produce a non-zero torsion element.  So we get a well defined map from $V_b$.  This map is norm preserving by the same reasoning.  In particular, the submodule $V_a$ maps to $0$.  

We verify that the map is an isomorphism. For injectivity, if an element $x$ on the left maps to $0$ on the right it means that $x$ has a representative of norm $<a$, so it belongs to $V_a$.  For surjectivity, a non-vanishing element on the left is represented by an element $x$ of norm $\geq a$ with boundary of norm $t<a$. By torsion freedom we can kill $dx$ by an element of norm $t$. Thus we can assume that $dx=0$. In particular, it lifts to $V_b/V_a$.

As a consequence, it suffices to show for some $\epsilon>0$ that the image of $B$  generates $SH^*_{M,[0,\epsilon)}(K)$. 

For this we observe that for any action $a$ we can construct a non-canonical isomorphism $F^aSH^*_{M,[0,\hbar)}(K)\simeq F^aRH^*(K)\otimes\Lambda_{[0,\hbar)}$ in a filtration and leading term preserving manner.  More precisely,   there is an isomorphism 
$$F^aSH^*_{M,[0,\hbar)}(K)=\oplus_{\gamma:\cA(\gamma)<a}SH^*_{uw}(\gamma)\otimes\Lambda_{[0,\hbar)}$$ 
To see this note that we have an isomorphism $F^aSH^*_{M,[0,\hbar)}(K)=\varinjlim_IF^aIn_{[0,\hbar)}(H_{K,I})$ where for $I$ large enough, all the maps are isomorphisms. The torsion freedom assumption implies that $In_{[0,\hbar)}(H_{K,I})$ can be modeled as the colimit complex of the sequence $CF^*_{In,[0,\hbar)}(H_{K,I,\eta_i})$ where each complex in the sequence has vanishing differential.  Thus we can take any complex in the sequence  to be the model for  $$F^aSH^*_{M,[0,\hbar)}(K)$$ and its homology is isomorphic to $$\oplus_{\gamma:\cA(\gamma)<a}SH^*_{uw}(\gamma)\otimes\Lambda_{[0,\hbar)}.$$ 

Thus given any element $x\in SH^*_K(M)$ which we take without loss of generality to be normalized,  we consider the truncation $\tau_{\hbar}(x)$ of $x$ modulo $T^{\hbar}$. Then $\tau(x)$ is  a finite combination of good elements of with leading symbol in $\oplus_{\gamma:\cA(\gamma)<a}SH^*_{uw}(\gamma)\otimes\Lambda_{[0,\hbar)}$ for some $a$. We can then truncate further modulo $T^{\epsilon}$. Then by $\epsilon$-goodness we can write $\tau_{\epsilon}(x)$as a finite combination of elements of $\cB$. 
\end{proof}

\begin{cy}\label{cyEpsGoodGenCrit}
    Under the hypothesis of Lemma \ref{lmEpsGoodGenCrit}, suppose $\cB$ consists of normalized elements and the $R$-module generated maps injectively into $RH^*(K)$ under $x\mapsto\sigma(x)$. Then there is an isomorphism
    \begin{equation}
    SH^*_{M,0}=RH^*(K)\otimes\Lambda_0.
    \end{equation}
    Similarly, for any $a<b\in[-\infty,-\infty)$ we have
     \begin{equation}
    SH^*_{M,[a,b)]}=RH^*(K)\otimes\Lambda_[a,b).
    \end{equation}
\end{cy}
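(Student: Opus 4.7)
The plan is to construct an explicit $\Lambda_0$-linear isomorphism $\Phi \colon RH^*(K) \otimes_R \Lambda_0 \to SH^*_{M,0}(K)$ using the set $\cB$, and deduce the windowed statement by passing to filtration subquotients. The first observation I would record is that, under the new injectivity hypothesis, the collection $\{\sigma(b) : b \in \cB\}$ is in fact an $R$-\emph{basis} of $RH^*(K)$: by Lemma \ref{lmEpsGoodGenCrit} the symbols generate, and $\sigma$ restricted to $\mathrm{span}_R(\cB)$ is assumed injective. This lets me define $\Phi$ unambiguously by $\sigma(b) \otimes \lambda \mapsto \lambda \cdot b$ for $b \in \cB$ and $\lambda \in \Lambda_0$, extended by $\Lambda_0$-linearity and continuity.

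For injectivity, I would suppose $\sum_i \lambda_i b_i = 0$ in $SH^*_{M,0}(K)$ with $b_i \in \cB$ and $\lambda_i \in \Lambda_0$. Rescaling by a power of $T$, I can arrange that $v := \min_i \val(\lambda_i) = 0$. Writing $\lambda_i = r_i + \lambda_i'$ with $r_i \in R$ and $\val(\lambda_i') > 0$, and using that each $b_i$ is normalized and $\epsilon$-good, the leading symbol of the sum is computed as $\sum_i r_i \sigma(b_i) \in RH^*(K)$. By the injectivity of $\sigma$ on $\mathrm{span}_R(\cB)$, this forces all $r_i = 0$, contradicting $v = 0$. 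Hence $\Phi$ is injective.

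For surjectivity, I would iterate Lemma \ref{lmEpsGoodGenCrit}: given $x \in SH^*_{M,0}(K)$, the lemma produces a finite $R$-combination $s_0 = \sum r_i b_i$ in the image of $\Phi$ with $\val(x - s_0) \geq \epsilon$. Applying the same procedure to $T^{-\epsilon}(x - s_0)$, one extracts $s_1$, and so on, producing a sequence whose partial sums in $RH^*(K) \otimes \Lambda_0$ converge $T$-adically. Completeness of $SH^*_{M,0}(K)$ under torsion freedom (cited to \cite[\S6.3]{GromanVarolgunes2021}) forces the image series to converge to $x$, giving $x \in \mathrm{im}(\Phi)$. The main obstacle lies here: one must verify that the extraction process at each stage can be arranged so that the residual genuinely has strictly larger valuation, and that the reconstructed element in the source side is a bona fide element of the completed tensor product $RH^*(K) \otimes_R \Lambda_0$ and not of some larger completion. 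The $\epsilon$-goodness hypothesis is precisely what ensures that action-filtration shifts under $\sigma$ are controlled enough for the residual at each stage to lie in the next step of the filtration.

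Finally, for the interval statement, the map $\Phi$ is $\Lambda_0$-linear and strictly filtration-preserving, so it descends to the subquotients. Using the identification $SH^*_{M,[a,b)}(K) = V_b/V_a$ established within the proof of Lemma \ref{lmEpsGoodGenCrit} (valid under torsion freedom), together with the canonical identification $\Lambda_{[a,b)} = (T^a \Lambda_0)/(T^b \Lambda_0)$, I obtain the induced isomorphism $SH^*_{M,[a,b)}(K) \simeq RH^*(K) \otimes_R \Lambda_{[a,b)}$ in all required windows.
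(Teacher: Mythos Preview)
Your argument is correct and is precisely the natural fleshing-out of this corollary, which the paper states without proof; injectivity via leading symbols and surjectivity by iterating Lemma~\ref{lmEpsGoodGenCrit} together with completeness are exactly the intended ingredients, and the windowed statement follows from the identification $V_b/V_a \simeq SH^*_{M,[a,b)}(K)$ established there. The completion concern you flag is real but slightly misdirected: the point is not that the reconstructed series might lie in some \emph{larger} completion, but rather that the algebraic tensor product $RH^*(K)\otimes_R\Lambda_0$ is itself not $T$-adically complete when $RH^*(K)$ has infinite $R$-rank, whereas $SH^*_{M,0}(K)$ is complete under torsion freedom---so the isomorphism should be read with the completed tensor product, or equivalently your $\Phi$ is an isometric injection with dense image and one completes the source.
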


\subsection{Proof of Theorems \ref{tmLocSHn2} and \ref{tmLocSHn3}}
In this section we refer to notions from rigid analytic geometry. A detailed reference is \cite{Bosch14}.

\begin{df}
A Lagrangian sub-manifold $L\subset M$ is called \emph{undeformed} if there is a  Weinstein neighbourhood $K\subset M$ of $L$ such that, denoting by $\overline{K}$ the Liouville completion of $K$, $SH_{\overline{K}}(K)$ is torsion free and  there exists an isomorphism of BV algebras $SH^*_M(K)\simeq SH_{\overline{K}}(K)$ which respects spectral symbols. 
\end{df}
\begin{rem}
The definition makes sense also for singular Lagrangians which occur as skeleta of Liouville domains.
\end{rem}

Let $\pi:M\to B$ and $P\subset B$ be as in Theorem \ref{tmInfShPtrop}. For $P\subset B_{reg}$ a convex polygon denote by $\cF^*_{loc}(P)$ the relative $SH$ of $\pi^{-1}(P)$ inside its Liouville completion. Explicitly, fix a base-point $p$ in $P$ and  $L_p:=\pi^{-1}(p)\simeq \bT^2$. We can canonically identify $P$ with a subset of $H^1(L_p;\bR)$ so that $p$ maps to the origin. We can lift this identification to an embedding of $\pi^{-1}(P)$ into $T^*L_p$ and, abbreviating $B_{loc}:=H^1(L_p;\bR)=(H_1(L_p;\bZ)\otimes\bR)^*$, extend $\pi$ to a map $\pi_{loc}:T^*L_p\to B_{loc}$. Picking a basis $\{e_1,e_2\}$ for $H_1(L_p;\bZ)$ we get an integral affine isomorphism $B_{loc}\simeq\bR^2$. Consider the algebraic torus $mSpec(\Lambda[ H_1(L_p;\bZ)])\simeq {(\Lambda^*)^2}$ over the Novikov field. It comes equipped with a log map to $B_{loc}$ which upon picking a basis is the map $f:(\Lambda^*)^2\to\bR^2$ given by $(x,y)\mapsto(\val(x),\val(y))$. 

The choice of basis induces isomorphism $\cF^0_{loc}\simeq f_*\cO_{(\Lambda^*)^2}$ of sheaves \cite[\S5]{GromanVarolgunes2022}. Let us describe this geometrically. Say the polygon $P$ is cut out by rational half planes $P_i$. For each $i$, let $\ell_i=\langle a_i,\cdot\rangle +b_i$ be the defining affine function of $P_i$, where $a_i\in H_1(L_p;\bZ)$ is a primitive generator and $b_i\in\bR$. Denote the corresponding monomial by $T^{b_i}u^{a_i}\in H_1(L_p;\bZ)]$. Then for each $i$, the pre-image $f^{-1}(P_i)$ is the set of points where $|T^{b_i}u^{a_i}|\leq1$, or equivalently $|u^{a_i}|\leq e^{b_i}$. Thus $\cF^0_{loc}(P)$ is the algebra of analytic function on the non-Archimedean toric domain cut out by the inequalities $|u^{a_i}|\leq e^{b_i}$. Similarly, it is shown  in \cite[\S5]{GromanVarolgunes2022}  that $\cF^*_{loc}$ is canonically isomorphic to the sheaf of polyvector fields on the analytification of the algebraic torus over the Novikov field. Note that this identification of $\cF^0(P)$ involves a choice of base-point. This corresponds to choosing a primitive of the symplectic form on the completion of $\pi^{-1}(P)$. We generally work with not with the generators $u^a$ but rather with the generators $z^a:=T^bu^a$. That is, our generators always have norm $1$. This corresponds to the fact that our construction of Hamiltonian Floer cohomology over the Novikov field assigns to each generator the norm $1$ since primitives exist only in special situations. 

\begin{tm}\label{tmAffinoidPolygon}
Let $\pi:M\to B$ and $P\subset B$ be as in Theorem \ref{tmInfShPtrop}. Suppose the fibers of $\pi$ are undeformed. Suppose the inclusion of a regular fiber into $\pi^{-1}(P)$ induces an injection of $H^*(\pi^{-1}(P);\bZ)$ into $H^*(\bT^n;\bZ)$. For each $i$ let $\xi_i\in P_{trop}(\bZ)$ be the generator associated with the $i$th face. Let  $z_i\in \cF_P$ be a normalized element whose leading symbol is $\xi_i$. Abusing notation we also consider  $z_i$ as a normalized representative cycle in $SC^*_M(\pi^{-1}(P)$. Then \begin{enumerate}
\item The set $\cB$ of monomials in the $z_i$ satisfies the hypothesis of Corollary \ref{cyEpsGoodGenCrit}.
\item 
$\cF_P$ is generated as a Banach algebra by the $z_i$. In particular, $\cF_P$ is affinoid and of dimension $n$. 
\item For an admissible inclusion $Q\subset P$  the restriction map $\cF_P\to\cF_Q$ is the inclusion of a Laurent domain provided $Q$ is contained in a sufficiently small neighborhood of the boundary. 
\end{enumerate}
\end{tm}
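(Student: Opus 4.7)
The plan is to deduce all three parts from two ingredients: the torsion-freedom criterion of Lemma \ref{lmTorsCrit} and the generation criterion of Corollary \ref{cyEpsGoodGenCrit}. First I would establish torsion freedom of $\cF^0(P)$. I would cover a neighborhood of $\partial P$ in $P$ by admissibly embedded convex polygons $P_\alpha \subset P \cap B_{reg}$ with compatible smoothings. For each such $P_\alpha$, $\pi^{-1}(P_\alpha)$ is a Weinstein neighborhood of a regular fiber, so by undeformedness $SH^*_M(\pi^{-1}(P_\alpha))$ is identified with the $SH^*$ of a standard cotangent-bundle domain, which is torsion free. Combined with the stated injectivity $H^*(\pi^{-1}(P)) \hookrightarrow H^*(\bT^2)$, the hypotheses of Lemma \ref{lmTorsCrit} are met, yielding torsion freedom of $\cF^0(P)$.

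For part (1), I would exhibit a set of monomial representatives $\cB$ indexed by $P_{trop}(\bZ)$: each nonzero element of $P_{trop}(\bZ)$ decomposes uniquely as $m\xi_i + n\xi_{i+1}$ for adjacent edge generators, and the monomial $z_i^m z_{i+1}^n$ has leading symbol exactly this element. Theorem \ref{tmInfShPtrop} identifies $RH^0(K)$ as the free $R$-module on $P_{trop}(\bZ)$, so the $R$-span of $\cB$ maps bijectively under $\sigma$, giving both the injectivity and generation hypotheses. The delicate point, $\epsilon$-goodness, I would verify by restricting to a Liouville neighborhood $\pi^{-1}(P_\alpha)$ of a regular fiber meeting the Reeb orbit underlying $\xi_i$: under undeformedness, the restriction of $z_i$ matches a toric monomial in the completed cotangent model, where the action filtration is visibly controlled by direct computation. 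Since the undeformedness isomorphism respects spectral symbols, this local control propagates to give $\epsilon$-goodness of each monomial in $\cF^0(P)$.

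Applying the degree-zero form of Corollary \ref{cyEpsGoodGenCrit} then produces an isometric identification of Banach modules $\cF_P \simeq R[P_{trop}(\bZ)] \widehat\otimes \Lambda_0$. The multiplicative structure of $\cF_P$ now forces toric relations at the symbol level: for non-adjacent edges $i,j$, the product $z_i z_j$ has vanishing leading symbol in $P_{trop}(\bZ)$ and hence, by completeness, equals a convergent series in monomials of strictly smaller norm. This presents $\cF_P$ as a quotient of the Tate algebra $\Lambda\langle z_1, \dots, z_k \rangle$ by an explicit ideal of such Stanley--Reisner-type relations, hence as an affinoid algebra of dimension $n$.

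For part (3), given an admissible $Q \subset P$ in a small neighborhood of $\partial P$, I would partition edges of $\partial Q$ into common edges (lying in $\partial Q \cap \partial P$) and new edges interior to $P$. By Theorem \ref{mainThmB}, the restriction map on leading symbols is the identity on common-edge generators and zero on the others. Applying the construction of part (1) to $Q$ yields normalized generators $z_j^Q$ of $\cF_Q$, and each new generator corresponds to an inverted shift of a non-common $z_i^P$, with the shift dictated by the integral affine distance between the parallel primitive covectors defining the original and new edges. Tracking these identifications exhibits $\cF_Q$ as the Laurent localization of $\cF_P$ obtained by inverting the $z_i^P$ corresponding to non-common edges. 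The hardest step is the $\epsilon$-goodness verification in part (1): ruling out unexpected low-order Floer-theoretic mixing across Reeb components. This is where the full strength of undeformedness, especially its respect for spectral symbols, is essential.
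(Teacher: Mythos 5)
Your overall skeleton matches the paper's: torsion freedom via Lemma \ref{lmTorsCrit} from a cover of $\partial P$ by admissibly embedded regular polygons, then generation via Corollary \ref{cyEpsGoodGenCrit}, then the Laurent domain description in part (3). But two steps in the middle diverge from the paper in a way that leaves genuine gaps.

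\textbf{$\epsilon$-goodness of the monomials.} Your argument restricts $z_i$ to a Weinstein neighborhood $\pi^{-1}(P_\alpha)$ of a regular fiber, checks $\epsilon$-goodness there via the cotangent model, and then says this ``propagates.'' The propagation is the gap: the restriction map kills all generators supported on non-common Reeb components, so a representative of $z_i$ in $SC^*_M(\pi^{-1}(P))$ with a bad high-order tail supported on Reeb orbits lying over the far part of $\partial P$ would still have a clean restriction to $\pi^{-1}(P_\alpha)$. Controlling the restriction does not control the lift. The paper instead observes that each $z_i$ is $\epsilon_i$-good for \emph{some} $\epsilon_i>0$ simply because the norm filtration is exhaustive, takes a uniform $\epsilon$ over the finite set of edges, and then uses that the pair-of-pants product preserves the action filtration modulo $T^{\hbar}$; this immediately makes every monomial $\epsilon$-good. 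Your leading-symbol claim for $z_{i_1}^{j_1}z_{i_2}^{j_2}$ also needs one more step the paper spells out: the restriction to a corner polygon $Q$ only pins down the symbol modulo the kernel of $P_{trop}(\bZ)\to Q_{trop}(\bZ)$, and the mutual irrationality of the edge actions is what forces the error term in that kernel to have strictly smaller action.

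\textbf{Part (3).} You describe the new edge generators of $Q$ as ``inverted shifts of the non-common $z_i^P$.'' In the base case the paper considers, $Q$ is the thin slab near a \emph{common} edge $e_i$, the new edge $\ell$ is parallel to $e_i$, and the element that becomes invertible is the \emph{common} $z_i$; the non-common $z_j$ simply restrict to elements of norm $<1$. So the identification is backwards. More importantly, you assert the restriction map ``exhibits $\cF_Q$ as the Laurent localization'' without proving that the induced map $\cF(P)\{X\}/(z_iX - T^\epsilon)\to\cF(Q)$ is an isomorphism. The paper proves injectivity via uniqueness of analytic continuation (Lemma \ref{lmUniqueContinuation}, which in turn uses the injectivity of the map from $\cF^0_P$ into $\bigoplus_i\cF^0_{P_i}$ detected at the level of spectral symbols), and surjectivity by running Lemma \ref{lmEpsGoodGenCrit} again for $Q$; then the general $Q$ near $\partial P$ is handled by transitivity through a slab $Q_i\subset B_{reg}$. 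Both the injectivity step and the transitivity step are missing from your sketch.
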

\begin{proof}
First observe $SH^*_{M,t}(\pi^{-1}(P))$ is torsion free by Lemma \ref{lmTorsCrit}. By finiteness there is an $\epsilon\in(0,\hbar)$ so that the $z_i$ are all $\epsilon$-good.  Since the product preserves the action filtration modulo $T^{\epsilon}$ it follows that all monomials in  the $z_i$ are $\epsilon$-good. We now show that for each $i,j$ the monomial $z_i^j$  has leading symbol $\xi_i^j$ and that for adjacent edges $i_1,i_2$ and integers $j_1,j_2\geq 0$ the product $z_{i_1}^{j_1}z_{i_2}^{j_2}$ has leading symbol $\xi_{i_1}^{j_1}\xi_{i_2}^{j_2}$.

Any $I\in P_{trop}(\bZ)$ can presented as a triple $(v, j_1,j_2)$ where $v$ is a vertex joining two adjacent edges $i_1,i_2$ and   $(j_1,j_2)$ are non-negative generators. We first prove  $\left|\xi^{(v,j_1,j_2)}-\xi_{i_1}^{j_1}\xi_{i_2}^{j_2}\right|<1$. Denote by $r$ the restriction map to a polygon sharing the corner containing $(i_1,i_2)$. Since $r$ is a homomorphism, $r(z_{i_1}^{j_1}z_{i_2}^{j_2})=r(z_{i_1}^{j_1})r(z_{i_2}^{j_2})$. By the undeformedness assumption on the product  for neighborhoods of regular fibers, $r(z_{i_1}^{j_1})r(z_{i_2}^{j_2})$ has leading symbol $\xi^{(v,j_1,j_2)}\in P_{trop}(\bZ)$. We deduce that the leading symbol of $z_{i_1}^{j_1}z_{i_2}^{j_2}$ is a sum $\xi^{(v,j_1,j_2)}+a$ for $a$ in the kernel of $P_{trop}(\bZ)\to Q_{trop}(\bZ)$.  We must have $\cA(a)\leq \cA(\xi^{(v,j_1,j_2)})$ and this inequality must be strict since distinct terms have distinct action values by the irrationality assumption. 

Thus the leading symbol of a monomial in the $z_i$ is the corresponding monomial in the $\xi_i$.  The set of monomials in the $z_i$ satisfies the hypothesis of Lemma \ref{lmEpsGoodGenCrit} and thus generate $\cF^0(P)$  as a Banach space. It follows that the $z_i$ themselves generate $\cF^0(P)$ as a Banach algebra. 

The claim concerning the dimension follows since dimension is preserved under reduction $T=1$.

For the last part of the claim consider first $Q\subset P$ cut out by an affine line $\ell$ parallel to the edge $e_i$. We claim that since $z_i$ has leading term coinciding with that of an invertible element of $\cF(Q)$ it follows that if $\ell$ is close enough to the edge $e_i$ then  $z_i$ is itself invertible on $Q.$ Here we are relying on undeformedness of $Q$ and of our knowledge of $\cF^0_{loc}(Q)$. Namely, we can identify $\cF^0_{loc}(Q)$ with a completion of the Laurent series in variables $x_1,x_2$ in such a way that the edge $e_i$ corresponds to $\val(x_1)=0$ and making $\ell$ arbitrarily close to the edge $e_i$ has the effect of considering a subset of the domain $|x_1|\geq 1-\epsilon$ for $\epsilon$ arbitrarily close to $0$. Since $z_i$ coincides with $x_1$ up to elements of strictly lower norm, say $<e^{-\epsilon'}$, it suffices to consider $\epsilon<\epsilon'$.

Write $\epsilon=-\log|z_i^{-1}|.$ Then we have a commutative diagram 
\[
\xymatrix{
    \cF(P) \ar[r] \ar[dr] & \cF(P)\{X\}/(z_iX=T^{\epsilon} )\ar[d]^{h} \\
    & \cF(Q)
}
\]
where $h$ is uniquely determined by being identity on $\cF(P)$ and mapping $X$ to $T^{\epsilon}z_i^{-1}$.
We need to show that $h$ is an isomorphism. Injectivity we prove in Lemma \ref{lmUniqueContinuation}. For surjectivity,  let $u_1,u_2,u_3$ be the images under $h$ of the generators corresponding to the three common edges of $Q$ and $P$ , and  of $X$ respectively. Let $u_4=u_2^{-1}$. We take the generators to be cyclically ordered in the obvious way. Let $\cB$ be the set of monomials in these elements with nonnegative powers where we only consider products of adjacent generators. Using Lemma \ref{lmEpsGoodGenCrit} we see that the submodule $V$ generated by $\cB$ is dense in $\cF(Q)$. To conclude surjectivity it suffices to observe that on the set $\cB$, the map $h$ precisely preserves norms. In particular the closure of $V$ is in the image of $h$. 

Fix now a neighborhood of $\partial P$ which is the union of the polygons $Q_i$ cut out by lines which are parallel and sufficiently close to $e_i$. Then the claim for polygons in such a neighborhood follows by our knowledge of $\cF_{loc}$ and by transitivity of Laurent domains.

\end{proof}

\begin{lm}\label{lmUniqueContinuation}[Uniqueness of analytic continuation]
Suppose the regular fibers are undeformed. Let $Q\subset P$ be an admissible inclusion such that $\partial Q\cap\partial P\neq\emptyset$. Then the restriction map $r:\cF^0_P\to\cF^0_Q$ is injective. 
\end{lm}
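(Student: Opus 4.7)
The plan is to prove injectivity of $r$ by factoring the restriction through a thin regular strip near a common edge. Pick a common edge $e \subset \partial P \cap \partial Q$; by admissibility of $P$, we have $e \subset B_{reg}$, so $e$ has a regular neighbourhood in $B$. Choose an admissible thin strip $P' \subset Q$ adjacent to a portion of $e$ and contained in this regular neighbourhood, arranged so that both $P' \subset P$ and $P' \subset Q$ are admissible inclusions. The restriction $r$ then factors as $\cF^0(P) \xrightarrow{r} \cF^0(Q) \to \cF^0(P')$, and it suffices to show the composite $r'' : \cF^0(P) \to \cF^0(P')$ is injective.

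Since $P' \subset B_{reg}$, the undeformedness hypothesis identifies $\cF^0(P')$ with the ring of analytic functions on a rigid analytic toric domain over $\Lambda$, in particular a torsion-free integral domain with a canonical Laurent basis $\{u^J\}_{J \in \bZ^2}$ indexed by $P'_{trop}(\bZ) = \bZ^2$. By Theorem \ref{tmAffinoidPolygon}(1), $\cF^0(P)$ has a Novikov basis $\{z^I\}_{I \in P_{trop}(\bZ)}$, and under the natural set-theoretic identification $P_{trop}(\bZ) = \bZ^2$ (obtained by sending each $\xi_i$ to the primitive outward conormal $v_i$ of $e_i$ and extending to cones), I would argue that each $r''(z^I)$ has leading Laurent monomial located precisely at $I \in \bZ^2$. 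Granted this, any Novikov-convergent relation $\sum_I a_I r''(z^I) = 0$ in $\cF^0(P')$ forces the leading Laurent coefficient at each index to vanish: normalizing so that $\max_I |a_I z^I| = 1$ and using that off-diagonal Laurent contributions of $r''(z^I)$ have strictly smaller norm than the leading one, the coefficient equation at $u^I$ forces $a_I = 0$ for every $I$ achieving the maximum, a contradiction unless that set is empty. An iteration (or completeness of the Novikov norm) then yields $a_I = 0$ for all $I$; hence $r''$ is injective, and by factoring so is $r$.

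The main obstacle is establishing that the leading Laurent position of $r''(z^I)$ really is $I \in \bZ^2$. Naturality of the spectral symbol (Theorem \ref{tmInfintsmlSHnsFunct}) delivers this only when $I = n v_e$ lies in the ray generated by the common edge, for then $r''_*(\xi^I) \neq 0$ pins down the leading Laurent coefficient. For general $I$ naturality yields only the bound $|r''(z^I)| < 1$ and does not identify the dominant Laurent monomial. To close this gap I would combine the Banach algebra homomorphism property of $r''$, which gives the multiplicative factorization $r''(z^I) = \prod_k r''(z_{i_k})^{j_k}$ on each cone of $P_{trop}(\bZ)$, with the fact that the undeformedness isomorphism for $P'$ respects spectral symbols; this should allow one to identify $r''(z_i)$ modulo strictly smaller norm with $T^{\lambda_i} u^{v_i}$, where $\lambda_i$ is the action of $v_i$ read off from the local affine chart at $P'$. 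Once each generator is pinned down, multiplicativity propagates the identification to all monomials $z^I$.
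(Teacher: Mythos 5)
Your plan of funnelling everything through a single thin regular strip $P'$ adjacent to a common edge is a genuinely different route from the paper's argument, and — as you candidly flag yourself — it runs into a real gap at the off-diagonal terms. The spectral-symbol naturality (Theorem \ref{tmInfintsmlSHnsFunct}) pins down $r''(z^I)$ only when $I$ lies in the cone of the common edge. For a generator $z_i$ attached to a \emph{non-common} edge, the partial map $P_{trop}(\bZ)\to P'_{trop}(\bZ)$ sends $\xi_i$ to nothing, so all the symbol machinery gives is $|r''(z_i)|<1$; it does not identify the dominant Laurent monomial. Worse, when $P$ contains a singular value, the covector $v_i$ at a non-common edge lives in a local affine chart that is related to the chart at $P'$ only via the (nontrivial) monodromy, so the formula ``$r''(z_i)\approx T^{\lambda_i}u^{v_i}$'' isn't even well-posed as stated — there is no canonical global comparison. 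Multiplicativity therefore cannot close the gap, because you would first need to pin down each $r''(z_i)$, and that is exactly the thing you cannot do.

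The paper's proof avoids this entirely by working with the family of \emph{all} edge neighbourhoods $P_i\subset B_{reg}$ and a propagation argument: (i) if $x$ dies in $\cF^0_Q$, it dies in $\cF^0_{P_i\cap Q}$ for some $i$ meeting $Q$, hence (by injectivity of restriction between regular polygons, which does follow from the undeformedness hypothesis and the known toric picture) dies in $\cF^0_{P_i}$; (ii) by iterating over adjacent edges — each overlap $P_i\cap P_j$ being regular — the vanishing propagates around the whole boundary by connectedness; (iii) finally, the map $\cF^0_P\to\bigoplus_i\cF^0_{P_i}$ is injective because the \emph{diagonal} part of the spectral-symbol map is injective (Theorem \ref{tmInfShPtrop}), which is precisely the easy case you already know. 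In other words, the paper only ever needs the symbol information on the diagonal and lets connectivity do the off-diagonal work; your approach is forced to control the off-diagonal Laurent expansion directly and cannot. To salvage your route you would either have to prove an explicit ``leading-term'' lemma for singular $P$ (essentially re-deriving the toric local model across monodromy, which is harder than the statement you are proving), or abandon the single-strip reduction in favour of the multi-strip propagation the paper uses.
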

\begin{proof}
For each $i$ consider an admissible neighborhood $P_i\subset B_{reg}$ of the $i$th edge inside $P$. Let $i$ be such that $Q\cap P_i$ is non-empty.  If $x$ map to $0$ in $\cF^0_Q$, it maps to $0$ in $\cF^0_{P_i\cap Q}$. Since $P_i\subset B_{reg}$ and we know that for the regular polygons the restriction maps are injective it follows that $x$ vanishes in $\cF^0_{P_i}$. Therefore, it vanishes in $\cF^0_{P_j}$ for $j$ adjacent to $i$. By connectedness we get vanishing for all the $P_i$.  But  the map from $\cF^0_P$ to the direct sum of the $\cF^0_{P_i}$ is injective since the map on spectral symbols is injective according to Theorem \ref{tmInfShPtrop}.  

\end{proof}
We now consider symplectic cohomology in degrees $>0$.  For any compact set $K$ there is a bracket operation 
$$
\{,\}:SH^1_M(K)\otimes SH^0_M(K)\to SH^0_M(K)
$$
defined by 
$$
\{v,f\}:=\Delta(v*f)+(\Delta v)*f-v*\Delta f.
$$
Here $\Delta$ denotes the BV operator and $*$ the pair of pants product. 
In the cases we care about $SH^*$ is supported in non-negative degrees, so the formula simplifies to 
$$
\{v,f\}:=\Delta(v*f)+(\Delta v)*f. 
$$
For each $v\in SH^1$ we denote by  $\partial_v:SH^0\to SH^0$ the map $\partial_v:=\{v,\cdot\}$.  Then  $\partial_v$ satisfies the Leibniz rule.  For a detailed discussion see \cite{Abouzaid14}.  For a Banach algebra $\cA$ we denote by $Der(\cA)$ the set of bounded linear operators which satisfy the Leibniz rule. 

Similarly,  we can map $\cF^2_P$ to bi-derivations by taking the $v$ to $\{v,\cdot\}$ where $\{v,f\}$ is defined by 
$$
\{v,f\}:=\Delta(v*f)-(\Delta v)*f
$$
for $f\in \cF^0_P$. That this is a biderivation follows from the Poisson identity for the bracket.  See \cite{Abouzaid14}.

\begin{lm}\label{lmInjPoly}
Suppose the regular fibers are undeformed.  Let $Q\subset P$ be an admissible inclusion of rational convex Delzant polygons such that $\partial Q\cap\partial P\neq\emptyset$ and suppose $Q\subset B_{reg}$. Then the kernel of the map  $\cF^1_P\to Der(\cF^0_P)$,  given by $v\mapsto\partial_v$,  is contained in the set of elements whose leading symbol is in  the kernel of the map $H^*(\pi^{-1}(P);R)\to H^*(\pi^{-1}(Q);R)$.
\end{lm}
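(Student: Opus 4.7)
The plan is to leverage the naturality of the BV bracket under the admissible restriction $r:\cF^*_P\to\cF^*_Q$, combined with undeformedness on the regular side, to reduce the injectivity question to an injectivity statement for polyvector fields on a smooth non-Archimedean toric domain, and then analyze the kernel of $r_*$ on $RH^1$ via Theorem \ref{mainThmB}.

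First I would observe that $r$ is a BV algebra homomorphism, so $\partial_v=0$ on $\cF^0_P$ forces $\partial_{r(v)}$ to vanish on the image $r(\cF^0_P)\subset\cF^0_Q$. By Theorem \ref{tmAffinoidPolygon}(3), after shrinking $Q$ to an admissible subpolygon $Q'\subset Q$ in a sufficiently small neighborhood of $\partial Q\cap\partial P$, the map $\cF_P\to\cF_{Q'}$ is a Laurent domain inclusion. Continuous derivations on a Laurent domain are uniquely determined by their restriction to the underlying affinoid (they extend by the Leibniz rule applied to the relations $f_jX_j=T^{a_j}$), so $\partial_{r(v)}$ vanishes on all of $\cF^0_{Q'}$.

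Next, undeformedness applied to $Q'\subset B_{reg}$ identifies $\cF^*_{Q'}$ with polyvector fields on the smooth rigid-analytic toric domain $\mathrm{mSpec}(\cF^0_{Q'})$. In degree one, vector fields on a smooth rigid-analytic space map isomorphically to continuous derivations, so $r(v)=0$ in $\cF^1_{Q'}$. By naturality of the leading symbol (Theorem \ref{mainThmB}), $r_*(\sigma(v))=\sigma(r(v))=0$ in $RH^1(Q')$. Unpacking the decomposition $RH^1(P)=H^1(\pi^{-1}(P);R)\oplus\bigoplus_\gamma SH^1_{uw}(\gamma)$, and the description of $r_*$ (ordinary restriction on $H^*$; identity on common Reeb components of $\partial Q'\cap\partial P$; zero otherwise), this places $\sigma(v)$ in the sum of $\ker(H^1(\pi^{-1}(P);R)\to H^1(\pi^{-1}(Q);R))$ with the summands $SH^1_{uw}(\gamma)$ indexed by edges $e_i$ of $\partial P$ not in $\partial Q'$.

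The hardest step is eliminating the remaining non-common Reeb contributions from $\sigma(v)$. For this I would use the compatibility of the leading symbol with the BV bracket: for the generator $z_i\in\cF^0_P$ associated with a non-common edge $e_i$ (with $\sigma(z_i)=\xi_i$ as in Theorem \ref{tmAffinoidPolygon}), the bracket $\{v,z_i\}$ has leading symbol controlled by $\{\sigma(v),\xi_i\}$ computed in $RH^*(P)$. A direct computation in the Morse–Bott model of the Reeb component $\gamma_i$, using Theorem \ref{tmMorseBottCascades} to identify $SH^*_{uw}(\gamma_i)$ with $H^*(T_{x};R)$ for an appropriate torus cover and identifying the induced bracket with contraction against the Reeb direction, should yield a non-zero result whenever $\sigma(v)$ has a non-trivial component in $SH^1_{uw}(\gamma_i)$, contradicting $\partial_v=0$. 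The main obstacle is precisely this last step: making the leading symbol into a genuine morphism of BV-algebras (up to the $\hbar$-level filtration considerations of Section \ref{SecSpectral}) and pinning down the bracket in the Morse–Bott model, which requires care about the interplay between the action filtration, the BV operator, and the spectral symbol.
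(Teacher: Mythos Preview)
Your core argument (first two paragraphs) is essentially the paper's proof: use that $r$ is a BV map to get $\partial_{r(v)}\circ r = r\circ\partial_v = 0$, then invoke that for a regular polygon the map from $SH^1$ to derivations is an isomorphism (undeformedness plus the known computation for $T^*\bT^n$) to deduce $r(v)=0$, and finally read off $r_*\sigma(v)=0$ from naturality of the spectral symbol. You are in fact more careful than the paper on one point: the paper simply asserts that $\partial_{r(v)}$ vanishes on all of $\cF^0_Q$, whereas you correctly note that naturality only yields vanishing on the image $r(\cF^0_P)$ and invoke the Laurent-domain property to extend. (Passing to the smaller $Q'$ is harmless for the application, since the lemma is only used after letting $Q$ vary anyway.)

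Your third paragraph, however, attempts more than the paper proves. The paper's proof stops at ``the leading term is in the kernel of the map in infinitesimal $SH$'' --- that is, at $\sigma(v)\in\ker\bigl(RH^1(P)\to RH^1(Q)\bigr)$ --- and writes ``the claim follows''. It makes no attempt to eliminate the non-common Reeb summands. So either the lemma is loosely stated (the intended conclusion being the $RH^*$-kernel rather than the $H^*$-kernel), or the paper's own proof has the very gap you are trying to close. In the actual application (Lemma \ref{lmPolyvectPolygon}) one lets $Q$ range over admissible neighborhoods of each edge, which kills every Reeb summand in turn; that is how the non-common Reeb contributions are disposed of, not via a single-$Q$ BV-bracket computation.

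If you still want to run your step for a fixed $Q$, note a genuine obstacle: the identity $\sigma(\{v,z_i\})=\{\sigma(v),\xi_i\}$ requires the spectral symbol to be a BV-algebra homomorphism, which the paper does not establish --- the remark following Theorem \ref{tmInfShPtrop} explicitly defers the BV compatibility. Without it, the computation you propose is not justified.
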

\begin{proof}
Suppose $\partial_v$ is identically $0$.  Denote by $r:\cF^*_P\to\cF^*_Q$ the Floer theoretic restriction. Then $\partial_{r(v)}$ is the restriction of the operator $\partial_v$.  In particular,  the restriction of $\partial_v$ to $\cF^0_Q$  vanishes.  We know that for a convex polygon in the base of $T^*\bT^n\to\bR^n$ the map from $SH^1$ to derivations is an isomorphism.  See \cite{GromanVarolgunes2022}.  It follows that $v$ is in the kernel of $\cF^1_P\to\cF^1_Q$.  This is only possible if the leading term is in the kernel of the map in infinitesimal $SH$. The claim follows.  
\end{proof}

Suppose the fibration admits a topological section $\sigma$ . The Poincare dual to $\sigma$ defines an element, still denoted by $\sigma$ in $H^2(M;\bZ)$ which maps to an element again denoted by $\sigma$ in $\cF^2(P)$. 
\begin{lm}\label{lmPolyvectPolygon}
Suppose the regular fibers are undeformed and let $\sigma$ as above. Then the restriction of $\sigma$ to $Q\subset B_{reg}$ is nowhere vanishing bi-vector field. If in addition $\cF^0(P)$ is a smooth algebra, then $\sigma$ vanishes nowhere. In this case, the map $v\mapsto\partial v$ is surjective. Suppose in addition that $H^*(\pi^{-1}(P);R)\to H^*(\pi^{-1}(Q);R)$ for $Q\subset B_{reg}$ is injective. Then $\cF^*_P$ is canonically isomorphic to the polyvector fields.  

\end{lm}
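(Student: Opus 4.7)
The four assertions are progressively stronger, and I would establish them in the stated order, feeding each into the next. For assertion (1), over a regular convex polygon $Q\subset B_{reg}$ the undeformedness hypothesis identifies $\cF^*(\pi^{-1}(Q))$ with $\cF^*_{loc}(Q)$, which by \cite{GromanVarolgunes2022} is canonically the sheaf of polyvector fields on the non-Archimedean toric domain $\{|u^{a_i}|\leq e^{b_i}\}$ over $\Lambda$. Under this identification the Poincar\'e dual of a topological section of $\pi|_Q$ maps to the standard generator $\partial_{u_1}\wedge\partial_{u_2}$ coming from the algebraic torus structure, which is nowhere vanishing. For assertion (2), Theorem \ref{tmAffinoidPolygon}(3) gives Laurent subdomain embeddings $X_i:=mSpec(\cF^0(Q_i))\hookrightarrow X:=mSpec(\cF^0(P))$ for $Q_i\subset B_{reg}$ a small rational convex neighborhood of the $i$-th edge of $\partial P$, whose union covers a neighborhood of the boundary of the skeleton. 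By (1), $\sigma$ restricts to a nowhere vanishing bivector field on each $X_i$. Assuming $\cF^0(P)$ is regular of dimension two, the zero locus $V(\sigma)$ would otherwise be a nonempty pure codimension one analytic subset of $X$ disjoint from all the $X_i$; one rules this out by a tropicalization argument, since its image in the skeleton $P$ would be a balanced one-cycle that must propagate to $\partial P$.

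For assertion (3), a nowhere vanishing $\sigma\in\cF^2(P)$ equips the smooth algebra $\cF^0(P)$ with a nondegenerate Poisson bracket. For every $f\in\cF^0(P)$ the element $\{\sigma,f\}\in\cF^1(P)$ satisfies $\partial_{\{\sigma,f\}}=\pm X_f$, the Hamiltonian vector field of $f$; this is a formal consequence of the graded Jacobi identity in the BV algebra together with the vanishing of $\cF^{-1}(P)$. In local coordinates $u_1,u_2$ supplied by smoothness, nondegeneracy of $\sigma$ makes $X_{u_1}$ and $X_{u_2}$ a pointwise frame for the derivations of $\cF^0(P)$, so Hamiltonian vector fields generate this projective rank-$2$ module; combined with the $\cF^0(P)$-linearity of $v\mapsto\partial_v$ one obtains surjectivity.

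For assertion (4), I would assemble the isomorphism degree by degree. Degree zero is the identity. In degree one, surjectivity is (3); injectivity follows from Lemma \ref{lmInjPoly}, which under the injectivity hypothesis confines the kernel to elements with vanishing leading symbol, and hence to zero by torsion freedom of $SH^*_M(\pi^{-1}(P))$, itself a consequence of Lemma \ref{lmTorsCrit} (each $\cF^0(Q_i)$ is torsion free being a Laurent domain in the undeformed regular fiber algebra). In degree two, the elements $z^I\cdot\sigma$ for $I\in P_{trop}(\bZ)$ are $\epsilon$-good and their leading symbols generate every rank-one direct summand of $RH^2(P)$, so Corollary \ref{cyEpsGoodGenCrit} gives $\cF^2(P)=\cF^0(P)\cdot\sigma$ as a topological $\Lambda$-module; composing with (2) yields an isomorphism onto the bivector fields. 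The main obstacle I anticipate lies in (2): ruling out an interior codimension one vanishing locus for $\sigma$ on the smooth affinoid $X$ requires a genuine non-Archimedean argument, since unlike in the complex analytic setting, compactness of a rigid analytic domain does not by itself preclude Cartier divisors disjoint from a prescribed open subdomain. The cleanest route appears to be tropical, leveraging the balancing condition on tropical curves in $P$ to force them to reach the boundary, but a careful analysis is needed.
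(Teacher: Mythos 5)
Your proposal correctly identifies the logical skeleton of the argument, but there is a genuine gap at exactly the point you flag — assertion (2) — and a secondary imprecision in (1).

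In (2), you propose to rule out an interior codimension-one zero locus of $\sigma$ by a tropicalization/balancing argument, but you correctly observe that this ``requires a careful analysis'' and do not supply it. The paper instead appeals to a specific rigid-analytic fact: by \cite[Proposition 1.6]{Lutkebohmert}, any nonempty Zariski-closed analytic subvariety of the affinoid $mSpec(\cF^0(P))$ must meet one of the Laurent subdomains $\{|f_i|>1-\epsilon\}$, i.e.\ the slight thickenings of the edges, which are precisely the pieces where assertion (1) applies. This replaces the hard analysis you anticipate with a single citation, and is the essential missing ingredient in your argument. Your proposed tropical route may well be viable, but as written it is an announcement of a strategy rather than a proof; the key balancing/propagation step is not established.

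In (1), you assert that under the undeformedness isomorphism the class $\sigma$ ``maps to the standard generator $\partial_{u_1}\wedge\partial_{u_2}$'' without saying why. The undeformedness hypothesis only gives an isomorphism of BV algebras respecting spectral symbols, and $\sigma$ is produced by a topological map from $H^2(M;\bZ)$ on the $M$-side while the nowhere-vanishing generator lives on the $\cF_{loc}$-side. The paper closes this gap by characterizing the local generator $\lambda_{loc}$ uniquely by two properties, namely its leading symbol and its vanishing under the BV operator $\Delta$, and then verifying that the image of $\sigma$ shares both, forcing equality. Your version of (1) omits this characterization step.

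Your (3) takes a different but reasonable route via the nondegenerate Poisson structure induced by $\sigma$ and Hamiltonian vector fields $X_f=\pm\partial_{\{\sigma,f\}}$; the paper instead reduces to the elementary observation that on a smooth affine variety vector fields are generated over functions by divergences of bivector fields (equivalently, forms by exact forms). Both work, though your appeal to ``local coordinates supplied by smoothness'' should be stated as working with \'etale local coordinates and then descending. Your (4) is consistent with the paper.
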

\begin{proof}
Let us introduce the notation $
\cF^2_{loc}(Q)$ for the symplectic cohomology of $\pi^{-1}(Q)$ inside its completion. Denote by $\lambda_{loc}\in \cF^2_{loc}(Q)$ the element  corresponding to the cohomology class associated with a section of $\pi_{loc}$ under the map from ordinary cohomology to $\cF^2_{loc}(Q)$. This is the same as the Poincare dual of the point class of the torus fibers.  By standard computations of symplectic cohomology of the cotangent bundle, $\lambda_{loc}$ gives rise to the nowhere vanishing $2$-from $\frac{dx\wedge dy}{xy}$. See \cite{GromanVarolgunes2022} for a detailed discussion. Moreover, $\lambda_{loc}$ is characterized by its leading symbol together with the fact that it vanishes under the BV operator. Let  $\lambda$ be the element corresponding to $\sigma$ in $\cF^2(Q)$ and fix an identification of $\cF^2_{loc}(Q)$ with $
\cF^2(Q)$ which respects leading symbols. Then $\lambda$ has the same leading symbol as $\lambda_{loc}$ and vanishes under the $BV$ operator, so it must coincide with $\lambda_{loc}$. In particular, it vanishes nowhere on $Spec (\cF^0(Q))$. This implies the first part of the claim. 

For the second part, note that by smoothness, the top exterior power of the tangent sheaf is a locally free sheaf of rank $1$. So, the vanishing locus $s$ of $\lambda$ is of codimension $1$ or empty.  Let $f_1,\dots, f_N$ be the set of generators associated with the edges of $P$. For sufficiently small $\epsilon$ each of the Laurent domains $|f_i|>1-\epsilon$ corresponds to the restriction of $
\cF^0(P)$ to an admissible polygon $Q_i$ which is a slight thickening of the $i$the edge. So, by \cite[Proposition 1.6]{Lutkebohmert}, if $s$ is non-empty, it must meet the image of $mSpec(\cF^0(Q_i))$ inside the spectrum of $\cF^0(P)$. But this contradicts the previous paragraph. 

For the third part of the claim observe that as a BV algebra, for each $Q\subset B_{reg}$ we have that $\cF^*(Q)$ is isomorphic to to the algebra of polyvector fields with  $\Delta$ the divergence operator associated with the non-vanishing vector field coming from $\sigma$. The claim then amounts to the claim that the vector fields on a smooth affine variety are generated as a module over the functions by vector fields which are divergences of polyvector fields. This is equivalent to the observation that differential forms are generated as a module over functions by exact differential forms.

The final part is immediate from Lemma \ref{lmInjPoly} and what we have proven so far.

\end{proof}

\begin{proof}[Proof of Theorems \ref{tmLocSHn2} and \ref{tmLocSHn3}]
Theorem \ref{tmLocSHn2} follows from Theorem \ref{tmAffinoidPolygon}, Corollary \ref{cyEpsGoodGenCrit}, and Lemma \ref{lmInjPoly}. Theorem \ref{tmLocSHn3} follows from Lemma \ref{lmPolyvectPolygon}.
\end{proof}

\appendix

 \section{$C^0$ estimates for Floer trajectories}\label{SecEstimates}

In this section we recall $C^0$ estimates on Floer trajectories in the small energy regime. These are enhancements of estimates by \cite{Usher09} and \cite{Hein}.

\subsection{Gromov's trick for Floer trajectories}
Denote by $\Sigma$ the cylinder $\bR\times S^1$ considered as a Riemann surface with its standard complex structure $j_{\Sigma}$. 
Let $H:\Sigma\times M\to\bR$ be a function which is $s$ independent outside of a compact set. Let $(s,t)\mapsto J_{s,t}$ be a $\Sigma$-dependent family of almost complex structures on $M$ which is $s$ independent outside of a compact set.  To this datum we associate an almost complex structure $J_{H}$ on $\Sigma\times M$ defined by
\begin{equation}\label{eqGrTrick}
J_{H}:=J_M+j_{\Sigma}+X_H\otimes ds- JX_H\otimes dt.
\end{equation}
When $H$ is $s$-dependent, we assume $\partial_sH\geq 0$. 
In this case the closed form
\begin{equation}\label{eqGrTrick2}
\omega_{H}:=\pi_1^*\omega_{\Sigma}+\pi_2^*\omega+ dH
\end{equation}
on $\Sigma\times M$ can be shown to be symplectic and  $J_{H}$ is compatible with it. Here $\omega_\Sigma$ is any symplectic form compatible with  $j_\Sigma$.  We shall take $\omega_\Sigma$ to coincide with the form $ds\wedge dt$ on the ends.
We denote the induced metric on $\Sigma\times M$ by $g_{J_{H}}$. We refer to the metric $g_{J_H}$ as the \emph{Gromov metric}. We stress that \emph{a Gromov metric depends on the choice of area form on $\Sigma$}. 

In the following lemma we consider in particular the case $\Sigma=\bR\times S^1$, $H=H_{s,t}dt$ and $\omega_{\Sigma,\tau}:=\tau^2 ds\wedge dt$. For $i=1,2$ let 
$\pi_i$ be the $\Sigma, M$ respectively. Let $g_{\Sigma}$ be the standard metric on the cylinder $\bR\times S^1$. Given a pair of Riemannian metrics $g_1,g_2$ on a smooth manifold, we say they are \emph{$C$-equivalent} for some $C>1$ if $\frac1{C}\|v\|^2_{g_2}<\|v\|^2_{g_1}<C\|v\|^2_{g_2}$

\begin{lm}\label{lmProdCEquivalence}
There is a continuous function $f:\bR_+^2\to\bR_+$ converging to $1$ at $(0,0)$ such that writing $C_{\tau}:=f\left(\left|\frac{\partial_sH}{\tau^2}\right|_{\infty},\left|\frac{\|X_H\|^2}{\tau^2}\right|_{\infty}\right)$, the metric $g_{J_H}$ determined by $J_H$ and $\omega_{\Sigma,\tau}$ is $C_{\tau}$-equivalent to the product metric $\tau^2\pi_1^*g_\Sigma+\pi_2^*g_J$.  
\end{lm}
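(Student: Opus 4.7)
The plan is to compute $g_{J_H}$ explicitly in the natural frame $\{\partial_s,\partial_t,e_1,\ldots,e_{2n}\}$, where $\{e_i\}$ is any local $g_J$-orthonormal frame of $TM$, and then compare it entry by entry to the product metric $g_{\mathrm{prod}}:=\tau^2\pi_1^*g_\Sigma+\pi_2^*g_J$.

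Unpacking (\ref{eqGrTrick}), one reads off $J_H\partial_s=\partial_t+X_H$, $J_H\partial_t=-\partial_s-JX_H$, $J_H v=Jv$ for $v\in TM$ (and $J_H^2=-I$). Interpreting the $dH$ of (\ref{eqGrTrick2}) as $d(H\,dt)$ gives
\[
\omega_H=(\tau^2+\partial_s H)\,ds\wedge dt+\pi_2^*\omega+d^M H\wedge dt,
\]
where $d^M H$ is the fiberwise differential. Plugging these into $g_{J_H}(u,w)=\omega_H(u,J_H w)$ and using $g_J(u,v)=\omega(u,Jv)$ and $dH(Y)=\omega(X_H,Y)$, a short direct calculation gives
\[
g_{J_H}(\partial_s,\partial_s)=\tau^2+\partial_s H,\quad g_{J_H}(\partial_t,\partial_t)=\tau^2+\partial_s H+\|X_H\|^2,\quad g_{J_H}(\partial_t,v)=-g_J(X_H,v),
\]
with $g_{J_H}|_{TM\times TM}=g_J$ and all other pairings vanishing. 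Hence for $\xi=a\partial_s+b\partial_t+w$, $w\in TM$, we have
\[
g_{J_H}(\xi,\xi)-g_{\mathrm{prod}}(\xi,\xi)=\partial_s H\,(a^2+b^2)+\|X_H\|^2 b^2-2b\,g_J(X_H,w).
\]

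The first two summands are immediately bounded in absolute value by $(|\partial_s H|_\infty/\tau^2)$ and $(\|X_H\|_\infty^2/\tau^2)$ times $g_{\mathrm{prod}}(\xi,\xi)$. The cross term is handled by weighted AM-GM
\[
|2b\,g_J(X_H,w)|\leq\epsilon\,\tau^2 b^2+\frac{\|X_H\|_\infty^2}{\epsilon\,\tau^2}\,\|w\|^2,
\]
and choosing $\epsilon=\sqrt{\|X_H\|_\infty^2/\tau^2}$ absorbs it into $\sqrt{\|X_H\|_\infty^2/\tau^2}\cdot g_{\mathrm{prod}}(\xi,\xi)$. Summing the three estimates gives
\[
\bigl|g_{J_H}(\xi,\xi)-g_{\mathrm{prod}}(\xi,\xi)\bigr|\leq h\!\left(\tfrac{|\partial_s H|_\infty}{\tau^2},\tfrac{\|X_H\|_\infty^2}{\tau^2}\right)\,g_{\mathrm{prod}}(\xi,\xi)
\]
for an explicit continuous $h\colon\mathbb{R}_+^2\to\mathbb{R}_+$ with $h(0,0)=0$. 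Setting $C_\tau=(1-h)^{-1}$ on the region where $h<1$ and extending continuously to $\mathbb{R}_+^2$ yields the required two-sided bound and hence the function $f$ in the statement.

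The main point to watch is the $\partial_t\leftrightarrow TM$ cross term: because it is only linear in $\|X_H\|/\tau$, a naive application of Cauchy-Schwarz would ruin the lower bound as soon as $\|X_H\|$ becomes comparable to $\tau$. The AM-GM optimization with $\epsilon=\sqrt{\|X_H\|_\infty^2/\tau^2}$ is precisely what is needed to produce a continuity constant that still tends to $1$ at the origin.
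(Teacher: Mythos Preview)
Your computation of $g_{J_H}$ and the difference $g_{J_H}-g_{\mathrm{prod}}$ is correct, and the AM--GM treatment of the cross term with $\epsilon=\|X_H\|_\infty/\tau$ is exactly the right move to get a bound of order $\sqrt{y}$ rather than a constant. The argument establishes the key content of the lemma, namely $C_\tau\to 1$ as the two ratios tend to zero.

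The paper proceeds a bit differently: it splits $T(\Sigma\times M)=\bigl(T\Sigma\oplus\bR X_H\bigr)\oplus(\bR X_H)^{\perp}$, observes that on the second summand $g_{J_H}$ agrees with $g_J$, and represents $g_{J_H}$ relative to the product metric on the first summand by an explicit $3\times 3$ symmetric matrix $A(x,y)$; then $f(x,y)=\max\{\lambda_{\max}(A),\lambda_{\min}(A)^{-1}\}$. This buys a little more than your approach: since $\partial_sH\geq 0$ the matrix $A(x,y)$ is positive definite for all $(x,y)\in\bR_+^2$, so the eigenvalue formula defines $f$ globally and the $C_\tau$-equivalence holds for every value of the arguments, not just when $h<1$. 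Your phrase ``extending continuously to $\bR_+^2$'' is not quite enough here, because the lemma asserts the metric equivalence for all $\tau$, not merely the existence of a continuous $f$; once $h\geq 1$ your lower bound $(1-h)g_{\mathrm{prod}}\leq g_{J_H}$ becomes vacuous. This is easy to patch (e.g.\ for large arguments bound the minimal eigenvalue of the explicit Gram matrix directly), and for the applications in the paper only the behaviour near the origin matters, but it is worth noting that the eigenvalue route handles this automatically.
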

\begin{proof}
We have
\begin{equation}\label{eqGromovMetricFormula}
g_{J_H}=(\tau^2+\partial_sH)\pi_1^*g_{\Sigma}+\|X_H\|^2_{g_J}(\pi_1^*dt)^2+g_J(X_H,\cdot)dt +\pi_2^*g_J.
\end{equation}
To relate this metric with the product metric we split $$T(\Sigma\times M)=\left(T\Sigma\oplus\bR X_H\right)\oplus (\bR X_H)^{\perp},$$ 
where  $(\bR X_H)^{\perp}$ denote the orthogonal complement in $TM$ with respect to $g_{J_H}$. Note this is also an orthogonal splitting with respect to $g_J$. Let $p_1, p_2$ be the orthogonal projections associated with the splitting. Consider the map $A_{\tau}$ on the first summand given in the basis $\left(\frac{\partial}{\partial s},\frac{\partial}{\partial t}, X_H\right),$ by the matrix
$$
\begin{pmatrix}1+\frac{\partial_sH}{\tau^2}& 0& 0\\
0& 1+\frac{\partial_sH+\|X_H\|^2}{\tau^2}& -\|X_H\|\\
0& -\|X_H\|& 1
\end{pmatrix}
$$
Then $g_J$ is represented with respect to the product metric by the linear map $A\circ p_1+p_2$. For $x,y\geq 0$ consider the matrix $A(x,y)$
$$
\begin{pmatrix}1+x& 0& 0\\
0& 1+x+y^2& -y\\
0& -y& 1
\end{pmatrix}
$$
and let $f(x,y)=\max\left\{\lambda_{max},\frac 1{\lambda_{min}}\right\} $ where we refer to the maximal and minimal eigenvalue of the positive definite symmetric matrix $A(x,y)$. Then $f(x,y)\to 1$ as $(x,y)\to (0,0).$
Moreover, for $C_{\tau}:=f\left(\left|\frac{\partial_sH}{\tau^2}\right|_{\infty},\left|\frac{\|X_H\|^2}{\tau^2}\right|_{\infty}\right)$ the metric $g_{J_H}$ is $C_{\tau}$-equivalent to the corresponding product metric.
\end{proof}

An observation known as \emph{Gromov's trick} is that $u$ is a solution to Floer's equation
\begin{equation}\label{eqFloer}
(du-X_{J})^{0,1}=0,
\end{equation}
if and only if its graph $\tilde{u}$ satisfies the Cauchy Riemann equation
\begin{equation}
\overline{\partial}_{J_{H}}\tilde{u}=0.
\end{equation}
Thus Floer trajectories can be considered as $J_{H}$-holomorphic sections of $\Sigma\times M\to\Sigma$. 
To a Floer solutions $u:\Sigma\to M$ and a subset $S\subset\Sigma$ we can now associate three different non-negative real numbers:
\begin{itemize}
\item The \emph{geometric energy} $E_{geo}(u;S):=\frac12\int_S\|(du-X_{H})\|^2$ of $u$.
\item The \emph{topological energy} $E_{top}(u;S):=\int_Su^*\omega+\tilde{u}^*dH.$
\item The \emph{symplectic energy} $E(\tilde{u};S):=\int_S\tilde{u}^*\tilde{\omega}$. 
\end{itemize}
We have the relation $E(\tilde{u};S)=E_{top}(u;S)+Area(S)$. For a monotone Floer datum we have, in addition, the relation $E_{geo}(u;S)\leq E_{top}(u;S)$. 

 The key to obtaining $C^0$ estimates is the is the monotonicity lemma  \cite{Sikorav94} .  For a $J$-holomorphic map $u:\Sigma\to M$ and for a measurable subset $U\subset \Sigma$ write
\begin{equation}
E(u;U):=\int_Uu^*\omega.
\end{equation}
\begin{lm}\label{lmMonEst++}[\textbf{Monotonicity} \cite{Sikorav94} ]
Fix a compatible almost complex structure $J$ on $M$. Let $a$ be a constant and let $p\in M$ be a point at which $|Sec|\leq a^2$ on the ball $B_{1/a}(p)$ and such that at $p$ the injectivity radius is $\geq\frac1{a}$. 
 Let $S$ be a compact Riemann surface with boundary and let $u:S\to M$ be $J$-holomorphic such that $p$ is in the image of $u$ and such that
\[
u(\partial S)\cap B_{1/a}(p)=\emptyset.
\]
Then there is a universal constant $c$ such that
\begin{equation}
E\left(u;u^{-1}(B_{1/a}(p))\right)\geq\frac 1{a^2}.
\end{equation}
If, instead, we only require that that there exists a constant $C>1$ and a Riemannian metric $h$ satisfying the above bounds on the sectional curvature and injectivity radius  $g_J$ and such that 
$$
\frac1{C}g_J(v,v)\leq h(v,v)\leq Cg_J(v,v),
$$
we get the inequality
\begin{equation}
E\left(u;u^{-1}(B_{1/a}(p))\right)\geq\frac{1}{C^3a^2}.
\end{equation}
 \end{lm}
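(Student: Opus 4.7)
The plan is to give the standard Sikorav-style isoperimetric/ODE proof for the first inequality, then track constants through the metric equivalence for the second. First I would set $f(r) := E(u; u^{-1}(B_r(p)))$ for $r \in (0, 1/a]$, where $B_r(p)$ is the $g_J$-geodesic ball. Since $u$ is $J$-holomorphic, $u^*\omega$ equals the induced area form, so $f(r)$ is the $g_J$-area of $u \cap B_r(p)$ counted with multiplicity. The hypothesis $u(\partial S) \cap B_{1/a}(p) = \emptyset$ ensures that for regular values $r \in (0, 1/a)$, the topological boundary of $u^{-1}(B_r(p))$ is contained in $u^{-1}(\partial B_r(p))$.

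Next, the coarea formula applied to the distance function $d(\cdot, p)$ gives $f'(r) \geq \mathrm{length}(u^{-1}(\partial B_r(p)))$ for a.e.\ $r$, since $|\nabla d_p| \leq 1$. The curvature bound $|\mathrm{Sec}| \leq a^2$ together with the injectivity radius bound $\geq 1/a$ make $B_{1/a}(p)$ a diffeomorphic image of a Euclidean ball with comparison constants depending only on the dimensionless product $a \cdot (1/a) = 1$, hence universal. The isoperimetric inequality for $J$-holomorphic curves through $p$ (see, e.g., \cite{MS2}) then yields $f(r) \leq c \cdot \mathrm{length}(u^{-1}(\partial B_r(p)))^2$ for a universal $c$, using that $p \in \mathrm{image}(u)$ to rule out the degenerate small-loop case.

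Combining these two estimates gives the differential inequality $f(r) \leq c \cdot f'(r)^2$, equivalently $\tfrac{d}{dr}\sqrt{f(r)} \geq \tfrac{1}{2\sqrt{c}}$ a.e. Since $f(0^+) = 0$, integration from $0$ to $1/a$ produces
\[
\sqrt{f(1/a)} \geq \frac{1}{2 a \sqrt{c}}, \qquad \text{so} \qquad f(1/a) \geq \frac{1}{4 c\, a^2},
\]
which is the first assertion after absorbing $4c$ into the universal constant.

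For the second part, I would replay the argument verbatim using $h$-geodesic balls $B^h_r(p)$ instead of $g_J$-balls, since the curvature and injectivity bounds are hypothesized for $h$. The friction is that $u$ remains $J$-holomorphic with respect to $g_J$, so $u^*\omega$ is the $g_J$-area form, not the $h$-area form. Using $\tfrac{1}{C} g_J \leq h \leq C g_J$, any $h$-length is within $\sqrt{C}$ of the corresponding $g_J$-length and any $h$-area is within $C$ of the $g_J$-area; similarly $|\nabla^{g_J} d^h_p|$ differs from $|\nabla^h d^h_p|$ by at most $\sqrt{C}$. Running the coarea step in $h$-balls picks up one factor of $\sqrt{C}$ there; the isoperimetric step contributes a factor $(\sqrt{C})^2 = C$ from squaring the length; and converting the final area inequality back from $h$-area to $g_J$-area (which is what $u^*\omega$ records) contributes another factor of $C$. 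These assemble to give the overall degradation $C^3$, yielding $f(1/a) \geq \tfrac{1}{C^3 a^2}$ up to the universal constant. The main obstacle will be this careful bookkeeping of the three places where a factor of $\sqrt{C}$ or $C$ appears; the underlying geometric content is the classical monotonicity inequality, so no new ideas are required.
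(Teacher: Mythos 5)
The paper does not actually prove this lemma; it cites \cite{Sikorav94} directly, and for the $C$-equivalent variant the text points (in Remark \ref{rmMonotonicity}) to \cite[Lemma 5.11]{Groman}. So your proof is necessarily a reconstruction from scratch rather than a comparison with the paper's own argument. Your reconstruction is correct in outline and is precisely the Sikorav isoperimetric/ODE argument the citation refers to.

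Two small remarks on the bookkeeping for the second part. First, the factors you enumerate ($\sqrt{C}$ from coarea, $C$ from the length squaring, $C$ from the area/$|\omega|$ conversion) literally multiply to $C^{5/2}$, whereas you assert they ``assemble to $C^3$.'' What actually happens is that in the differential inequality $f(r)\leq c\,\ell_{g_J}^2$ combined with $\ell_{g_J}\leq\sqrt{C}\,f'(r)$, the coarea factor $\sqrt{C}$ enters as $(\sqrt{C})^2=C$ when the length is squared, and the other two factors degrade the isoperimetric constant by $C\cdot C = C^2$, giving $f\leq c_0 C^3 (f')^2$ and hence $f(1/a)\geq \frac{1}{4c_0 C^3 a^2}$. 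So the exponent $3$ is right, but the arithmetic as written is off. Second, ``replay the argument verbatim'' glosses over the one place where the two metrics genuinely interact: the isoperimetric step relies on a local primitive $\lambda$ of $\omega$ with $|\lambda|_h$ controlled, and the bound on $|\omega|_h$ (needed for that) comes from $|\omega|_h\leq C|\omega|_{g_J}$ via the $C$-equivalence, not from the curvature/injectivity hypotheses on $h$ alone. This is exactly where the third factor of $C$ enters, and it is worth saying explicitly rather than filing it under ``converting $h$-area to $g_J$-area.'' With those two points tightened, the argument is complete.
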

 \begin{rem}\label{rmMonotonicity}
Lemma \ref{lmMonEst++} applies in particular to the case of a $J_H$ holomorphic curve $\tilde{u}$ associated with a monotone Floer datum $(H,J)$ where both $H$ and $J$ are allowed to vary on a compact subset of $\Sigma$. In this case we let $a$ be an estimate of the injectivity radius and sectional curvature of the  almost complex structure on $\Sigma\times M$ defined by $J_{z,x}=j_{\Sigma}\times J_z$ and we let $C=\max_{x\in B_{1/a}(p)}\{\|X_H\|^2,(\partial_sH)^2\}$ \cite[Lemma 5.11]{Groman} 
 \end{rem}
\subsection{The energy distance inequality}
The following proposition is slight refinement of \cite[Propositio 3.5]{Hein} and the proof is  taken from there. 
\begin{prp}\label{PrpDistEn0}
Let $V$ be a possibly time dependent vector field whose flow is complete on a Riemannian manifold $M$. Denoting the time $t$ flow by $\psi_t$ let $\lambda>0$ be a constant such that $\|d\psi_tv\|>\lambda^{-1}|v|$ for all tangent vectors $v$. Let $\gamma:[0,T]\to M$ be a smooth path. Then
\begin{equation}
    d(\gamma(0),\psi_T^{-1}(\gamma(T)))<\lambda\int_0^T\|\gamma'(t)-V_t\circ\gamma(t)\|dt.
\end{equation}
\end{prp}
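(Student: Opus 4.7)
The plan is to construct an explicit path from $\gamma(0)$ to $\psi_T^{-1}(\gamma(T))$ whose length bounds the left-hand side, and then estimate its length using the Lyapunov constant $\lambda$. The natural candidate is
\[
\alpha(t) := \psi_t^{-1}(\gamma(t)),
\]
which has $\alpha(0) = \gamma(0)$ and $\alpha(T) = \psi_T^{-1}(\gamma(T))$, so that $d(\gamma(0), \psi_T^{-1}(\gamma(T))) \leq \mathrm{length}(\alpha)$.

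Next I would compute $\alpha'(t)$ explicitly via the chain rule. Writing $\psi_{s,t}$ for the two-parameter flow of $V$ from time $s$ to time $t$ (so $\psi_t = \psi_{0,t}$ and $\psi_t^{-1} = \psi_{t,0}$), the identity $\psi_{s,t}\circ\psi_{a,s} = \psi_{a,t}$ differentiated in $s$ yields $\partial_s \psi_{s,0}(y)\big|_{s=t} = -d\psi_t^{-1}|_y(V_t(y))$. Combining with the $t$-derivative of $\gamma$, one obtains
\[
\alpha'(t) = d\psi_t^{-1}|_{\gamma(t)}\bigl(\gamma'(t) - V_t\circ\gamma(t)\bigr).
\]

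The Lyapunov hypothesis $\|d\psi_t v\| > \lambda^{-1}|v|$ is equivalent, by setting $w = d\psi_t v$, to $\|d\psi_t^{-1} w\| < \lambda\|w\|$ for all nonzero $w$. Applying this pointwise gives
\[
\|\alpha'(t)\| < \lambda\,\|\gamma'(t) - V_t\circ\gamma(t)\|
\]
whenever the right-hand side is nonzero. Integrating from $0$ to $T$ and bounding the distance by the length of $\alpha$ delivers the claimed inequality.

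There is no serious obstacle here; the only mild subtlety is keeping track of signs and the chain rule for time-dependent flows, which is what justifies the identity $\partial_s \psi_{s,0} = -d\psi_t^{-1}\circ V_t$ used in the computation of $\alpha'$. The strictness of the inequality is inherited from the strict Lyapunov bound (and the fact that a distance is always strictly less than the length of a non-geodesic connecting path whenever $\gamma'-V_t\circ\gamma$ does not vanish identically).
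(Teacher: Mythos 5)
Your proof is correct and is essentially identical to the paper's: both introduce the path $\eta(t)=\psi_t^{-1}(\gamma(t))$, compute its velocity via $d\psi_t\dot\eta(t)=\gamma'(t)-V_t\circ\gamma(t)$, apply the Lyapunov bound, and integrate to bound the distance by the length of $\eta$. The small point you flag about strictness when $\gamma'-V_t\circ\gamma$ vanishes identically is present in the paper's argument as well and is not a real discrepancy.
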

\begin{proof}
    Let $\eta(t):=\psi_t^{-1}(\gamma(t))$. Then $\dot{\gamma}(t)=d\psi_t\dot{\eta}(t)+V_t\circ\gamma(t)$. So,
    \begin{align*}
       \int_0^T\|\gamma'(t)-V_t\circ\gamma(t)\|dt&= \int_0^T\|d\psi_t\dot{\eta}(t)\|dt\\
       &>\lambda^{-1}\int_0^T\|\dot{\eta}(t)\|dt\\
       &\geq \lambda^{-1}d(\eta(0),\eta(T))\\
       &=\lambda^{-1}d(\gamma(0),\psi_T^{-1}(\gamma(T))).
    \end{align*}
\end{proof}
As a corollary we have
\begin{prp}\label{PrpDistEn}[The energy distance inequality]
On a compact manifold there is a constant $c$ such that, for any loop $\gamma$ we have
\begin{equation}
d(\gamma(0),\psi_1(\gamma(0))<c\|\gamma'-V_t\circ\gamma\|^2_{L^2},
\end{equation}
and, if the flow is time independent, for any path $\gamma$  we have
\begin{equation}\label{eqBoundL2L1}
d(\gamma(T),\psi_T(\gamma(0)))<Te^{cT}\|\gamma'-V_t\circ\gamma\|^2_{L^2}.
\end{equation}
\end{prp}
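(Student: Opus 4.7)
The plan is to reduce Proposition \ref{PrpDistEn} to Proposition \ref{PrpDistEn0} via Cauchy--Schwarz, using compactness of $M$ to supply the Lyapunov-type constant. Smoothness of $V_t$ and compactness of $M$ make $\sup_{t,x}\|\nabla V_t(x)\|$ finite, so Gronwall's inequality applied to the linearised flow gives $\|d\psi_t\|\leq\exp(t\sup\|\nabla V\|)$; taking reciprocals supplies the $\lambda$ required by Proposition \ref{PrpDistEn0}, which is uniformly bounded when $T=1$ and grows like $e^{cT}$ in the time-independent case.

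For the loop case I would apply Proposition \ref{PrpDistEn0} with $T=1$. Since $\gamma(0)=\gamma(1)$, its conclusion reads $d(\gamma(0),\psi_1^{-1}(\gamma(0)))<\lambda\|\gamma'-V_t\circ\gamma\|_{L^1}$, and the bi-Lipschitz character of $\psi_1$ allows one to replace $\psi_1^{-1}$ by $\psi_1$ at the cost of an absorbed factor of $\lambda$. Cauchy--Schwarz on $[0,1]$ then gives $\|\cdot\|_{L^1}\leq\|\cdot\|_{L^2}$, and we collect everything into a constant $c=c(M,V)$. The time-independent path case is handled identically on $[0,T]$: Gronwall bounds $\lambda$ by $e^{cT}$, and Cauchy--Schwarz on $[0,T]$ produces a factor $\sqrt{T}$, which is absorbed into the announced $Te^{cT}$ after readjusting $c$.

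The main obstacle is the exponent $2$ appearing on $\|\gamma'-V_t\circ\gamma\|_{L^2}$ in the statement. The route sketched above proves the bound \emph{linear} in the $L^2$-norm, not quadratic; moreover, the quadratic form as written cannot hold as a general inequality, as one sees by considering loops that traverse a short flow-line of $V$ forward and then backward, for which the drift $d(\gamma(0),\psi_1(\gamma(0)))$ is of the same order as $\|\gamma'-V_t\circ\gamma\|_{L^2}$ (so the ratio $d/\|\cdot\|_{L^2}^2$ blows up as the loop shrinks). I would interpret the $2$ in the right-hand side as a typographical error, and the intended statement as either $d\leq c\|\cdot\|_{L^2}$ or the equivalent $d^2\leq c^2\|\cdot\|_{L^2}^2$; either version suffices for the only invocation of Proposition \ref{PrpDistEn} in the paper, namely in Lemma \ref{lmEpsilonSeparation}, where one needs a positive lower bound on $\|\cdot\|_{L^2}^2$ in terms of a fixed positive distance $\delta_2$.
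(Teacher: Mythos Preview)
Your argument---apply Proposition~\ref{PrpDistEn0}, then Cauchy--Schwarz, then a Lipschitz bound on $\psi_T$ to convert $\psi_T^{-1}$ into $\psi_T$---is exactly the paper's proof; the paper records the Lipschitz constant directly as $e^c:=\sup_{t\in[0,1],\,x,y}d(\psi_t(x),\psi_t(y))/d(x,y)$ rather than invoking Gronwall, but this is the same content. Your diagnosis of the exponent~$2$ is also correct: the paper's own displayed inequality reproduces the same slip, only the bound linear in $\|\cdot\|_{L^2}$ is actually established or needed, and the cleanest counterexample to the quadratic version is simply a constant loop near a zero of $V$.
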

\begin{proof}
    The first part is a particular case of the second part after renaming constants.  From Cauchy-Schwartz and the previous Lemma we deduce 
    $$
    d(\gamma(0),\psi^{-1}_T(\gamma(T)))<T\|\gamma'-V_t\circ\gamma\|^2_{L^2}.
    $$
    The claim then follows by taking $$e^c=\sup_{t\in[0,1], x,y\in M}\frac{d(\psi_t(x),\psi_t(y))}{d(x,y)}.$$
\end{proof}

% \begin{prp}\label{PrpDistEn}[The energy distance inequality]
%Suppose $H$ is time independent. Let $\lambda> 0$ be a constant bounding $\|\nabla X_H\|$. Let $\gamma:[0,T]\to M$ be a smooth curve and let $p=\gamma(0)$. Write
%$$E_H(\gamma):=\int_{t\in [0,T]}\|\gamma'(t)-X_H\circ\gamma(t)\|^2.$$
%Then
%\begin{equation}\label{eqBoundL2L1}
%d_g(\gamma(T),\phi_T(p))^2\leq \frac{e^{2T\lambda}-1}{2T\lambda} E_H(\gamma),
%\end{equation}
%If we replace the assumption that $H$ is time independent everywhere and that $\|\nabla X_H\|<\lambda$ everywhere by the assumption that these hold on a region $U=H^{-1}([a,b])$ then the same estimate holds if we replace the left hand side by $$\min\{d_g(\gamma(T),\phi_T(p))^2, cd(\gamma(T),\partial U)\}$$ for $c$ depending only on the bounds on $H,J$ in the region $U$. Finally, we can relax the assumption that $H$ is time independent with the assumption that it is sufficiently close to time independent.
%\end{prp}
%\begin{proof}
%See Lemmas 6.16, 6.17, and Remark 6.18 in \cite{Groman}. 
%\end{proof}

\section{Relative symplectic cohomology}\label{SecAcceleration}

To set ideas, \emph{an acceleration datum for $K\subset M$} is a family of pairs $\tau\mapsto(H_{\tau},J_{\tau})$ parametrized by $\tau\in\bR$ of time dependent Floer data together with  a smooth function $f:\bR\to\bR$ such that $f(t)=0$ whenever $t\leq 0$ and $f(t)=1$ whenever $t\geq 1$.  These are required to satisfy
\begin{itemize}
\item $H$ is monotone in $\tau$. That is, for all $(t,x)\in  S^1\times M$ and for any pair $\tau\leq\tau'\in\bR$ we have $H_{\tau}(t,x)\leq H_{\tau'}(t,x)$. 
\item For each $i,$ the pair $(H_i,J_i)$ is regular for the definition of Floer cohomology and is dissipative. 
\item For each $i$ the continuation by the family
$$g_i:s\mapsto (H_{i+f(s)},J_{i+f(s)})$$
is regular for the definition of continuation maps and is dissipative.
\end{itemize}
To each $i$  we associate the Floer cohomology group $CF^*(H_i,J_i)$. 
To the family $(H_{g_i(s)},J_{g_i(s)})$ interpolating between $(H_i,J_i)$ and $(H_{i+1},J_{i+1})$ we associate the continuation map $f_i:CF^*(H_i,J_i)\to CF^*(H_{i+1},J_{i+1})$ which commutes with the differential. These are all packaged together into a construction called the \emph{completed telescope} defined as follows.

From the acceleration datum $(H_\tau,J_\tau)$ we obtain a $1$-ray of chain complexes over $\Lambda_{\geq 0}$: $$\mathcal{C}(H_\tau):= CF(H_1)\to CF(H_2)\to\ldots. $$
We omit $J$ from the notation whenever there is no fear of confusion.
We define relative symplectic cochain complex by taking the degree-wise completion of the telescope of $\mathcal{C}(H_\tau)$: $$SC_M^*(K,H_\tau):=\widehat{tel}(\mathcal{C}(H_\tau)).$$ Here the telescope is defined as $$tel(\mathcal{C}))=\left(\bigoplus_{i=1}^\infty\oplus C_i[q]\right)$$ with $q$ a degree $1$ variable satisfying $q^2=0$.  The differential is as depicted below \begin{align}\label{teles}
\xymatrix{
C_1\ar@{>}@(ul,ur)^{d }  &C_2\ar@{>}@(ul,ur)^{d} &C_3\ar@{>}@(ul,ur)^{d}\\
C_1[1]\ar@{>}@(dl,dr)_{-d} \ar[u]^{\text{id}}\ar[ur]^{f_1} &C_2[1]\ar@{>}@(dl,dr)_{-d} \ar[u]^{\text{id}}\ar[ur]^{f_2}&\ldots\ar[u]^{\text{id}}_{\ldots} }
\end{align}
and is given by the formula
\begin{equation}\label{eqTelDiff}
\delta qa:=qda+(-1)^{deg(a)}(f_i(a)-a).
\end{equation}

The completion is defined as follows. We assign a non-Archimedean norm to each Floer cohomology $CF^*(H_i,J_i)$ by assigning 
\begin{itemize}
\item norm $1$ to each orbit, 
\item norm $e^{-1}$ to the formal Novikov parameter $T,$ and,
\item norm $1$ to the formal variable $q$. 
\end{itemize}
We consider the Cauchy completion $\widehat{tel}(\mathcal{C}(H_\tau))$ of ${tel}(\mathcal{C}(H_\tau))$ with respect to this norm. For the more algebraically minded reader, completion is a functor $Mod(\Lambda_{\geq 0})\to Mod(\Lambda_{\geq 0})$ defined by \begin{align}
A\mapsto \widehat{A}:\lim_{\xleftarrow[r\geq 0]{}}A\otimes_{\Lambda_{\geq 0}}\Lambda_{\geq 0}/\Lambda_{\geq r}
\end{align} on objects, and by functoriality of inverse limits on the morphisms.

The completion functor automatically extends to a functor $Ch(\Lambda_{\geq 0})\to Ch(\Lambda_{\geq 0})$. Namely, if $(C,d)$ is a chain complex over $\Lambda_{\geq 0}$, then the completion $(\widehat{C},\widehat{d})$ is obtained by applying the completion functor to each graded piece of the underlying graded module, and also to the maps $d_i:C^i\to C^{i+1}$.
\bibliographystyle{plain}
\bibliography{ref}
\end{document}